\newtheorem{theorem}{Theorem}[section]
\newtheorem{lemma}[theorem]{Lemma}
\newtheorem{proposition}[theorem]{Proposition}
\newtheorem{corollary}[theorem]{Corollary}
\newtheorem{introtheorem}{Theorem}
\theoremstyle{definition}
\newtheorem{definition}[theorem]{Definition}
\newtheorem{example}[theorem]{Example}
\theoremstyle{remark}
\newtheorem{remark}{Remark}[theorem]
\newcounter{mynote}
\newcommand{\G}{\Gamma}
\title[Subgroups of RACGs]{Subgroups of right-angled Coxeter groups via Stallings-like techniques}
\author{Pallavi Dani$^1$}
\author{Ivan Levcovitz$^2$}
\address{$^1$Department of Mathematics, Louisiana State University, Baton Rouge, LA 70803-4918, USA}
\address{$^2$Technion - Israel Institute of Technology, Department of Mathematics, Haifa, 32000, Israel}
\email{pdani@math.lsu.edu (Pallavi Dani), Levcovitz@technion.ac.il (Ivan Levcovitz)}
\thanks{This work of the first author was supported
by a grant from the Simons Foundation (\#426932, Pallavi Dani) and by NSF Grant No.~DMS-1812061. This work of the second author was supported by the Israel Science Foundation and in part by a Technion fellowship.}
\begin{document}

\begin{abstract}
	We associate cube complexes called completions to each subgroup of a right-angled Coxeter group (RACG).
	A completion characterizes many properties of the subgroup such as whether it  is quasiconvex, normal, finite-index or torsion-free. We use completions to show that reflection subgroups are quasiconvex, as are one-ended Coxeter subgroups of a 2-dimensional RACG. We provide an algorithm that determines whether a given one-ended, 2-dimensional RACG is isomorphic to some finite-index subgroup of another given RACG. In addition, we answer several algorithmic questions regarding quasiconvex subgroups. Finally, we give a new proof of Haglund's result that quasiconvex subgroups of RACGs are separable.  
\end{abstract}

\maketitle

\subjclass{Mathematics Subject Classification: 20F65, 57M07, 20F55.}

Keywords: right-angled Coxeter groups, quasiconvex subgroups, algorithmic problems

\section{Introduction} \label{sec_intro}
In the highly influential article \cite{Stallings}, Stallings introduced new tools to study subgroups of free groups. A crucial
 idea in Stallings' work is that given a finite set of words in a free group, one can associate a labeled graph to this set, and perform a sequence of operations, now known as ``Stallings folds,'' to this graph. The resulting graph is, in some sense, a canonical object associated to the subgroup generated by the given words. This topological viewpoint provided clean new proofs for many theorems regarding subgroups of free groups.  
 In~\cite{KM}, Kapovich--Miasnikov use Stallings' ideas, recast in a more combinatorial form, to systematically study the subgroup structure of free groups and to answer a number of algorithmic questions about such free subgroups. 
 
	Arzhantseva--Olshanskii 
were the first to apply Stallings' techniques to groups that are not free in \cite{Arzhantseva-Olshanskii}, and Arzhantseva uses these techniques to show quasiconvexity of certain subgroups in \cite{Arzhantseva-comm}.
	More recently, 
  Kharlampovich--Miasnikov--Weil
 use
similar techniques  to study several algorithmic questions for automatic groups 
 (which include RACGs)~\cite{KMW}.
 Beeker--Lazarovich, building on work of Brown \cite{SamuelBrown}, define a version of Stallings folds for CAT(0) cube complexes, which they use to give a characterization of finitely presented
 quasiconvex subgroups of cubulated hyperbolic groups, in terms of hyperplane stabilizers~\cite{BL}.
Other related 
articles are discussed in Section~\ref{sec_relation}.
 
The main goal of this article is to study subgroups of right-angled Coxeter groups (RACGs)
through generalizations of
 Stallings' techniques.  
Restricting to RACGs (rather than considering all automatic groups or CAT(0) cube complex groups) allows for some stronger results.  
 We do not make any hyperbolicity 
 assumption, which is often crucial in more general frameworks.  
 Our approach is 
  inspired by the combinatorial methods used in~\cite{KM}, and is 
quite different from that of~\cite{BL}. 
This is further discussed in Section~\ref{sec_relation}.

Given a finite  simplicial graph $\G$, the associated RACG $W_\G$ is generated by order two elements corresponding to vertices of~$\G$, with the additional relations that two such generators commute if there is an edge in $\G$ between the corresponding vertices.
RACGs form a wide class of groups which have become central objects in geometric group theory.  We refer to~\cite{Dani} for a survey of recent work on  these groups.  
One interesting feature of RACGs is that they have a rich variety of subgroups, which includes all free groups, RAAGs (right-angled Artin groups)~\cite{DJ} 
 and surface groups.
Incredibly, all hyperbolic 
3-manifold groups \cite{Ago} \cite{Wise} and Coxeter groups~\cite{Haglund-Wise}
are virtually subgroups of RACGs as well.  

Given a subgroup $G$ of a RACG $W_{\G}$, we abstractly define a completion of $G$ as an edge-labeled cube complex satisfying certain properties.
If $G$ is additionally finitely generated, we explicitly build a  \textit{standard} 
completion for $G$ by the following procedure.
 Starting with
a subdivided ``rose'' graph whose petals are labeled by the words generating $G$,
we perform a sequence of operations of three possible types: fold, cube attachment, and cube identification. A completion $\Omega$ can always be obtained as the direct limit of the complexes in this sequence. 
 Many properties of the subgroup $G$ can be characterized in terms of properties of 
 $\Omega$, 
 as summarized in the following theorem:
 
\begin{introtheorem} \label{intro_thm_main}
	Let $G$ be a subgroup of the RACG $W_{\Gamma}$. Then
	\begin{enumerate}
		\item $G$ is quasiconvex in $W_\G$ if and only if $G$ is finitely generated and every (equivalently, some) standard completion for $G$ is finite (Theorem~\ref{thm_qc}).
		\item  There exist characterizations of $G$ having finite index (Theorem~\ref{thm_omega_finite_index}), $G$ being torsion-free (Proposition~\ref{prop_torsion_free}), and $G$ being normal (Theorem~\ref{thm_normal}) in terms of properties of a completion of $G$. 
	
		\item 
		If $G$ is torsion-free, then any completion is non-positively curved  
		(Proposition~\ref{prop_torsion_free_npc}) and has fundamental group isomorphic to $G$ (Theorem~\ref{thm_fundamental_group}). 
	\end{enumerate}
\end{introtheorem}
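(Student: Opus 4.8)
The final statement is a summary: each of its clauses is one of the later theorems or propositions cited in it, so at the top level its proof is simply an assembly of those results. Below I outline the common strategy and then the strategy for each clause.

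\emph{The common engine.} The geometric model is the Davis complex $\Sigma_\Gamma$ of $W_\Gamma$, the CAT(0) cube complex with vertex set $W_\Gamma$, an edge labeled $s$ joining $w$ and $ws$ for each $s \in S$, and a cube for every clique of $\Gamma$; its vertex links are the clique complex of $\Gamma$, which is automatically flag. The bridge between a completion $\Omega$ of $G$ and $\Sigma_\Gamma$ is the \emph{label-reading map}: reading edge-labels of paths out of the basepoint defines, on the universal cover of $\Omega$, a label-preserving combinatorial map to $\Sigma_\Gamma$, and this descends to a $G$-equivariant map $\widehat\Omega \to \Sigma_\Gamma$ from the cover $\widehat\Omega$ of $\Omega$ whose deck group is $G = \mathrm{im}(\pi_1(\Omega) \to W_\Gamma)$. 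The foldedness and completeness axioms defining a completion are exactly what force this map to behave like a local isometry onto a locally convex subcomplex, and understanding this map is the heart of every clause.

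\emph{Clause (1).} If a standard completion $\Omega$ is finite, then $\widehat\Omega$ is $G$-cocompact and the label-reading map exhibits $G$ as acting cocompactly on a combinatorially convex (equivalently, quasiconvex) subcomplex of $\Sigma_\Gamma$, which yields quasiconvexity of $G$; the tools are the standard facts that a local isometry of a non-positively curved cube complex into a CAT(0) cube complex is $\pi_1$-injective with convex image, and that a group acting cocompactly on a convex subcomplex of a CAT(0) cube complex is quasiconvex. Conversely, quasiconvex subgroups of finitely generated groups are finitely generated, so standard completions exist; one must then show that the sequence of folds, cube attachments, and cube identifications stabilizes. Every complex in the sequence maps compatibly to $\Sigma_\Gamma$ via the label-reading map, and quasiconvexity supplies a uniform constant $R$ so that no operation ever needs to create a cell mapping outside the $R$-neighborhood of a $G$-orbit; since $\Sigma_\Gamma$ is locally finite, only finitely many cells can ever be produced, so the limit is finite. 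Since the isomorphism type of the limit does not depend on the order of the operations, ``some'' and ``every'' coincide.

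\emph{Clauses (2) and (3).} Finite index is detected by $\Omega$ being finite with ``no boundary'', i.e. the label-reading map being surjective, equivalently every vertex link of $\Omega$ equal to the full clique complex of $\Gamma$; then $\widehat\Omega \to \Sigma_\Gamma$ is a finite-sheeted covering-type map and $[W_\Gamma:G]<\infty$. Torsion-freeness of $G$ is detected by $\Omega$ having no degenerate cube (no cube with two of its model corners identified in $\Omega$): a van Kampen argument with the RACG relations (cancelling $ss$, swapping commuting $st$) turns a degenerate cube into a finite-order element of $W_\Gamma$ lying in $G$, and conversely any torsion in $G$ forces such a configuration. Normality is detected by a homogeneity condition on $\Omega$---the group of label-preserving automorphisms acting transitively on vertices---reflecting the action of $W_\Gamma/G$. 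Finally, if $G$ is torsion-free there are no degenerate cubes, so every vertex link of any completion is a full subcomplex of the flag complex of $\Gamma$, hence flag, so $\Omega$ and therefore $\widehat\Omega$ are non-positively curved; the label-reading local isometry $\widehat\Omega \to \Sigma_\Gamma$ is then $\pi_1$-injective, so $\pi_1(\widehat\Omega)$ injects into $\pi_1(\Sigma_\Gamma)=1$, whence $\widehat\Omega$ is simply connected and thus CAT(0), and $\ker(\pi_1(\Omega)\to W_\Gamma)=\pi_1(\widehat\Omega)$ is trivial, giving $\pi_1(\Omega)\cong G$.

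\emph{Main obstacle.} The crux is the forward implication of clause (1): extracting from the global quasiconvexity hypothesis a concrete a priori bound on the ``search'' performed by the cube-attachment and identification moves, so that the process provably terminates, together with a confluence argument showing the final complex is independent of the choices made. This is where essentially all the geometric difficulty lies; the remaining clauses are, by comparison, local combinatorial analyses of $\Omega$ and its vertex links together with the convexity/injectivity properties of the label-reading map.
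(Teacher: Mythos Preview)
Your proposal takes a genuinely different route from the paper. The paper's arguments are almost entirely combinatorial---working directly with Tits moves, the deletion property, and paths in $\Omega$---rather than passing through covers and local-isometry theory. For instance, the paper's proof that a finite completion implies quasiconvexity (Lemma~\ref{lemma_is_wqc}) goes via a direct comparison of distances: Lemma~\ref{lemma_distances_in_omega} shows $d_\Omega(B,v) = d_{\mathcal C}(G, v_w)$ by manipulating reduced expressions, with no appeal to non-positive curvature or to any map into $\Sigma_\Gamma$. The converse (Lemma~\ref{lemma_qc_implies_finite}) uses that a standard completion has only finitely many hyperplanes (Proposition~\ref{prop_omega_hyps}) and a Hausdorff-distance estimate on paths (Lemma~\ref{lemma_hausdorff_bound}), again purely combinatorially.

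There are two genuine gaps in your outline. First, your argument for ``finite completion $\Rightarrow$ quasiconvex'' invokes the standard fact that a local isometry of a \emph{non-positively curved} cube complex into a CAT(0) complex has convex image. But non-positive curvature of $\Omega$ is only established for torsion-free $G$ (clause (3)), and clause (1) must hold for arbitrary quasiconvex $G$, including those with torsion. When $G$ has torsion, $\Omega$ can contain graph-loops and commuting bigons (see Proposition~\ref{prop_torsion_free}), so $\Omega$ is not NPC and the local-isometry machinery does not apply. The paper avoids this entirely by never needing $\Omega$ to be NPC for clause~(1).

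Second, your justification that ``some'' and ``every'' coincide---``the isomorphism type of the limit does not depend on the order of the operations''---is false. The paper explicitly notes (and Example~\ref{ex_non-unique} exhibits) that completions are not unique, even up to homotopy. The equivalence of ``some'' and ``every'' in Theorem~\ref{thm_qc} is obtained indirectly: finiteness of some completion implies quasiconvexity (Lemma~\ref{lemma_is_wqc}), and quasiconvexity implies finiteness of every standard completion (Lemma~\ref{lemma_qc_implies_finite}). No confluence argument is available or used.

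Your characterizations in clause~(2) also diverge from the paper's (e.g.\ normality is detected via core-graph isomorphisms at different basepoints, not via a transitive automorphism group; torsion is detected via loops with labels in finite special subgroups, not via degenerate cubes), though since the statement only asserts that \emph{some} characterizations exist, this is a difference in content rather than a gap.
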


We note that quasiconvex subgroups are always quasi-isometrically embedded and, in RACGs, are 
 separable~\cite{Haglund}.
We discuss the relation of our results 
to~\cite{BL, KMW} and other works  in Section~\ref{sec_relation}.

Our approach to producing completions 
is constructive.  In particular, given a quasiconvex subgroup, a finite completion can be explicitly constructed in finite time (by Theorem~\ref{intro_thm_main}(1) and Proposition~\ref{prop_finite_completion}).
  Furthermore, as our 
completions are cube complexes (and not just graphs
as is the case in many generalizations of Stallings' methods to non-free settings),
 powerful tools from cubical geometry can be applied to study them.
This enables us to use the characterizations in Theorem~\ref{intro_thm_main} to prove 
  several new results on subgroups of RACGs 
 including showing that particular classes of subgroups of RACGs are always quasiconvex and solving various algorithmic problems
 (see Theorem~\ref{intro_thm_reflection_subgroups}--Theorem~\ref{introtheorem_algorithms} below).

We expect that Theorem~\ref{intro_thm_main} will have other applications.  For instance,  
the characterization of finite-index subgroups in Theorem~\ref{intro_thm_main} is an essential tool in our proof of~\cite[Theorem~B]{DL_visual_raags}, which states that any $2$-dimensional,
one-ended RACG with planar defining graph is quasi-isometric to a RAAG if
and only if it is commensurable to a RAAG.

A given subgroup may have multiple completions.  
Indeed two completions for a given subgroup need not even be homotopy equivalent (see Example~\ref{ex_non-unique}). 
Nevertheless, our results 
do not depend on the specific completions chosen.    
In fact, the flexibility of choosing different generating sets and order of operations 
is useful in our arguments. 
Despite the non-uniqueness of completions, every completion for $G$ has a 1-dimensional subcomplex called its core graph, and any two core graphs for $G$ are isomorphic (see Proposition~\ref{prop_unique_core}).

Theorem~\ref{intro_thm_main} provides a tool to show  that subgroups of a RACG are quasiconvex, by showing that their associated completions must be finite. 
We apply this approach to subgroups of $W_\G$ generated by \emph{reflections}, i.e., conjugates of the generators corresponding to vertices of $\G$.  We call such subgroups \emph{reflection subgroups}.  More precisely, we prove:

\begin{introtheorem}[Theorem~\ref{thm_completion_of_reflection_subgroup}] 
\label{intro_thm_reflection_subgroups}
	Every finitely generated reflection subgroup of a RACG is quasiconvex. 
\end{introtheorem}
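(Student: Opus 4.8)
The plan is to invoke the quasiconvexity criterion of Theorem~\ref{thm_qc} (Theorem~\ref{intro_thm_main}(1)): since $G$ is assumed finitely generated, it suffices to exhibit a single finite standard completion for $G$, and by Proposition~\ref{prop_finite_completion} it is enough to show that \emph{some} standard completion is finite. Write the given reflections as $r_i = w_i s_i w_i^{-1}$, where each $s_i$ is a standard generator of $W_\G$, and run the standard completion procedure starting from the subdivided rose whose $i$-th petal is labeled by the word $w_i s_i w_i^{-1}$. The first thing I would establish is that the folding operations cause the two arcs labeled $w_i$ and $w_i^{-1}$ in the $i$-th petal to cancel against each other substantially, because $s_i$ has order two; after performing all available folds one is left with a comparatively small graph, and I would prove a lemma bounding its size and identifying its core graph (using Proposition~\ref{prop_unique_core}) with a combinatorial object attached to $G$ — morally, a set of representatives for the $G$-action on the ``wall germs'' of $\Sigma_\G$ at the basepoint.

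The conceptual input is the structure theory of reflection subgroups: $G$ is itself a Coxeter group, with a canonical reflection generating set $R$, and the walls of the Davis complex $\Sigma_\G$ fixed by the reflections of $G$ form a $G$-invariant wall arrangement $\mathcal{M}_G$. I would use this to pin down what the completion must look like: cutting $\Sigma_\G$ along $\mathcal{M}_G$ produces ``super-chambers'' permuted by $G$, and the combinatorics of the base super-chamber, together with the gluing data coming from $R$, should govern both the folded graph from the previous paragraph and the cube attachments that follow. A subsidiary ingredient I would isolate is a description of exactly when two reflections of $W_\G$ commute: this forces their defining walls to cross, which is a strong restriction when the reflections are conjugates of standard generators, and it controls which square-attachment moves can ever be triggered — hence, after finitely many cube attachments and cube identifications, no further operation applies. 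The direct limit of the process is then the desired finite completion, and Theorem~\ref{thm_qc} yields quasiconvexity.

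The main obstacle is precisely this finiteness, i.e., the termination of the folding/cube-attachment/cube-identification process. The heart of the matter is converting the hypothesis that $G$ is finitely generated into a genuine boundedness statement — either that the core graph of $G$ has boundedly many vertices and edges, or that the relevant fundamental domain for $\mathcal{M}_G$ inside $\Sigma_\G$ is bounded — and then ruling out unbounded growth of the complex under further operations. The natural tool for the latter, tracking the developing images in $\Sigma_\G$ of the cubes of the evolving complex and arguing by convexity and wall separation there, is complicated by the fact that every reflection subgroup contains torsion (each $r_i$ is an involution), so one cannot simply appeal to non-positive curvature of the completion as in Theorem~\ref{intro_thm_main}(3). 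I therefore expect the bulk of the work to be a hands-on analysis of how the generating reflections and their pairwise dihedral subgroups sit relative to the wall structure of $\Sigma_\G$.
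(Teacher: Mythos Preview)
Your outline correctly identifies the starting point (fold the two $w_i$-arcs in each petal onto each other) and the main obstacle (finiteness of the completion in the presence of torsion), but it does not actually overcome that obstacle. The Davis-complex/wall-arrangement picture you sketch is plausible, and is indeed close in spirit to Haglund's convex-core approach mentioned in Section~\ref{sec_relation}; however, converting ``finitely many generating reflections'' into ``bounded fundamental domain for the wall arrangement $\mathcal{M}_G$'' is essentially the content of the theorem, and your proposal defers this to ``a hands-on analysis'' without supplying one. The commutation analysis you propose would at best control which \emph{new} squares appear, not how many rounds of attachment are needed.

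The paper's route is quite different and entirely combinatorial. After the initial folds you describe, one has a finite tree $\mathcal{T}$ (paths from $B$ labeled $w_i$) with a graph-loop labeled $s_i$ attached at each endpoint. The key move is to \emph{separate} these two pieces: remove the graph-loops and complete the bare tree $\mathcal{T}$. By Proposition~\ref{prop_finite_cat0}, the completion of any finite $\G$-labeled tree is a \emph{finite} CAT(0) cube complex $\Omega_{\mathcal{FT}}$ --- the torsion obstruction you worry about has been sidestepped, since the tree carries no graph-loops and its completion is simply connected. One then re-attaches the graph-loop labeled $s_i$ at the image of each endpoint. Two technical lemmas (Lemmas~\ref{lemma_no_bad_paths} and~\ref{lemma_no_bad_vertices}), which use crucially that the generating set has first been made \emph{trimmed} in the sense of Definition~\ref{def:trimmed}, show that no such vertex is already incident to an edge labeled $s_i$, so the resulting complex is still folded. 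Finally, the general Lemma~\ref{lemma_adding_graph_loops} says that attaching finitely many graph-loops to a finite folded cube-full complex and then completing yields a finite completion. Theorem~\ref{thm_qc} then gives quasiconvexity.

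So the missing idea in your proposal is precisely this tree/graph-loop decomposition: rather than tracking walls in the Davis complex, one isolates the torsion-free backbone of the generating rose, exploits CAT(0) geometry \emph{there}, and handles the involutions separately via a general lemma about adjoining graph-loops.
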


We next turn our attention to Coxeter subgroups,
i.e.~subgroups that are  isomorphic to some abstract finitely generated Coxeter group. 
A result 
proved independently by Dyer~\cite{Dyer} and Deodhar~\cite{Deodhar}
shows that every reflection subgroup of a RACG is a Coxeter subgroup. The converse to this statement is not true in general (see Remark~\ref{rmk_reflection_counterexample}), but 
 we show that
it holds under certain hypotheses:
\begin{introtheorem}[Theorem~\ref{thm_racg_subgroup_gen_set}, Corollary~\ref{cor_RACG_subgroups_qc}] \label{intro_thm_racg_subgroups}
	Every one-ended Coxeter subgroup of a $2$-dimensional RACG is a reflection subgroup. Consequently, every such Coxeter subgroup is quasiconvex by Theorem~\ref{intro_thm_reflection_subgroups}.
\end{introtheorem}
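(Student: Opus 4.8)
The statement has two parts, and the second follows quickly from the first: once $G$ is known to be a reflection subgroup, it is automatically a \emph{finitely generated} reflection subgroup (a one-ended Coxeter group is finitely generated, so by the Dyer--Deodhar description \cite{Dyer, Deodhar} its canonical reflection generating set is a finite Coxeter system), and hence quasiconvex by Theorem~\ref{intro_thm_reflection_subgroups}. So the plan is to prove the first assertion: a one-ended Coxeter subgroup $G$ of a $2$-dimensional $W_\Gamma$ is generated by reflections of $W_\Gamma$. Fix a Coxeter system $(W_S, S)$ with $G \cong W_S$, and let $R \le G$ be the subgroup generated by \emph{all} reflections of $W_\Gamma$ lying in $G$. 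Since the set of reflections of $W_\Gamma$ is closed under conjugation, this generating set is $G$-invariant, so $R \trianglelefteq G$, and by Dyer--Deodhar $R$ is itself a Coxeter group. The goal is to show $R = G$.

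The first ingredient is a classification of involutions in a $2$-dimensional RACG. Since $\Gamma$ is triangle-free, its cliques are vertices and edges, so every finite special subgroup is $\langle a\rangle \cong \mathbb{Z}/2$ or $\langle a,b\rangle\cong(\mathbb{Z}/2)^2$ for an edge $\{a,b\}$; as every torsion element of a Coxeter group is conjugate into a finite special subgroup, every involution of $W_\Gamma$ is conjugate either to a generator $a$ (a \emph{reflection}, whose fixed set in the Davis complex $\Sigma = \Sigma_\Gamma$ is a wall) or to a product $ab$ of two \emph{commuting} generators. In the latter case the walls of $a$ and $b$ cross transversally, so $\mathrm{Fix}(ab)$ is a single vertex of $\Sigma$; moreover triangle-freeness gives $\mathrm{lk}_\Gamma(a)\cap\mathrm{lk}_\Gamma(b)=\emptyset$, which forces these two walls to be disjoint away from that vertex. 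Consequently each Coxeter generator $s\in S$ is either a reflection of $W_\Gamma$ (hence lies in $R$) or is conjugate in $W_\Gamma$ to such a ``rotation'' $ab$, and we call the latter generators \emph{bad}. If there are no bad generators we are done, so the content is to rule out bad generators under the one-endedness hypothesis.

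I would argue by contradiction: suppose some $s\in S$ is bad, so $s$ fixes a single vertex $v$ of $\Sigma$, with the two walls through $v$ meeting only at $v$. The idea is to turn this ``point-like'' fixed-point structure into a nontrivial splitting of $G$ over a finite (spherical) subgroup, which contradicts one-endedness. Concretely, I would run the standard-completion machinery of Theorem~\ref{intro_thm_main} on $G$ relative to a generating set adapted to $S$, examine the local structure of the resulting completion $\Omega$ near the image of $v$, and use the fact that the two relevant walls are disjoint outside $v$ to locate in $\Omega$ a separating subcomplex with finite associated subgroup; reading off the induced action on the Bass--Serre tree in the manner of Stallings \cite{Stallings} then yields an amalgam or HNN decomposition of $G$ over a finite subgroup. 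I expect this step to be the main obstacle: one must show that a bad generator genuinely \emph{forces} such a separation (rather than being absorbable by other generators), which is where both $2$-dimensionality (via triangle-freeness and the codimension-$2$ fixed set) and finite generation are used essentially, and it is plausible that an induction on the Coxeter rank of $G$, together with the invariance of the core graph (Proposition~\ref{prop_unique_core}), is needed to organize the bookkeeping. Once the splitting is produced, one-endedness of $G$ is contradicted, so $R = G$; thus $G$ is a reflection subgroup, and the quasiconvexity conclusion follows as explained above.
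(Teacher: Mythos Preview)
Your outline correctly isolates the dichotomy (each Coxeter generator of $G$ is either a reflection of $W_\Gamma$ or a ``rotation'' $w(s_1s_2)w^{-1}$ with $s_1,s_2$ adjacent in $\Gamma$) and the observation that the reflection subgroup $R$ is normal in $G$. But the heart of your argument---extracting a nontrivial splitting of $G$ over a finite subgroup from the existence of a bad generator---is not actually carried out, and you acknowledge as much (``I expect this step to be the main obstacle'', ``it is plausible that\ldots''). A generator fixing a single vertex of the Davis complex does not by itself produce a $G$-invariant tree with finite edge stabilizers; you would need some wall or track structure that all of $G$ respects, and nothing in the completion machinery of this paper manufactures such a structure from a single rotation. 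As written, this is a plausible strategy with the decisive step missing.

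The paper's proof of Theorem~\ref{thm_racg_subgroup_gen_set} takes a completely different, constructive route, and uses only the weaker hypothesis that $\Gamma'$ has no isolated vertex (so every Coxeter generator commutes with some other one). Starting from the standard Coxeter generating set $I_G$, one picks a non-reflection generator $h=ws_1s_2w^{-1}$ and a commuting generator $h'$. After conjugating the whole generating set by the tail of $h'$, two elementary lemmas about words commuting with clique elements (Lemmas~\ref{lemma_commuting} and~\ref{lemma_ax_commuting}) force $h'$ to be (a conjugate of) $s_1$, say; one then \emph{replaces} $h$ by $h'h$, which is now a genuine reflection $zs_2z^{-1}$. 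Lemma~\ref{lemma_ax_commuting} guarantees the new set still satisfies the required commuting relations, so it remains a Coxeter generating set for $G$ with one more reflection than before. Iterating finishes. No ends or Bass--Serre argument is needed, and the algorithmic nature of this replacement is what feeds into Theorem~\ref{intro_thm_algorithm_finite_index} later.
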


Completions can be used to answer several  algorithmic questions about subgroups of RACGs.  For instance, 
	 we consider the problem of finite-index embeddability between RACGs:

\begin{introtheorem}[Theorem~\ref{thm_algorithm_finite_index}] \label{intro_thm_algorithm_finite_index}
	There is an algorithm 
	 (explicitly constructed in Section~\ref{sec_algorithm_for_2d_subgroups})
	which, given a one-ended, $2$-dimensional RACG  $W_{\Gamma}$, and  any RACG $W_{\G'}$, determines whether or not 
	$W_{\Gamma'}$ is isomorphic to a finite-index subgroup of $W_{\Gamma}$.
\end{introtheorem}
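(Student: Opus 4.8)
The plan is to reduce the problem to a finite search over reflection subgroups of $W_\Gamma$. Theorem~\ref{intro_thm_racg_subgroups} identifies exactly which subgroups can occur, and the completion machinery---Theorems~\ref{intro_thm_main} and~\ref{intro_thm_reflection_subgroups}, Theorem~\ref{thm_omega_finite_index}, and Proposition~\ref{prop_finite_completion}---lets us test each candidate effectively. First I would dispose of the easy cases. Since the number of ends and the virtual cohomological dimension are commensurability invariants, a finite-index subgroup of the one-ended, $2$-dimensional group $W_\Gamma$ is again one-ended and $2$-dimensional, and both properties of $W_{\Gamma'}$ are decidable directly from $\Gamma'$ (connectedness and no separating clique; triangle-freeness and presence of an edge). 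If either fails, output ``no''. Otherwise compute the rational Euler characteristics $\chi(W_\Gamma)$ and $\chi(W_{\Gamma'})$, each a finite explicit sum over the trivial, rank-one, and rank-two finite special subgroups; since the rational Euler characteristic is multiplicative under finite-index inclusions, $W_{\Gamma'}\cong G\le W_\Gamma$ with $[W_\Gamma:G]=k$ forces $\chi(W_{\Gamma'})=k\,\chi(W_\Gamma)$, so when $\chi(W_\Gamma)\ne 0$ the index $k$ is pinned down and we output ``no'' unless it is a positive integer.

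Next is the structural reduction. If $G\le W_\Gamma$ has finite index and $G\cong W_{\Gamma'}$, then $G$ is a one-ended Coxeter subgroup of the $2$-dimensional RACG $W_\Gamma$, so by Theorem~\ref{intro_thm_racg_subgroups} it is a reflection subgroup, and by Theorem~\ref{thm_racg_subgroup_gen_set} its generating reflections may be taken to form a Coxeter generating set: a tuple $(r_1,\dots,r_n)$ of reflections of $W_\Gamma$, with $n=|V(\Gamma')|$, realizing the Coxeter presentation of $W_{\Gamma'}$, so that $r_ir_j$ has order $2$ exactly when $v_iv_j\in E(\Gamma')$ and infinite order otherwise. (The dichotomy ``order $2$ or $\infty$'' is automatic: a finite dihedral subgroup of a RACG has order at most $4$, so two distinct reflections either commute or generate an infinite dihedral group.) Conversely, any tuple of reflections with this commutation pattern whose subgroup has finite index and is isomorphic to $W_{\Gamma'}$ is a witness. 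Thus it suffices to enumerate such tuples and test each.

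The test is effective. Given $(r_1,\dots,r_n)$, the subgroup $G=\langle r_1,\dots,r_n\rangle$ is a reflection subgroup, hence quasiconvex by Theorem~\ref{intro_thm_reflection_subgroups}, so by Proposition~\ref{prop_finite_completion} a finite standard completion $\Omega$ for $G$ is constructible in finite time. One checks (a) that $\Omega$ is complete, which by Theorem~\ref{thm_omega_finite_index} is equivalent to $[W_\Gamma:G]<\infty$ and in that case identifies the index with $|\Omega^{(0)}|$; and (b) that $G\cong W_{\Gamma'}$. For (b) one can read off from the reflection subgroup $G$ its canonical Coxeter generating set in the sense of Dyer~\cite{Dyer} and Deodhar~\cite{Deodhar}---which, being a set of reflections of the RACG $W_\Gamma$, again presents $G$ as a right-angled Coxeter group---and compare the resulting defining graph with $\Gamma'$, using that a right-angled Coxeter group is determined up to isomorphism by its defining graph.

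The remaining, and main, difficulty is to make the enumeration terminate: one must bound effectively the word-lengths of the conjugators $w_i$ in $r_i=w_is_iw_i^{-1}$ that need be examined---equivalently, via $|\Omega^{(0)}|$, the index $[W_\Gamma:G]$. When $\chi(W_\Gamma)\ne 0$ this is immediate, as the index was already pinned down; only finitely many completions of that size exist (equivalently, finitely many conjugacy classes of finite-index subgroups of that index, which can be listed directly), and tests (a) and (b) are applied to each. The hard case is $\chi(W_\Gamma)=0$, where Euler characteristic gives no constraint and the index can genuinely be unbounded---for instance $W_\Gamma$ could be $D_\infty\times D_\infty$, the RACG on a $4$-cycle, which contains proper finite-index copies of itself of arbitrarily large index. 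Here one must obtain the index bound by other means---a more delicate, direct analysis of which finite covers of the Davis complex of $W_\Gamma$ carry a reflection subgroup isomorphic to $W_{\Gamma'}$---and this is the step I would expect to demand the most work.
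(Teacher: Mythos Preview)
Your proposal has a genuine gap. You correctly reduce to reflection subgroups via Theorem~\ref{intro_thm_racg_subgroups} and set up a sound test procedure (build a finite completion, check full valence, compare the Dyer--Deodhar canonical generating set with $\Gamma'$). But the finiteness of the enumeration is not established: you rely on the Euler characteristic to pin down the index, and when $\chi(W_\Gamma)=0$ you explicitly leave the problem open. Since the theorem must cover all one-ended $2$-dimensional RACGs---including, as you note, the $4$-cycle group $D_\infty\times D_\infty$, which has $\chi=0$ and contains finite-index copies of itself of arbitrarily large index---this is the heart of the matter, not a side case.

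The paper does not go through Euler characteristic or index bounds at all. The key is Proposition~\ref{prop_conj_word_bound}: if a \emph{trimmed} set $\mathcal{R}$ of reflections generates a finite-index subgroup of a triangle-free, not-almost-star $W_\Gamma$, then the conjugator lengths $|w_i|$ are bounded by a constant depending only on $|V(\Gamma)|$ and $|\mathcal{R}|=|V(\Gamma')|$. The argument is geometric: if some $w_i$ were too long, a pigeonhole on letters yields many occurrences of some generator $\bar{s}$ along the corresponding branch of the folded tree $\mathcal{FT}$; one then locates a vertex in $\Omega_G$ trapped between two $\bar{s}$-hyperplanes at which full valence fails, contradicting Theorem~\ref{thm_omega_finite_index}. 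This bounds the search space uniformly, with no dependence on $\chi$ or on the (unknown) index. The paper does split into cases, but according to whether $\Gamma$ is ``almost star'' (a purely combinatorial condition on $\Gamma$), and the almost-star cases are dispatched by passing to an index-two doubling or a join decomposition and invoking the main case.
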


The above theorem gives an algorithm that can often  determine when two RACGs are commensurable; thus, it provides a tool for studying commensurability classification.  A few specific families of RACGs have been classified up to commensurability (see~\cite{crisp-paoluzzi, dani-stark-thomas, hruska-stark-tran}), but not much is known in general.  
We remark that the precise statements of Theorem~\ref{intro_thm_racg_subgroups} and Theorem~\ref{intro_thm_algorithm_finite_index} use a significantly weaker hypothesis than one-endedness.

As noted above, when $G$ is quasiconvex, a finite completion for $G$ can be computed in finite time.  
Consequently, we can provide  algorithms to check several properties of~$G$:

\begin{introtheorem}
\label{introtheorem_algorithms}
Let $G$ be a quasiconvex subgroup of a RACG $W_\G$,
 given by a finite generating set of words in $W_\G$.
Then there exist algorithms (explicitly constructed in Section~\ref{sec_other_algorithmic_properties}) to solve the following problems.
\begin{enumerate}
\item Determine whether or not $G$ is torsion-free.
\item Determine the index of $G$ in $W_\G$ (even if infinite).
\item Given a word $w$ representing an element $g \in W_\G$, determine whether or not a positive power of $g$ is in $G$.
\item Determine whether or not $G$ is normal in $W_\G$. 
\end{enumerate}
\end{introtheorem}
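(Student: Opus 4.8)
The plan is to handle all four parts through the same device: first produce a finite completion $\Omega$ for $G$, and then read the answer off of it. Since $G$ is quasiconvex, Theorem~\ref{thm_qc} tells us that every standard completion for $G$ is finite, and Proposition~\ref{prop_finite_completion} supplies an effective procedure that builds one: starting from the subdivided rose labeled by the given generating words, one applies folds, cube attachments and cube identifications, and since the limiting complex is finite this terminates after finitely many (computable) steps, yielding a finite completion $\Omega$ with a basepoint $x_0$.

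Parts (1), (2) and (4) then reduce to checking a property of the finite complex $\Omega$. For (1), Proposition~\ref{prop_torsion_free} expresses torsion-freeness of $G$ as a condition on $\Omega$, which we verify directly. For (2), Theorem~\ref{thm_omega_finite_index} expresses finiteness of $[W_\G:G]$ as a condition on $\Omega$, and when that condition holds the index is recovered as an explicit combinatorial invariant of $\Omega$; if the condition fails we report that the index is infinite. For (4), Theorem~\ref{thm_normal} expresses normality of $G$ as a condition on $\Omega$, which we again check directly. In each case the verification is a finite computation because $\Omega$ is finite.

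Part (3) needs one further ingredient: membership of an element of $W_\G$ in $G$ can be detected by reading geodesic words in $\Omega$. Concretely, since $\Omega$ is folded, a word over the generating set labels at most one path starting at $x_0$; and, by the defining properties of a completion together with the fact that any two geodesic words for the same element of $W_\G$ differ by swaps of adjacent commuting generators (which $\Omega$ realizes via its squares), a \emph{geodesic} word represents an element of $G$ if and only if the path it labels exists and returns to $x_0$. Given $w$ representing $g$, we first decide whether $g$ has finite order (which is decidable in any Coxeter group); if it does, then $g^k = 1 \in G$ for $k$ the order of $g$, and we answer \emph{yes}. Suppose instead that $g$ has infinite order. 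Using the structure of infinite-order elements of Coxeter groups, we pass to a power $g^N$ for which there is a geodesic word $v$ such that $v^k$ is a geodesic word for $g^{Nk}$ for every $k\ge 1$ (if necessary after also conjugating, which just replaces $G$ by a conjugate finitely generated quasiconvex subgroup and $\Omega$ by a completion of it); this is harmless since $\langle g\rangle\cap G$ is nontrivial if and only if $\langle g^N\rangle\cap G$ is. We then read $v$ from $x_0$, and again from the endpoint reached, and so on, obtaining a sequence $x_0=q_0,q_1,q_2,\dots$ of vertices of $\Omega$, where $q_k$ is the terminus of the path labeled $v^k$ (the sequence stops if some step gets stuck). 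Since $\Omega$ has finitely many vertices, this sequence is eventually periodic, so it suffices to check whether $q_k=q_0$ for some $k$ between $1$ and the number of vertices of $\Omega$: by the preceding paragraph such a $k$ exists exactly when $g^{Nk}\in G$, i.e.\ exactly when some positive power of $g$ lies in $G$.

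The main obstacle lies entirely in part (3): one must verify that every geodesic representative of an element of $G$ labels a closed path at $x_0$ in any completion (so that the membership test is correct and so that ``getting stuck'' genuinely certifies that no larger power of $g$ lies in $G$), and one must justify the reduction to a power $g^N$ whose geodesic words are literal repetitions, which is where facts about infinite-order elements acting on the Davis complex enter. By contrast, parts (1), (2) and (4) are essentially a matter of combining the structural characterizations already stated in Theorem~\ref{intro_thm_main} with the finiteness and effective constructibility of $\Omega$.
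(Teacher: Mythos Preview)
Your setup and parts (1)--(2) match the paper exactly. Part (3) follows the same strategy as the paper (pigeonhole on the vertices of the finite completion), and your acknowledged gap --- producing a power of $g$ with a ``straight'' reduced representative --- is precisely what the paper fills with an elementary self-contained lemma (Lemma~\ref{lemma_powers}): for any reduced $w$ there are reduced words $x,h,k$ with $w = xhkx^{-1}$ and $w^n = xh^n k^{(n\bmod 2)}x^{-1}$ reduced for all $n\ge 1$. This lets the paper avoid your conjugation maneuver and the appeal to outside facts about the Davis complex; the pigeonhole argument is then run on the vertices visited by the $h$-segment inside the single completion $\Omega$, and one checks membership of $w^{2l}$ for $l$ up to the number of vertices.

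Part (4) has a genuine gap. Condition (N2) of Theorem~\ref{thm_normal} asks for basepoint-preserving isomorphisms between the core graphs $C(\Omega, B)$ and $C(\Omega, v)$, and the core graph is by definition the union of \emph{all} loops at the basepoint with reduced label --- an a priori infinite family. Asserting that this is ``a finite computation because $\Omega$ is finite'' does not suffice: one would still need an argument (for instance via the geodesic automatic structure and a regular-language intersection) that membership of each edge in the core is decidable. The paper explicitly flags this difficulty and sidesteps it by proving a separate, directly checkable criterion (Proposition~\ref{prop_normal_fg}), in which (N2) is replaced by the condition that every word in the given generating set $S_G$ labels a loop at every vertex of $\Omega$. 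Together with the observation that $\Delta$ can be read off from the supports of the words in $S_G$, this is what actually yields the normality algorithm.
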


Finally, we use completions to recover some known results. 
Specifically, we
give new proofs 
 of  the well-known result that RACGs are residually finite (see Theorem~\ref{thm_res_finite}), and of a result of Haglund that every quasiconvex subgroup of a RACG is separable (see Theorem~\ref{thm_virtual_retract}).

We note that much of the work presented here can also be used to study RAAGs.
Any RAAG $A$ embeds as a finite-index subgroup of a RACG $W$ by a construction of Davis--Januszkiewicz~\cite{DJ}.  Thus, one can construct a completion for $G$ considered as a subgroup of $W$, and then use Theorem~\ref{intro_thm_main} and 
Theorem~\ref{introtheorem_algorithms} to study properties of $G$ as a subgroup of $A$.

\subsection{Relation to other works} \label{sec_relation}
Every RACG $W_\G$ admits an automatic structure whose associated language consists of reduced, i.e.~geodesic, words of $W_\G$~\cite{Brink-Howlett}. 
Thus, the algorithms provided by Kharlampovich--Miasnikov--Weil in~\cite{KMW} for 
quasiconvex 
subgroups of automatic groups apply to RACGs.  
These include computing their intersections and (in some cases including hyperbolic groups) the conjugacy and almost malnormality problems. 
The algorithmic applications in~\cite{KMW} are distinct 
 from those in our Theorems~\ref{intro_thm_algorithm_finite_index}~and~\ref{introtheorem_algorithms}.
We note that, 
for automatic groups satisfying an additional assumption which does not hold for all RACGs,~\cite{KMW} 
gives an algorithm for
 (2) of  Theorem~\ref{introtheorem_algorithms}, i.e.~determining the index of a quasiconvex subgroup.

To provide the algorithms above, 
Kharlampovich--Miasnikov--Weil show that finite \emph{Stallings-like graphs} can be constructed for 
subgroups that are quasiconvex with respect to an automatic structure.
We refer to~\cite{KMW} for a precise definition, but note that the 1-skeleton of every completion,   in our sense, is a Stallings-like graph.
However, completions have  strictly stronger properties than Stallings-like graphs. 
For instance, completions are \textit{cube-full}, which means they do not have ``missing relations'' (see Section \ref{sec_completion}). A priori, there is no guarantee that a Stallings-like graph can be completed
to a finite cube-full  complex, and this is  what we prove can be done for quasiconvex subgroups of RACGs (see Theorem \ref{intro_thm_main}). The cube-full property is essential to our arguments.  It allows us to prove,  in many cases, that completions are non-positively curved, which is a crucial step in our proofs of Theorem~\ref{intro_thm_reflection_subgroups} and Theorem~\ref{intro_thm_algorithm_finite_index}. 

Another important property of the completion of a subgroup $G < W_\G$ is that, 
given \textit{any} 
geodesic
word $w$ of $W_\G$ representing an element of $G$, there exists a loop  with label $w$ in the completion.
Stallings-like graphs do not necessarily
have this property, which can be traced back to the fact that, in general, the language associated to a geodesically automatic structure is strictly smaller than the set of geodesic words. 
Additionally, loops in a completion whose  labels are reduced expressions for the same group element have Hausdorff distance proportional to the number of ``moves'' required to get from one expression to the other (see Section \ref{sec_completion_properties}). These properties are used throughout our paper. 

To produce Stallings-like graphs, Kharlampovich--Miasnikov--Weil begin with a construction similar to ours and many others: they start with a rose graph, and alternately fold and ``add relations.''
For each folded graph $\Lambda$ in the resultant sequence, they use a separate algorithm, which involves constructing a new automaton associated to $\Lambda$, 
to check if the graph obtained is Stallings-like (and stopping if it is).
In contrast, our algorithm only involves checking, at each stage, whether the complex is folded and cube-full (and stopping if it is).  Consequently, it is a more practical algorithm for doing explicit computations in RACGs.

The name ``completion'' is inspired by the terminology in~\cite{KMW}, which talks of a ``completion process'' for constructing Stallings-like graphs.  We caution the reader that our completions are not the same as the canonical completions considered in~\cite{Haglund-Wise-special}.

We mention 
 some 
other works which use generalizations of Stallings' methods to study algorithmic and structural properties of subgroups, referring to~\cite{KMW}  for a more detailed summary. 

Arzhantseva--Olshanskii use Stallings-like graphs to study groups that are generic in a certain statistical sense \cite{Arzhantseva-Olshanskii}. 
They define a series of operations, now known as AO-moves, that, when applied to a Stallings-like graph, preserve the subgroup associated to it.
	These techniques have been generalized to the free product setting in \cite{Steenbock} and \cite{Gruber-thesis}, and the latter uses a notion conceptually similar to our completions.
	In a series of articles, Arzhantseva and Arzhantseva--Cherix utilize Stallings-like graphs to prove that several results hold generically in finitely presented groups  \cite{Arzhantseva-thesis, Arzhantseva-fund, Arzhantseva-comm,Arzhantseva-Cherix}. Notably, Stallings-like graphs are first used to show quasiconvexity of subgroups in \cite{Arzhantseva-comm}.

McCammond--Wise use a generalization of Stallings' construction to study local quasiconvexity and coherence in groups satisfying certain small cancellation conditions~\cite{McW}. 
Schupp uses Stallings graphs to show that certain 
 extra-large type Coxeter groups 
(which don't include RACGs) are locally quasiconvex and to answer algorithmic questions \cite{Schupp}.

Beeker--Lazarovich define a version of Stallings folds for CAT(0) cube complexes and 
use  it 
to give a characterization of finitely presented  quasiconvex  subgroups of hyperbolic groups that act geometrically on CAT(0) cube complexes~\cite{BL}.
This generalizes work of Brown on hyperbolic VH square complexes~\cite{SamuelBrown}.
The Beeker--Lazarovich characterization is different in nature to ours; it is in terms of finiteness and quasiconvexity properties of stabilizers of hyperplanes, conditions which are not necessarily easier to check algorithmically.  
Despite also using folds, their procedure follows a very different outline. 
Given a hyperbolic group $H$ acting geometrically on a CAT(0) cube complex $X$, and a finitely presented subgroup $G$, they start with an equivariant map $K\to X$, where $K$ is 
the universal cover of the presentation complex of $G$. They then apply 
apply a ``resolution process'' (generalizing work of Dunwoody) which uses the pullback in $K$ of the wall structure of $X$ to produce an
unbounded CAT(0)
 cube complex $Y$. Next, 
 	folds are $G$-equivariantly
  applied to $Y$ 
to obtain the cube complex associated to~$H$.  
In contrast, our completion process is algorithmic in nature (our starting point is an explicit generating set), and the results we obtain do not require any hyperbolicity and finite-presentation hypotheses.
Indeed, many RACGs are not even relatively hyperbolic.  
 On the other hand, we strongly rely on  combinatorial properties of RACGs 
 which do not hold in the generality of cubulated groups.

Other authors have studied ``cores'' associated to quasiconvex subgroups.  For instance,
Sageev--Wise show that relatively quasiconvex subgroups of relatively hyperbolic groups 
acting 
on a finite-dimensional, locally finite CAT(0) cube complex admit a convex core \cite{SW}. This generalizes ideas of Haglund \cite{Haglund}, who shows that a quasiconvex  subgroup $G$ of a RACG acts cocompactly on the combinatorial convex hull $\Sigma(G)$ of $G$ in the Davis complex. Indeed  the quotient  $\Sigma(G)/G$  is very close to being a completion (one has to take care, as 
cubes may be folded along midcubes under this quotient).
We note that characterizations of quasiconvexity using this approach cannot immediately be used to answer algorithmic questions.

In the spirit of Theorem~\ref{intro_thm_algorithm_finite_index}, Kim--Koberda find conditions for a RAAG to be realized as a (not necessarily finite-index) subgroup of another in terms of properties of the associated extension and clique graphs~\cite{KK}.  
Using their work, Casals-Ruiz 
 gives an algorithm which determines if a 2-dimensional RAAG is isomorphic to a subgroup of another RAAG~\cite{casals-ruiz}.

\subsection*{Acknowledgements}
	The authors would like to thank Jason Behrstock, Anthony Genevois, Ilya Kapovich, Sang-hyun Kim, Thomas Koberda, Ignat Soroko, and the referee for comments.  The second author would like to thank Nir Lazarovich and Michah Sageev for helpful conversations.

\section{Preliminaries} \label{sec_prelim}

Given a graph $\G$, we denote  the vertex and edge sets of $\G$ by $V(\G)$ and $E(\G)$ respectively. 

\subsection{Right-angled Coxeter groups}
We summarize some well-known facts regarding right-angled Coxeter group (RACGs) which are needed throughout this article. We refer the reader to \cite{Dani} for a survey on RACGs and to \cite{Davis} and \cite{BB} as references on Coxeter groups.

Let $\G$ be a simplicial graph with finite vertex set $S = V(\G)$ and edge set $E = E(\G)$. The \textit{RACG} $W_{\G}$ associated to $\G$ is the group given by the presentation:
	\[W_{\Gamma} = \langle S ~| ~ s^2 = 1 \text{ for } s \in S,~ st = ts \text{ for } (s, t) \in E  \rangle \]
We say that $S$ is a \textit{standard Coxeter generating set} for $W_{\G}$. Given $s, t \in V(\Gamma)$, we write $m(s,t) = 1$  if $s=t$, $m(s,t) = 2$ if $s$ is adjacent to $t$ and $m(s,t) = \infty$ otherwise.

We refer to the elements of $S$ as \textit{letters}. A word $w$ in $W_{\G}$ is a 
(possibly empty)
sequence of letters in $S$. 
Let $w = s_1 \dots s_n$ be a word in $W_\G$, where $s_i \in S$ for $1 \le i \le n$. We let $|w| = n$ denote the \textit{length} of $w$. If $w'$ is another word in $W_\G$ such that $w$ and $w'$ are equal as elements of $W_\G$, then we say that $w'$ is an \textit{expression} for $w$. We say that $w$ is \textit{reduced} if $|w| \le |w'|$ for any expression $w'$ for $w$. Finally, we 
define the support of $w$, denoted by $\text{Support}(w)$, to be  the set of vertices of $\Gamma$ which appear as a letter in $w$. 

Given a graph $\G$ and a subset $V'$ of $V(\G)$, the graph $\Delta$ \textit{induced} by $V'$ is the graph which has vertex set $V'$ and an edge between two vertices of $V'$ if and only if there is an edge between them in $\G$. We also say that $\Delta$ is an induced subgraph of $\G$.

Throughout this article, given any simplicial graph $\G$, we will always denote by $W_\G$ the corresponding RACG. If $\Delta$ is an induced subgraph of a graph $\Gamma$, then $W_\Delta$ is naturally isomorphic to the subgroup of $W_\Gamma$ generated by the generators corresponding to vertices of $\Delta$ (see for instance \cite{Davis}). 
Such a subgroup of $W_\G$ is called a \textit{special subgroup}.

	Given a vertex $v$ of $\G$, the \textit{link of $v$}, denoted by $\text{link}(v)$, is the set of all vertices of~$\G$ which are adjacent to $v$. The \textit{star of $v$}, denoted by $\text{star}(v)$, is the 
set $\text{link}(v) \cup \{v\}$. 
We will often consider the special subgroup of $W_{\G}$ generated by the link or star of a vertex.

Recall that a graph is a \textit{clique} if any pair of distinct vertices of the graph are adjacent. 
A RACG $W_\G$ is finite if and only if $\G$ is a clique.
Furthermore, $W_\Gamma$ is one-ended if and only if $\Gamma$ is connected and does not contain a clique which separates $\Gamma$ \cite{MT}. 
We say a graph is \textit{triangle-free} if it does not contain a subgraph that is a clique with three vertices, i.e.~a triangle.  If $\G$ is triangle-free, we say that the RACG $W_\G$ is \textit{$2$-dimensional}.

A graph $\G$ decomposes as a \textit{join graph $\G = \G_1 \star \G_2$} if there are induced subgraphs $\G_1$ and $\G_2$ of $\G$ such that $V(\G) = V(\G_1) \cup V(\G_2)$
and  $v_1$ and $v_2$ are adjacent in $\G$ for all $v_1 \in V(\G_1)$ and  $v_2 \in V( \G_2)$.
The graph $\G$ decomposes as a join $\G = \G_1 \star \G_2$ if and only if $W_\G = W_{\G_1} \times W_{\G_2}$.

We say that a subgroup of a Coxeter group is a \textit{Coxeter subgroup} if it is isomorphic to some finitely generated Coxeter group. 
In our setting, it follows from \cite[Theorem~12.3.4]{Davis} and \cite[Proposition 1.1.1]{BB} that Coxeter subgroups are themselves RACGs:
\begin{proposition} \label{prop_coxeter_subgroups_are_racgs}
	Let $G$ be a Coxeter subgroup of a RACG, then $G$ is a RACG. 
	\qed
\end{proposition}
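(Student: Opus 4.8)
The plan is to reduce the statement to the observation that finite subgroups of a RACG are extremely restricted, and then to read the defining graph off the Coxeter matrix. Since $G$ is a Coxeter subgroup, fix an isomorphism $\varphi\colon W \to G$, where $W$ is a finitely generated Coxeter group with Coxeter generating set $S$ and Coxeter matrix $m\colon S\times S\to\{1,2,3,\dots\}\cup\{\infty\}$. It suffices to prove that $m(s,t)\in\{2,\infty\}$ for all distinct $s,t\in S$: granting this, $W$ is by definition the RACG $W_{\Gamma'}$, where $\Gamma'$ is the graph with vertex set $S$ and an edge $\{s,t\}$ precisely when $m(s,t)=2$. As $S$ is finite, $\Gamma'$ is a finite simplicial graph, so $G\cong W_{\Gamma'}$ is a RACG.

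The key step is the following. Fix distinct $s,t\in S$ with $m(s,t)<\infty$. By \cite[Proposition~1.1.1]{BB} the standard rank-two parabolic subgroup $\langle s,t\rangle\le W$ is dihedral of order $2m(s,t)$, hence \emph{finite}; therefore $\varphi\big(\langle s,t\rangle\big)$ is a finite subgroup of the ambient RACG $W_\Gamma$. By \cite[Theorem~12.3.4]{Davis}, every finite subgroup of a Coxeter group --- in particular of $W_\Gamma$ --- is conjugate into a finite special subgroup. In a RACG the finite special subgroups are exactly the $W_K$ with $K$ a clique of $\Gamma$, and each such $W_K$ is isomorphic to $(\mathbb{Z}/2\mathbb{Z})^{|V(K)|}$ (apply the join-decomposition fact recorded above iteratively to the vertices of $K$). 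Hence $\varphi\big(\langle s,t\rangle\big)$ is a subgroup of an elementary abelian $2$-group, so it is itself abelian of exponent $2$. A dihedral group of order $2m(s,t)$ has this property only when $2m(s,t)\le 4$, i.e.\ $m(s,t)\le 2$; since $s\neq t$ rules out $m(s,t)=1$, we conclude $m(s,t)=2$, as required.

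I do not anticipate a genuine obstacle: the argument is essentially bookkeeping built on top of the two cited facts. The points that merit a little care are verifying that every finite special subgroup of a RACG is an elementary abelian $2$-group (equivalently, that a finite RACG is a direct product of copies of $\mathbb{Z}/2\mathbb{Z}$) and that any subgroup of such a group again has exponent $2$ --- both routine. It is also worth flagging explicitly where the finiteness built into the definition of ``Coxeter subgroup'' enters, namely to guarantee that the graph $\Gamma'$ constructed above has finitely many vertices, so that $W_{\Gamma'}$ is a RACG in the sense used throughout the paper.
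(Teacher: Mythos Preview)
Your argument is correct and is essentially the same approach as the paper's: the paper simply records that the proposition follows from \cite[Theorem~12.3.4]{Davis} and \cite[Proposition~1.1.1]{BB}, and you have spelled out precisely how those two ingredients combine --- finite dihedral parabolics must embed in elementary abelian $2$-groups, forcing $m(s,t)\in\{2,\infty\}$.
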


We will need the following definition in Section~\ref{sec_algorithm_for_2d_subgroups}.

\begin{definition}\label{def_doubling}
Given a graph $\G$ and $s \in V(\G)$, let $\Lambda$ be the subgraph of $\G$ induced by $V(\G) \setminus \{s\}$. We define $D(\Gamma, s)$ to be the graph consisting of the union of two copies 
of $\Lambda$ which are identified along the subgraph of $\Lambda$ 
induced by $\text{link}(s)$.
 
 It is well-known that $W_{D(\Gamma, s)}$ is isomorphic to the kernel $K$ of the homomorphism 
$\phi_s: W_\G \to \mathbb{Z}_2 $ 
defined by 
$\phi_s(s) = 1$ and $\phi_s(t) = 0$
 for all $t \in V(\Gamma) \setminus \{s\}$. 
To see this, first observe that $K$ is generated by  
$T=\{t,  sts \mid t \in V(\G) \setminus \{s\} \}$. Since $T$ consists of reflections, a result proved independently by Dyer~\cite{Dyer} and Deodhar~\cite{Deodhar} implies that $K$ is a Coxeter group, and $K$ is therefore a RACG by Proposition \ref{prop_coxeter_subgroups_are_racgs}.  By the criterion in the first paragraph of~\cite{Dyer}, the set $T'$ is a Coxeter generating set, where $T'$ consists of all reduced reflections $r$ in $K$ with the property that for every reflection $r' \neq r$ in $K$, the word $r'r$ cannot be represented by a word of length less than $|r|$.  It is easily seen from Tits' solution to the word problem that $T \subset T'$.  Since both these sets generate $K$ and as no subset of $T'$ can generate $K$, it follows that $T=T'$, and consequently, that $K$ is isomorphic to $W_{D(\G,s)}$. 

\end{definition}

\subsection{The word problem in RACGs} \label{subsec_word_prob}
We discuss Tits' solution to the word problem  and the deletion property in RACGs. We  refer the reader to \cite{Davis} or \cite{BB} for more details.

Let $w = s_1 \dots s_n$ be a word in the RACG $W_\G$. Suppose that $m(s_i, s_{i+1}) = 2$ for some $1 \le i \le n$. Then we may ``swap'' the letters $s_i$ and $s_{i+1}$ to obtain another expression $w' = s_1 \dots s_{i-1}s_{i+1}s_i s_{i+2} \dots s_n$ for $w$. We say that $w'$ is obtained from $w$ by a \textit{swap move} or by \textit{swapping} $s_i$ and $s_{i+1}$. On the other hand, if $s_i = s_{i+1}$ (as vertices of $\G$) 
for some  $1 \le i \le n$, then we can obtain an expression $w' = s_1 \dots s_{i-1}s_{i+2}\dots s_n$ for $w$ by \textit{cancelling} $s_i$ and $s_{i+1}$.

Let $w$ be a word in $W_\G$, and let $w'$ be a reduced expression for $w$.  There exists a sequence of words $w = w_1, \dots, w_m = w'$ such that $w_{i+1}$ is obtained from $w_i$ by either a swap move or a cancellation. This is known as Tits' solution to the word problem. We call such a sequence of expressions for $w$ \textit{a sequence of Tits moves}.

Given a word  $w = s_1 \dots s_n$ in $W_{\G}$, suppose
 $s_i = s_{i'} = s$ for some $1 \le i < i' \le n$ and that $m(s, s_j) = 2$ for all $i < j < i'$. It follows that $w' = s_1 \dots s_{i-1}s_{i+1} \dots s_{i'-1}s_{i'+1} \dots s_n$ is an expression for $w$. We say that $w'$ is obtained from $w$ by a \textit{deletion} (as two occurrences of $s$ have been deleted). 
We remark that this definition is slightly stronger than the classical definition of a deletion in  general Coxeter groups.  
The  \emph{deletion property} states that if 
if $w$ is a word in $W_\G$ such that $w$ is not reduced, a deletion can be applied to $w$.  
The proof of this fact is similar to that of the corresponding statement for the standard definition of a deletion~\cite{Bahls}.
The deletion property guarantees that a reduced expression for $w$ can be obtained by performing  a sequence of deletions.  

The following lemma, which is required later, easily follows from the deletion property.
\begin{lemma} \label{lemma_reduced_expression}
	Let $h$ and $k$ be reduced words in a RACG $W_\G$. Then there is a reduced expression $\hat{h} \hat{k}$ for the word $hk$ such that $\hat{h}s_1\dots s_m$ is a reduced expression for $h$ and $s_m \dots s_1 \hat{k}$ is a reduced expression for $k$ where $s_i \in V(\G)$ for $1 \le i \le m$.
\end{lemma}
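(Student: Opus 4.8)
The plan is to induct on the number of deletions needed to reduce the (possibly non-reduced) word $hk$ to a reduced expression, using the deletion property. The base case is when $hk$ is already reduced: then we take $\hat h = h$, $\hat k = k$, and $m = 0$, so the claimed reduced expression is just the concatenation $hk$, and trivially $\hat h$ is a reduced expression for $h$ and $\hat k$ is a reduced expression for $k$.

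For the inductive step, suppose $hk$ is not reduced. By the deletion property, there is a deletion applicable to the word $hk = s_1 \dots s_p$ (where $h = s_1 \dots s_\ell$ and $k = s_{\ell+1} \dots s_p$): there are indices $i < j$ with $s_i = s_j = s$ and $m(s, s_t) = 2$ for all $i < t < j$. First I would argue that, because $h$ and $k$ are each individually reduced, the deleted pair cannot lie entirely within the $h$-block or entirely within the $k$-block — otherwise a deletion (hence a length reduction) would be available inside $h$ or inside $k$, contradicting that they are reduced. Hence $i \le \ell < j$, i.e. one occurrence of $s$ is in $h$ and the other is in $k$. Now the condition $m(s, s_t) = 2$ for $i < t < j$ says that $s$ commutes with every letter of $h$ lying strictly after position $i$, and with every letter of $k$ lying strictly before position $j$. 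Therefore I can swap the occurrence of $s$ at position $i$ rightward through the rest of $h$ to obtain a reduced expression $h = h' s$ for $h$ (with $h'$ of length $\ell - 1$), and swap the occurrence of $s$ at position $j$ leftward through the initial part of $k$ to obtain a reduced expression $k = s k'$ for $k$ (with $k'$ of length $p - \ell - 1$). Performing the deletion now cancels the adjacent pair $s \cdot s$, leaving the word $h' k'$, which is an expression for $hk$ of length $p - 2$, equal to the original $hk$ as a group element.

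Next I would apply the induction hypothesis to the pair of reduced words $h'$ and $k'$: note $h' k'$ requires strictly fewer deletions to reduce than $hk$ did. The hypothesis yields a reduced expression $\hat h \, \hat k$ for $h' k'$ such that $\hat h\, s_1' \dots s_q'$ is a reduced expression for $h'$ and $s_q' \dots s_1'\, \hat k$ is a reduced expression for $k'$, for some $s_1', \dots, s_q' \in V(\G)$. Then $\hat h \, s_1' \dots s_q'\, s$ is a reduced expression for $h' s = h$, and $s\, s_q' \dots s_1'\, \hat k$ is a reduced expression for $s k' = k$ (each is reduced since it is obtained from a reduced expression by appending the one extra letter needed to reach the known lengths $\ell$ and $p - \ell$; alternatively, reducedness of $\hat h s_1' \dots s_q'$ as an expression for $h$ follows because $h$ is reduced and this word has length $|h|$). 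Setting $m = q + 1$ and taking the letter list $s_1' , \dots, s_q', s$ in the appropriate order, we see that $\hat h \, \hat k$ is the desired reduced expression for $hk$: it is reduced (it is an expression for $hk$ of length $p - 2$, which is the reduced length since no further deletions were needed after the induction), $\hat h s_1 \dots s_m$ is a reduced expression for $h$, and $s_m \dots s_1 \hat k$ is a reduced expression for $k$.

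The main obstacle I anticipate is the bookkeeping in the inductive step: verifying carefully that the deleted pair straddles the $h$–$k$ boundary (this is where "both $h$ and $k$ are reduced" is essential), and correctly tracking how the commuting letter $s$ is reabsorbed into the reduced expressions for $h$ and $k$ so that the index sequence $s_1, \dots, s_m$ in the statement is produced in the right order and each of the two words $\hat h s_1 \dots s_m$ and $s_m \dots s_1 \hat k$ is genuinely reduced. Everything else is a routine application of swap and cancellation moves together with the deletion property.
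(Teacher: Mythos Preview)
Your proposal is correct and is exactly the argument the paper has in mind: the paper omits the proof entirely, saying only that the lemma ``easily follows from the deletion property,'' and your induction on the number of deletions---observing that each deletion must straddle the $h$/$k$ boundary since $h$ and $k$ are individually reduced---is the natural way to fill this in. One minor slip: in your final parenthetical you assert $\hat h\hat k$ has length $p-2$, which need not hold if $h'k'$ itself required further deletions, but this is harmless since reducedness of $\hat h\hat k$ is already given by the induction hypothesis.
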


\subsection{Cube complexes}
A \textit{cube complex} is a cell complex whose cells are Euclidean unit cubes, $[-\frac{1}{2}, \frac{1}{2}]^n$, of varying dimension. We refer the reader to \cite{Caprace-Sageev} and to \cite{Wise-riches} for a background on cube complexes.  

Let $\G$ be a simplicial graph. A cube complex is \emph{$\G$-labeled} if every edge in its 
$1$-skeleton is labeled by a vertex of $\Gamma$. 
In this article, the labels of a $\G$-labeled cube complex will be the generators of the RACG $W_\G$. 
Given a simplicial path $\alpha$ in the $1$-skeleton of a $\G$-labeled complex, the \textit{label of $\alpha$} is the word formed by the sequence of labels of consecutive edges in~$\alpha$.

We say 
that a cube complex $\Omega$ is \textit{non-positively curved} if the 
(simplicial) link of each vertex in $\Omega$ is a flag 
complex. 
(Recall that a simplicial complex is called a \emph{flag} complex if 
	any finite clique in its $1$-skeleton
spans a simplex.)
If $\Omega$ is both non-positively curved and simply connected then 
$\Omega$ is a CAT(0) cube complex. 

A \textit{path} in the cube complex $\Omega$ is a simplicial path in its $1$-skeleton. Given a path $p$, we denote by $|p|$ the number of edges in $p$. Given two paths, $p$ and $p'$, such that the endpoint of $p$ is equal to the startpoint of $p'$, we let $pp'$ denote their concatenation. A \textit{loop} in a complex is defined to be a closed path (possibly with backtracking). We define a \textit{graph-loop} to be an edge in a complex that connects a vertex to itself. 

We will work with the combinatorial path metric on cube complexes. Namely, given two vertices of $\Omega$, we define their distance to be the length of a shortest path in $\Omega$ between them.

A \textit{midcube} of a cube $c = [-\frac{1}{2}, \frac{1}{2}]^n$ is the restriction of one of the coordinates of $c$ to $0$. 
A \textit{hyperplane} $H$ in $\Omega$ is a maximal collection of midcubes in $\Omega$, such that for any two midcubes $m$ and $m'$ in $H$, it follows there is a sequence of midcubes $m = m_1, \dots, m_n = m'$ in $H$ such that $m_i \cap m_{i+1}$ is a midcube in $\Omega$ for all $1 \le i < n$. 
The \textit{carrier} of a hyperplane $H$, denoted by $N(H)$, is the set of all cubes which have non-empty intersection with $H$. If $H$ intersects an edge $e$, then we say that $e$ is dual to $H$. 

Let $\Omega$ be a CAT(0) cube complex. 
A path $\gamma$ in $\Omega$ is geodesic if and only if every hyperplane is dual to at most one edge of $\gamma$. Thus, if $\gamma$ is geodesic then $|\gamma|$ is equal to the number of hyperplanes intersecting~$\gamma$. 
Given a hyperplane $H$ in $\Omega$, $\Omega \setminus H$ contains exactly two components, and the carrier $N(H)$ is convex in the combinatorial path metric.
We refer the reader to \cite[Chapters 3.2 and 3.3]{Wise-riches} for proofs of these well-known facts. 
\subsection{Disk diagrams in cube complexes}
We now recall some basic facts about disk diagrams, and 
refer to \cite{Wise} and \cite{Wise-riches} for further details.  

In the setting of cube complexes, a \textit{disk diagram} $D$ is a contractible, finite, $2$-dimensional cube complex (i.e.,~a square complex) equipped with a planar embedding $\Psi : D \to \mathbb{R}^2$. The map $\Psi$ gives a natural cellulation of the $2$-sphere $S^2 = \mathbb{R}^2 \cup \infty$. We call the path traced  
by an attaching map of the cell containing $\infty$ in this cellulation the \textit{boundary of $D$} and denote it by $\partial D$.

Given a cube complex $\Omega$, a \textit{disk diagram 
over
 $\Omega$} is a disk diagram $D$ which admits a map  $\Phi: D \to \Omega$ mapping $n$-cubes isometrically onto $n$-cubes (i.e.,~a combinatorial map). As the edges of the cube complexes we consider in this  article will 
be labeled, we accordingly further require the edges of $D$ to be labeled and the map from $D$ to $\Omega$ to respect this labeling. 

Given 
a closed null-homotopic loop $p: \mathbb{S}^1 \to \Omega$, the van Kampen Lemma says that  there exists a disk diagram in $\Omega$ given by some 
$\Phi: D \to \Omega$, and an identification of $\partial D$ with $\mathbb{S}^1$ such that $\Phi$ restricted to $\partial D$ is equal, as a map, to $p$ (see for instance \cite[Lemma 3.1]{Wise-riches}).

Given a disk diagram $D$ in $\Omega$ and an edge $e$ of $D$, \textit{ 
the 
dual curve 
intersecting $e$} is
 the 
 hyperplane in $D$ dual to $e$.
As $D$ is planar, a dual curve in $D$ can intersect at most two edges along $\partial D$.  
	
Let $W_\G$ be a RACG. 
The Cayley graph of $W_\Gamma$ (with the usual presentation) is the $1$-skeleton of a CAT(0) cube complex (the \textit{Davis complex}) whose $2$-cells correspond to the commuting relations between generators (see for instance \cite{Davis}). 
In this setting, a $\G$-labeled disk diagram over the Davis complex of $W_\Gamma$
has the property that opposite sides of squares must have the same label.
In particular, we can define the \textit{type} of a dual curve to be the label of an edge (equivalently, all edges) 
intersecting the dual curve. Furthermore, if two dual curves intersect, then their types are adjacent vertices of $\G$.
These observations are used in the following technical lemma which is useful in later sections.  
Note that any word considered in the lemma below could be the empty word.

\begin{lemma} \label{lemma_disk_diagram_subwords}
	Let $w$ and $z$ be words that are equal as elements of a RACG $W_{\Gamma}$. Suppose that $w = w'w''$ and $z = z'z''$, where $w'$, $w''$, $z'$ and $z''$ are words in $W_\Gamma$, and suppose that $z'$ is reduced. Let $D$ be a disk diagram 
whose boundary label, starting from a base vertex $v$, is $wz^{-1}$. 
Let $\alpha_{w'}$ and $\alpha_{z'}$  be the oriented paths starting at $v$ in 
$\partial D$
whose labels, read in the direction of the orientation, are $w'$ and  $z'$ respectively. 
	Suppose further, that every dual curve intersecting $\alpha_{w'}$ also intersects $\alpha_{z'}$. Then $z'$ has a reduced expression $w'x$, where $x$ is some word in $W_{\Gamma}$. 
\end{lemma}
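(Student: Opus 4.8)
The plan is to argue by induction on $|w'|$. The base case, where $w'$ is the empty word, is immediate, taking $x := z'$. Throughout the inductive step I would use the following standard facts about $\G$-labeled disk diagrams over the Davis complex of $W_\G$: a dual curve meets $\partial D$ in at most two edges (planarity); two edges dual to a common dual curve carry the same label (``type''); if two dual curves cross, their types are adjacent vertices of $\G$; and, since $z'$ is reduced, no dual curve is dual to two edges of $\alpha_{z'}$. I would also reduce $D$ at the outset, noting that reduction preserves $\partial D$ and preserves the hypothesis (a dual curve of the reduced diagram meeting $\alpha_{w'}$ inherits, from the original diagram, an edge of $\alpha_{z'}$ to which it is dual).

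For the inductive step, write $w' = s\,w_1'$ with $s$ the first letter, let $e$ be the first edge of $\alpha_{w'}$, and let $C$ be the dual curve through $e$. The facts above together with the hypothesis force $C$ to meet $\alpha_{w'}$ only in $e$ and to meet $\alpha_{z'}$ in exactly one edge, say the $m$-th edge $f_m$ of $\alpha_{z'} = f_1\cdots f_q$, whose label is that of $e$, namely $s$. Writing $\ell_i$ for the label of $f_i$ (so $z' = \ell_1\cdots\ell_q$), the next step is to show that $\ell_1,\dots,\ell_{m-1}$ all commute with $s$. If some $\ell_j$ with $j < m$ does not, I would let $C'$ be the dual curve through $f_j$; then $C' \neq C$, and since $z'$ is reduced the second boundary edge met by $C'$ lies off $\alpha_{z'}$. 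Fixing the cyclic order on $\partial D$ of the four edges $e$, this second edge, $f_m$, and $f_j$ --- using that $e$ is the first edge leaving the basepoint along the $w$-portion while $f_j$ precedes $f_m$ along $\alpha_{z'}$, hence $f_m$ precedes $f_j$ in the reversed traversal of the $z$-portion --- one checks that the chords $\{e, f_m\}$ and $\{\text{that edge}, f_j\}$ interleave, so $C$ and $C'$ cross, making $\ell_j$ adjacent to $s$, a contradiction. Swapping $\ell_m = s$ to the front then exhibits a reduced expression $s\,x_1$ for $z'$, where $x_1 := \ell_1\cdots\ell_{m-1}\ell_{m+1}\cdots\ell_q$.

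Next I would surger $D$ along $C$: removing the carrier $N(C)$ and regluing its two sides produces a reduced disk diagram $D_1$ (suitably based) with boundary label $w_1'\,w''\,(z'')^{-1}x_1^{-1}$, in which $\alpha_{w'}$ has lost its first edge to become $\alpha_{w_1'}$ and $\alpha_{z'}$ has lost $f_m$ to become a path $\alpha_{x_1}$ of label $x_1$. The dual curves of $D_1$ are precisely those of $D$ other than $C$, with unchanged crossings and unchanged incidences with the surviving boundary edges. Since only $C$ meets $f_m$, every dual curve of $D_1$ meeting $\alpha_{w_1'} \subset \alpha_{w'}$ meets $\alpha_{z'}$ in an edge other than $f_m$, i.e.\ meets $\alpha_{x_1}$. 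As $x_1$ is reduced, the inductive hypothesis applies to $D_1$ and yields a reduced expression $w_1'\,x'$ for $x_1$. Then $s\,w_1'\,x' = w'\,x'$ is an expression for $z'$ of length $|z'|$, hence reduced, and $x := x'$ completes the induction.

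I expect the surgery step to be the main obstacle: it requires carefully describing the carrier-removal operation, identifying what the basepoint becomes, checking that $D_1$ is again a reduced $\G$-labeled disk diagram over the Davis complex with the claimed boundary word, and verifying that dual curves and their boundary incidences transfer as asserted (in particular that removing $N(C)$ does not create new boundary incidences). The interleaving argument in the middle step is the other point needing care, though it reduces to a routine chord-alternation check once the cyclic order of the relevant boundary edges is pinned down; one should also dispose of the harmless degenerate case in which $\alpha_{w'}$ and $\alpha_{z'}$ share edges.
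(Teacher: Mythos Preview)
Your argument is correct, and it rests on the same geometric observation as the paper's proof: the dual curve through the leading edge of $\alpha_{w'}$ lands on some edge $f_m$ of $\alpha_{z'}$, and any dual curve through an earlier edge $f_j$ of $\alpha_{z'}$ is forced by an interleaving/separation argument to cross it, so the intervening labels commute with $s$.  The paper phrases this as ``any $H$ intersecting $\beta$ must intersect $H_c$,'' which is exactly your chord-alternation check.

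Where you diverge is in the iteration mechanism.  You induct on $|w'|$ by \emph{excising} the carrier $N(C)$ and recursing on a smaller diagram.  The paper instead keeps $w'$ fixed and \emph{attaches} cells: once it is known that $s$ commutes with the label $b$ of the sub-arc $\beta$ of $\alpha_{z'}$ preceding the landing edge, one glues a $|b|\times 1$ rectangle with boundary word $bsb^{-1}s$ along $\beta\cup e$, obtaining a new diagram $D'$ in which the swap $bs\mapsto sb$ has been performed on $z'$.  Iterating aligns successively longer prefixes of $w'$ and $z'$.

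The paper's route is technically lighter.  Attaching a labelled rectangle is an unambiguous cellular operation with no side conditions, whereas your surgery requires checking that $N(C)$ is an embedded $[0,1]$-bundle over $C$ so that ``remove and reglue'' really yields a planar disk diagram with the stated boundary word and the stated bijection between surviving dual curves and those of $D\setminus\{C\}$.  This is standard and true, but not free: one must rule out, for instance, non-adjacent squares of the carrier sharing a vertex, and handle the bookkeeping of the basepoint when $m=1$.  You flag this as the main obstacle, and it is.  (Your preliminary reduction of $D$ is in fact unnecessary for the crossing argument: since $C$ meets $\partial D$ at $e$, it is automatically an arc with exactly two boundary edges, and likewise for each $C'$.)
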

\begin{proof}	
	Let $H_1, \dots, H_k$, with $k = |\alpha_{w'}|$, be the dual curves intersecting $\alpha_{w'}$ ordered by the orientation of $\alpha_{w'}$ (i.e. for $i < j$, $H_i \cap \alpha_{w'}$ occurs before $H_j \cap \alpha_{w'}$ along this orientation). 
This order is well-defined, since each $H_i$ necessarily also intersects $\alpha_{z'}$ by hypothesis, and therefore intersects $\alpha_{w'}$ exactly once. 
	
	Let $e_1, \dots, e_k$ be the first $k$ edges of $\alpha_{z'}$ ordered by its orientation. If $H_i$ intersects $e_i$ for each $1 \le i \le k$, then we are done. For then $w'$ is precisely the initial subword of $z'$ of length~$k$.
	Otherwise, let $c$ be the smallest integer such that $H_c$ does not intersect $e_c$.
	In particular, the initial subword of $w'$ of length $c-1$ is equal to the initial subword of $z'$ of length $c-1$.
	
	We claim that we can construct a new disk diagram $D'$ with boundary label $w'w''(yz'')^{-1}$, such that $y$ is a reduced word equal in $W_\G$ to $z'$, every dual curve intersecting the part of 
	$\partial D'$ 
	with label $w'$ also intersects the part of 
		$\partial D'$ labeled $y$, 
	and the initial subword of $y$ of length~$c$ is equal to the initial subword of $w'$ of length $c$. The lemma then follows by iteratively applying this claim.

	Orient the edges of $\alpha_{z'}$ with the induced orientation from $\alpha_{z'}$.  	
		Let $e$ be the edge of $\alpha_{z'}$ that intersects $H_c$. 
		Define $\beta$ to be the subpath of $\alpha_{z'}$ from $v$ to the initial vertex of $e$ if $c=1$ and from the terminal vertex of $e_{c-1}$ to the initial vertex of $e$ otherwise.  If $H$ is a dual curve intersecting $\beta$, then $H\neq H_i$ for $1 \le i \le c$ by construction, and $H$ cannot intersect $\alpha_{z'}$ twice, since $z'$ is reduced.  Thus, any such $H$ must intersect $H_c$.  It follows that the label $s$ of $e$ commutes with the label $b$ of $\beta$.  
		We may now attach a $|b| \times 1$ rectangle with label $bsb^{-1}s$ to the 
			$\partial D$ 
by identifying the sides labeled $b$ and $s$ of the rectangle with $\beta$ and $e$ respectively, to get a new diagram $D'$. Define $y$ to be the word obtained from $z'$ by replacing the subword labeled $bs$ by $sb$.  It is clear by construction that $D'$ has the properties in the claim. 
\end{proof}

\section{A complex for subgroups of a RACG} \label{sec_completion}

The main goal of this section is to define a completion of a subgroup of a RACG, and to construct completions for finitely generated subgroups.  We begin by defining 
 a completion of a $\G$-labeled complex as the direct limit of a certain sequence of $\G$-labeled complexes.   We then show that there is a natural labeled graph associated to any 
finite generating set, such that a completion of this graph is also a completion of the group generated by the set.

\subsection{Completion of a complex}
\label{subsec:completion}
Let $\Gamma$ denote a simplicial graph. In this paper, we only consider $\G$-labeled cube complexes whose labeled cubes have two additional properties. 
Firstly, any pair of edges dual to a  common
mid-cube have the same label. As a result, hyperplanes in the cube complex have a well-defined label.  
Secondly, given a cube in such a complex and a set of edges of this cube which are all incident to  a common vertex, no two edges in this set have the same label, and the full subgraph of $\G$ induced by the vertices of $\G$ corresponding to the labels of the edges in this set is a clique.
 When we mention a $\G$-labeled cube complex, it will be implicit that the labeling has these additional properties. 

Let $C$ be a $\G$-labeled cube complex.  We describe three operations that can be applied to $C$  to produce a new $\G$-labeled cube complex. 

\subsection*{Fold operation:} 
 A \textit{fold operation} corresponds to collapsing a pair of adjacent edges with the same label into a single edge.  More precisely, for $i=1,2$, let $e_i$ be an edge in $C$ with endpoints $v$ and $v_i$, where $e_1 \neq e_2$, but two or more of the vertices $v, v_1, v_2$ could be equal.  
Furthermore, suppose that $e_1$ and $e_2$ have the same label.  
 Temporarily orient the edge $e_i$ from $v$ to $v_i$ (choosing the orientation arbitrarily if $v=v_i$, i.e.~if $e_i$ is a graph-loop).  Then the fold operation consists of forming a quotient of $C$ by identifying $e_1$ and $e_2$ so that their orientations agree, and then forgetting the orientation. 
 
 We remark that although the fold map corresponding to $e_1$ and $e_2$ is not unique when one of these edges is a graph-loop, this does not affect any of our applications.

\subsection*{Cube identification operation:}
This is a higher dimensional analogue of a fold operation. 
Consider a collection of two or more distinct $i$-cubes in $C$, with $i \ge 2$, whose boundaries are equal. A cube identification operation consists of forming the quotient of $C$ in which all of the $i$-cubes in the collection have been identified to a single cube.  Note that the $1$-skeleton does not change in this process.

\subsection*{Cube attachment operation:} Consider an $i$-tuple $e_1, \dots, e_i$ of edges in $C$, with labels $s_1, \dots, s_i$, which are all incident to a single vertex $v$.  Suppose furthermore, that the vertices corresponding to 
$s_1, \dots, s_i$ in $\G$ form an $i$-clique.
A cube attachment operation consists of adding an $i$-cube $c$ to $C$ by identifying the edges 
$e_1, \dots, e_i$ to $i$ edges in $c$ which are all incident to a single vertex of $c$.  In the process, we  add some vertices and edges to $C$.  
Each new edge added is dual to a mid-cube of $c$ which is also dual to a unique edge in the set $\{e_1, \dots e_i\}$.  This induces a labeling on the newly added edges, making the resultant complex $\G$-labeled. 

\bigskip
We say a complex is \textit{folded} if no fold or cube identification operation can be performed to it.
As fold operations and cube identification operations reduce the number of cells, any finite complex $C$ can be transformed into a folded complex through finitely many such operations.

We say a complex is \textit{cube-full} if for any 
$i$-tuple of edges all incident to the same vertex such that the vertices corresponding to their labels 
form an $i$-clique in $\G$, 
there exists an $i$-cube of $C$ whose boundary contains  these $i$ edges.  

Given a connected finite $\G$-labeled
 complex $X$, consider a possibly infinite sequence: 
\[\Omega_0 = X \xrightarrow{f_0}  \Omega_1 \xrightarrow{f_1} \Omega_2 \cdots \]
where for each $i$, the map $f_i : \Omega_i \to \Omega_{i+1}$ is either a fold, cube identification or cube attachment operation.
Let $\Omega_X$ be the direct limit of this sequence. If $\Omega_X$ is folded and cube-full, we call $\Omega_X$ a \textit{completion} of $X$. We say 
\[\Omega_0 = X \xrightarrow{f_0}  \Omega_1 \xrightarrow{f_1} \Omega_2 \cdots \to \Omega_X \] 
is a completion sequence for $X$. We sometimes leave the maps $f_i$ out of the notation when these maps are not relevant. We also set $\hat{f}: X \to \Omega_X$ as the direct limit of the maps $\{f_i\}$. 

\begin{example}\label{ex_completion_ex1}
Let $\G_1$ be the graph in Figure~\ref{fig_completion_ex1}.  
The right of the figure shows a completion sequence for the $\G_1$-labeled complex $X$. The completion $\Omega$ is obtained from $X$ by a fold operation followed by a cube attachment operation. Note that not all labels of $\Omega$ are shown.
	\begin{figure}[h!]
	\centering
	\begin{overpic}
	[scale=1.3]
	{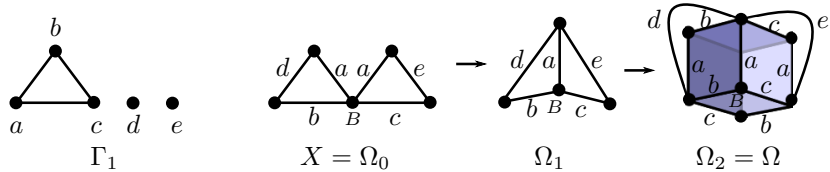}
		\put(0,-1){\small $a$}
		\put(5,11){\small $b$}
		\put(10,-1){\small $c$}
		\put(14.5,-1){\small $d$}
		\put(20,-1){\small $e$}
		\put(40.25, 6){\small $a$}
		\put(43, 6){\small $a$}
		\put(33,6){\small $d$}
		\put(37,0){\small $b$}
		\put(47,0){\small $c$}
		\put(50,6){\small $e$}
		\put(41.5, 0){\tiny $B$}
		
		\put(64, 1){\small $b$}
		\put(70,1){\small $c$}
		\put(66,7){\small $a$}
		\put(62,7){\small $d$}
		\put(72,7){\small $e$}
		\put(66.5, 1.5){\tiny $B$}

		\put(84.5,6.5){\small $a$}
		\put(91, 7){\small $a$}
		\put(95,6){\small $a$}
		\put(85.5,12){\small $b$}
		\put(86.5, 4){\small $b$}
		\put(93,-1){\small $b$}
		\put(86,0){\small $c$}
		\put(94,11.5){\small $c$}
		\put(93,4){\small $c$}
		\put(79,12){\small $d$}
		\put(100,12){\small $e$}
		\put(89,2.25){\tiny $B$}
 		\put(10,-5){\small $\G_1$}
		\put(36,-5){\small $X = \Omega_0$}
		\put(65,-5){\small $\Omega_1$}
		\put(85,-5){\small $\Omega_2= \Omega$}

%
%
%
%
%
%
	\end{overpic}
	\bigskip
	\caption{A completion $\Omega$ for the $\G_1$-labeled complex $X$.}
	\label{fig_completion_ex1}
\end{figure}	
\end{example}

\begin{example}\label{ex_completion_ex2}
Figure~\ref{fig_completion_ex2} shows a graph $\G_2$ and a $\G_2$-labeled complex $X$.
A standard completion $\Omega'$ of $X$ (see Definition~\ref{def_standard_completion_of_complex}) is shown on the right (the labels of $\Omega'$ are omitted). The cube complex $\Omega'$ is topologically a bi-infinite cylinder.
\begin{figure}[h!]
	\centering
	\begin{overpic}
	[scale=.6]
	{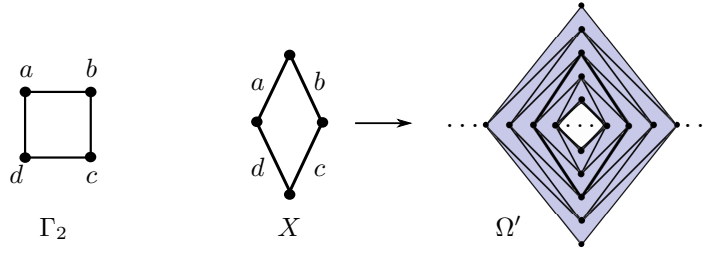}
		\put(0, 26){\small $a$}
		\put(10, 26){\small $b$}
		\put(-1.5,10){\small $d$}
		\put(10,10){\small $c$}
		
		\put(35, 24){\small $a$}
		\put(44.5, 24){\small $b$}
		\put(35,11){\small $d$}
		\put(44.5,11){\small $c$}
		
		\put(3,2){\small $\G_2$}
		\put(39, 2){\small $X$}
		\put(72,2){\small $\Omega'$}
		\put(64.5, 18.5){$\dots$}
		\put(100.5, 18.5){$\dots$}
		\put(82.5, 18.5){\small $\dots$}
%
%
	\end{overpic}
	\caption{A completion $\Omega'$ for the $\G_2$-labeled complex $X$. }
	\label{fig_completion_ex2}
\end{figure}
\end{example}

Additional examples of completions can be found in 
	Examples~\ref{ex:fi_completion}, \ref{ex:nonqc_completion} and~\ref{ex:reflection_subgroup}.

\begin{proposition}[Existence of a completion] \label{prop_existence}
	Given any finite $\G$-labeled complex $X$, there exists a completion $\Omega_X$ of $X$.
\end{proposition}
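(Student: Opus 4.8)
The plan is to build the completion $\Omega_X$ as a direct limit of a completion sequence, constructed greedily: at each stage either perform a fold or cube identification (if the current complex is not folded), or a cube attachment (if it is folded but not cube-full), and iterate. The two things requiring care are (i) that the resulting direct limit $\Omega_X$ is actually folded and cube-full, not merely a limit of complexes that are ``more and more'' so, and (ii) that the sequence of operations can be organized so that no obstruction survives to the limit. The construction is intrinsically infinite in general (as Example~\ref{ex_completion_ex2} shows), so finiteness is not available and one must argue directly about the direct limit.

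First I would make the following observation about the effect of the operations on the $1$-skeleton and on cube attachments. A fold or cube identification is a quotient map that does not increase the number of cells of any dimension; moreover folds can only decrease the number of edges and vertices, and cube identifications fix the $1$-skeleton. A cube attachment does not change the set of edges incident to a given vertex up to the identification induced on the $1$-skeleton (it adds an $i$-cube spanning an existing $i$-clique of edges at a vertex, together with new lower-valence cells ``opposite'' to $v$). The key bookkeeping point is that a cube attachment filling a given $i$-clique-configuration at a vertex can always be done so that it does \emph{not} create new fold opportunities at that configuration, and only finitely many cells are affected at each step.

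Next I would set up the sequence by a fair-scheduling (dovetailing) argument. Enumerate, at each stage, all currently available ``tasks'': pairs of adjacent same-label edges (fold tasks), families of distinct cubes with equal boundary (identification tasks), and $i$-tuples of edges at a common vertex spanning an $i$-clique in $\G$ with no $i$-cube filling them (attachment tasks). Process them in a fair order, always clearing all outstanding fold and cube-identification tasks before performing the next attachment, so that the complex is folded at every moment just before an attachment. Because folds/identifications strictly decrease cell counts, only finitely many of them can occur between consecutive attachments, so the schedule is well-defined. Let $\Omega_X$ be the direct limit of the resulting sequence $\Omega_0=X \xrightarrow{f_0}\Omega_1\xrightarrow{f_1}\cdots$.

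Finally I would verify $\Omega_X$ is folded and cube-full. Since every cell of $\Omega_X$ is the image of a cell of some $\Omega_i$, and every edge/vertex/cube of $\Omega_X$ stabilizes after finitely many steps (the maps are eventually injective on any fixed finite subcomplex, because each further fold or identification strictly reduces cell count and the $1$-skeleton is locally finite by the clique condition on labels), any potential fold or cube-identification task in $\Omega_X$ is visible already in some $\Omega_i$; by fair scheduling it gets cleared at some later stage and hence does not survive to the limit — so $\Omega_X$ is folded. Similarly, any $i$-tuple of edges at a vertex of $\Omega_X$ whose labels span an $i$-clique in $\G$ lifts to such a configuration in some $\Omega_i$; fair scheduling performs the corresponding attachment at a later stage (or it was already filled), so the filling $i$-cube is present in $\Omega_X$ — hence $\Omega_X$ is cube-full. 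Therefore $\Omega_X$ is a completion of $X$. The main obstacle, and the step I would write out most carefully, is showing the ``stabilization'' property — that each feature of the direct limit is already realized faithfully at a finite stage — since this is what lets one conclude that no obstruction persists to the limit despite the process being infinite; this rests on the fact that folds and cube identifications strictly decrease the number of cells and that vertex links are finite by the labeling conventions of Section~\ref{subsec:completion}.
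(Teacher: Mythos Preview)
Your approach is essentially the same as the paper's: build a sequence alternating between folding-until-folded and cube attachments, take the direct limit, and argue that any fold or cube-fullness obstruction in the limit lifts to a finite stage where the schedule clears it. The paper does exactly this (it batches all attachments at the current folded complex's vertices rather than one at a time, but this is cosmetic).

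Two of your side claims are false, though neither is actually needed. First, cube attachments \emph{do} create new fold opportunities: attaching a square along edges $e_1,e_2$ at $v$ produces a new edge parallel to $e_2$ at the far endpoint of $e_1$, which may need to be folded with an existing edge there. Second, your ``stabilization'' assertion (that maps are eventually injective on any fixed finite subcomplex because folds decrease cell count) does not follow, since cube attachments increase cell count, so the net count is not monotone. Fortunately you never use stabilization: the argument only requires that any finite configuration in $\Omega_X$ has a preimage in some $\Omega_i$ (immediate from the definition of direct limit), and that the schedule eventually processes it---which is exactly what the paper proves, without any stabilization claim. Drop those two paragraphs and your proof matches the paper's.
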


\begin{proof}

We set $\Omega_0 = X$ and build $\Omega_i$ inductively. 
Suppose a finite complex $\Omega_i$ was obtained from $\Omega_0$ by a sequence of fold, cube identification and cube attachment operations.
We iteratively perform fold and cube identification operations to $\Omega_{i}$ to obtain the complexes $\Omega_{i+1}, \Omega_{i+2}, ..., \Omega_{i+j}$, where $\Omega_{i+j}$ is folded. (This includes the case $i=0$.)
As $\Omega_{i}$ is finite,  we conclude that $j$ (and hence $\Omega_{i+j}$) is finite as well. 

Next we describe a sequence of operations to be performed to the finite folded complex $\Omega_{i+j}$.  
Choose a vertex $v$ of $\Omega_{i+j}$.  Consider a maximal tuple of edges incident to $v$ such that their labels form a clique in $\G$.  If there is no cube in $\Omega_{i+j}$ whose boundary contains the tuple of edges, then attach an appropriately labeled cube of the appropriate dimension along the tuple of edges.  
Do this for each such maximal tuple at $v$, and then proceed to do the same for all the vertices of $\Omega_{i+j}$.  The result is a sequence of complexes 
$\Omega_{i+j+1}, \Omega_{i+j+2},...\Omega_{i+j+k}$, such that each one is obtained from the previous one by attaching an $n$-cube for some $n$ to an $n$-tuple of edges of $\Omega_{i+j}$ which are all incident to a single vertex of $\Omega_{i+j}$.  As $\Omega_{i+j}$ is finite 
 we conclude that $k$ (and hence $\Omega_{i+j+k}$) is finite. We then repeat the above procedure starting with the finite 
complex $\Omega_{i+j+k}$.

	Let $\Omega_X$ be the direct limit of these complexes. Consider a pair of edges, say $e$ and $f$, in $\Omega_X$ incident to the same vertex $v$. It follows that some $\Omega_i$ contains  preimages of $e$ and $f$ which are incident to the same vertex. 
	Consequently, there is some folded $\Omega_{i'}$, with $i' \ge i$, which contains preimages of $e$ and $f$ that are incident to the same vertex. Thus, if $e$ and $f$ have the same label, then their preimages in $\Omega_{i'}$ must be identified.  A similar argument shows that two cubes with the same boundary in $\Omega_X$ must be identified. It follows that $\Omega_X$ is folded.

	Let $e_1, \dots, e_n$ be edges all incident to a common vertex $v \in \Omega_X$ whose labels form an $n$-clique in $\G$. There is some folded  $\Omega_i$ which contains a preimage $v'$ of $v$ and preimages $e_1', \dots, e_n'$ of $e_1, \dots, e_n$ so that $e_i', \dots, e_n'$ are all incident to $v'$. As a result of the procedure described above, some $\Omega_{i+j}$ is the complex resulting from a cube attachment operation where an $n$-cube, say $c$, is attached to the image of the edges $e_1', \dots, e_n'$ in $\Omega_{i + j - 1}$. Thus, the image of $c$ in $\Omega_X$ is an $n$-cube containing the edges $e_1, \dots, e_n$. This shows $\Omega_X$ is cube-full.
\end{proof}

\begin{definition}[Standard Completion] \label{def_standard_completion_of_complex}
	We call the completion algorithm given in the proof of Proposition~\ref{prop_existence}
	 a
	\textit{standard completion} and the associated sequence
	$\Omega_0 \to \Omega_1 \to \dots \to \Omega_X$
	a standard completion sequence. 
\end{definition}
We show that if a completion is finite, then there is indeed 
an algorithm to obtain it that terminates.


\begin{proposition} \label{prop_finite_completion}
	Let $X$ be a $\G$-labeled complex. 
Consider a standard completion sequence 
	\[X = \Omega_0 \to \Omega_1 \to\dots \to \Omega.\]
	If $\Omega$ is finite, then the completion sequence is finite, i.e.~$\Omega = \Omega_N$ for some $N$.
\end{proposition}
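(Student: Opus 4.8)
\emph{Plan.} The goal is to show that, once the direct limit $\Omega$ is finite, the sequence of complexes $(\Omega_i)$ must eventually become constant, at which point the standard completion procedure performs no further operations.

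First I would record the easy half. Write $g_i \colon \Omega_i \to \Omega$ for the canonical label-preserving combinatorial maps into the direct limit, so that $g_i = g_{i+1}\circ f_i$. Every $\Omega_i$ is a finite complex (each of the three operations preserves finiteness), while $\Omega$ has only finitely many cells, each of which is the image of a cell of some $\Omega_i$; hence there is $N_0$ with $g_{N_0}$ surjective. Surjectivity then persists for all $i\ge N_0$: folds and cube-identifications are surjective, while for a cube attachment the map $f_i$ is an inclusion $\Omega_i\hookrightarrow\Omega_{i+1}$ with $g_{i+1}|_{\Omega_i}=g_i$. It therefore suffices to produce $N\ge N_0$ with $g_N$ also injective, for then $g_N$ is a combinatorial bijection and hence an isomorphism; consequently $\Omega_N$ is folded and cube-full, so no fold, cube-identification, or cube attachment is called for at stage $N$, giving $\Omega_i=\Omega_N=\Omega$ for all $i\ge N$, i.e.\ the sequence is finite.

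To control injectivity I would track the quantity $\Psi_i = |\mathrm{cells}(\Omega_i)| - |\mathrm{cells}(\Omega)|$, which is $\ge 0$ for $i\ge N_0$ and vanishes precisely when $g_i$ is an isomorphism. Because $\Omega$ is folded, any fold or cube-identification applied to $\Omega_i$ merges cells that already have the same image in $\Omega$: two edges with a common endpoint and equal label map to two edges with a common endpoint and equal label in the folded complex $\Omega$, hence to the same edge, and likewise for their merged endpoints and for two cubes with equal boundary. Thus every fold and cube-identification strictly decreases $\Psi$. In the other direction, because $\Omega$ is cube-full and the standard procedure attaches a cube only along a \emph{maximal} clique-tuple of edges, the images in $\Omega$ of those edges span a cube of $\Omega$; hence the attached $k$-cube, together with all of its newly created cells, maps to cells already present in $\Omega$, and this step increases $\Psi$ by a fixed positive amount depending only on $k$ (with $k\ge 2$).

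The crux — and the step I expect to be the main obstacle — is to bound the \emph{total} increase of $\Psi$ contributed by cube attachments over the whole (a priori infinite) sequence. Granting such a bound, the argument closes: $\Psi$ is bounded below by $0$, increases by a bounded total amount, and strictly decreases at every fold and cube-identification, so only finitely many folds and cube-identifications can occur after stage $N_0$; combined with the bound on cube attachments this leaves only finitely many nontrivial operations in all, so $\Omega_N=\Omega$ for some $N$. To obtain the bound I would analyse the alternating fold/attach rounds of the standard procedure once $g_i$ is surjective: each attached cube sits over a genuine cube $\bar c$ of $\Omega$, and one must show that within each such fiber the number of still-missing cubes is eventually non-increasing, so that a cube, once attached, is not repeatedly ``re-created'' by the folding of a later round without a compensating cube-identification. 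Making this interplay precise — equivalently, bounding how far $\Omega_i$ can be from $\Omega$ in terms of $\Omega$ alone — is the delicate point; everything else is bookkeeping with the monovariant $\Psi$.
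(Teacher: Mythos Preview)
Your monovariant analysis is set up correctly, and you are honest that the crux---bounding the total increase of $\Psi$ contributed by cube attachments---remains unproved. That gap is genuine and is essentially the whole difficulty: nothing in your framework rules out a long alternation of ``attach a cube over $\bar c$; fold, thereby creating a new missing cube elsewhere; attach again,'' and your proposed fiberwise monotonicity (that within each $g_i^{-1}(\bar c)$ the number of missing cubes is eventually non-increasing) is precisely what needs proof and is not visibly easier than the proposition itself.

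The paper sidesteps the bookkeeping entirely with a different idea. Since $\Omega$ is finite and is the direct limit, there is a stage $M$ beyond which each $\Omega_n$ contains a subcomplex $Y_n$ that $g_n$ maps isomorphically onto $\Omega$: for each of the finitely many cells of $\Omega$ choose a preimage at a late enough stage so that all the (finitely many) incidences among these preimages have already been realised in the system. Now take $N\ge M$ with $\Omega_N$ folded; such $N$ exists because the standard procedure always folds to a folded complex before the next round of cube attachments. Then $\Omega_N=Y_N$: if some edge $e\in\Omega_N\setminus Y_N$ were incident to a vertex $v\in Y_N$, its image $g_N(e)$ would coincide with $g_N(e')$ for the unique edge $e'\in Y_N$ at $v$ of that label (since $g_N|_{Y_N}$ is a bijection onto $\Omega$), giving two equally-labelled edges at $v$ and contradicting foldedness; higher cells are handled the same way. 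Hence $\Omega_N\cong\Omega$, and as you noted, no further operation is performed.

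So your surjectivity/injectivity reduction is sound, but injectivity is obtained in one stroke at a single folded stage via the section $Y_N\hookrightarrow\Omega_N$, rather than by controlling the growth of $\Psi$ across the entire sequence.
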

\begin{proof}
	As $\Omega$ is finite, by the definition of a direct limit, for some $M$ and all $n \ge M$, $\Omega_n$ contains an isometrically embedded subcomplex $Y_n$ isometric to $\Omega$, and the natural map $f: \Omega_n \to \Omega$ is a label-preserving isometry when restricted to $Y_n$. 
	As we have a standard completion, there exists an $N \ge M$ such that $\Omega_N$ is folded.
	
	We claim $\Omega_N = \Omega$. Suppose instead that $Y_N \subsetneq \Omega_N$. If $\Omega_N \setminus Y_N$ contains a vertex or an edge, then since $\Omega_N$ is connected, 
	it follows that some vertex $v \in Y_N$ is incident to an edge $e$ that is not contained in $Y_N$. Let $s$ be the label of~$e$. As $\Omega_N$ is folded, no edge in $Y_N$ that is incident to $v$ has label $s$. However, the continuous map $f: \Omega_N \to \Omega$ sends $e$ to an edge incident to $f(v)$ labeled by $s$. This is a contradiction.  Thus $Y_N$ and $\Omega_N$ have the same $1$-skeleton.  

	Suppose there is a $2$-cell $c$ in $\Omega_N \setminus Y_N$.  Then the boundary $\partial c$  of $c$ is contained in $Y_N$. Since $\Omega_N$ folded, there is no cube in $Y_N$ with boundary $\partial c$.  However, $f$ sends $\partial c$ to the boundary of a cube in $\Omega$, and $f(c)$ is a cube in $\Omega$ with boundary $f(\partial c)$, leading to a contradiction.   Thus  $\Omega_N $ and $ \Omega$ have the same $2$-skeleton.  Proceeding inductively, we conclude that $\Omega_N = \Omega$
\end{proof}

\subsection{Completion of a subgroup}

In this subsection, we define the completion of a subgroup of a RACG and show a completion is guaranteed to exist for any finitely generated subgroup. 

\begin{definition}[Completion of a subgroup] \label{def_completion}
	 Let $G$ be a subgroup of a  RACG $W_{\Gamma}$, and 
	 let $\Omega$ be a connected 
	$\G$-labeled cube complex
	with basepoint the vertex $B \in \Omega$. We say that $(\Omega, B)$ is \textit{a completion of $G$} if:
	\begin{enumerate}
		\item \label{def_completion_folded_cube_full} $\Omega$ is folded and cube-full.
		\item \label{def_completion_loops} Given any loop in $\Omega$ based at $B$, its label is a word which represents an element of $G$.
		\item \label{def_completion_reduced_words} For any \textit{reduced} word $w$ in $W_\Gamma$ which represents an element of $G$, there is a loop $l$ based at $B$ with label $w$.
	\end{enumerate}
\end{definition}
\begin{remark}
When $B$ is not relevant, we may simply say $\Omega$ is a completion of $G$.
\end{remark}

\begin{example}\label{ex_non-unique}
Let $G$ be a finite subgroup of $W_\G$ generated by adjacent vertices $a$ and $b$ in  $\G$.   
Let $X$ be the rose graph consisting of one vertex, one graph-loop  labeled by $a$ and one graph-loop labeled by $b$.  Let $\Omega_1$ and $\Omega_2$ respectively be the torus and Klein bottle obtained by attaching a (square) 2-cell to $X$. 
Then both $\Omega_1$ and $\Omega_2$ are completions for $G$.  
\end{example}

When the group $G< W_\G$ is finitely generated, we can construct a completion for $G$ as 
follows. 
 Let  $G$ be generated by the finite generating set of words 
\[S_G =  \{w_i = s_{i_1}s_{i_2}...s_{i_{m_i}} ~|~  1 \le i \le n \}\]
 where $s_{i_j} \in V(\Gamma)$ for each $i, j$. 
We associate to $S_G$ the following $\G$-labeled complex. We begin with a single base vertex $B$. For each generator $w_i$, we attach a circle subdivided to have $m_i$ edges, such that edges of this circle are sequentially labeled, beginning at $B$, by the letters $s_{i_j}$ for $1 \le j \le m_i$. We denote this  based complex by $(X(S_G), B)$ and call it the \textit{$S_G$-complex}.

Let $\Omega$ be a completion of $X(S_G)$ with completion sequence
	\[X(S_G) = \Omega_0 \to \Omega_1 \to \Omega_2\to \cdots \to \Omega. \] 
By a slight abuse of notation, we use $B$ to denote the image of the base point 
$B$ in $\Omega_i$ for any $i$, as well as in $\Omega$.
The next few lemmas show that $\Omega$ is a completion of $G$.

\begin{lemma} \label{lemma_omega_reduced_words} Let $G$ be a subgroup of a RACG $W_\G$, given by a finite generating set $S_G$. Let $\Omega$ be any completion of $X(S_G)$ where $(X(S_G), B)$ is the $S_G$-complex. 
If $w$ is a reduced word in $W_\G$ which represents an element of $G$, then $w$ is the label of some loop  in $\Omega$ based at $B$.
\end{lemma}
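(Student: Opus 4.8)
The plan is to exhibit one (typically non-reduced) word $z$ that equals $w$ in $W_\G$ and \emph{visibly} labels a loop at $B$ in $\Omega$, and then to transform $z$ into $w$ one Tits move at a time, maintaining throughout the invariant ``is the label of a loop based at $B$''. First I would produce $z$: since $w$ represents an element $g\in G$ and $G=\langle w_1,\dots,w_n\rangle$, write $g=u_1\cdots u_k$ in $W_\G$, where each $u_j$ is some $w_i$ or its reverse word $\overline{w_i}$ (which equals $w_i^{-1}$ in $W_\G$, since the generators of $W_\G$ are involutions). By the definition of the $S_G$-complex each $w_i$, read off its subdivided petal, labels a loop at $B$ in $X(S_G)$; concatenating these petals (traversed in the appropriate direction) and pushing forward along the label-preserving map $\hat f\colon X(S_G)\to\Omega$ gives a loop at $B$ in $\Omega$ with label $z:=u_1\cdots u_k$. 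Since $z=g=w$ in $W_\G$ and $w$ is reduced, Tits' solution to the word problem supplies a finite sequence of Tits moves (swaps and cancellations) from $z$ to $w$.

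The core of the argument is then the claim: if a word $z'$ labels a loop $\ell$ based at $B$ and $z''$ is obtained from $z'$ by a single Tits move, then $z''$ also labels a loop based at $B$. Write the two consecutive letters involved as $s,t$, so $\ell$ contains consecutive edges $e_1$ from $a$ to $v$ labeled $s$ and $e_2$ from $v$ to $c$ labeled $t$, with the rest of $\ell$ untouched. In the swap case $m(s,t)=2$, so $s\ne t$ and $\{s,t\}$ spans a $2$-clique in $\G$; then $e_1,e_2$ are distinct edges incident to $v$ whose labels form this clique, so cube-fullness yields a square $Q$ with $e_1,e_2$ on its boundary. Using the standing fact that opposite edges of a square carry the same label, the boundary $4$-cycle of $Q$ must read $a\xrightarrow{s}v\xrightarrow{t}c\xrightarrow{s}v'\xrightarrow{t}a$, so the complementary arc $a\xrightarrow{t}v'\xrightarrow{s}c$ has label $ts$; replacing the subpath $a\to v\to c$ of $\ell$ by this arc gives a loop at $B$ labeled $z''$. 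In the cancellation case $s=t$ as vertices of $\G$, so $e_1,e_2$ are edges at $v$ with the same label; since $\Omega$ is folded no fold operation applies, which forces $e_1=e_2$, hence $a=c$ and the subpath $a\xrightarrow{s}v\xrightarrow{s}a$ of $\ell$ is a backtrack over $e_1$, which I delete to obtain a loop at $B$ labeled $z''$. Applying the claim along the Tits sequence from $z$ to $w$ finishes the proof.

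The hard part will be the swap case: one must be careful that cube-fullness yields a genuine square carrying $e_1$ and $e_2$ on its boundary in the correct cyclic positions, and that the ``opposite edges share a label'' property (one of the standing assumptions on $\G$-labeled cube complexes) actually pins down the label of the complementary arc. Minor care is also needed for degenerate configurations (coincidences among $a,v,c,v'$, or $e_1$ being a graph-loop in the cancellation case), but as noted in the text these do not affect the argument. Everything else — producing $z$, invoking Tits' theorem, and the induction — is routine.
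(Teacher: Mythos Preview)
Your proposal is correct and follows essentially the same approach as the paper's proof: produce an initial loop at $B$ by concatenating petals (pushed forward to $\Omega$), then maintain the ``labels a loop at $B$'' invariant through the Tits sequence using cube-fullness for swaps and foldedness for cancellations. The paper's treatment of the cancellation case is phrased slightly differently---it explicitly separates the backtrack case from the case where $e_1=e_2$ is a graph-loop traversed twice---but you have flagged exactly this degeneracy, and the remedy (delete the two traversals) is identical.
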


\begin{proof}
	Let 
	$X(S_G) = \Omega_0 \to \Omega_1 \to \Omega_2 \to \cdots \to \Omega$
	be a completion sequence for $X(S_G)$, and let $w$ be a reduced word in $W_\G$ which represents an element of $G$.
	
	As $S_G$ is a generating set of $G$, it follows that $w$ is equal in $W_{\Gamma}$ to a word $w' = h_{1}...h_{k}$ where $h_i \in S_G$ for each $i$. 
	By construction, for each $1 \le i \le k$, there is a loop $l_i$ based at $B$ in $\Omega_0$, with label $h_i$. Let $l$ be the loop in $\Omega_0$ formed as a concatenation of loops: $l_1 l_2 \dots l_k$. Let $\hat{l}$ be the image of $l$ in $\Omega$. Then $\hat{l}$ has the same label as $l$.
	
	As $w'$ and $w$ are equal as elements of $W_{\Gamma}$, the word $w$ can be obtained from $w'$ through a sequence of Tits moves. Suppose the first Tits move in this sequence is a swap performed to $w'$ to obtain a new word $w''$.
	
	We claim $w''$ is the label of a loop in $\Omega$ as well. Note that there are adjacent edges $e$ and $f$ of $\hat{l}$, labeled by $s$ and $t$ where $s, t \in V(\Gamma)$ and $m(s,t) = 2$, such that  $w' = a_1...a_ista_{i+1}...a_m$ and $w'' = a_1...a_itsa_{i+1}...a_m$, with $a_j \in V(\G)$. As $m(s,t) = 2$ and $\Omega$ is cube-full, there must be a square $Q$ in $\Omega$ whose boundary contains $ef$. We now obtain the desired loop by replacing $ef$ in $\hat{l}$ with the opposite path in the boundary of $Q$.
	
	On the other hand, suppose the first Tits move is a cancellation. In other words, $w' = a_1\dots a_issa_{i+1} \dots a_m$ is replaced by $w'' = a_1 \dots a_m$ where $s \in V(\G)$ and $a_i \in V(\G)$ for each $i$. As $\Omega$ is folded, $\hat{l}$ must traverse an edge $e$, labeled by $s$, twice consecutively. It follows that either $e$ is a graph-loop or that $\hat{l}$ traverses $e$ in one direction and immediately backtracks in the other direction. In either case, we can simply remove these two occurrences of the edge $e$ from $\hat{l}$ to obtain a new loop based at $B$ with label $w''$.
	
	By repeating this procedure for each Tits move, we obtain a loop in $\Omega$ with label  $w$.
\end{proof}

\begin{lemma} \label{lemma_pull_back}
	Let $\Omega$ be a $\G$-labeled complex obtained by applying either a fold, cube identification or cube attachment operation to the $\G$-labeled complex $\bar{\Omega}$. Let $F: \bar{\Omega} \to \Omega$ be the natural map. Let $B$ be a vertex of $\Omega$ and let $\bar{B}$ be a vertex that is in the preimage under $F$ of $B$. Let $w$ be the label of a loop $l$ in $\Omega$ based at the vertex $B$. Then there exists a loop $\bar{l}$ in $\bar{\Omega}$ based at $\bar{B}$, with label $\bar{w}$, such that $\bar{w}$ and $w$ represent the same element of the RACG $W_\G$.
\end{lemma}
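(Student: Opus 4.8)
The plan is to analyze each of the three operations separately, since in each case we must ``lift'' a loop $l$ in $\Omega$ to a loop in $\bar\Omega$ whose label represents the same group element. The common strategy is: pick a lift of each vertex and edge of $l$ using $F$, and where consecutive lifts fail to match up, insert a correcting path in $\bar\Omega$ whose label is trivial in $W_\Gamma$ (or more generally, splice in extra letters that represent the identity), so that the concatenation is a genuine loop with the required label up to equality in $W_\Gamma$.

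First, consider the fold operation, in which edges $e_1, e_2$ of $\bar\Omega$ with the same label $s$ and a common endpoint $v$ are identified to a single edge $e$ of $\Omega$. Away from $e$, the map $F$ is a bijection on cells, so any edge of $l$ not equal to $e$ has a unique lift. The only issue is when $l$ traverses $e$: there are two possible lifts, namely $e_1$ or $e_2$, and consecutive edges of the lifted path may not share an endpoint because one wants to leave from the $v_1$-end of $e_1$ while the previous edge arrived at the $v_2$-end of $e_2$ (here $v_1, v_2$ are the other endpoints of $e_1, e_2$). But in $\Omega$ these two endpoints are identified, and in $\bar\Omega$ one can bridge between the corresponding lifts by going ``$e_1$ then back along $e_2$'' (or vice versa), inserting the subword $ss$, which is trivial in $W_\Gamma$. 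Concretely, I would lift $l$ edge by edge, and whenever the terminal vertex of the current lifted edge does not match the initial vertex of the next lifted edge, both vertices map to the same vertex of $\Omega$; by examining the fold, that discrepancy is always resolvable by inserting a backtracking pair along $e_1$ or $e_2$ labeled $ss$. The resulting closed path $\bar l$ is based at a lift of $B$; if it is not based at the specified $\bar B$, a further conjugation argument (or simply choosing the lift of the initial vertex of $l$ to be $\bar B$, which is possible since $\bar B \in F^{-1}(B)$) handles it. The label of $\bar l$ differs from $w$ only by insertions of subwords $ss$, hence represents the same element of $W_\Gamma$.

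The cube identification operation is the easiest: it does not change the $1$-skeleton at all, so $F$ restricts to an isomorphism on $1$-skeleta, and $l$ lifts verbatim to a loop $\bar l$ with the \emph{same} label $w$; we then just relabel its basepoint as $\bar B$ using that $F$ is a bijection on vertices. For the cube attachment operation, $F$ embeds $\bar\Omega$ into $\Omega$ as a subcomplex, with $\Omega$ obtained by gluing an $i$-cube $c$ along $i$ edges $e_1,\dots,e_i$ at a common vertex; the new vertices and edges of $\Omega$ all lie in $c$. If $l$ never enters the new part, it already lies in $\bar\Omega$. If it does, each maximal subpath of $l$ inside $\bar c \setminus (\text{old part})$ is a path in (a subdivided) $i$-cube between two points of the attaching locus; I would replace each such subpath by a path in $\bar\Omega$ running along (concatenations of) the edges $e_1,\dots,e_i$. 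Since $e_1,\dots,e_i$ are pairwise-commuting generators (their labels form an $i$-clique in $\Gamma$), any two paths in the cube $c$ between the same two vertices of its $1$-skeleton have labels that are equal in $W_\Gamma$ — indeed equal in the special subgroup $W_{\text{clique}}$, which is just an $i$-fold direct product of copies of $\mathbb{Z}_2$, where the word problem is settled by swaps and cancellations. Replacing subpaths this way produces a loop $\bar l$ in $\bar\Omega$ whose label is obtained from $w$ by Tits moves (swaps and cancellations among the commuting letters $s_1,\dots,s_i$), hence represents the same element of $W_\Gamma$; again we may arrange it to be based at $\bar B$.

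The main obstacle I anticipate is the fold case, specifically the bookkeeping when one of $e_1, e_2$ is a graph-loop, or when several of $v, v_1, v_2$ coincide, so that there are several candidate lifts and the matching-up of consecutive lifted edges requires care. In all these degenerate configurations, though, the key invariant is the same: any mismatch between the terminal vertex of one lifted edge and the initial vertex of the next occurs only among vertices in a single $F$-fiber, and such a mismatch can always be corrected by inserting a short backtracking path in $\bar\Omega$ whose label is a product of squares of generators. One should also double-check that the final $\bar l$ is genuinely a loop (closed), which follows because $l$ is closed and we only ever insert loops/backtracks, so the overall endpoint-discrepancy is trivial; and that it is based at $\bar B$, which is arranged by choosing $\bar B$ as the lift of the basepoint at the very start.
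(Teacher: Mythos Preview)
Your proposal is correct and follows essentially the same approach as the paper: analyze the three operations separately, observe that cube identification leaves the $1$-skeleton unchanged, handle folds by lifting edge-by-edge and inserting backtracking pairs $ss$ (the paper calls this the path $\bar z = \bar f_1 \bar f_2$) at mismatches, and handle cube attachment by replacing each maximal subpath inside the new cube with a path along the attaching edges $e_1,\dots,e_i$, using that the labels lie in the abelian special subgroup on the clique.

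One place where the paper is more careful and your sketch could be tightened: in the cube attachment case, you assert that a replacement path along $e_1,\dots,e_i$ with the \emph{same endpoints} and equivalent label exists, appealing to ``any two paths in $c$ between the same two vertices have equal labels in $W_\Gamma$.'' The paper does not rely on that topological statement (which is delicate when some $e_j$ are graph-loops, so the image of $c$ in $\Omega$ is a genuine quotient of the abstract cube); instead it computes the reduced form $h'=s_1^{\epsilon_1}\cdots s_n^{\epsilon_n}$ of the label of $q$ and checks directly, via a midcube-parity argument, that whenever $\epsilon_j=1$ with $j\neq k,k'$ the edge $e_j$ must be a graph-loop, so that $e_1^{\epsilon_1}\cup\cdots\cup e_n^{\epsilon_n}$ really is a path in $\bar\Omega$ from $v_k$ to $v_{k'}$. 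Your anticipated obstacle (graph-loops in the fold case) is indeed where the paper spends the most ink, carrying out a five-case decomposition of $l$; your description of the resolution there is accurate.
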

\begin{proof}
	We analyze each type of operation separately:

	\medskip \noindent
	\textbf{Cube identification operation}: 
	If $F$ is a 
	cube identification operation, 
	then $\Omega$ and $\bar{\Omega}$ have the same $1$-skeleton. Thus $l$ is the image of a loop $\bar{l}$ in $\bar{\Omega}$ with the same label as $l$.
	
		\medskip \noindent
	\textbf{Fold operation:}
Suppose $F$ is a fold, and let $B = u_1, \dots, u_m = B$ be the vertices of $l$ listed sequentially by the orientation of $l$.

	Suppose some vertex, say $v$, of $\Omega$ has preimage $F^{-1}(v) = \{\bar{v}_1, \bar{v}_2\}$. As only a single edge is folded in a fold operation, there is at most one such vertex. Let $\bar{f}_1$ and $\bar{f}_2$ be the two edges in $\bar{\Omega}$ which are folded and let $f$ be the edge in $\Omega$ which is their image. Let $s$ be the label of $f$. The endpoints of $f$ must be $v$ and some vertex $v'$ (which is possibly equal to $v$). 
	
	We say a vertex or edge of $l$ has unique preimage if its preimage under $F$ is a single vertex or edge. 
	It is straightforward to check that $l$ can be subdivided into subpaths of the five types described below (though not all the types may be used):
	\begin{enumerate}
		\item An edge $p$ from $u = u_i$ to $u' = u_{i+1}$ where $u$ and $u'$ each have unique preimage.  
		\item A path $p$ from $u = u_i$ to $u'$ = $u_{i'}$, where $u$ and $u'$ each have unique preimage, and $u_{j} = v$ for every $i < j < i'$.
		\item A path $p$ from $B=u_1$ to $u = u_i$, where $u$ has unique preimage 
		and $u_{j} = v$ for $j < i$.
		\item A path $p$ from $u = u_i$ to $B=u_m$, where $u$ has unique preimage 
		 and $u_{j} = v$ for $j > i$.
		\item $p = l$ and $u_i = v$ for all $i$.
	\end{enumerate}
	
	We claim that for each path $p$ of a type described above, there is a path $\bar{p}$ in $\bar{\Omega}$ such that the label of $p$ and the label of $\bar{p}$ are equal in $W_\G$. Additionally, the image under $F$ of the endpoints of $\bar{p}$ are equal to the endpoints of $p$. Finally, if $p$ is of type 3, then $\bar{p}$ begins at $\bar{B}$, if $p$ is of type 4 then $\bar{p}$ ends at $\bar{B}$ and if $p$ is of type 5 then $\bar{p}$ begins and ends at $\bar{B}$. The lemma clearly follows from this claim. We proceed to prove the claim for each type of subpath of $l$.
	
	\textit{Type 1:} 
	Let $p=e$ be the edge in $l$ between $u = u_i$ and $u' = u_{i+1}$.
	Let $\bar{u}$ and $\bar{u}'$ be the unique preimages of $u$ and $u'$ under $F$. The preimage $F^{-1}(e)$ is either a single edge between $\bar{u}$ and $\bar{u}'$ 
	or is a pair of edges between $\bar{u}$ and $\bar{u}'$ 
	 (in this case the fold operation identifies this pair of edges to get 
	 $e$). Let $\bar{e}$ be a choice of edge in $F^{-1}(e)$. We define $\bar{p}$ to the path that traverses $\bar{e}$.  Clearly $\bar p$ and $p$ have the same label.  

	\textit{Type 2:} 
	In this case $p$ consists of an edge $e_1$ from $u$ to $v$, followed by a collection of graph-loops $q_1, \dots, q_k$ based at $v$, followed by an edge $e_2$ from $v$ to $u'$.

	Let $\bar{u}$ and $\bar{u}'$ be the unique preimages of ${u}$ and ${u}'$. Let $\bar{e}_1$ and $\bar{e}_2$ be  edges (not necessarily unique) in the preimage of $e_1$ and $e_2$ respectively. Let $\bar{q}_1, \dots, \bar{q}_k$  each be a choice of edge in the preimages of $q_1, \dots, q_k$. For each $1 \le i \le k$, the edge $\bar q_i$ is either a graph-loop at $\bar v_1$, a graph-loop at $\bar v_2$ or an edge between $\bar v_1$ and $\bar v_2$. 

	\begin{figure}[htp]
	\centering
		\vspace{0.2in}
	\begin{overpic}[scale=1.3]{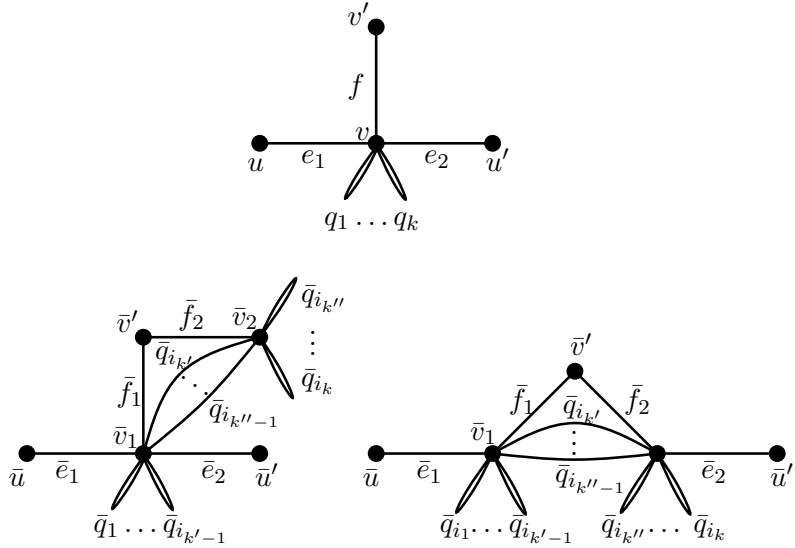}
		\put(30,45){$u$}
		\put(44,49){$v$}
		\put(61,45){$u'$}
		\put(37,46){$e_1$}
		\put(53,46){$e_2$}
		\put(43,55){$f$}
		\put(40,38){$q_1$}
		\put(49,38){$q_k$}
		\put(44,38){$\dots$}
		\put(43, 64){$v'$}

		\put(-1,4){$\bar{u}$}
		\put(12.5,9.5){$\bar{v}_1$}
		\put(31,4){$\bar{u}'$}
		\put(5,5){$\bar{e}_1$}
		\put(24,5){$\bar{e}_2$}
		\put(13,15){$\bar{f}_1$}
		\put(21,25){$\bar{f}_2$}
		\put(10,-2){$\bar{q}_1$}
		\put(19,-2){$\bar{q}_{i_{k' - 1}}$}
		\put(14,-2){$\dots$}
		\put(13, 24){$\bar{v}'$}
		\put(18,20.5){$\bar{q}_{i_{k'}}$}
		\put(21, 15.5){$\ddots$}
		\put(25,13){$\bar{q}_{i_{k'' - 1}}$}
		\put(37,28){$\bar{q}_{i_{k''}}$}
		\put(37,17){$\bar{q}_{i_k}$}
		\put(38,21){$\vdots$}
		\put(28, 25){$\bar{v}_2$}
		
		\put(45,4){$\bar{u}$}
		\put(59,10){$\bar{v}_1$}
		\put(52,5){$\bar{e}_1$}
		\put(64,14){$\bar{f}_1$}
		\put(79,14){$\bar{f}_2$}
		\put(72,21){$\bar{v}'$}
		\put(55,-2){$\bar{q}_{i_1}$}
		\put(64,-2){$\bar{q}_{i_{k' - 1}}$}
		\put(59,-2){$\dots$}
		\put(89,5){$\bar{e}_2$}
		\put(98,4){$\bar{u}'$}
		\put(71, 13.5){$\bar{q}_{i_{k'} }$}
		\put(72, 8){$\vdots$}
		\put(70, 4.5){$\bar{q}_{i_{k'' - 1} }$}
		\put(76,-2){$\bar{q}_{i_{k''} }$}
		\put(88,-2){$\bar{q}_{i_k}$}
		\put(82,-2){$\dots$}
	\end{overpic}
		\vspace{0.2in}
	\caption{The graph on the top shows a path of type 2 in $\Omega$. The two graphs below it show the two possible choices of preimages for $p$. The graph-loops $\bar{q}_{i_1}, \dots, \bar{q}_{i_{k'}}$ form a subsequence of the graph-loops $\bar{q}_1, \dots, \bar{q}_k$ consisting of those 
	based at $\bar{v}_1$. Similarly, $\bar{q}_{i_{k'}}, \dots, \bar{q}_{i_{k'' - 1}}$ are edges between $\bar{v}_1$ and $\bar{v}_2$ and $\bar{q}_{i{k''}}, \dots, \bar{q}_{i_k}$ are graph-loops based at $\bar{v}_2$. Note that some of the vertices and edges shown may actually be equal $\bar{\Omega}$. For instance, it could be that $\bar{f}_1 = \bar{e}_1$.}
\end{figure}	
	Let $\bar{z}$ be the path from $\bar{v}_1$ to $\bar{v}_2$ obtained by traversing $\bar{f}_1$ and then $\bar{f}_2$. Note that the label of $\bar{z}$ is equal to the identity element of $W_{\Gamma}$ as $\bar{f}_1$ and $\bar{f}_2$ have the same label. Form the path 
	\[\bar{p} = \bar{e}_1 \bar{p}_0 \bar{q}_1 \bar{p}_1 \bar{q}_2 \bar{p}_2 \dots \bar{q}_{k} \bar{p}_k \bar{e}_2, \]
	where for $1 \le i < k$, we define $\bar{p}_i$ to either be $\bar{z}$, $\bar{z}^{-1}$ or the empty word in order for the endpoint of $\bar{q}_i$ to coincide with the startpoint of $\bar{q}_{i+1}$. The paths $\bar{p}_0$ and $\bar{p}_k$ are defined similarly in order for the endpoint of $\bar{e}_1$ to coincide with the startpoint of $\bar{q}_1$, and in order for the endpoint of $\bar{q}_k$ to coincide with the startpoint of $\bar{e}_2$.
		The claim then follows for this case as the label of $\bar{p}$ is equal as an element of $W_{\G}$ to the label of $p$.

	\textit{Type 5:} In this case $v = B$ and $l$ consists of a sequence of graph-loops $q_1, \dots, q_k$. Let $\bar{q}_1, \dots, \bar{q}_k$ be a choice of edges in the preimages of $q_1, \dots, q_n$. As above, these preimages consist of graph-loops at $\bar{v}_1$ or  $\bar{v}_2$, and edges between $\bar{v}_1$ and $\bar{v}_2$. We may define the path
	\[ \bar{p} = \bar{p}_0 \bar{q}_1 \bar{p}_1 \dots \bar{q}_k \bar{p}_k \]
	where $\bar{p}_i$, for $1 \le i < k$, is defined similarly as in the previous case. Note that $\bar{B}$ is either equal to $\bar{v}_1$ or $\bar{v}_2$. We then define $\bar{p}_0$ to either be $\bar{z} = \bar{f}_1\bar{f}_2$, $\bar{z}^{-1}$ or the empty path in order to guarantee $\bar{p}$ begins at $\bar{B}$. Similarly define $\bar{p}_k$ to guarantee that $\bar{p}$ ends at $\bar{B}$.
	
	\textit{Type 3 and 4:} 
	The proofs in these cases are very similar to those above, and are omitted.

	\medskip \noindent	\textbf{Cube attachment operation:}
	Suppose $\Omega$ is obtained by attaching
	a cube $c$ to $e_1 \cup  \dots \cup e_n \subset \bar\Omega$,  where for $1 \le i \le n$, $e_i$ is an edge labeled $s_i$ from a vertex $v$ to a vertex $v_i$ (with $v=v_i$ if $e_i$ is a graph-loop), and the vertices of $\G$ corresponding to $s_1, \dots,  s_n$ form an $n$-clique. 
	
	If $l$ does not intersect $c \setminus \{e_1, \dots, e_n\}$ then $l$ is clearly the image of a loop in $\bar \Omega$ with the same label. Otherwise let $q$ be the closure of a maximal connected subpath of $l$ that is contained in $c \setminus \{e_1, \dots, e_n\}$.  In particular, $q$ is a path between $v_k$ and $v_{k'}$ for some (not necessarily distinct) $k, k'$. 
	Let $h$ be the label of $q$. Since the vertices $s_1, \dots , s_n$ form a clique in $\G$, there is a reduced expression for $h$ given by $h' = s_1^{\epsilon_1}\dots s_n^{\epsilon_n}$ where for each $1 \le i \le n$, $\epsilon_i = 1$ if there is an odd number of occurrences of the generator $s_i$ in $h$ and $\epsilon_i = 0$ otherwise.
	 
	First assume $k \neq k'$.  After renaming if necessary, we may assume $k = 1$ and $k'=n$. 
	We claim that if 
	$1< j <n$ and $\epsilon_j =1$, then $e_j$ is a graph-loop.  To prove this, note that if $1< j < n$ and $e_j$ is not a graph-loop,  then 
	$v_1$ and $v_n$ are on the same side of 	the midcube of $c$ dual to $e_j$.  Thus $q$ crosses this midcube an even number of times, and therefore $\epsilon_j = 0$. 

	Consider  the union of edges $q'= e_1^{\epsilon_1} \cup e_2^{\epsilon_2} \cup \cdots \cup e_n^{\epsilon_n}$, where $e_i^{\epsilon_i}$ is interpreted as empty if $\epsilon_i =0$.  The claim in the previous paragraph implies that this 
 	is in fact a path $q'$ with the same endpoints as $q$ (even if $e_1$ and $e_n$ are graph-loops).  Observe that the label of $q'$ is $h'$.  
		
	By a similar argument, if $k=k'=1$ and $e_1$ is a graph-loop, then 
	$q'= e_1^{\epsilon_1} \cup e_2^{\epsilon_2} \cup \cdots \cup e_n^{\epsilon_n}$ is a path with label $h'$ and the same endpoints as $q$.  Finally, suppose $k=k'=1$ and $e_1$ is not a graph-loop.  As before, if $j \neq 1$ and $\epsilon_j =1$ then $e_j$ is a graph-loop.   Moreover,  $\epsilon_1 = 0$, because $q$ crosses the mid-cube dual to $e_1$ an even number of times.  Thus in this case we define $q'$ to be the concatenation $e_1  e_2^{\epsilon_2} \cup \cdots \cup e_n^{\epsilon_n} e_1$, and note that this is a continuous path with the same endpoints as $q$, and with label $s_1 h' s_1$, which is equal in $W_\G$ to $h'$.  
		
	In each case we have produced a path $q'$ in $F(\bar\Omega)$ with the same endpoints as $q$ and whose label is a word equal in $W_\G$ to $h$.  We replace $q$ with $q'$ in $l$. 
	By performing all possible replacements of this sort, we obtain a loop in $F(\bar \Omega)$ 
	whose label is a word equal to $w$ in $W_{\Gamma}$. Thus the lemma follows for this case.
\end{proof}

\begin{lemma} \label{lemma_omega_loops}
	Let $G$ be a subgroup of a RACG $W_\G$ given by a finite generating set $S_G$. Let $\Omega$ be any completion of $X(S_G)$ where $(X(S_G), B)$ is the $S_G$-complex. Given a loop in $\Omega$ based at $B$, its label is a word representing an element of $G$.
\end{lemma}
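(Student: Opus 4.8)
The plan is to push a loop in $\Omega$ back, one operation at a time, along a completion sequence until it lands in the $S_G$-complex $\Omega_0 = X(S_G)$, where the assertion is immediate from the construction.

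Fix a completion sequence $\Omega_0 = X(S_G) \xrightarrow{f_0} \Omega_1 \xrightarrow{f_1} \cdots \to \Omega$ and let $l$ be a loop in $\Omega$ based at $B$, with label $w$. The first step is to observe that $l$ already comes from a finite stage: since $\Omega$ is the direct limit of the $\Omega_i$ and $l$ traverses only finitely many cells, there is an $N$ and a loop $\bar l$ in $\Omega_N$ based at $B$ (the image of the basepoint) whose label is exactly $w$. Here one uses that fold and cube identification operations only identify cells carrying the same label, while cube attachments only introduce new labeled edges, so all the maps $f_i$ --- and the induced maps $\Omega_i \to \Omega$ --- preserve edge labels; hence the label of $\bar l$ is $w$ on the nose. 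Concretely, every edge of $\Omega$ is the image of an edge already present at some finite stage, so one lifts the edges of $l$ one at a time and then passes to a stage large enough that the lifted edges become concatenable and close up at the canonical image of $B$. This reduction to a finite stage is the one place that requires genuine care, precisely because the transition maps are not embeddings.

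The second step is a finite iteration of Lemma~\ref{lemma_pull_back}: for $i = N, N-1, \dots, 1$ apply the lemma with $F = f_{i-1} \colon \Omega_{i-1} \to \Omega_i$, taking the distinguished vertex of $\Omega_i$ to be $B$ and its chosen preimage to be the vertex $B$ of $\Omega_{i-1}$ (a preimage of $B$ by our convention on basepoints). Beginning with $\bar l$ in $\Omega_N$, this produces, after $N$ steps, a loop $l_0$ in $\Omega_0 = X(S_G)$ based at $B$ whose label $w_0$ represents the same element of $W_\G$ as $w$.

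For the final step, recall that $X(S_G)$ is a wedge at $B$ of subdivided circles, the $i$-th of which is labeled by the word $w_i \in S_G$; consequently the label of any loop in $X(S_G)$ based at $B$ represents a product of the $w_i^{\pm 1}$ (the only other contributions, coming from backtracking excursions, represent the identity of $W_\G$ since each generator is an involution). Thus $w_0$, and therefore $w$, represents an element of $G = \langle S_G \rangle$, as desired. The main obstacle is the first step; the remainder is bookkeeping with tools already in hand.
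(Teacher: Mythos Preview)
Your proof is correct and follows essentially the same approach as the paper's: both pull the loop back to a finite stage $\Omega_N$ using the direct limit structure, then iteratively invoke Lemma~\ref{lemma_pull_back} to descend to $\Omega_0 = X(S_G)$, where the label visibly represents an element of $G$. You supply somewhat more detail than the paper does on why a loop in the direct limit lifts to a finite stage and on why loops at $B$ in the wedge of subdivided circles have labels in $G$, but the logical skeleton is identical.
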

\begin{proof}
	Let $X(S_G) = \Omega_0 \xrightarrow{f_0} \Omega_1 \xrightarrow{f_1} \Omega_2 \xrightarrow{f_2} \cdots \to \Omega$
	be a completion sequence for $X(S_G)$. Let $B$ denote the basepoint of $X({S_G})$ as well as all its images in this sequence. 
	
	Consider a loop $l$ based at $B$ in $\Omega$, with label $w$. 
Then there exists $n$ such that $l = \hat{f}(l')$ for some loop $l'$ based at $B$ in $\Omega_n$
which also has label $w$, where $\hat{f}: \Omega_n \to \Omega$ is the natural map. By iteratively applying Lemma~\ref{lemma_pull_back} starting with $\hat l$, it follows there is a loop in $\Omega_0$ based at $B$ whose label is a word equal to $w$ in $W_{\Gamma}$. As the label of any loop based at $B$ in $\Omega_0$ represents an element of  $G$, the lemma follows.	
\end{proof}

The existence of completions is an immediate consequence of Lemma~\ref{lemma_omega_reduced_words} and Lemma~\ref{lemma_omega_loops}:

\begin{theorem} \label{thm_subgroup_completion}
	Let $G$ be a subgroup of a RACG $W_\G$ given by a finite generating set $S_G$. 
	Then any completion $(\Omega, B)$ of the based $S_G$-complex  $(X(S_G), B)$ is 
	a completion of $G$.
	\hfill{\qed}
\end{theorem}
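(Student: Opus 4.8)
The plan is simply to verify the three defining properties of a completion of a subgroup listed in Definition~\ref{def_completion}, since the substantive work has already been done in the preceding lemmas. Property~\ref{def_completion_folded_cube_full}, that $\Omega$ is folded and cube-full, is immediate from the hypothesis: by assumption $(\Omega, B)$ is a completion of the $\G$-labeled complex $X(S_G)$, and being folded and cube-full is part of the definition of a completion of a complex given in Section~\ref{subsec:completion}. Property~\ref{def_completion_reduced_words}, that every reduced word $w$ representing an element of $G$ is the label of some loop in $\Omega$ based at $B$, is exactly the conclusion of Lemma~\ref{lemma_omega_reduced_words}. Property~\ref{def_completion_loops}, that the label of every loop in $\Omega$ based at $B$ represents an element of $G$, is exactly the conclusion of Lemma~\ref{lemma_omega_loops}. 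So the proof consists of invoking these three facts in turn.

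The one place where care is genuinely needed is internal to Lemma~\ref{lemma_omega_loops}, which rests on Lemma~\ref{lemma_pull_back}: given a loop $l$ in $\Omega$ based at $B$, one first finds a loop $l'$ with the same label in some finite stage $\Omega_n$ of a completion sequence $X(S_G) = \Omega_0 \to \Omega_1 \to \cdots \to \Omega$, and then pulls $l'$ back one operation at a time through $\Omega_n \to \Omega_{n-1} \to \cdots \to \Omega_0$, at each step replacing the loop by one whose label represents the same element of $W_\G$. Since any loop based at $B$ in $\Omega_0 = X(S_G)$ has label equal to a product of elements of $S_G$, its label lies in $G$, and hence so does the label of $l$. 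The real difficulty is therefore already absorbed into Lemma~\ref{lemma_pull_back}, whose proof requires the case analysis over the three operation types (the five path types for a fold, and the treatment of a subpath running through a newly attached cube via the reduced expression $s_1^{\epsilon_1}\cdots s_n^{\epsilon_n}$ of a cube attachment; cube identifications are trivial since they leave the $1$-skeleton unchanged).

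At the level of Theorem~\ref{thm_subgroup_completion} itself there is thus no serious obstacle; the statement is an assembly of Lemma~\ref{lemma_omega_reduced_words}, Lemma~\ref{lemma_omega_loops}, and the definition of a completion of a complex. The only thing to be mildly careful about is notational: one should keep track of the identification of the basepoint $B$ of $X(S_G)$ with its image in each $\Omega_i$ and in $\Omega$, which is precisely the abuse of notation set up just before these lemmas.
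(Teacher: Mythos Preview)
Your proposal is correct and follows exactly the paper's own approach: the theorem is stated as an immediate consequence of Lemma~\ref{lemma_omega_reduced_words} and Lemma~\ref{lemma_omega_loops}, together with the folded/cube-full property built into the definition of a completion of a complex. Your additional remarks about where the substantive work lies (inside Lemma~\ref{lemma_pull_back}) are accurate commentary but go beyond what is needed here.
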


\begin{definition}[Standard completion of a subgroup]
	Let $G$ be a subgroup of the RACG $W_{\G}$ generated by a finite generating set $S_G$. We call a completion $\Omega$ of $G$ obtained by 
	Theorem~\ref{thm_subgroup_completion} a \textit{standard completion of $G$ with respect to $S_G$}. In cases where it is understood that  there is a finite generating set for $G$, we simply say $\Omega$ is a \textit{standard completion of $G$}.
\end{definition}

\begin{example}
The $\G_1$-labeled cube complex $\Omega$ in Example~\ref{ex_completion_ex1} is a standard completion of the subgroup $\langle adb, aec\rangle $
of $W_{\G_1}$. Similarly, the $\G_2$-labeled cube complex $\Omega'$ in Example~\ref{ex_completion_ex2} is a standard completion of the subgroup 
$\langle abcd \rangle$
of $W_{\G_2}$.
\end{example}

\begin{remark}
	Although every reduced word in $W_\G$ representing an element of $G$ labels a loop in the completion $\Omega$, it is not true that every word in $W_\G$ representing an element of $G$ labels a loop in $\Omega$. For instance, let $s \in V(\Gamma)$ and suppose  
	$G < W_\G$ is generated by a set of words, 
	none of which contains the letter $s$. It follows that no edge in $\Omega$ is labeled $s$. Then the word $ss$, which is equal to the identity element, cannot be the label of any path in~$\Omega$.
\end{remark}

\section{Basic properties of completions} \label{sec_completion_properties}

We prove a few facts regarding completions that will be used throughout the rest of the paper. Recall from Section~\ref{subsec_word_prob} that a deletion performed to a word $w$ in a RACG produces an expression for $w$ with a pair of generators of the same type removed.

\begin{lemma} \label{lemma_hausdorff_bound}
	Let $\Omega$ be a folded, cube-full, $\G$-labeled complex. Let $p$ be a path in $\Omega$ with label $w$. Let $w'$ be  an expression for $w$ obtained by performing $k$ deletions to~$w$. Then there exists a path $p'$ in $\Omega$ such that the following properties hold. 
	\begin{enumerate} 
		\item The path $p'$ has label $w'$. 
		\item The paths $p$ and $p'$ have the same endpoints. 
		\item The Hausdorff distance  between $p$ and $p'$  is at most $k$.
		\item If $p$ does not traverse any graph-loops, then $p$ and $p'$ are homotopic relative to their endpoints.
	\end{enumerate}
\end{lemma}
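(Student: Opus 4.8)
The plan is to reduce to the case of a single deletion by induction on $k$, and for a single deletion, to explicitly surger the path $p$ using the cube-full hypothesis. First I would set up the induction: if $w'$ is obtained from $w$ by $k$ deletions, there is an intermediate expression $w''$ obtained from $w$ by one deletion, with $w'$ obtained from $w''$ by $k-1$ deletions. Granting the $k=1$ case, one gets a path $p''$ with label $w''$, the same endpoints as $p$, Hausdorff distance $\le 1$ from $p$, and (if $p$ traverses no graph-loops) homotopic to $p$ rel endpoints --- note that the surgery in the base case will not introduce any graph-loops into $p''$, so the inductive hypothesis continues to apply. Then applying the inductive hypothesis to $p''$ and composing, the Hausdorff distances add up to at most $k$ and the homotopies concatenate.

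For the base case $k=1$, write $w = u s v s u'$ where the two displayed occurrences of the letter $s \in V(\G)$ commute past $v$ (i.e. $m(s,t)=2$ for every letter $t$ of $v$), so that $w' = u v u'$. Correspondingly decompose $p = p_u \, e_1 \, p_v \, e_2 \, p_{u'}$, where $e_1, e_2$ are the edges labeled $s$ and $p_v$ has label $v$. The key step is to push $e_1$ across $p_v$ one edge at a time: for each edge $f$ of $p_v$, with label $t$, we have $m(s,t) = 2$, so by the cube-full property there is a square in $\Omega$ containing the edge labeled $s$ and the edge labeled $t$ incident at their common vertex; replacing the two-edge path ``(edge labeled $s$) followed by $f$'' by the opposite two-edge path in that square gives a homotopic (rel endpoints) path of the same length with the $s$-edge moved one step later. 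Care is needed when $f$ is a graph-loop: then the square in question may be a torus (both the $s$-edge and $t$-edge are graph-loops), but one can still slide past it, possibly replacing the graph-loop by a different but identically-labeled graph-loop; this does not increase the number of graph-loops traversed beyond what $p$ already had, and it keeps the path homotopic to $p$ when $p$ had none. Iterating, we slide $e_1$ all the way to the right until it is adjacent to $e_2$; since $\Omega$ is folded, two consecutive edges with the same label are either a single graph-loop traversed twice or an edge traversed then immediately backtracked, and in either case we delete both occurrences. The resulting path $p'$ has label $u v u' = w'$, the same endpoints as $p$, lies within Hausdorff distance $1$ of $p$ (every intermediate path shares a vertex-path with $p$ except possibly for the apex vertices of the sliding squares, which are at distance $1$), and is homotopic rel endpoints to $p$ when $p$ traverses no graph-loops, since every move was either a homotopy across a square or the removal of a backtrack.

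The main obstacle I expect is the bookkeeping around graph-loops: a graph-loop labeled $t$ in $p_v$ forces $s$ and $t$ to bound a higher-dimensional cube (a $2$-torus or worse if several such loops share a vertex), and one must check carefully that sliding $e_1$ past such a loop (a) is always possible using only cube-fullness at the relevant vertex, (b) does not increase the Hausdorff distance beyond $1$, and (c) does not create a graph-loop that $p$ did not already have --- which is exactly what is needed to make item (4) survive the induction. Everything else (the length-preserving square moves, the final folded-ness cancellation, the additivity of Hausdorff distances and the concatenation of homotopies) is routine, so I would state those steps and concentrate the written proof on the graph-loop case.
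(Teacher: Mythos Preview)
Your proposal is correct and takes essentially the same approach as the paper: induct on $k$, and for a single deletion use cube-fullness to produce a strip of squares along the subpath labeled $s\,s_{i+1}\cdots s_{i'-1}\,s$, replace the bottom route by the top route (your ``sliding'' is just an edge-by-edge description of this), and then use foldedness to cancel the two adjacent $s$-edges. The paper's treatment of your main worry---that no new graph-loops appear in $p'$ when $p$ has none---is a one-line contrapositive: if an edge on the top of a square were a graph-loop, foldedness would force the two $s$-edges of that square to be identified, hence the opposite edge (which lies on $p$) would also be a graph-loop; you may find this cleaner than tracking the intermediate slides.
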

\begin{proof}
Let $w_1$ be the word obtained by performing the first deletion to $w$.  If $w = s_1 \dots s_n$, with $s_i \in V(\Gamma)$, then $w_1 = s_1 \dots s_{i-1}s_{i+1} \dots s_{i'-1}s_{i'+1}\dots s_n$ where $s_i = s_{i'} = s$ for some $1 \le i < i' \le n$ and $m(s, s_j) = 2$ for all $i < j < i'$.  
	Let $\alpha$ be the subpath of $p$ labeled by $s_i s_{i+1} \dots s_{i'}$.
	
	Suppose first that $i' - i > 1$.
	As $\Omega$ is cube-full, there exists a sequence of squares in $\Omega$ such that Figure~\ref{fig_hausdorff} holds (although there may be additional edge or vertex identifications that are not shown).
		The subpath $\alpha$ of $p$, which runs along the bottom of Figure~\ref{fig_hausdorff}, can be replaced with the path which runs along the top of Figure~\ref{fig_hausdorff}, to obtain a new path $p_1$ with label $w_1$. Then $p_1$ is homotopic relative to endpoints to $p$ and is at Hausdorff distance at most $1$ from~$p$.  

\begin{figure}[h!]
		\centering
		\begin{overpic}[]{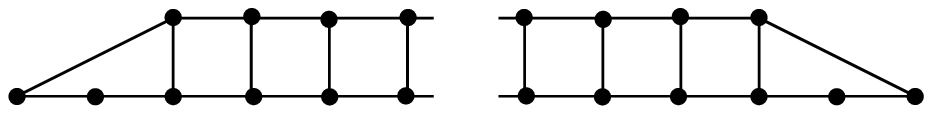}
			\put(-1,-3){\footnotesize$s = s_i$}
			\put(10,-3){\footnotesize $s_{i+1}$}
			\put(19,-3){\footnotesize $s_{i+2}$}
			\put(27.5,-3){\footnotesize$s_{i+3}$}
			
			\put(48, 6){$\dots$}
			
			\put(73.5,-3){\footnotesize$s_{i'-2}$}
			\put(83,-3){\footnotesize$s_{i'-1}$}
			\put(91,-3){\footnotesize$s_{i'} = s$}

			\put(5, 7){\footnotesize$s_{i+1}$}
			\put(19,11.5){\footnotesize$s_{i+2}$}
			\put(27.5,11.5){\footnotesize$s_{i+3}$}
			
			\put(73.5,11.5){\footnotesize$s_{i'-2}$}
			\put(90, 7){\footnotesize$s_{i'-1}$}
			
			\put(18.5, 5){\footnotesize$s$}
			\put(27, 5){\footnotesize$s$}
			\put(35.5, 5){\footnotesize$s$}
			\put(44, 5){\footnotesize$s$}
			\put(57, 5){\footnotesize$s$}
			\put(65.5, 5){\footnotesize$s$}
			\put(74, 5){\footnotesize$s$}
			\put(82.5, 5){\footnotesize$s$}
			
		\end{overpic}
		\caption{}
		 \label{fig_hausdorff}
	\end{figure}


	On the other hand, if $i' - i = 1$, then as $\Omega$ is folded, $\alpha$ either  traverses an edge labeled by $s$ twice in opposite directions, or a graph-loop labeled by~$s$ twice consecutively. In either case we can simply remove both occurrences of $\alpha$ from $p$ to obtain a new path $p_1$ with the same endpoints as $p$, such that $p_1$  is labeled by $w_1$ and is at Hausdorff distance at most $1$ from $p$. 
	If $p$ does not traverse any graph-loops, then $p_1$ is homotopic relative to endpoints to $p$.  
	
	Finally, if $p$ does not traverse any graph-loops, then $p_1$ does not either.  This is clear when $i' - i = 1$.  Now suppose $i' - i > 1$, and suppose $p$ has no graph-loops, but $p_1$ has a graph-loop $e$.  Then $e$ is an edge of one of the squares in Figure~\ref{fig_hausdorff}, 
and since $\Omega$ is folded, the two edges of the square incident to $e$ are identified in $\Omega$.  It follows that the edge opposite to $e$ in the square, which is a part of $p$, is also a graph-loop.  This is a contradiction.

By repeating this process of obtaining $p_1$ from $p$ inductively, we obtain the result.
\end{proof}

\begin{lemma} \label{lemma_omega_paths}
	Let $\Omega$ be a folded, cube-full, $\G$-labeled complex. Let $p$ be a path in $\Omega$ with endpoints the vertices $u$ and $v$, and let $w$ be its label.
	\begin{enumerate}
		\item \label{lemma_omega_paths1} Any reduced word $w'$ equal to $w$ in $W_{\Gamma}$ is the label of some path $p'$ in $\Omega$ from $u$ to $v$. Furthermore, if $p$ does not traverse any graph-loops, then $p$ and $p'$ are homotopic relative to their endpoints.
		\item \label{lemma_omega_paths2} If $p$ has minimal length then $w$ is reduced.
		\item \label{lemma_omega_paths3} Let $p$ and $p'$ be paths in $\Omega$ with the same endpoints. If $p$ and $p'$ are homotopic relative to their
		endpoints, then their labels are equal as elements of  $W_{\Gamma}$.
	\end{enumerate}
\end{lemma}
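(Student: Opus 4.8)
The plan is to treat the three parts in the order (2), (1), (3), since each uses a slightly larger toolkit. For part (2), I would argue by contradiction: suppose $p$ has minimal length among paths in $\Omega$ from $u$ to $v$, but its label $w$ is not reduced. By the deletion property, some deletion applied to $w$ produces an expression $w_1$ with $|w_1| = |w| - 2$, and then Lemma~\ref{lemma_hausdorff_bound} (with $k = 1$) yields a path $p_1$ in $\Omega$ with the same endpoints $u, v$ and label $w_1$. But $|p_1| = |w_1| < |w| = |p|$, contradicting minimality; hence $w$ is reduced.

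For part (1), I would first apply the deletion property to fix a sequence of $k$ deletions turning $w$ into a reduced expression $\hat w$, and use Lemma~\ref{lemma_hausdorff_bound} to get a path $\hat p$ from $u$ to $v$ with label $\hat w$ which, when $p$ traverses no graph-loops, is homotopic to $p$ relative to endpoints. Now $\hat w$ and $w'$ are reduced words representing the same element of $W_\Gamma$, so by Tits' solution to the word problem there is a sequence of Tits moves from $\hat w$ to $w'$; since swaps preserve word length while cancellations strictly decrease it and no Tits move increases length, this sequence consists only of swap moves. Each swap is realized on the path as follows: at the location of the swapped subword $st$ (with $m(s,t)=2$), the two consecutive edges $e$ and $f$ carrying labels $s$ and $t$ meet at a common vertex and their labels span a $2$-clique of $\Gamma$, so cube-fullness provides a square $Q$ of $\Omega$ whose boundary contains $e$ and $f$; replacing the subpath $ef$ by the opposite two edges of $\partial Q$ changes the subword to $ts$ and is a homotopy across $Q$, hence relative to endpoints. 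Iterating over all the swaps produces a path $p'$ from $u$ to $v$ with label $w'$ that is homotopic rel endpoints to $\hat p$; composing homotopies, if $p$ has no graph-loops then $p'$ is homotopic to $p$ relative to endpoints.

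For part (3), write $w$ and $w'$ for the labels of $p$ and $p'$. Since $p$ and $p'$ share endpoints $u, v$ and are homotopic relative to endpoints, the loop $\ell = p\,(p')^{-1}$ based at $u$ is null-homotopic, so the van Kampen Lemma yields a disk diagram $\Phi\colon D \to \Omega$ whose boundary label is the label of $\ell$, namely $w(w')^{-1}$. Under the label-preserving combinatorial map $\Phi$, each $2$-cell of $D$ maps to a square of $\Omega$, and reading around such a square gives a word of the form $abab$ where $a$ and $b$ are distinct adjacent vertices of $\Gamma$ (opposite edges of a square in a $\Gamma$-labeled complex carry the same label, and the two labels at any vertex of the square span a $2$-clique); since $a$ and $b$ then commute in $W_\Gamma$, this word, and any cyclic conjugate of it, represents the identity. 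By the standard van Kampen property the boundary label of $D$ is a product of conjugates of these $2$-cell boundary words in the free group on $V(\Gamma)$, so $w(w')^{-1}$ represents the identity in $W_\Gamma$, i.e.\ $w = w'$ in $W_\Gamma$.

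I expect the main obstacle to be part (3): one must be careful that the homotopy relative to endpoints genuinely translates, via van Kampen, into a disk diagram whose $2$-cells encode only relations that hold in $W_\Gamma$ — concretely, that the boundary word of every square of $\Omega$ (read from any vertex, in either direction) is a word $abab$ with $m(a,b)=2$. Once Lemma~\ref{lemma_hausdorff_bound}, the deletion property, Tits' theorem and cube-fullness are in hand, the rest of the argument is routine bookkeeping.
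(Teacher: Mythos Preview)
Your proposal is correct and follows essentially the same approach as the paper: deletions via Lemma~\ref{lemma_hausdorff_bound} followed by swaps across squares for part~(1), a length-contradiction for part~(2), and a disk diagram for part~(3). The only cosmetic differences are that the paper proves~(1) first and then deduces~(2) from it (rather than proving~(2) independently as you do), and for~(3) the paper phrases the disk-diagram argument as ``a homotopy of $p$ to $p'$ in $D$ induces a sequence of Tits moves'' rather than invoking the van Kampen product-of-conjugates formulation---but these amount to the same thing.
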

\begin{proof}
		Let ${w''}$ be a reduced word equal to $w$ in $W_{\Gamma}$ which is obtained by a sequence of deletions. By Lemma~\ref{lemma_hausdorff_bound}, there is a path $p''$ with label $w''$ and the same endpoints as $p$. Furthermore, if $p$ does not traverse any 
		graph-loops, then $p''$ is homotopic relative endpoints to $p$
		
		By Tits' solution to the word problem, there is a sequence of words $w'' = w_0, w_1, \dots , w_n = w'$ so that $w_{i+1}$ is obtained from $w_i$ by swapping a pair of consecutive 
		commuting generators.
		
		Suppose that $w_0 = a_1\dots a_i st a_{i+1} \dots a_n$ and $w_1 =  a_1\dots a_i ts a_{i+1}$, where $s, t \in V(\Gamma)$, with $m(s,t) = 2$, and $a_j \in V(\Gamma)$ for each $j$. 		
		It follows there are consecutive edges of $p''$ with labels $s$ and $t$. As $\Omega$ is cube-full, these edges are in a square with label $stst$. By replacing these two edges of $p''$ with the other edges in this square, we obtain a new path whose label corresponds to swapping $s$ and $t$. Furthermore, this new path is homotopic, relative to endpoints, to $p''$. By iteratively repeating this process, we obtain the desired path $p'$.
		This proves (1).
		
		Let $p$ be a path with minimal length and label $w$. If $w$ is not reduced, let $w'$ be a reduced expression for $w$. By (\ref{lemma_omega_paths1}) there is a path $p'$ having the same endpoints as $p$, with label $w'$. However, $|p'| < |p|$, a contradiction.  This proves (2).
		
		Let $p$ and $p'$ be as in (3).	As the concatenation $pp'^{-1}$ is null-homotopic, there exists a disk diagram $D$ with boundary $p p'^{-1}$. It readily follows that a homotopy of $p$ to $p'$ in $D$ induces a sequence of Tits moves which show that $p = p'$.  This proves~(3).
\end{proof}

The next definition allows us to ``go backwards''  from a $\G$-labeled complex to a subgroup.

\begin{definition}[Associated subgroup] \label{def_associated_subgroup} 
	Let $\Omega$ be a connected, 
		$\G$-labeled complex with base vertex $B$.  Consider the set of all $g \in W_\G$ such that there exists a loop in $\Omega$ based at $B$ whose label is a word in $W_\G$ that represents of $g$.  This set is easily seen to be a subgroup of $W_\G$, and is called the \emph{subgroup of $W_\G$ associated to $(\Omega, B)$}.
\end{definition}

\begin{proposition} \label{prop_subgroup_gen_by_loops} 
	Let $(\Omega, B)$ be a connected, folded, cube-full, $\G$-labeled complex, and let $G$ be the subgroup of $W_\G$ associated to $(\Omega, B)$. Then $(\Omega, B)$ is a completion of $G$.
\end{proposition}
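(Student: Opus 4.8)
The plan is to verify the three conditions of Definition~\ref{def_completion} in turn. Condition~\eqref{def_completion_folded_cube_full} is immediate, since $\Omega$ is folded and cube-full by hypothesis. Condition~\eqref{def_completion_loops} is essentially the definition of the associated subgroup: by Definition~\ref{def_associated_subgroup}, every $g$ arising as the class of the label of a loop based at $B$ lies in $G$, so in particular the label of \emph{any} such loop represents an element of $G$.

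The only substantive point is condition~\eqref{def_completion_reduced_words}. Here I would argue as follows. Let $w$ be a reduced word in $W_\G$ representing an element $g \in G$. By definition of $G$ as the subgroup associated to $(\Omega, B)$, there is a loop $l_0$ based at $B$ whose label $w_0$ represents $g$; note that $w_0$ need not be reduced. Since $w$ and $w_0$ both represent $g$, they are equal as elements of $W_\G$. Now apply Lemma~\ref{lemma_omega_paths}\eqref{lemma_omega_paths1} to the path $p = l_0$ (which has $u = v = B$): since $\Omega$ is folded and cube-full, the reduced word $w$, being equal to $w_0$ in $W_\G$, is the label of some path $p'$ in $\Omega$ with the same endpoints as $l_0$, i.e.\ a loop based at $B$. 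This is exactly the loop required by condition~\eqref{def_completion_reduced_words}.

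There is no real obstacle here; the content of the proposition is packaged into Lemma~\ref{lemma_omega_paths}\eqref{lemma_omega_paths1} (and, further upstream, into Lemma~\ref{lemma_hausdorff_bound}), which uses cube-fullness to replace commuting subwords via squares and foldedness to cancel backtracks. The one point to be slightly careful about is that Lemma~\ref{lemma_omega_paths}\eqref{lemma_omega_paths1} preserves endpoints, so the loop property at $B$ is retained; beyond that, the argument is a direct assembly of the three conditions from the hypotheses and the already-established lemmas.
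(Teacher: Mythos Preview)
Your proof is correct and follows essentially the same approach as the paper: conditions~\eqref{def_completion_folded_cube_full} and~\eqref{def_completion_loops} are dismissed as immediate, and condition~\eqref{def_completion_reduced_words} is handled by taking any loop whose label represents $g$ and applying Lemma~\ref{lemma_omega_paths}\eqref{lemma_omega_paths1} to replace it with a loop labeled by the given reduced word.
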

\begin{proof}
Properties (\ref{def_completion_folded_cube_full}) and 
	(\ref{def_completion_loops}) in the definition of a completion of a subgroup (Definition~\ref{def_completion})  are immediate.  To check property (\ref{def_completion_reduced_words}), let $w$ be a reduced word representing an element $g$ of $G$.  By the definition of $G$, there exists a loop in $\Omega$ based at $B$ whose label, say $w'$, is a representative of $g$.  Since $w$ is a reduced representative of $w'$,
	Lemma~\ref{lemma_omega_paths} (\ref{lemma_omega_paths1}) implies that there is a loop in $\Omega$ based at $B$ with label $w$.  	
	\end{proof}

The following lemma describes the effect of changing the basepoint in a $\G$-labeled complex on the associated subgroup of $W_\G$.  

\begin{lemma} \label{lem_basepoint_change}
	Let $\Omega$ be a connected, folded, cube-full, $\G$-labeled complex.  Let $B_1$ and $B_2$ be vertices of $\Omega$, and for $i=1, 2$, let $G_i$ be the subgroup of $W_\G$ associated to $(\Omega, B_i)$.  
	Then $G_2 = w^{-1}G_1 w$, where $w$ is the label of some path from $B_1$ to $B_2$.    
\end{lemma}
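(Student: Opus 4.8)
The plan is to run the standard change-of-basepoint argument familiar from the theory of fundamental groups, transported to the combinatorial setting of associated subgroups. Since $\Omega$ is connected, its $1$-skeleton is connected, so there is a path $\rho$ from $B_1$ to $B_2$; let $w$ be its label. Two elementary observations drive everything: the label of a concatenation $\alpha\beta$ is the concatenation of the labels of $\alpha$ and $\beta$, and if $\rho$ has label $w = s_1 \cdots s_n$ then the reversed path $\rho^{-1}$ has label $s_n \cdots s_1$, which represents $w^{-1}$ in $W_\G$ precisely because each generator is an involution.

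First I would prove $w^{-1} G_1 w \subseteq G_2$. Given $h \in G_1$, Definition~\ref{def_associated_subgroup} gives a loop $\ell$ based at $B_1$ whose label represents $h$; then $\rho^{-1} \ell \rho$ is a loop based at $B_2$ whose label represents $w^{-1} h w$, so $w^{-1} h w \in G_2$. For the reverse inclusion $G_2 \subseteq w^{-1} G_1 w$ — equivalently $w G_2 w^{-1} \subseteq G_1$ — take $g \in G_2$ realized by a loop $\ell'$ based at $B_2$; then $\rho \ell' \rho^{-1}$ is a loop based at $B_1$ whose label represents $w g w^{-1}$, so $w g w^{-1} \in G_1$. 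Combining the two inclusions yields $G_2 = w^{-1} G_1 w$.

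There is no real obstacle: the argument uses neither the folded nor the cube-full hypothesis (these are carried along only because they are the ambient context for completions), and the sole point needing a word of care is the bookkeeping that a reversed path carries the inverse group element as its label, which is exactly where the relations $s^2 = 1$ are invoked.
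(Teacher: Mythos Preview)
Your proof is correct and takes essentially the same approach as the paper: both argue by concatenating a fixed path $\rho$ from $B_1$ to $B_2$ with loops at either basepoint to obtain the two inclusions $w^{-1}G_1w \subseteq G_2$ and $wG_2w^{-1} \subseteq G_1$. Your additional remarks---that the folded and cube-full hypotheses are not actually used, and that the relation $s^2=1$ is what makes the reversed path carry the inverse label---are accurate and worth noting.
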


\begin{proof}
	Let $\alpha$ be a path from $B_1$ to $B_2$ with label $w$.  If $\beta$ is a loop in $\Omega$ based at $B_1$ representing an element of $G_1$, then the concatenation $\alpha^{-1} \beta \alpha$ represents an element of $G_2$.  It follows that $w^{-1} G_1 w \subseteq G_2$.  Similarly $w G_2 w^{-1} \subseteq G_1$.
\end{proof}

It is easy to detect torsion in subgroups of RACGs using completions:
\begin{proposition} \label{prop_torsion_free}
	Let $G$ be a subgroup of a RACG $W_\G$ and let $(\Omega, B)$ be a completion for $G$.  Then $G$ has torsion if and only if there exists a loop in $\Omega$ (not necessarily passing through $B$) whose label is a reduced word representing an element in a finite special subgroup of $W_\G$. 
\end{proposition}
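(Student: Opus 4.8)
The plan is to compare torsion in $G$ with torsion in the subgroups $G_v < W_\G$ associated to $(\Omega,v)$ (Definition~\ref{def_associated_subgroup}) as $v$ ranges over the vertices of $\Omega$. The two facts I will lean on are that each $G_v$ is a $W_\G$-conjugate of $G$ (Lemma~\ref{lem_basepoint_change}), and that $(\Omega,v)$ is itself a completion of $G_v$ (Proposition~\ref{prop_subgroup_gen_by_loops}), so in particular every reduced word representing an element of $G_v$ labels a loop based at $v$. Throughout, $\ell(g)$ denotes the length of a shortest word representing $g$, and ``loop'' should be read as one with nonempty label (equivalently, for a reduced label, representing a nontrivial element), since the constant loop at $B$ otherwise makes the right-hand side of the statement hold automatically.

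\emph{If.} Suppose $l$ is a loop based at a vertex $v$ whose label $w$ is a reduced word representing an element $g$ of a finite special subgroup $W_\Delta$ (so $\Delta$ spans a clique and $W_\Delta$ is finite). Then $g \neq 1$, since a nonempty reduced word is not an expression for the identity, and $g$ has finite order, being an element of the finite group $W_\Delta$. Because $l$ is a loop based at $v$ with label $w$, we have $g \in G_v$, and $G_v = u^{-1} G u$ for some $u \in W_\G$ by Lemma~\ref{lem_basepoint_change}; hence $u g u^{-1}$ is a nontrivial torsion element of $G$.

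\emph{Only if.} Suppose $G$ has torsion, and let $\mathcal{T}$ be the set of pairs $(v,g)$ with $v$ a vertex of $\Omega$ and $g \in G_v \setminus \{1\}$ of finite order. Since $G = G_B$ has torsion, $\mathcal{T} \neq \emptyset$, so I may pick $(v,g) \in \mathcal{T}$ with $\ell(g)$ minimal. Let $\Delta = \text{Support}(g)$, which is well defined because all reduced expressions for $g$ use the same letters. I claim $\Delta$ spans a clique in $\G$. Granting the claim, $W_\Delta$ is a finite special subgroup containing $g \neq 1$, and since $g \in G_v$ and $(\Omega,v)$ is a completion of $G_v$ by Proposition~\ref{prop_subgroup_gen_by_loops}, every reduced expression $w$ for $g$ labels a loop based at $v$; this loop is the one required.

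\emph{Proof of the claim.} I will show that $g$ is cyclically reduced, i.e.\ $\ell(sgs) \ge \ell(g)$ for every $s \in V(\G)$; the claim then follows from the standard fact that a finite-order, cyclically reduced element of a Coxeter group lies in a finite special subgroup (see, e.g., \cite{Davis, BB}), so that $\text{Support}(g)$ spans a clique. Suppose instead $\ell(sgs) < \ell(g)$ for some $s$. Then $\ell(sgs) = \ell(g) - 2$, and a short length computation produces a reduced expression $sws$ for $g$ in which $w$ is a reduced expression for $sgs$. Since $g \in G_v$ and $sws$ is reduced, $sws$ labels a loop based at $v$; write this loop as $e\, p\, f$, where $e$ and $f$ are the two edges labeled $s$ and $p$ is the subpath labeled $w$, running from a vertex $v_1$ to a vertex $v_2$. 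The key point is that $p\, f\, e$ is a loop based at $v_1$ whose label is the word $wss$, which represents $sgs$; hence $sgs \in G_{v_1}$. As $\ell(g) = \ell(sgs) + 2$ and $g \neq 1$ force $sgs \neq 1$, the pair $(v_1, sgs)$ lies in $\mathcal{T}$ with $\ell(sgs) < \ell(g)$, contradicting minimality. I expect the main obstacle to be exactly this last step, namely realizing a length-decreasing conjugation by a generator inside some $G_{v_1}$; it works because the possibly non-reduced loop $p\, f\, e$ already certifies $sgs \in G_{v_1}$, and the completion property of $(\Omega, v_1)$ then upgrades this to a loop with reduced label. The Coxeter-group input invoked above is standard; the rest is bookkeeping with loops in $\Omega$.
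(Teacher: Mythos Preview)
Your proof is correct and takes a somewhat different route from the paper's.

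For the ``if'' direction, the paper argues directly: it picks a letter $s$ appearing once in the reduced label $r$, concatenates with a path from $B$ to obtain a loop at $B$ labeled $hrh^{-1}$, and uses the parity of $s$ to see this word is nontrivial. You instead invoke Lemma~\ref{lem_basepoint_change} to identify $G_v$ as a conjugate of $G$ and transport the torsion element back; this is cleaner once that lemma is in hand.

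For the ``only if'' direction, the paper takes a torsion element $g\in G$, writes it as a reduced word $wuw^{-1}$ with $u$ in a finite special subgroup and $w$ minimal, and then uses foldedness in one stroke: the subpaths of the loop at $B$ labeled $w$ and $w^{-1}$ must coincide, so the middle subpath labeled $u$ is itself a loop at the vertex reached by $w$. Your minimization over pairs $(v,g)$ reaches the same endpoint one generator at a time: if the minimizer were not cyclically reduced you shift to an adjacent basepoint and shorten. The paper's argument is more self-contained; yours is more structural, leaning on Proposition~\ref{prop_subgroup_gen_by_loops} and Lemma~\ref{lem_basepoint_change}, at the cost of citing the (standard) fact that cyclically reduced torsion elements lie in finite special subgroups.

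One simplification available to you: in the length-decreasing step the detour through the non-reduced loop $p\,f\,e$ is unnecessary. Since $\Omega$ is folded, there is at most one edge labeled $s$ incident to $v$, so $e$ and $f$ are the same edge and hence $v_1=v_2$; thus $p$ itself is already a loop at $v_1$ with the reduced label $w$ representing $sgs$.
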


\begin{proof}
	If $g\in G$ has finite order, then $g$ is conjugate into a finite special subgroup of $W_\G$ (see \cite[Theorem 12.3.4]{Davis} for instance).  Write $g=w u w^{-1}$, where 
	$u$ and $w$ are reduced and $w$ is the shortest word for which such an expression for $g$ exists.  
	We claim 
	$wuw^{-1}$ is reduced.  If not, then a deletion is possible.  It follows from our choices that some letter, 
	say $s$, occurring in $w$ or $w^{-1}$ cancels with an occurrence of $s$ in $u$.  Since $u$ belongs to a finite special subgroup, $s$ commutes with $u$.  Thus we can write $g = w_1 u w_1^{-1}$, where $w = w_1 s$, a contradiction.

	Since $wuw^{-1}$ is reduced, there is a loop $\alpha$  in $\Omega$ based at $B$ with label $wuw^{-1}$.  
	Let $v$ be the vertex along $\alpha$ such that the label of $\alpha $ between $B$ and $v$ is $w$. 
	As $\Omega$ is folded, 
		the subpaths of $\alpha$ with labels $w$ and $w^{-1}$ are identified, and
there is a loop
	 based at $v$ with label $u$. This proves one direction of the claim.

	For the other direction, suppose that there is a loop based at some vertex $x$ of $\Omega$ with label a reduced word $r$ representing an element in a finite special subgroup of $W_\G$. As finite special subgroups of $W_\G$ correspond to clique subgraphs of $\G$, it follows that the support of $r$ is contained in a clique of $\G$. Let $s \in V(\G)$ be a letter in $r$. As $r$ is reduced, there is exactly one occurrence of $s$ in $r$. Let $h$ be the label of a path from $B$ to $x$ in $\Omega$. It follows that there is a loop in $\Omega$ based at $B$ with label $k = hrh^{-1}$. Furthermore, as there are an odd number of occurrences of the letter $s$ in the word $k$, 
     the deletion property implies that $k$ cannot be an expression for the identity element. As $k$ has finite order, it follows that $G$ has torsion.
\end{proof}

For torsion-free subgroups, the following holds: 

\begin{theorem} \label{thm_fundamental_group}
	Suppose $G$ is a torsion-free subgroup of the RACG $W_{\Gamma}$. Then the fundamental group of any
	completion $\Omega$ of $G$ is isomorphic to~$G$. 
\end{theorem}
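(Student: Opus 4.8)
The plan is to build an explicit isomorphism $\phi\colon \pi_1(\Omega,B)\to G$ out of the labeling. Given a loop $l$ in $\Omega$ based at $B$, its label is a word in the standard generators of $W_\Gamma$ and hence represents an element of $W_\Gamma$. By Lemma~\ref{lemma_omega_paths}~(\ref{lemma_omega_paths3}), two loops at $B$ that are homotopic rel $B$ have labels that are equal in $W_\Gamma$, so this assignment descends to a well-defined map $\phi\colon\pi_1(\Omega,B)\to W_\Gamma$; it is a homomorphism since concatenation of loops corresponds to concatenation of labels, which is multiplication in $W_\Gamma$. By Definition~\ref{def_completion}~(\ref{def_completion_loops}) the image of $\phi$ lies in $G$, and by Definition~\ref{def_completion}~(\ref{def_completion_reduced_words}) every element of $G$ is represented by a loop at $B$, so $\phi$ maps $\pi_1(\Omega,B)$ onto $G$. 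Everything then reduces to showing that $\phi$ is injective.

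The key preliminary step is to use torsion-freeness to show that $\Omega$ contains no graph-loops. Indeed, if $e$ were a graph-loop with label $s\in V(\Gamma)$, then the word $s$ is reduced and represents the nontrivial element $s$ of the finite special subgroup $\langle s\rangle\leq W_\Gamma$, and $e$ itself is a loop with label $s$; so Proposition~\ref{prop_torsion_free} would force $G$ to contain torsion, contrary to hypothesis. (Equivalently, if $h$ is the label of a path from $B$ to an endpoint of $e$, then $hsh^{-1}$ is the label of a loop at $B$, hence lies in $G$, but it is a conjugate of a generator and so has order two.)

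To finish, suppose $[l]\in\ker\phi$, i.e.\ $l$ is a loop at $B$ whose label $w$ represents the identity of $W_\Gamma$. The only reduced word equal to the identity is the empty word. Since $\Omega$ has no graph-loops, $l$ traverses none, so Lemma~\ref{lemma_omega_paths}~(\ref{lemma_omega_paths1}), applied with the reduced word taken to be the empty word, produces a path $p'$ with empty label that is homotopic to $l$ rel $B$. A path whose label is literally the empty word has no edges, hence is the constant path at $B$; thus $l$ is null-homotopic and $[l]=1$. Therefore $\ker\phi$ is trivial and $\phi\colon\pi_1(\Omega,B)\to G$ is an isomorphism.

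I expect the only real subtlety to be the graph-loop-free claim, since it is precisely what makes the ``homotopic rel endpoints'' conclusion of Lemma~\ref{lemma_omega_paths}~(\ref{lemma_omega_paths1}) available for an arbitrary loop $l$; once that is settled, together with the well-definedness provided by Lemma~\ref{lemma_omega_paths}~(\ref{lemma_omega_paths3}), the argument is immediate. (One could alternatively invoke Proposition~\ref{prop_torsion_free_npc} on non-positive curvature, but the direct argument above is self-contained and uses only Proposition~\ref{prop_torsion_free} and Lemma~\ref{lemma_omega_paths}.)
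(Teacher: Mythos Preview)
Your proof is correct and follows essentially the same approach as the paper's: define $\phi$ via labels, use Lemma~\ref{lemma_omega_paths}(\ref{lemma_omega_paths3}) for well-definedness, Definition~\ref{def_completion} for surjectivity into $G$, Proposition~\ref{prop_torsion_free} to rule out graph-loops, and then Lemma~\ref{lemma_omega_paths}(\ref{lemma_omega_paths1}) with the empty word for injectivity. Your write-up is in fact slightly more explicit than the paper's at a couple of points (e.g.\ the surjectivity and the graph-loop argument), but the structure and the key lemmas invoked are identical.
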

\begin{proof}
	Let $B$ be a vertex of $\Omega$.  
	We  define the isomorphism $\phi: \pi_1(\Omega, B) \to G$ as follows. Let $\alpha$ be a loop in $\Omega$ based at $B$ representing an element of $ \pi_1(\Omega, B) $.
	 We may assume that $\alpha$ is contained in the $1$-skeleton of $\Omega$.
By property (\ref{def_completion_loops}) of the definition of a completion, the label of $\alpha$ represents an element of $G$, and we define $\phi(\alpha)$ to be this element.  
	To see that $\phi$ is well-defined, let $\alpha$ and $\alpha'$ be loops based at $B \in \Omega$ that are homotopic relative basepoint.  Then by Lemma~\ref{lemma_omega_paths}(\ref{lemma_omega_paths3}), the labels of $\alpha$ and $\alpha'$ are equal as elements of $G$. 
	
	It is clear that $\phi$ is a surjective	homomorphism.
	To  check that $\phi$ is injective, suppose that $\phi(\alpha)$ is a word in $W_\G$ equal in $G$ to the identity element. Note that $\Omega$ cannot contain a graph-loop by Proposition~\ref{prop_torsion_free}. Thus, by
Lemma~\ref{lemma_omega_paths}(\ref{lemma_omega_paths1}) we conclude that $\alpha$ is null-homotopic.
\end{proof}

The next result, which in particular implies that any standard completion of a finite $\G$-labeled complex has finitely many hyperplanes, is used in several proofs in this article. 

\begin{proposition} \label{prop_omega_hyps}
	Let $X$ be a $\G$-labeled complex. Let 
	\[X = \Omega_0 \to \Omega_1 \to \cdots \to \Omega\] 
	be a completion sequence for $X$. Then, for all $i \ge 0$, every hyperplane in $\Omega_i$ intersects the image of $X$ in $\Omega_i$. Consequently, every	hyperplane in $\Omega$ intersects the image of $X$ in $\Omega$. 
\end{proposition}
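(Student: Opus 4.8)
The plan is to induct on $i$, the position in the completion sequence, and to track hyperplanes through each of the three types of operations. The base case $i=0$ is trivial: $\Omega_0 = X$, so the image of $X$ is all of $\Omega_0$, and every hyperplane meets it. For the inductive step, suppose the claim holds for $\Omega_i$, and let $f_i : \Omega_i \to \Omega_{i+1}$ be the next operation. Let $H$ be a hyperplane of $\Omega_{i+1}$; I must produce an edge of $H$ lying in the image of $X$ in $\Omega_{i+1}$. The key observation, which I would isolate as a small sublemma, is that each of the three operations is surjective on the set of edges: every edge of $\Omega_{i+1}$ is the image of an edge of $\Omega_i$ under $f_i$. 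This is immediate for folds (a quotient) and cube identifications (the $1$-skeleton is unchanged); for a cube attachment, the newly added edges are parallel to the edges $e_1, \dots, e_n$ being attached along, so each new edge is dual to the same hyperplane as some $e_j \in \Omega_i$.

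With the sublemma in hand, I would argue as follows. Since hyperplanes are determined by their dual edges, and since dual curves/hyperplanes are preserved under these operations in the sense that $f_i$ carries a hyperplane of $\Omega_i$ into a hyperplane of $\Omega_{i+1}$ (edges sharing a midcube map to edges sharing a midcube), every hyperplane $H$ of $\Omega_{i+1}$ is the image $f_i(H')$ of some hyperplane $H'$ of $\Omega_i$: pick any edge $e$ dual to $H$, lift it to an edge $\bar e$ of $\Omega_i$ by the sublemma, and let $H'$ be the hyperplane of $\Omega_i$ dual to $\bar e$. By the inductive hypothesis, $H'$ intersects the image $X_i$ of $X$ in $\Omega_i$, i.e.\ there is an edge $d$ of $X_i$ dual to $H'$. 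Then $f_i(d)$ is an edge of $\Omega_{i+1}$ dual to $H$ (since $f_i(H') \subseteq H$), and $f_i(d)$ lies in the image $X_{i+1}$ of $X$ in $\Omega_{i+1}$ because the image of $X$ is carried forward by $f_i$. This completes the induction.

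For the final sentence of the statement, I would pass to the direct limit. Let $H$ be a hyperplane of $\Omega$ and let $e$ be an edge dual to $H$. By the definition of a direct limit, $e$ is the image of an edge $e_i$ in some $\Omega_i$, and the hyperplane $H_i$ of $\Omega_i$ dual to $e_i$ maps into $H$. By the result already proved, $H_i$ meets the image $X_i$ of $X$ in $\Omega_i$, say in an edge $d_i$; the image of $d_i$ in $\Omega$ is then an edge dual to $H$ lying in the image $\hat f(X)$ of $X$ in $\Omega$. Hence $H$ intersects $\hat f(X)$.

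The step I expect to be the main technical point is the sublemma on the cube attachment operation — specifically, verifying that each newly created edge is genuinely parallel (dual to the same hyperplane) to one of the edges $e_1,\dots,e_n$, rather than to some altogether new hyperplane. This follows from the description of the cube attachment operation in Section~\ref{subsec:completion}: each new edge is dual to a midcube of the attached cube $c$, and that midcube is in turn dual to a unique edge among $e_1,\dots,e_n$, so the new edge and that $e_j$ share a midcube of $c$ and therefore lie on a common hyperplane. Once this is pinned down, the fold and cube-identification cases are essentially formal, and the induction and the limiting argument are routine.
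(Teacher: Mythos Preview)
Your proof is correct and follows essentially the same route as the paper's: induction on $i$, with the key point being that cube attachment creates no new hyperplanes since each midcube of the attached cube extends a hyperplane already dual to one of the $e_j$. One small wording issue: your sublemma as stated (``surjective on the set of edges'') is not literally true for cube attachment, since the new edges are not images under $f_i$; but you immediately give the correct content (each new edge is parallel to some $e_j$), and your subsequent use of the sublemma relies only on this weaker fact that every hyperplane of $\Omega_{i+1}$ contains an edge in $f_i(\Omega_i)$, so the argument goes through.
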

\begin{proof}
	We prove the first claim 
	by induction on $n$. The base case for $\Omega_0$ is trivially true. 
	Suppose every hyperplane in $\Omega_{n-1}$ intersects the image of $X$. 
	It is clear that cube identifications and folds do not produce new hyperplanes. Now suppose that $\Omega_n$ is obtained by attaching a $k$-cube $c$ 
	along edges $e_1, \dots, e_k$, all incident to a common vertex of $\Omega_{n-1}$. Since each midcube of $c$ extends a hyperplane dual to one of the $e_i$'s, it follows that no new hyperplanes are created 
in~$\Omega_n$.   Hence, the claim also holds for $\Omega_n$. 
	The claim follows for the completion $\Omega$ as any hyperplane in $\Omega$ contains the image of some hyperplane in $\Omega_n$ for some~$n$.
\end{proof}

In a few arguments throughout this article, we will have a finite, cube-full, folded $\G$-labeled complex, and we will want to attach certain additional graph-loops to this complex. We will then need the original complex to be isometrically embedded in the completion of the new complex. The following lemma guarantees this property.

\begin{lemma} \label{lemma_adding_graph_loops}
	Let $\Omega$ be a finite, cube-full, folded $\G$-labeled complex. Let $\Omega'$ be a complex obtained by attaching a set $\mathcal{L}$ of labeled graph-loops to vertices of $\Omega$. Suppose that the label of an attached graph-loop is distinct from the labels of every other edge  incident to the vertex it is attached to, i.e., $\Omega'$ is a folded complex. Then there exists a completion $\Omega''$ of $\Omega'$ such that
	\begin{enumerate}
		\item \label{lemma_adding_graph_loops_isometry} The natural inclusion $i: \Omega \xhookrightarrow{} \Omega''$ is an isometry.
		\item \label{lemma_adding_graph_loops_paths} Every edge of $\Omega''$ that is not in $i(\Omega)$ is a graph-loop attached to a vertex $v \in i(\Omega)$. Let $l$ be such a graph-loop 
		and let $s$ be its label. Then there exists a graph-loop in $\mathcal{L}$ with label $s$ attached to a vertex $u \in \Omega$ and a path 
 in $i(\Omega)$ 
		from $i(u)$ to $v$ whose label 
is a word in $\text{link}(s) \subset V(\G)$. 
		\item \label{lemma_adding_graph_loops_bound} The number of operations performed to obtain $\Omega''$ from $\Omega'$ is finite and only depends on the number of edges of $\Omega'$.
	\end{enumerate}
\end{lemma}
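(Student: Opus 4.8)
The plan is to build $\Omega''$ by running a standard completion starting from $\Omega'$, and to control the process by tracking how far away from $i(\Omega)$ the new cells can possibly reach. First I would observe that $\Omega'$ is already folded by hypothesis, so the first phase of the standard completion is a sequence of cube attachments. The key claim to establish, by induction on the completion sequence $\Omega' = \Omega'_0 \to \Omega'_1 \to \cdots$, is the following invariant: each $\Omega'_j$ contains an isometrically embedded copy of $\Omega$, and every cell of $\Omega'_j$ not in this copy is a graph-loop attached to a vertex of the copy of $\Omega$, whose label $s$ is such that there is a path in the copy of $\Omega$ from some vertex carrying an $\mathcal{L}$-graph-loop labeled $s$ to the attachment vertex, with label a word in $\mathrm{link}(s)$. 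Granting this invariant through the whole (a priori infinite) sequence, properties~(\ref{lemma_adding_graph_loops_isometry}) and~(\ref{lemma_adding_graph_loops_paths}) follow by passing to the direct limit, and the finiteness in~(\ref{lemma_adding_graph_loops_bound}) follows because the invariant bounds the number of possible vertices and edges in any $\Omega'_j$ (there are at most finitely many vertices of $\Omega$, at most $|V(\G)|$ possible labels, and the graph-loops attached to a given vertex have distinct labels since $\Omega'_j$ stays folded), so the sequence of cell-count-reducing folds/cube-identifications interleaved with cube attachments must terminate; the number of steps is then bounded in terms of the size of $\Omega'$.

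The heart of the argument is the inductive step, which splits according to the operation $f_j \colon \Omega'_j \to \Omega'_{j+1}$. For a cube attachment, one attaches a $k$-cube $c$ along edges $e_1, \dots, e_k$ incident to a common vertex $v$ whose labels $s_1, \dots, s_k$ span a clique in $\G$. Here I would argue that at most one of the $e_i$ can fail to lie in the copy of $\Omega$: if two of them, say $e_1, e_2$, were graph-loops outside $\Omega$, then $s_1$ and $s_2$ are adjacent in $\G$, but since $\Omega'_j$ is folded and the old complex $\Omega$ was cube-full and all of $e_3, \dots, e_k$ lie in $\Omega$, one checks that the presence of the clique forces the attachment to have already happened inside $\Omega$, or that $v$ together with these edges would contradict foldedness — the point being that the new graph-loops were attached one-at-a-time to vertices of $\Omega$ and cannot interact to create genuinely new cube-attachment sites off of $\Omega$. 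In the case where exactly one $e_i$ is a new graph-loop at $v\in\Omega$, the attached cube $c$ adds new edges, each dual to a midcube of $c$ hence carrying a label among $s_1, \dots, s_k$; the new edges carrying labels of the $e_i$ that lie in $\Omega$ are ``parallel copies'' inside the carrier of $c$, and I would use the clique/commuting structure to exhibit, for each new graph-loop created, the required path in $i(\Omega)$ with label in $\mathrm{link}(s)$ — concretely, the label of the path from $v$ to the far copy of the graph-loop is a subword of $s_1 \cdots \widehat{s_i} \cdots s_k$, all of whose letters commute with $s_i = s$. The folds and cube identifications are easier: a fold or identification never introduces a new hyperplane (cf.\ Proposition~\ref{prop_omega_hyps}) and, since $\Omega$ is already folded and cube-full, no fold can collapse an edge of $i(\Omega)$ with another edge of $i(\Omega)$, nor collapse an edge of $i(\Omega)$ with a new graph-loop (their attachment vertices in $i(\Omega)$ together with the matching labels would violate foldedness of the copy of $\Omega$); so a fold either identifies two of the new graph-loops or folds a new graph-loop against a copy of an $\Omega$-edge inside a just-attached cube, and in each case one checks the invariant persists, possibly moving the witnessing path to an adjacent vertex via a commuting letter. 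The isometry statement in~(\ref{lemma_adding_graph_loops_isometry}) follows because any path in $\Omega''$ between two vertices of $i(\Omega)$ can be shortened to one inside $i(\Omega)$: detours through new graph-loops contribute words that are either trivial or conjugate (by the commuting structure) to shorter $\Omega$-words, so using that $\Omega$ is folded and cube-full one applies Lemma~\ref{lemma_omega_paths} to replace the path by a geodesic inside $i(\Omega)$ of no greater length.

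The step I expect to be the main obstacle is exactly the cube-attachment case: precisely pinning down that no cube attachment with a new site genuinely outside $i(\Omega)$ ever occurs, and carefully producing the witnessing path with label in $\mathrm{link}(s)$ — this requires a somewhat delicate bookkeeping of which edges of a freshly attached cube are ``old'' and which are ``new,'' and using that the labels on a cube form a clique so that every letter commutes with $s$. A secondary subtlety is making the termination count in~(\ref{lemma_adding_graph_loops_bound}) genuinely depend only on the number of edges of $\Omega'$: for this I would note that the invariant caps the number of vertices of any $\Omega'_j$ by $|V(\Omega)|$ and the number of edges by $|V(\Omega)| \cdot |V(\G)|$ plus the edges of $i(\Omega)$, so all the $\Omega'_j$ have uniformly bounded size, whence the standard completion, which strictly decreases cell count on each fold/identification and only finitely often can attach a cube before being forced to fold, must stabilize after a number of steps bounded in terms of $|E(\Omega')|$.
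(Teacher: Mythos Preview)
Your overall strategy---run a completion from $\Omega'$ and maintain an invariant that the only new $1$-cells are graph-loops at vertices of $i(\Omega)$---matches the paper's. However, the core step of your inductive argument has a genuine gap.

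Your claim that ``at most one of the $e_i$ can fail to lie in the copy of $\Omega$'' is simply false. Take $\Omega$ to be a single vertex $v$ (trivially folded and cube-full), and let $\mathcal{L}$ consist of graph-loops at $v$ labeled by the vertices of a clique in $\G$. Then the first cube attachment uses an $n$-tuple $e_1,\dots,e_n$ \emph{all} of which are new graph-loops. More generally, several of the $e_i$ can be new graph-loops at the same vertex with pairwise commuting labels; nothing in foldedness or cube-fullness of $\Omega$ prevents this. The justification you sketch (``the presence of the clique forces the attachment to have already happened inside $\Omega$'') does not go through, because the relevant edges are not in $\Omega$ at all.

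What actually makes the argument work---and what your proposal is missing---is the following. Split $e_1,\dots,e_n$ into those that are \emph{not} graph-loops (say $e_1,\dots,e_k$) and those that \emph{are} (say $e_{k+1},\dots,e_n$); the non-graph-loop edges automatically lie in $\Omega$, and since $\Omega$ is cube-full they already span a $k$-cube $q\subset\Omega$. One then attaches the $n$-cube $c$, identifies $q$ with a face of $c$, and performs all folds internal to $c$; the resulting $1$-skeleton is exactly $q$ together with a graph-loop labeled $s_i$ (for each $k+1\le i\le n$) at every vertex of $q$. The remaining folds are then between a new graph-loop in $c$ and an edge $e$ of $\Omega'$ sharing a vertex and label, and one argues (using cube-fullness of $\Omega$ and the path back to $v$ inside $q$) that such an $e$ must itself be a graph-loop. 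This is how the invariant is recovered \emph{after} folding; note in particular that your invariant cannot hold at every $\Omega'_j$ (immediately after a cube attachment there are new non-graph-loop edges), only at the folded stages. The paper handles this by checking the invariant one cube-attachment-plus-all-folds cycle at a time.

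Your remarks on termination and the isometry in~(\ref{lemma_adding_graph_loops_isometry}) are fine in outline, but they rest on the invariant, so they are not yet justified until the cube-attachment step is repaired along the lines above.
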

\begin{proof}
	We build the completion $\Omega''$ by alternately performing a single cube attachment  operation followed by all possible fold and cube identification operations. 
%
	Since $\Omega'$ is folded by assumption, 
	each cube attachment operation is only done to a folded complex. We show that each folded complex in this completion sequence satisfies the conclusion of the lemma. 
	
	Let $v$ be a vertex of $\Omega'$  incident to edges $e_1, \dots, e_n$ with distinct pairwise commuting labels $s_1, \dots , s_n$, such that 
	$v \cup e_1 \cup \dots \cup e_n$ are not  contained in a common $n$-cube and that $n$ is maximal out of such possible choices. Consider the 
	operation which attaches a labeled cube to $v \cup e_1 \cup \dots \cup e_n$. Let $c$ denote the image of this cube in the resulting complex.

	By possibly relabeling, we may assume that there exists $0 \le k \le n$ such that 
	if $i \le k$, then $e_i$ is not a graph-loop 
	(and is therefore necessarily in $\Omega$), while if $i > k $, then $e_i$ is a graph-loop (and may or may not be in $\Omega$).  Note that if $k = 0$, then $e_i$ is a graph-loop for all $i$.

	If $k>0$, it follows (since $\Omega$ is cube-full) that 
$e_1, \dots, e_k$ are contained in a common $k$-cube $q$ of $\Omega$. We perform fold and cube identification operations to identify $q$ to a face of $c$.  
Otherwise if $k=0$, define $q$ to be the $0$-cube $v$.
	
	Next, starting at $v$, we perform all possible fold operations to pairs of edges which are both in $c$. It readily follows that the $1$-skeleton of this resulting complex will consist of $q$ 
	 and a graph-loop with label $s_i$, for each $k+1 \le i \le n$ and each vertex of $q$. By a slight abuse of notation, we call this resulting complex $c$ as well.
	
	We now check what other fold operations are possible. As $\Omega'$ is folded, the only type of possible additional fold operation would consist of an edge $e$ in $\Omega'$ and a graph-loop $f$ in $c$ such that $e$ and $f$ have the same label, $s$, and share an endpoint 
$u \in q$.
	
	We claim that $e$ must be a graph-loop. This is clear if $e$ is in $\Omega' \setminus \Omega$. 
	Suppose $e \in \Omega$. There is a path $p$ from $u$ to $v$ in $q$	with label a word consisting only of generators that are distinct from and commute with $s$. Let $e_i$ be the edge at $v$ with label $s$. Then $e_i$ must be a graph-loop since otherwise $e$ would have already been folded onto $c$. As $\Omega$ is cube-full and $p\cup e \subset \Omega$, we conclude that $e_i \in \Omega$.  Thus since $e_i$ is a graph-loop, $e$ is a graph-loop as well. 

	Thus, we simply fold $e$ onto $f$. 	After performing all such folds to $\Omega' \cup c$ we obtain the complex $\Omega'_1$. After possibly applying some cube identification operations, it follows that $\Omega'_1$ is folded. Furthermore, the $1$-skeleton of $\Omega'_1$ is the same as the $1$-skeleton $\Omega'$ with the possible addition of some new graph-loops. Thus, $\Omega$ is isometrically embedded in this new complex. The second conclusion of the lemma also readily follows from the construction.
	
	We then iteratively repeat such cube attachments followed by such a sequence of fold and cube identification operations. After each iteration we have a folded complex satisfying the first two claims of the lemma. As the number of such operations is bounded by a function of the number of edges of $\Omega'$, the third claim of the lemma follows.
\end{proof}

\section{Core graphs}\label{subsec_core}
In general, a subgroup $G$ of $W_\G$ does not have a unique completion.   
However, we now
use completions to define a certain graph associated to $G$ called a core, 
which is unique.  
This is used in Theorem~\ref{thm_normal}
to obtain a characterization for normality of a subgroup. 

\begin{definition}[Core graph]\label{def_core_graph}
Given a $\G$-labeled complex $(\Omega, B)$, define its \emph{core graph at $B$}, denoted $C(\Omega, B)$, to be the $1$-dimensional subcomplex  consisting of the union of all the loops in $\Omega$ based at $B$ whose labels are reduced words in $W_\G$.    
\end{definition}

The core graph  of a completion is not necessarily its entire $1$-skeleton.
For instance, 
in Figure~\ref{fig_completion_ex1}, 
the core graph is  the part of $\Omega^{(1)}$ (the 1-skeleton of $\Omega$)
that is shown in black.  That is, 
$C(\Omega, B)$ is $\Omega^{(1)}$ minus the open star of the vertex $v$ diametrically opposite to $B$. The paths based at $B$ with (reduced) labels $adabec, aeacdb$ and $adcbea$ show that $C(\Omega, B)$ contains this subcomplex.  On the other hand, let $v_a, v_b$ and $v_c$ denote the vertices adjacent to $B$ in $\Omega^{(1)}$. 
Any loop at $B$ which passes through $v$ contains a subpath which passes through~$v$, starts and ends in $\{v_a, v_b, v_c\}$, and, apart from its endpoints, remains in the component of $\Omega^{(1)} \setminus \{v_a, v_b, v_c\}$ containing $v$.  
We leave it to the reader to check that any such subpath $\gamma$ must cross some midcube twice. 
This, together with the fact that the label of $\gamma$ only uses the letters
$a, b,$ and $c$, which pairwise commute, implies that $\gamma$ cannot have 
reduced label.

The following proposition states that core graphs are unique in a certain sense.  We omit the proof as it very closely follows the proof of~\cite[Theorem 5.2]{KM}, which is 
the corresponding result in the free group setting.

\begin{proposition}\label{prop_unique_core}
Let $G$ be a subgroup of $W_\G$, and let $(\Omega_1, B_1)$ and $(\Omega_2, B_2)$ be completions of $G$. 
Then there is an isomorphism $f: C_1 (\Omega_1, B_1)\to C_2(\Omega_1, B_2)$, such that $f(B_1) = B_2$.  
\qed
\end{proposition}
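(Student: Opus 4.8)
The plan is to mimic the Stallings/Kapovich--Miasnikov uniqueness argument in the free group setting (\cite[Theorem 5.2]{KM}), but using reduced words in $W_\G$ in place of reduced words in the free group. First I would show that the core graph $C(\Omega, B)$ of any completion $(\Omega, B)$ of $G$ is \emph{folded} in the graph-theoretic sense: since $\Omega$ itself is folded, no two edges incident to a common vertex share a label, and this property is inherited by the subcomplex $C(\Omega,B)$. I would also record that $C(\Omega,B)$ is connected (it is a union of loops through $B$), that every vertex of $C(\Omega,B)$ lies on at least one reduced loop through $B$, and — crucially — that $C(\Omega, B)$ is \emph{clean} in the sense that there are no ``dead ends'': by Lemma~\ref{lemma_omega_paths}(\ref{lemma_omega_paths1}), every reduced word representing an element of $G$ labels a loop at $B$, and conversely by Definition~\ref{def_completion}(\ref{def_completion_loops}) every loop label represents an element of $G$; combining these, an edge $e$ of $C(\Omega,B)$ lies on a reduced loop at $B$ precisely when the corresponding partial labels extend to reduced words for elements of $G$.

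The heart of the argument is then to show that $C(\Omega, B)$ depends only on $G$. For this I would define, for a vertex $v$ of $C(\Omega,B)$, the set $L_v$ of reduced words $w$ in $W_\G$ that label some path from $B$ to $v$ lying in $C(\Omega, B)$, and similarly a set of ``reduced cosets''. The key claim is that two vertices $v_1 \in C(\Omega_1, B_1)$ and $v_2 \in C(\Omega_2, B_2)$ should be identified by the isomorphism precisely when $L_{v_1} = L_{v_2}$ as subsets of $W_\G$, and that the edge structure is determined by appending a single letter: there is an edge labeled $s$ from $v$ to $v'$ in the core iff some $w \in L_v$ with $ws$ reduced satisfies $ws \in L_{v'}$. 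Both directions — that $L_v$ determines $v$ uniquely within a single core, and that the assignment $v \mapsto L_v$ is ``the same'' across the two completions — follow from Lemma~\ref{lemma_omega_paths}(\ref{lemma_omega_paths1}) (a path with a given label can be rewritten, relative to endpoints, to realize any equivalent reduced word) together with the foldedness of the core: foldedness guarantees that a reduced word, read off from $B$, traces out a \emph{unique} path in $C(\Omega,B)$, so the endpoint $v$ is a well-defined function of $w \in L_B := \{$reduced words for elements of $G\}$, and $L_v$ is exactly the set of $w' \in L_B$ tracing to the same vertex. This gives a bijection $f$ between vertex sets sending $B_1 \mapsto B_2$; it respects edges and labels by the appending-a-letter description, hence is a label-preserving graph isomorphism.

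The main obstacle I anticipate is the well-definedness and surjectivity of the path-tracing map at the level of \emph{core graphs} rather than the full complex. In the free group case this is clean because reduced words are in bijection with reduced edge-paths; here a reduced word in $W_\G$ is not a canonical normal form, so I must use the full strength of Lemma~\ref{lemma_omega_paths}(\ref{lemma_omega_paths1}): any two reduced words for the same element are connected by swap moves, each realized by pushing a path across a square of the cube-full complex $\Omega$ — but I need such squares to keep the path \emph{inside} the core $C(\Omega,B)$, or else argue that the endpoint is unaffected even if the intermediate path temporarily leaves the core. I would handle this by working with the rewriting inside $\Omega$ (where cube-fullness is available) and then observing that since both the starting and ending loops have reduced labels, they lie in $C(\Omega,B)$ by definition, so only the endpoints — which are preserved throughout the homotopy by Lemma~\ref{lemma_omega_paths} — matter for defining $f$. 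Once this subtlety is dispatched, the rest is a routine transport of the \cite{KM} argument, which is why, as the authors state, the proof is omitted; I would simply cite \cite[Theorem 5.2]{KM} after flagging these two adaptations (foldedness of the core, and the substitution of Tits-rewriting for free reduction).
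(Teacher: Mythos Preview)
Your proposal is correct and is exactly what the paper intends: the authors omit the proof entirely, pointing to \cite[Theorem~5.2]{KM}, and your adaptation---replacing uniqueness of freely reduced words by Lemma~\ref{lemma_omega_paths}(\ref{lemma_omega_paths1}), so that the endpoint in $\Omega$ reached from $B$ by a reduced word depends only on the element of $W_\G$ (hence on the coset $Gw$)---is precisely the substitution needed. Your resolution of the swap-move subtlety is also the right one: the rewriting takes place in $\Omega$ where cube-fullness is available, and since only endpoints matter for defining $f$ and these are preserved by Lemma~\ref{lemma_omega_paths}, it is irrelevant whether intermediate paths stay in the core.
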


We now characterize normal subgroups of RACGs in terms of core graphs. 
In Section~\ref{sec_other_algorithmic_properties}, we give a different 
characterization of normality  that is more useful for algorithmic applications.

\begin{theorem}\label{thm_normal} 
	Let $G$ be a subgroup of $W_\G$, with completion $(\Omega, B)$. Consider 
		\[\Delta = \{ s \in V(\G) \mid s \text{ commutes with every element of } G\}. \]
	 Then $G$ is normal if and only if the following conditions are satisfied.  
	\begin{enumerate}
		\item[(N1)] Given any $s \in V(\G) \setminus \Delta$, there is an edge in $\Omega$ incident  to $B$ with label $s$. 
		\item[(N2)] For every vertex $v$ of $\Omega$, there is an isomorphism 
		from $C(\Omega, B)$ to $C(\Omega, v)$ which takes  $B$ to $v$. 
	\end{enumerate}
\end{theorem}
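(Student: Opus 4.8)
The plan is to characterize normality via the action on the Davis complex, translating it into the two combinatorial conditions (N1) and (N2). Recall that $G \trianglelefteq W_\G$ if and only if $w^{-1} G w = G$ for every $w \in W_\G$, and since $W_\G$ is generated by the letters $S = V(\G)$, this reduces to checking $s G s = G$ for each $s \in S$ (using $s = s^{-1}$). The key tool connecting this to completions is Lemma~\ref{lem_basepoint_change}: for any vertex $v$ of $\Omega$, the subgroup $G_v$ associated to $(\Omega, v)$ equals $w^{-1} G w$, where $w$ is the label of any path from $B$ to $v$. Combined with Proposition~\ref{prop_unique_core}, which says the core graph is determined up to based isomorphism by the associated subgroup, this gives: \emph{$G_v$ is a conjugate of $G_B = G$, and moreover $G_v = G$ if and only if there is a based isomorphism $C(\Omega,B) \to C(\Omega,v)$.} (One direction of this last equivalence is exactly Proposition~\ref{prop_unique_core}; the other requires that the core graph \emph{detects} the subgroup, i.e.\ two subgroups with based-isomorphic core graphs are equal, which follows since a reduced word represents an element of $G$ iff it labels a loop at $B$ in $\Omega$ iff it labels a loop at the image vertex in the core graph — here we use that $C(\Omega,B)$ contains \emph{all} reduced-label loops by definition, and that every element of $G$ has a reduced representative.)

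For the forward direction, suppose $G$ is normal. Then for every path $\alpha$ in $\Omega$ from $B$ to a vertex $v$ with label $w$, Lemma~\ref{lem_basepoint_change} gives $G_v = w^{-1} G w = G$, so by the detection principle above there is a based isomorphism $C(\Omega, B) \to C(\Omega, v)$ taking $B$ to $v$; this is (N2). For (N1): take $s \in V(\G)\setminus \Delta$, so $s$ does not commute with some $g \in G$. Since $sGs = G$, the element $sgs \in G$; I want to conclude $s$ appears in a reduced loop at $B$. The argument: pick a reduced word $u$ for $g$. Then $sus$ is an expression for $sgs \in G$. If $sus$ is already reduced, then (since completions contain loops for all reduced representatives, property (\ref{def_completion_reduced_words})) there is a loop at $B$ labeled $sus$, whose first edge has label $s$, giving (N1). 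If $sus$ is not reduced, a deletion is possible, and — by the same cancellation analysis as in the proof of Proposition~\ref{prop_torsion_free} — the only way this happens is if $s$ commutes with $g$, contradicting $s \notin \Delta$. (More carefully: a deletion in $sus$ must delete two occurrences of $s$; since $u$ is reduced the deletion cannot be internal to $u$, so it pairs a boundary $s$ with an $s$ inside $u$, forcing $s$ to commute with everything between, and a short argument shows $s$ then commutes with all of $u$, i.e.\ with $g$.) So (N1) holds.

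For the converse, assume (N1) and (N2); I must show $sGs = G$ for each $s \in V(\G)$. If $s \in \Delta$, then $s$ commutes with every element of $G$, so $sGs = G$ trivially. If $s \notin \Delta$, then by (N1) there is an edge $e$ in $\Omega$ incident to $B$ with label $s$; let $v$ be its other endpoint, so the path $\alpha = e$ from $B$ to $v$ has label $s$. By Lemma~\ref{lem_basepoint_change}, $G_v = s^{-1} G s = sGs$. By (N2), $C(\Omega, B)$ and $C(\Omega, v)$ are isomorphic via a map sending $B$ to $v$; by the detection principle, this forces $G_v = G$, hence $sGs = G$. Since this holds for all generators $s$, $G$ is normal.

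The main obstacle I anticipate is making the ``detection principle'' fully rigorous: showing that two subgroups $G_1, G_2 < W_\G$ with based-isomorphic core graphs $C(\Omega_1, B_1) \cong C(\Omega_2, B_2)$ must actually be equal. Proposition~\ref{prop_unique_core} gives one direction (equal subgroups $\Rightarrow$ isomorphic cores), but I need the reverse. The content is that an element $g$ lies in the associated subgroup iff some reduced representative of $g$ labels a loop at the basepoint of the core graph — the ``iff'' needs that the core graph, despite being a proper subcomplex of $\Omega^{(1)}$ in general, still contains \emph{every} reduced-label loop at $B$, which is true by the very definition of $C(\Omega, B)$, and that every element of $G$ has a reduced representative. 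So a based isomorphism of core graphs induces a label-preserving bijection on reduced-label loops, hence $G_1 = G_2$. A secondary subtlety is the deletion/commutation argument in the (N1) step, but this is essentially a reprise of the argument already carried out in Proposition~\ref{prop_torsion_free}, so it should go through with minor changes. I would also take care that in (N2) it suffices to verify the condition for the single vertex $v$ arising in the converse argument — but since (N2) is stated for all vertices, no extra work is needed there.
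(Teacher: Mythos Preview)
Your approach matches the paper's in structure: the converse direction and the detection principle (that a based label-preserving isomorphism of core graphs forces equality of the associated subgroups) are essentially identical to the paper's argument, and are correct.

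There is, however, a genuine gap in your proof of (N1) in the forward direction. The dichotomy you assert---``either $sus$ is reduced, or $s$ commutes with $g$''---is false. Take $s,t\in V(\G)$ non-adjacent and $g=u=st$ (reduced). Then $sus=ssts$ is not reduced, yet $s$ does not commute with $g$ (since $sg=t$ while $gs=sts$). Your parenthetical ``short argument'' cannot be completed: if a boundary $s$ cancels with an $s$ inside $u$, you only learn that $s$ commutes with the letters \emph{between} them, not with all of $u$.

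The paper's fix is to split off this case explicitly. If some reduced expression $w'$ for $g$ begins or ends with $s$, then the loop at $B$ labelled $w'$ (which exists since $g\in G$) already supplies an edge labelled $s$ at $B$. If no reduced expression for $g$ begins or ends with $s$, then Lemma~\ref{lemma_reduced_expression} shows $sws$ is genuinely reduced, and your argument goes through. With this extra case handled, your proof is complete and coincides with the paper's.
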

\begin{proof}
	First suppose N1 and N2 are satisfied.  To show that $G$ is normal, it is enough to show that $sGs \subset G$ for all $s \in V(\G)$.  This is obvious when $s \in \Delta$, so consider $s\in V(\G) \setminus \Delta$.  By N1 there is an edge incident to $B$ with label $s$.  Let $v$ be its other endpoint.  Then by Lemma~\ref{lem_basepoint_change}, $(\Omega, v)$ is a completion for $sGs$.  
	Thus, given any element $g$ in $sGs$, there is a loop in $\Omega$ whose label is a reduced word representing $g$.  It follows that the group associated to $(C(\Omega, v), v)$ (in the sense of Definition~\ref{def_associated_subgroup}) is $sGs$. 
 On the other hand, since $C(\Omega, v) \cong C(\Omega, B)$ by N2, the group associated to it is $G$.  Thus $sGs = G$.
	
	Now suppose $G$ is normal.  	We first show N1 is satisfied. Let $s \in V(\G) \setminus \Delta$,
and let  $w$ be a reduced word representing an element of $G$ which does not commute with $s$.  
If $w$ has a reduced expression $w'$ which either begins or ends with $s$, then since $\Omega$ is a completion, there is a loop based at $B$ with label $w'$. It follows that there is an edge incident to $B$ in $\Omega$ labeled $s$.  
If no expression for $w$ begins  or ends with $s$, then $sws$ is reduced
	by Lemma~\ref{lemma_reduced_expression}.  Moreover, since $G$ is normal, $sws \in G$, and consequently there is a loop in $\Omega$ based at $B$ with label $sws$.  
	Once again, $B$ is incident to an edge labeled by $s$. Thus N1 holds in all cases.

	To prove N2, let $v \neq B$ be a vertex of $C(\Omega, B)$, and let $\alpha$ be a path in $C(\Omega, B)$ from $B$ to $v$ with label $w$.   
	By Lemma~\ref{lem_basepoint_change}, we know that $(\Omega, v)$ is a completion for $w^{-1}Gw=G$. 
	Then by Proposition~\ref{prop_unique_core}, there is an isomorphism from $C(\Omega, B) $ to $C(\Omega, v)$ which takes $B$ to~$v$.  
\end{proof}

\section{Index of a subgroup} \label{sec_index}	
The main result of this section is Theorem~\ref{thm_omega_finite_index}, which 
states that completions characterize the index of a subgroup 
	of a RACG. This is 
analogous to a 
	result for subgroups of free groups, first proved in~\cite{Arzhantseva-pams}.

\begin{definition}[Full valence] \label{def_full_valence} 
	We say a vertex $v$ of a $\G$-labeled complex has \textit{full valence} if for each $s \in V(\Gamma)$ there is an edge with label $s$ incident to $v$. We say a $\G$-labeled complex $\Omega$ is
	\textit{full valence} if every vertex of $\Omega$ has full valence.
\end{definition}
We would like to  be able to state that a subgroup has finite index if and only if all its completions are finite and have full valence,  and moreover,
that the number of vertices of a completion determines the index of a subgroup.
However, due to a slight technical issue, this is not quite true in general.
Namely, suppose that $\G$ contains a vertex $s$ that is adjacent to every other vertex. 
Then  $W_\G = W_{\Delta} \times \mathbb{Z}_2$, where $\Delta$ is the subgraph of $\G$ induced by $V(\Gamma) \setminus s$. We can then build a completion for $W_\Delta \subset W_\Gamma$ by using the generating set $V(\Delta)$. Such a completion has no edges labeled by $s$ and so does not have full valence. Furthermore, it contains only one vertex, and yet, the index of $W_\Delta$ is not one in $W_\Gamma$.
To remedy this, we define resolved completions below, for which the desired statements do hold (see Theorem~\ref{thm_omega_finite_index} and Lemma~\ref{lemma_index_of_subgroup}). 
Any finitely generated subgroup admits a resolved completion (see Remark~\ref{rmk_resolved}), so resolved completions do indeed remedy this technical issue.

\begin{definition}[Resolved Completion]
	Let $\Omega$ be a completion of a subgroup $G$ of $W_\G$. We say that $\Omega$ is \textit{a resolved completion} if given any $s \in \Gamma$ such that $ V(\Gamma)=\text{star}(s) $, it follows that some edge of $\Omega$ is labeled by $s$.
\end{definition}

\begin{lemma} \label{lemma_full_vertex}
	Let $G$ be a subgroup of $W_{\Gamma}$, and let $\Omega$ be a resolved completion of $G$. Let $s \in \Gamma$ be such that $V(\G) = \text{star}(s)$. Then every vertex of $\Omega$ is incident to an edge labeled by $s$.
\end{lemma}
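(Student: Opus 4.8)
The plan is to leverage the fact that $\Omega$ is \emph{cube-full} together with the hypothesis that $s$ is adjacent (in $\Gamma$) to every other vertex, so that $s$ commutes with every letter. Let $v$ be an arbitrary vertex of $\Omega$. Since $\Omega$ is connected, there is a path $p$ in $\Omega$ from $v$ to some vertex $u$ that is incident to an edge $e$ labeled $s$; such a $u$ exists because $\Omega$ is resolved, so at least one edge of $\Omega$ carries the label $s$. The idea is to ``push'' the edge $e$ back along $p$, one edge at a time, using cube attachments that are guaranteed to be present by the cube-full property.

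First I would set up the inductive step: suppose $w$ is a vertex incident to an edge $e'$ labeled $s$, and let $f$ be an edge of $\Omega$ from $w$ to an adjacent vertex $w'$ with label $t \in V(\Gamma)$. If $t = s$, then since $\Omega$ is folded, $e'$ and $f$ must coincide (two edges incident to $w$ cannot share the label $s$), so $f$ itself is an edge labeled $s$ incident to $w'$, and we are done at this step. If $t \neq s$, then since $m(s,t) = 2$ by hypothesis, the two edges $e'$ and $f$ incident to $w$ have labels spanning a $2$-clique in $\Gamma$; by the cube-full property there is a square $Q$ in $\Omega$ containing $e'$ and $f$ in its boundary. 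The edge of $Q$ opposite to $e'$ is incident to $w'$ and, since opposite edges of a labeled square share a label, it is labeled $s$. So $w'$ is incident to an edge labeled $s$.

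Then I would run this step along the path $p$ from $u$ back to $v$: starting with the vertex $u$ and its incident $s$-labeled edge $e$, and processing the edges of $p^{-1}$ one at a time, I obtain at each vertex of $p$ an edge labeled $s$ incident to that vertex, and in particular an $s$-labeled edge incident to $v$. Since $v$ was arbitrary, every vertex of $\Omega$ is incident to an edge labeled $s$.

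I do not anticipate a genuine obstacle here; the only mild subtlety is the degenerate case when the ``opposite'' edge produced by the cube attachment is a graph-loop or when some of the vertices of the square $Q$ coincide, but this does not affect the argument, since all we use is that the edge opposite $e'$ in $Q$ is incident to $w'$ and carries the label $s$. Likewise, the case $t = s$ must be handled separately (as above) precisely because no square with two sides labeled $s$ can exist in a $\Gamma$-labeled complex, but there the conclusion is immediate from foldedness.
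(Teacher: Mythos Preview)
Your proof is correct and follows essentially the same approach as the paper: both start from a vertex incident to an $s$-labeled edge (guaranteed by the resolved hypothesis), propagate the existence of such an edge to any adjacent vertex using cube-fullness (and the case $t=s$ separately), and conclude by connectedness. Your handling of the $t=s$ case via foldedness is slightly more explicit than the paper's, but the argument is the same.
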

\begin{proof}
	As $\Omega$ is resolved, let $e$ be an edge of $\Omega$ labeled by $s$. Let $v$ be a vertex of $\Omega$ incident to $e$. Let $u$ be any vertex adjacent to $v$, and let $t$ be the label of the edge $e'$ between $u$ and $v$. Either $t = s$ or $t$ is adjacent to $s$ in $\Gamma$. As $\Omega$ is cube-full, in the latter case there must be a square with label $stst$ in $\Omega$ that contains both the edge $e$ and $e'$. In either case $u$ is incident to an edge labeled by $s$ as well. Proceeding in this manner, since $\Omega$ is connected, we conclude every vertex in $\Omega$ is incident to an edge labeled by $s$.
\end{proof}

\begin{lemma} \label{lemma_full_valence}
	Let $G$ be a subgroup of $W_{\Gamma}$, and let $(\Omega, B)$ be a resolved completion of $G$. If $\Omega$ is not full valence, then $G$ has infinite index in $W_{\Gamma}$.  
\end{lemma}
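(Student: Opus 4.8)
The plan is to show directly that if $\Omega$ is not full valence, then $W_\Gamma$ has infinitely many cosets of $G$. Suppose that some vertex $v$ of $\Omega$ is incident to no edge labeled by a letter $s \in V(\Gamma)$. The first step is to rule out the possibility $V(\Gamma) = \text{star}(s)$: if this held, then since $\Omega$ is resolved it would contain some edge labeled by $s$, and Lemma~\ref{lemma_full_vertex} would then force every vertex of $\Omega$ --- in particular $v$ --- to be incident to an edge labeled by $s$, a contradiction. Hence there is a vertex $t \in V(\Gamma)$ with $t \neq s$ and $(s,t) \notin E(\Gamma)$; equivalently $m(s,t) = \infty$, so $W_{\{s,t\}} \cong D_\infty$ and $ts$ has infinite order.

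The second step is to reduce to the case in which the missing edge is at the basepoint. Let $w$ be the label of a path in $\Omega$ from $B$ to $v$. By Lemma~\ref{lem_basepoint_change}, $(\Omega, v)$ is a completion of the conjugate subgroup $H = w^{-1}Gw$, and $[W_\Gamma : H] = [W_\Gamma : G]$. So it suffices to prove that $H$ has infinite index, knowing that $(\Omega, v)$ is a completion of $H$ and that $v$ is incident to no edge labeled by $s$.

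The heart of the argument is the observation that \emph{no element of $H$ has a reduced expression ending in $s$}. Indeed, if $g \in H$ had such a reduced expression $u$, then property~(3) of Definition~\ref{def_completion} would give a loop in $\Omega$ based at $v$ with label $u$, and the final edge of this loop --- labeled by $s$ and incident to $v$ --- would contradict our assumption on $v$. Now consider the elements $\gamma_k = (ts)^k$ for $k \geq 0$. For $i > j \geq 0$ we have $\gamma_i \gamma_j^{-1} = (ts)^{i-j}$, and the word $(ts)^{i-j} = tsts\cdots ts$ is reduced: by the deletion property, a deletion would require two occurrences of the same letter with every letter between them commuting with it, but between any two $s$'s (resp.\ $t$'s) of this alternating word there is a $t$ (resp.\ an $s$), and $m(s,t) = \infty$. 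Since this reduced word ends in $s$, the observation gives $(ts)^{i-j} \notin H$, and therefore $H\gamma_i \neq H\gamma_j$. Thus $\{H\gamma_k\}_{k \geq 0}$ is an infinite family of distinct cosets, so $H$, and hence $G$, has infinite index.

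I expect the only delicate points to be in the first two steps: invoking the ``resolved'' hypothesis together with Lemma~\ref{lemma_full_vertex} to produce a non-neighbor $t$ of $s$ (without which one cannot build an infinite ray, since $s$ has order two), and passing to the conjugate completion $(\Omega, v)$ so that the missing edge lies at the basepoint. Once this setup is in place, the core of the proof --- that an element with a reduced expression ending in $s$ cannot lie in the subgroup, and that the powers $(ts)^k$ therefore represent infinitely many distinct cosets --- is short.
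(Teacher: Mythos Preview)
Your proof is correct and takes a genuinely different, more elementary route than the paper's.

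The paper keeps the basepoint at $B$: it chooses $v$, $s$, and a geodesic label $w$ from $B$ to $v$ with $|w|$ minimal, and then establishes three auxiliary facts --- that $ws$ is reduced, that no reduced expression for $w$ ends in a generator commuting with $s$, and that no reduced word in $G$ begins with $ws$. Assuming finite index with coset representatives of length at most $M$, it then analyzes the word $w(st)^M$ via a disk diagram and Lemma~\ref{lemma_disk_diagram_subwords} to force a reduced word in $G$ to begin with $ws$, a contradiction. Your basepoint change to $v$ is the key simplification: it replaces all of this with the single observation that no element of $H = w^{-1}Gw$ has a reduced expression ending in $s$, from which the explicit infinite family of cosets $\{H(ts)^k\}_{k\ge 0}$ drops out immediately. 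You thereby avoid the minimality hypothesis on $|w|$, the three auxiliary facts, and the disk diagram machinery entirely. The paper's argument does extract slightly more structural information (about prefixes of reduced words in $G$), but this is not used elsewhere, so your approach loses nothing.
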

\begin{proof}
	
Suppose there exists a vertex $v$ in $\Omega$ that is not incident to an edge labeled by $s$, for some $s \in V(\G)$.
Let $\alpha$ be a minimal length path in $\Omega$ from the base vertex $B$ to $v$, and let $w$ be the label of this path. We assume $|w|$ is minimal among the  possible choices for $w$ and $v$. By Lemma~\ref{lemma_omega_paths} (\ref{lemma_omega_paths2}), the word $w$ is reduced.

We begin by establishing a few facts, which will be used later in the proof:

\medskip
{\it (i) 	The word $ws$ is reduced.} 
If not, by the deletion property there exists a  
	 reduced word $w'$, ending with $s$, which is an expression for $w$.
	 By Lemma~\ref{lemma_omega_paths}(\ref{lemma_omega_paths1}) there is a path in $\Omega$ from $B$ to $v$ with label $w'$.  However, this is not possible as $v$ is not incident to an edge labeled by~$s$.
	
	\medskip

	{\it (ii) No reduced expression for $w$ ends in a generator that commutes with $s$.} 
	For suppose $w' = s_1 \dots s_n$, with $s_i \in \Gamma$, is a reduced word such that $s_n$ commutes with $s$ and $w$ is equal to $w'$ in $W_{\Gamma}$. Let $\alpha'$ be the path from $B$ to $v$ with label $w'$ and let $\hat{\alpha}$ be the subpath of $\alpha'$ 
	with label $s_1 \dots s_{n-1}$. Such paths exist by Lemma~\ref{lemma_omega_paths}(\ref{lemma_omega_paths1}). Let $\hat{v}$ be the endpoint of $\hat{\alpha}$. No edge incident to $\hat{v}$ is labeled by $s$. For if there were such an edge, the fact that 
	$\Omega$ is cube-full would imply that there is a square with label $ss_nss_n$ containing both $v$ and $\hat{v}$, contradicting the fact that $v$ is not incident to an edge labeled by $s$. However, it now follows that $\hat{v}$ is a vertex that is not incident to an edge labeled by $s$ and $|\hat{w}| < |w|$, contradicting the minimality of our choice of $w$. Thus no expression for $w$ can end with a generator that commutes with $s$.

\medskip
{\it (iii) No reduced word representing an element of $G$ begins with the label $ws$. }
	To see this, note that every reduced word representing an element of $G$ labels a loop in $\Omega$ based at $B$.  As $\Omega$ is folded, $\alpha$ is the only path beginning at $B$ with label $w$.  The claim follows, since the endpoint $v$ of $\alpha$ is not incident to an edge labeled by $s$.
	 
	\medskip
	We now proceed with the proof. 
	As $\Omega$ is resolved and by Lemma~\ref{lemma_full_vertex}, there exists a vertex $t$ of $\Gamma$ that is not adjacent to $s$ in $\Gamma$. 
	By Tits' solution to the word problem, it follows that $(st)^n$ is reduced for all integers $n \ge 1$. 
	Similarly, $w(st)^n$ is reduced for all integers $n \ge 1$, since $ws$ is reduced by {\it (ii)} above. 
	
	Suppose now, for a contradiction, that $G$ is a finite-index subgroup of $W_{\Gamma}$. In particular, as a set we have $W_{\Gamma} = Gg_1 \sqcup Gg_2 ... \sqcup Gg_n$ for finitely many elements $g_1, \dots, g_n \in W_\G$. Let $w_1, \dots, w_n$ be reduced words representing $g_1, \dots, g_n$. Let $M = \max\{|w_1|, ..., |w_n|\}$, and let $k = (st)^M$. Consider the word $h = wk$ which we know to be reduced. It follows that $h$ is equal to $h' h''$ in $W_{\Gamma}$, where $h'' = w_i$ for some $i$ and $h'$ is a reduced word in $G$. Form a disk diagram $D$ with boundary label $h(h'h'')^{-1} = wkh''^{-1}h'^{-1}$. Let $p_{w}$, $p_k$, $p_{h'}$ and $p_{h''}$ be the paths along the boundary of $D$ with labels respectively $w$, $k$, $h'$ and $h''$. Thus, $p_w p_k p_{h''}^{-1} p_{h'}^{-1}$ is a path tracing the boundary of $D$.

	As $h$ is reduced, every dual curve intersecting $p_wp_k$ must necessarily intersect either $p_{h'}$ or $p_{h''}$. Furthermore, a pair of dual curves which each intersect $p_k$ cannot intersect one another as $s$ and $t$ do not commute. Let $C$ be the dual curve intersecting the first edge of $p_k$. Note that $C$ is of type $s$. 
	%
	Now, $C$ cannot intersect $p_{h''}$. For if it did, every dual curve intersecting $p_k$ would intersect $p_{h''}$ as well. However, as $|h''| \le M$ and $|k| = 2M$, this is not possible. Thus, $C$ intersects $p_{h'}$. 
	
	Additionally, no dual curve intersecting $p_w$ intersects $C$. For suppose there is such a dual curve, and suppose that it is the furthest such dual curve along $p_w$. It follows that  the type of such a dual curve commutes with $s$ and commutes with every label of an edge appearing further along $p_w$.  However, this implies that $w$ has an expression ending with a generator that commutes with $s$, which contradicts {\it (ii)} above. Thus, every dual curve intersecting $p_w$ intersects $p_{h'}$ at an edge occurring before (in the orientation of $p_{h'}$)  the edge of $p_{h'}$ intersecting~$C$.
	
	By Lemma~\ref{lemma_disk_diagram_subwords}, there is a reduced word equal to ${h'}$ in $W_{\Gamma}$ 
	with prefix $ws$. 
	This contradicts {\it (iii)} above.  Note that the argument above also holds when $w$ is empty, i.e.~when $B=v$.
\end{proof}

\begin{lemma} \label{lemma_index_of_subgroup} 
	Let $G$ be a subgroup of $W_{\Gamma}$, and let $\Omega$ 
	be a completion of $G$ which is full valence. Then the index of $G$ in $W_{\Gamma}$ is equal to the number of vertices in $\Omega$ (which could be infinite).
\end{lemma}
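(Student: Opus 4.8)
The plan is to turn $\Omega$ into a set on which $W_\Gamma$ acts by ``reading words,'' identify $G$ with the stabilizer of the basepoint, and invoke orbit--stabilizer. First I would record the key structural consequence of the hypotheses: since $\Omega$ is folded and full valence, at every vertex $v$ and for every letter $s\in V(\Gamma)$ there is \emph{exactly one} edge incident to $v$ with label $s$ (at least one by full valence, at most one since $\Omega$ is folded). Hence, for any word $w$ in $W_\Gamma$ there is a unique path in $\Omega$ starting at $v$ with label $w$; write $v\cdot w$ for its endpoint. Reading a concatenation gives $v\cdot(ww')=(v\cdot w)\cdot w'$.

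Next I would show that $v\cdot w$ depends only on the element of $W_\Gamma$ represented by $w$. Given a word $w$ representing $g$ and any reduced word $w'$ representing $g$, Lemma~\ref{lemma_omega_paths}(\ref{lemma_omega_paths1}) (applied to the unique path from $v$ labeled $w$) produces a path from $v$ with label $w'$ and the same endpoint; by uniqueness this must be the unique path from $v$ labeled $w'$. So $v\cdot w$ equals $v\cdot w'$ for every reduced $w'$ representing $g$, and in particular depends only on $g$. Writing $v\cdot g$ for this common value and using the concatenation identity, we obtain a right action of $W_\Gamma$ on the set $V(\Omega)$.

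Then I would verify the two inputs needed for orbit--stabilizer. The action is transitive: since $\Omega$ is connected, every vertex $v$ is the endpoint of some edge-path from $B$, and if $w$ is the label of such a path then $B\cdot w=v$; thus the orbit of $B$ is all of $V(\Omega)$. The stabilizer of $B$ is exactly $G$: if $g\in G$, pick a reduced word $w$ for $g$, so by property~(\ref{def_completion_reduced_words}) of a completion there is a loop at $B$ labeled $w$, which by uniqueness is the path from $B$ labeled $w$, giving $B\cdot g=B$; conversely, if $B\cdot g=B$, pick any word $w$ for $g$, so the unique path from $B$ labeled $w$ is a loop at $B$, and by property~(\ref{def_completion_loops}) its label $w$ represents an element of $G$, i.e.\ $g\in G$. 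Orbit--stabilizer now yields a bijection between the right cosets $G\backslash W_\Gamma$ and the orbit of $B$, which is $V(\Omega)$; hence the index of $G$ in $W_\Gamma$ equals the number of vertices of $\Omega$, whether this is finite or infinite.

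The argument is largely routine given the earlier results; the one point that needs care is the well-definedness of the action --- in particular handling non-reduced words and paths that traverse graph-loops --- which is exactly why I route that step through Lemma~\ref{lemma_omega_paths}(\ref{lemma_omega_paths1}) (and implicitly Lemma~\ref{lemma_hausdorff_bound}), whose ``same endpoints'' conclusions hold with no restriction on graph-loops.
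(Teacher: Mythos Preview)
Your proof is correct. Both your argument and the paper's rest on the same core observation---that folded plus full valence gives a unique path from any vertex with any given label, and that Lemma~\ref{lemma_omega_paths}(\ref{lemma_omega_paths1}) makes the endpoint depend only on the group element---but you package this as a transitive right action of $W_\Gamma$ on $V(\Omega)$ and invoke orbit--stabilizer, whereas the paper argues directly: it picks a path $\alpha_i$ from $B$ to each vertex $v_i$ with label $w_i$, shows every element of $W_\Gamma$ lies in some coset $Gw_i$, and then checks by hand that $Gw_i = Gw_j$ forces $v_i = v_j$ (via an ad hoc argument ruling out the possibility that the path labeled $w_iw_j^{-1}$ from $B$ ends at some $v_k \neq B$). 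Your approach is cleaner and more conceptual---once the action is set up, transitivity and the stabilizer computation are immediate from the completion axioms, and the bijection with cosets comes for free. The paper's approach is more elementary in that it avoids naming the action, but it ends up reproving a special case of orbit--stabilizer by hand.
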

\begin{proof}
	Let  $B = v_1, v_2, \dots$ be an enumeration of the vertices of $\Omega$ (where $B$ denotes the basepoint). For each $i$, choose a minimal length path $\alpha_i$ from $B$ to $v_i$ and let $w_i$ be its label. We will show that the words $w_1, w_2,...$ are expressions for right coset representatives for $G$.

	Let $w$ be a reduced word in $W_{\Gamma}$. As every vertex of $\Omega$ has full valence, there is a path $\alpha$ in $\Omega$ beginning at the vertex $B$ with label $w$.  Then, for some $i$, the concatenation $\alpha\alpha_i^{-1}$ is a loop based at $B$, and its label $w w_i^{-1}$ represents an element of $G$. Thus, $w$ can be represented by the coset $(ww_i^{-1})w_i$.
	
	Let $n$ be the number of vertices in $\Omega$ (where $n$ could be infinite).	We have shown that the index of $G$ is at most $n$. We now show it is exactly $n$. Suppose, to the contrary, that there exist words $h$ and $h'$ representing elements of $G$ such that $hw_i$ is an expression for $h'w_j$, for some $1 \le i < j \le n$. Then $w_iw_j^{-1}$ is an expression for an element of $G$. Now consider the path $\beta$ in $\Omega$ with initial vertex $B$ and label $w_iw_j^{-1}$. This path exists 
	as $\Omega$ is full valence. 
	
	We claim $\beta$ is a loop. For suppose not. Then $\beta$ ends in some vertex $v_k \neq B$ and it follows that $w_iw_j^{-1}w_k^{-1}$ is a loop. By the definition of a completion, the word $w_iw_j^{-1}w_k^{-1}$ represents an element of $G$. Consequently, as $w_iw_j^{-1}$ is a word representing an element of $G$, we conclude that  $w_k$ represents an element of $G$ as well. However, this contradicts $\Omega$ being a completion, as $w_k$ is a reduced word and is not the label of a loop in $\Omega$ based at $B$. Thus $\beta$ must be a loop. However, since $\beta$ is labeled by $w_iw_j^{-1}$, this implies that $v_i = v_j$, a contradiction.
\end{proof}

 Combining Lemma~\ref{lemma_full_valence} and Lemma~\ref{lemma_index_of_subgroup}, we obtain:

\begin{theorem} \label{thm_omega_finite_index}
	Let $G$ be a subgroup of $W_{\Gamma}$, and let $\Omega$ be a resolved completion of $G$. The subgroup $G$ has finite index in $W_{\Gamma}$ if and only if $\Omega$ is finite and full valence.
	\qed
\end{theorem}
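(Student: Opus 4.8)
The proof is essentially an immediate consequence of the two preceding lemmas once one observes that their hypotheses fit together, so my plan is to set up that combination cleanly.

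First I would prove the ``only if'' direction. Assume $G$ has finite index in $W_\Gamma$. By Lemma~\ref{lemma_full_valence}, since $\Omega$ is a resolved completion and $G$ has finite index, $\Omega$ must be full valence — otherwise $G$ would have infinite index, a contradiction. Once we know $\Omega$ is full valence, Lemma~\ref{lemma_index_of_subgroup} tells us the index of $G$ equals the number of vertices of $\Omega$; since this index is finite, $\Omega$ has finitely many vertices. A cube-full $\Gamma$-labeled complex with finitely many vertices has finitely many edges (each vertex has valence bounded by $|V(\Gamma)|$ since distinct edges at a vertex carry distinct labels) and finitely many cubes (the dimension is bounded by the clique number of $\Gamma$), so $\Omega$ is finite.

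For the ``if'' direction, assume $\Omega$ is finite and full valence. Then in particular $\Omega$ is a full-valence completion of $G$, so Lemma~\ref{lemma_index_of_subgroup} applies and gives that the index of $G$ in $W_\Gamma$ equals the number of vertices of $\Omega$, which is finite since $\Omega$ is finite. Hence $G$ has finite index.

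I do not expect any genuine obstacle here — the real work has already been done in Lemmas~\ref{lemma_full_valence} and~\ref{lemma_index_of_subgroup}. The only point requiring a word of care is the passage from ``finitely many vertices'' to ``finite complex'' in the forward direction, which uses the $\Gamma$-labeling constraints (distinct labels on edges sharing a vertex, cliques of $\Gamma$ bounding cube dimension) rather than anything about $G$; and the observation that full-valence implies resolved, so Lemma~\ref{lemma_index_of_subgroup}'s hypothesis is met in the reverse direction without needing the resolved assumption separately. Both are routine.
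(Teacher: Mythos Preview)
Your proposal is correct and matches the paper's own proof, which simply states that the theorem follows by combining Lemma~\ref{lemma_full_valence} and Lemma~\ref{lemma_index_of_subgroup}. One minor terminological slip: the fact that distinct edges at a vertex carry distinct labels comes from $\Omega$ being \emph{folded}, not from being cube-full; since completions are folded by definition this does not affect the argument.
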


\begin{remark} \label{rmk_resolved}
	For finitely generated subgroups, a resolved completion can always be constructed. 	For let $T = \{w_1, w_2 \dots w_n\}$ be a generating set of words for $G < W_\G$. Note that $G$ is still generated by 
$T' = T \cup \{s^2 ~| ~s \in \Gamma \text{ and }   V(\Gamma)= \text{star}(s) \}$, and furthermore,
	 every $s \in V(\Gamma)$ occurs in some word in $T'$. Thus, 
	a standard completion for $G$  built using $T'$ is necessarily resolved. 
	\end{remark}

\begin{example}\label{ex:fi_completion}
Let $\G$ be the graph in Figure~\ref{fig:fi_completion}, and let $G = \langle ca, cb\rangle < W_\G$.  The right of this 
figure shows a standard completion $\Omega$ for $G$. 
Since $\G$ does not have a vertex adjacent to every other vertex, $\Omega$ is automatically resolved. Note that 
 $\Omega$ is finite and full-valence. 
 Thus, $G$ has finite index in $W_\G$ by Theorem~\ref{thm_omega_finite_index}.
	
\bigskip

\begin{figure}[h!]
		\centering
		\begin{overpic}
			[scale=1.3]
			{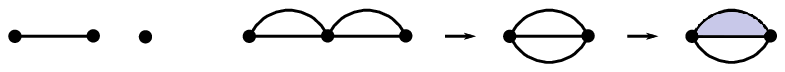}
			\put(-.5,0){\small $a$}
			\put(10,0){\small $b$}
			\put(17,0){\small $c$}
			
			\put(10,-6){\small $\Gamma$}
			
			\put(35,1.5){\small $c$}
			\put(46,1.5){\small $c$}
			\put(35,8){\small $a$}
			\put(46,8){\small $b$}
			
			\put(40,-6){\small $X$}
			\put(93,-6){\small $\Omega$}

			\put(69,-2){\small $c$}
			\put(69,1.15){\small $b$}
			\put(69,8){\small $a$}
			
			\put(93,-2){\small $c$}
			\put(93,1.15){\small $b$}
			\put(93,8){\small $a$}

			\put(91, 4.5){\tiny $\mathbb{R}P^2$}
		
		\end{overpic}
		\bigskip
		\caption{
		Figure illustrating 
		 Example~\ref{ex:fi_completion}}
		\label{fig:fi_completion}
\end{figure}	
\end{example}

\section{Nonpositive curvature} \label{sec_nonpos}
	This section  establishes criteria which guarantee that a completion is non-positively curved or a CAT(0) cube complex, which we rely on in Sections~\ref{sec_separability} and~\ref{sec_reflection_subgroups}.

Recall that a graph-loop is an edge that connects a vertex to itself.
A \textit{bigon} in a CW complex  is a pair of edges $e_1$ and $e_2$, such that the set of vertices that are endpoints of $e_1$ is the same as the set of vertices that are endpoints of $e_2$. 
Note that a bigon could consist of two graph-loops based at the same vertex. 
A \textit{commuting bigon} in a $\G$-labeled complex is a bigon 
whose edges are labeled by adjacent vertices of $\Gamma$. 
Next, we show that the presence of a commuting bigon is the only obstruction to a $\G$-labeled complex being non-positively curved.

\begin{proposition} \label{prop_no_bigon_nonpos}
	Let $\Omega$ be a folded $\G$-labeled cube complex which does not contain a commuting bigon. 
	If 
either $\Omega$ is  cube-full or $\G$ is triangle-free, then  
$\Omega$ 
 is non-positively curved. 
\end{proposition}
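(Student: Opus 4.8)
The plan is to show directly that the link of every vertex $v$ of $\Omega$ is a flag complex. Recall that the vertices of $\mathrm{link}(v)$ correspond to the edges of $\Omega$ incident to $v$ (a graph-loop at $v$ contributes two vertices to the link), and that an edge of $\mathrm{link}(v)$ between two such link-vertices exists precisely when the corresponding pair of edges of $\Omega$ spans a $2$-cube (square) incident to $v$; more generally a $k$-simplex of $\mathrm{link}(v)$ corresponds to a $(k{+}1)$-cube of $\Omega$ incident to $v$. So the flag condition at $v$ says: whenever a finite set $e_0, \dots, e_k$ of edges incident to $v$ is pairwise contained in squares at $v$, then $e_0, \dots, e_k$ are all contained in a single $(k{+}1)$-cube at $v$. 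The first reduction is to observe that, since $\Omega$ is folded, the edges incident to $v$ have pairwise distinct labels \emph{except} possibly for the two ends of a graph-loop; but the two ends of a graph-loop at $v$ cannot be joined by an edge in $\mathrm{link}(v)$ without producing a commuting bigon (a square whose two edges at $v$ are the two ends of the loop would force a second edge with the loop's label, giving a bigon), so this subtlety does not affect flagness. Hence we may treat the link-vertices as labeled by distinct elements $s_0, \dots, s_k \in V(\G)$.

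The heart of the matter is the following claim: if edges $e_i$ (label $s_i$) and $e_j$ (label $s_j$) at $v$ are contained in a common square, then $s_i$ and $s_j$ are adjacent in $\G$. Granting this claim, a pairwise-square family $e_0, \dots, e_k$ has labels $s_0, \dots, s_k$ forming a clique in $\G$, so in the cube-full case $\Omega$ immediately contains a $(k{+}1)$-cube at $v$ whose boundary contains all the $e_i$ — which is exactly the flag condition. In the triangle-free case, a clique of size $\geq 3$ cannot occur, so we only need the flag condition for edges of $\mathrm{link}(v)$ (i.e.\ $k \le 1$), which is vacuous: the $1$-skeleton of a simplicial complex always spans its $1$-simplices. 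Thus in both hypotheses the proof is finished once the claim is established, together with the observation that in the cube-full case we must also rule out a square being ``folded onto itself'', i.e.\ that the $(k{+}1)$-cube guaranteed by cube-fullness genuinely realizes a $k$-simplex in the link rather than a degenerate one — but this is again prevented by the absence of commuting bigons, since the two edges of any square at $v$ have distinct labels.

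To prove the claim, suppose $e_i$ and $e_j$ lie in a square $Q$. By the standing assumptions on $\G$-labeled cube complexes (Section~\ref{subsec:completion}), opposite edges of $Q$ carry the same label, and the two edges of $Q$ incident to a common vertex have distinct labels whose $\G$-vertices span a clique — in particular an edge. If $e_i$ and $e_j$ are the two edges of $Q$ at $v$, this says precisely that $s_i$ and $s_j$ are adjacent, and we are done. The only remaining possibility is that $e_i$ and $e_j$ are \emph{parallel} sides of $Q$ rather than adjacent sides; but parallel sides of a square have the same label, forcing $s_i = s_j$, which we have excluded (they are distinct labels of distinct link-vertices). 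Hence $e_i$ and $e_j$ must be adjacent sides of $Q$, and the claim holds.

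The main obstacle I anticipate is the careful bookkeeping around graph-loops and possible degeneracies: one must check that a square at $v$ whose two edges at $v$ are (the two ends of) a single graph-loop, or a $(k{+}1)$-cube attached to a degenerate tuple, does not sneak in an unwanted link-edge or collapse the promised simplex. Each of these scenarios produces a commuting bigon — in the graph-loop-square case, the two ``opposite'' sides of the square carry the same label but are distinct edges with the same endpoint set; in the cube-identification case, folding forces two parallel edges to coincide, again yielding a bigon — so all of them are excluded by hypothesis. Once these degenerate configurations are systematically dispatched, the argument above goes through cleanly.
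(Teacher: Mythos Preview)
Your approach is the same as the paper's, but there is a genuine gap: you never verify that the link of $v$ is a \emph{simplicial} complex.  Being a flag complex entails being simplicial, so before you can invoke ``the flag condition for $k\le 1$ is vacuous'' in the triangle-free case, or claim that the cube produced by cube-fullness fills in the required simplex, you must rule out graph-loops and multi-edges in $\mathrm{link}(v)$.  Your final paragraph gestures at degeneracies but misses the main one.

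Specifically, you do not address the following: suppose two \emph{distinct} squares $c_1,c_2$ of $\Omega$ each have a corner at $v$ with the same pair of edges $e_1,e_2$.  This yields two distinct edges of $\mathrm{link}(v)$ between the same pair of link-vertices, i.e.\ a bigon in the link, and no amount of cube-fullness or absence of commuting bigons in $\Omega$ obviously forbids it.  The paper disposes of this using foldedness: since opposite sides of a square carry the same label and $\Omega$ is folded, $c_1$ and $c_2$ are forced to have the same boundary, and then the cube-identification part of ``folded'' gives $c_1=c_2$.  The paper then separately treats the case where the two link-edges come from two corners of a \emph{single} square $c$; there one shows the corners are opposite, so that opposite sides of $c$ are glued orientation-reversingly and the image of $\partial c$ in $\Omega$ is a commuting bigon.

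Two smaller points.  First, your argument that the two link-vertices of a graph-loop cannot be joined actually follows immediately from the standing convention on $\G$-labeled complexes (adjacent edges of a cube have distinct labels), not from the commuting-bigon hypothesis.  Second, in your proof of the ``claim'', the case ``$e_i,e_j$ are parallel sides of $Q$'' cannot arise at all: an edge of the link records a \emph{corner} of a square, so $e_i$ and $e_j$ are by definition the two sides meeting at that corner.
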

\begin{proof}
	Let $v$ be any vertex of $\Omega$, and let $\Delta$ denote the link of $v$ in $\Omega$. We verify that $\Omega$ is non-positively curved by checking that $\Delta$ is a flag simplicial complex. We first check that $\Delta$ is 
	a simplicial complex.
	This part of the proof does not require that $\Omega$ is cube-full or that $\G$ is triangle-free.

Since $\Delta$ is the link of a vertex in a cube complex, to check that it is a simplicial complex, it is enough to show that its $1$-skeleton $\Delta^{(1)}$ does not contain any graph-loops or bigons. 
Suppose $\Delta$ contains a graph-loop, and let $c$ be the square of $\Omega$ contributing this graph loop to $\Delta$.  It follows that 
 a pair of adjacent sides of $c$ 
are identified in $\Omega$. 
However, adjacent sides of squares in $\Omega$ are always labeled by distinct elements of $\Gamma$, so such an identification cannot occur.  
Thus, $\Delta$ cannot contain a graph-loop.
	
	We next check that
	$\Delta^{(1)}$
	does not contain a bigon. Suppose there is such a 
	bigon, whose edges come from (possibly non-distinct) squares $c_1$ and $c_2$ of $\Omega$, and whose vertices come from distinct edges $e_1$ and $e_2$ of $\Omega$, each incident to $v$.
	 Then $e_1$ and $e_2$ are adjacent sides of $c_i$, for $i=1,2$.  Suppose $c_1$ and $c_2$ are distinct. Since opposite sides of squares in $\Omega$ have the same label, it follows that $c_1$ and $c_2$ have the same boundary label, read starting from $v$  in the direction of $e_1$. 
	  However, this is not possible as $\Omega$ is folded. 
	
	On the other hand, suppose 
	$c_1 = c_2 =c$. 
	Then the  bigon in $\Delta$ comes from two corners of $c$. Since
	adjacent sides of a square in $\Omega$ are never identified, it follows that these are necessarily opposite corners of $c$, and furthermore, that 
	opposite sides of $c$ are identified with orientation reversing isometries.
 Consequently,  the attaching map of $c$ maps the boundary of $c$ to a bigon in $\Omega$, consisting of the edges $e_1$ and $e_2$.  Since $e_1$ and $e_2$ are adjacent sides of a square in $\Omega$, their labels commute.  
	This contradicts the assumption that $\Omega$ does not contain commuting bigons. 
	Hence 
	$\Delta^{(1)}$
	cannot contain a bigon.  We have thus verified that $\Delta$ is 
a	simplicial complex.
	
	We now check that $\Delta$ is flag. 
Let $u_1, \dots, u_n$ be the vertices of a complete graph contained in  the $1$-skeleton of $\Delta$.
For $ 1 \le i \le n$, let $e_i$ be the edge of $\Omega$  incident to $v$ which $u_i$ lies on, and let $s_i$ be the label of $e_i$. 
For each $1 \le i < j \le n$,  we know that $s_i$ and $s_j$ are adjacent in $\Gamma$ since $u_i$ is adjacent to $u_j$ in $\Delta$. If $\Omega$ is cube-full, it follows that there is some cube in $\Omega$ containing $v \cup \bigcup_{i = 1}^ne_i$. Thus, $\Delta$ is flag. On the other hand suppose that $\G$ is triangle-free. As $\Omega$ is folded, the labels $s_1, \dots, s_n$ are distinct. Furthermore, as $\G$  is triangle-free and $s_1, \dots, s_n$ as vertices of $\G$ form a complete graph, we have that $n \le 2$. Thus, the flag condition holds under the triangle-free assumption as well.
\end{proof}

If $G$ is a torsion-free subgroup of $W_\G$, then by Proposition~\ref{prop_torsion_free}, a completion of~$G$ cannot have commuting bigons.  Then  Proposition~\ref{prop_no_bigon_nonpos} immediately implies:

\begin{proposition}\label{prop_torsion_free_npc}
Any completion of a 
torsion-free subgroup of a RACG is non-positively curved. 
\hfill{\qed}
\end{proposition}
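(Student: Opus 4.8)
The plan is to reduce immediately to Proposition~\ref{prop_no_bigon_nonpos}, which says that a folded, cube-full $\G$-labeled complex containing no commuting bigon is non-positively curved. If $\Omega$ is a completion of $G$, then $\Omega$ is folded and cube-full by part~(\ref{def_completion_folded_cube_full}) of Definition~\ref{def_completion}, so the only thing left to verify is that $\Omega$ contains no commuting bigon. I would establish this by contradiction, turning a commuting bigon into the configuration that Proposition~\ref{prop_torsion_free} detects, and thereby forcing $G$ to contain torsion.

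So suppose, for contradiction, that $\Omega$ has a commuting bigon consisting of edges $e_1$ and $e_2$ with labels $s$ and $t$, where $s$ and $t$ are adjacent vertices of $\G$ and $e_1$, $e_2$ have the same set of endpoints (possibly a single vertex, if both are graph-loops). Since $\G$ is a simplicial graph, adjacency forces $s \neq t$, and the pair $\{s,t\}$ spans an edge, so the special subgroup $W_{\{s,t\}} \cong \mathbb{Z}_2 \times \mathbb{Z}_2$ is finite. Because $e_1$ and $e_2$ share their endpoint set, I can orient them so that traversing $e_1$ and then $e_2$ produces a closed loop $\gamma$ in $\Omega$ (this works uniformly whether the two endpoints are distinct or coincide); recalling that the generators of $W_\G$ are involutions, the label of $\gamma$ is the word $st$. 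This word is reduced, since $s \neq t$, and it represents an element of the finite special subgroup $W_{\{s,t\}}$.

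Applying Proposition~\ref{prop_torsion_free} to the loop $\gamma$ then shows that $G$ has torsion, contradicting the hypothesis that $G$ is torsion-free. Hence $\Omega$ contains no commuting bigon, and Proposition~\ref{prop_no_bigon_nonpos} yields that $\Omega$ is non-positively curved. I do not expect a genuine obstacle here; the argument is a two-step combination of the already established Propositions~\ref{prop_torsion_free} and~\ref{prop_no_bigon_nonpos}. The only place needing a moment's care is the degenerate case in which the bigon is a pair of graph-loops at one vertex — which is precisely why the loop $\gamma$ is built directly from $e_1$ and $e_2$ with a choice of orientations, rather than via a basepoint-preserving construction; one could alternatively first note that a completion of a torsion-free subgroup has no graph-loops at all (again by Proposition~\ref{prop_torsion_free}) and then only distinct-endpoint bigons remain.
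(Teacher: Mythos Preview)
Your argument is correct and follows exactly the same route as the paper: use Proposition~\ref{prop_torsion_free} to rule out commuting bigons (via the loop labeled $st$ in the finite special subgroup $W_{\{s,t\}}$), and then invoke Proposition~\ref{prop_no_bigon_nonpos}. The paper simply states this in one sentence without spelling out the bigon-to-loop step, but the content is identical.
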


Next, we show that a completion of a finite $\G$-labeled tree is a finite CAT(0) cube complex.

\begin{proposition} \label{prop_finite_cat0}
	Let $X$ be a $\G$-labeled finite tree and let
	\[X = \Omega_0 \to \Omega_1 \to \dots \to \Omega \]
	be a completion sequence for $X$. Then $\Omega$ is a finite CAT(0) cube complex. 
	Furthermore, there is a finite bound on the length of the completion sequence.
\end{proposition}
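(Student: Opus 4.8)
The plan is to reduce everything to the observation that a finite tree has trivial associated subgroup. Fix any vertex $B$ of $X$, write $B$ also for its image in each $\Omega_i$ and in $\Omega$, and let $G$ be the subgroup of $W_\Gamma$ associated to $(\Omega, B)$. I claim $G$ is trivial. Given a loop $l$ in $\Omega$ based at $B$, the direct-limit description of $\Omega$ provides an $n$ and a loop $l'$ based at $B$ in $\Omega_n$ with the same label; iterating Lemma~\ref{lemma_pull_back} backwards along the completion sequence (exactly as in the proof of Lemma~\ref{lemma_omega_loops}) yields a loop based at $B$ in $\Omega_0 = X$ whose label equals that of $l$ in $W_\Gamma$. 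Since $X$ is a tree, every loop in $X$ has label freely equal to the empty word, hence represents the identity. Thus $G = \{1\}$, and by Proposition~\ref{prop_subgroup_gen_by_loops}, $(\Omega, B)$ is a completion of the trivial (in particular torsion-free) group.

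Next I would extract the topological conclusions from results already in hand. By Proposition~\ref{prop_torsion_free_npc}, $\Omega$ is non-positively curved, and by Theorem~\ref{thm_fundamental_group}, $\pi_1(\Omega) \cong G = \{1\}$; since a non-positively curved, simply connected cube complex is CAT(0), $\Omega$ is a CAT(0) cube complex. For finiteness: because $\{1\}$ is torsion-free, Proposition~\ref{prop_torsion_free} shows $\Omega$ has no graph-loop, so (as $\Omega$ is folded) the edges at any vertex carry pairwise distinct labels and every vertex has valence at most $|V(\Gamma)|$; thus $\Omega$ is locally finite, and it is finite-dimensional since cubes correspond to cliques of $\Gamma$. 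By Proposition~\ref{prop_omega_hyps}, every hyperplane of $\Omega$ is dual to an edge in the image of $X$, so $\Omega$ has at most $|E(X)|$ hyperplanes; since $\Omega$ is CAT(0), a combinatorial geodesic between two vertices is dual to each hyperplane at most once, so the combinatorial diameter of $\Omega$ is at most $|E(X)|$. A locally finite, finite-dimensional cube complex with valence at most $|V(\Gamma)|$ and diameter at most $|E(X)|$ has at most $\sum_{k=0}^{|E(X)|}|V(\Gamma)|^k$ vertices, hence finitely many cells, so $\Omega$ is a finite CAT(0) cube complex.

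For the last assertion, since $\Omega$ is finite, Proposition~\ref{prop_finite_completion} already shows that a standard completion sequence for $X$ terminates, i.e.\ $\Omega = \Omega_N$ for some $N$. To see that $N$ can be bounded in terms of $|E(X)|$ and $\Gamma$ alone, I would use the cell bound on $\Omega$ from the previous paragraph together with the structure of the standard completion procedure of Proposition~\ref{prop_existence}: each round consists of a finite fold-and-identify phase (each step strictly decreasing the number of cells) followed by a bounded batch of cube attachments, and no round can be vacuous once $\Omega$ is reached, so the number of rounds is controlled by the number of cells of $\Omega$.

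I expect the main obstacle to be this final length bound. Finiteness of the sequence is immediate for \emph{standard} sequences from Proposition~\ref{prop_finite_completion}, but a general completion sequence can in principle interleave cube attachments and cube identifications without monotone progress, so the honest statement to prove is that a standard completion sequence has length bounded by an explicit function of $|E(X)|$ and $|V(\Gamma)|$ — which is what the applications in Sections~\ref{sec_separability} and~\ref{sec_reflection_subgroups} require. The remainder of the argument is routine once the triviality of the associated subgroup has been established.
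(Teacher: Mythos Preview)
Your argument is correct, but it takes a different route to simple connectedness than the paper does, and the comparison is worth recording.

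Both proofs begin the same way: use the pull-back lemma to see that every loop in $\Omega$ based at $B$ has label representing the identity in $W_\Gamma$, so the associated subgroup is trivial. From here the paper argues \emph{directly} that each $\Omega_n$ is simply connected, by induction on $n$: it first shows no $\Omega_n$ contains a graph-loop, then analyzes each of the three operations (fold, cube identification, cube attachment) and checks that simple connectedness is preserved at every step. Only after establishing $\pi_1(\Omega)=1$ does the paper invoke the torsion-free machinery to get non-positive curvature.

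You instead short-circuit the induction by quoting two earlier results as black boxes: Proposition~\ref{prop_torsion_free_npc} gives non-positive curvature, and Theorem~\ref{thm_fundamental_group} gives $\pi_1(\Omega)\cong G=\{1\}$. This is legitimate---both results are proved earlier and do not depend on the present proposition---and it is shorter. What you lose is the intermediate information that every $\Omega_n$ (not just the limit) is simply connected and free of graph-loops; the paper's inductive proof establishes this, and it is exactly what is reused in the sharper bound of Proposition~\ref{prop_finite_cat0_triangle_free}, where one needs to know that each folded $\Theta_j$ in the sequence is already CAT(0). Your approach would need a separate argument to recover that.

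Your observation about the length bound is well taken: the paper, like you, only invokes Proposition~\ref{prop_finite_completion}, which applies to \emph{standard} sequences, so the ``finite bound'' conclusion should be read in that sense. Your attempt to go further and bound $N$ explicitly in terms of $|E(X)|$ and $|V(\Gamma)|$ is not part of the proposition as stated; the paper defers that refinement to Proposition~\ref{prop_finite_cat0_triangle_free} under the additional triangle-free hypothesis, where the inductive control over intermediate complexes is actually used.
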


\begin{proof}
We begin by proving that each complex in the completion sequence is simply connected.  

As a first step, we 
show that $\Omega_n$ does not contain any graph-loops for $n \ge 0$. Note that the label of every loop in $\Omega_0$ based at $B$ represents the trivial element in $W_\G$. For a contradiction, suppose that for some $n$, $\Omega_n$ contains a graph-loop $l$ with label~$s$. Suppose $l$ is incident to a vertex $v \in \Omega_n$. Let $p$ be a geodesic in $\Omega_n$ from $B$ to $v$, and let $w$ be the label of $p$. It follows that the loop $p l p^{-1}$ in $\Omega_n$ has label $wsw^{-1}$. As $wsw^{-1}$ has an odd number of occurrences of the letter $s$, it represents a nontrivial element of $W_\G$. However, by iteratively applying Lemma~\ref{lemma_pull_back} we conclude that the label of some loop in $\Omega_0$ based at $B$ represents a non-trivial element of $W_\G$. This is a contradiction. Thus, $\Omega_n$ does not contain a graph-loop for any $n$.

Next, we 
 show by induction that $\Omega_n$ is simply connected for all $n \ge 0$.   The base case is true by hypothesis.   Now assume that $\Omega_n$ is simply connected.
 	
	Suppose $\Omega_{n+1}$ is obtained from $\Omega_n$ by attaching a $k$-cube $c$ to the edges $e_1, \dots, e_k$ of $\Omega_{n}$ which are all  incident to the same vertex $v$. Then $\Omega_{n+1}$ can be homotoped onto $\Omega_{n}$ by homotoping $c$ onto $v \cup \bigcup_{i=1}^ke_i$. 
	If $\Omega_{n+1}$ is obtained from $\Omega_n$ by identifying a collection $\{c_i\}$ of $k$-cubes $(k \ge 2)$ with identical boundary to a single cube $c$, then any null homotopy using the  $c_i$'s can be replaced with one that only uses $c$. 
	In both cases, $\Omega_{n+1}$ is simply connected.

Now suppose $\Omega_{n+1}$ is obtained from $\Omega_n$ by a fold operation.  Specifically, suppose that the edges $e_i$ (with endpoints $v$ and $v_i$, for $i = 1,2$)  in $\Omega_n$ are identified to get the edge $e$ in $\Omega_{n+1}$.  By the first paragraph these edges are not graph-loops.  

If $v_1=v_2$, then $e_1 \cup e_2$ is a loop, which is null homotopic because $\Omega_n$ is simply connected.  Since identifying $e_1$ and $e_2$ is equivalent to attaching a disk to this loop, it follows that $\Omega_{n+1}$ is simply connected.  
If $v_1 \ne v_2$, then let $\Omega_n'$ and $\Omega_{n+1}'$ be the complexes obtained by collapsing the contractible subspaces $e_1 \cup e_2$ and $e$ in $\Omega_n$ and $\Omega_{n+1}$ respectively to points.  Then $\Omega_n'$ is homotopy equivalent to $\Omega_n$, $\Omega_{n+1}'$ is homotopy equivalent to $\Omega_{n+1}$, and $\Omega_n'$ is homeomorphic to $\Omega_{n+1}'$.  
Again, we conclude that $\Omega_{n+1}$ is simply connected.  
We have established that $\Omega_n$ is simply connected for all $n \ge 0$, and it readily follows that $\Omega$ is simply-connected as well.
	
	Next, we show that $\Omega$ is non-positively curved.   By Proposition~\ref{prop_no_bigon_nonpos} it is enough to show that $\Omega$ does not contain a commuting bigon. 
	Since $\Omega$ is simply-connected, Lemma~\ref{lemma_omega_paths}(3) implies that the group associated to $(\Omega, B)$ is trivial, and therefore torsion-free.  By Proposition~\ref{prop_torsion_free} there are no commuting bigons.
	
	It follows that  $\Omega$ is CAT(0) as it is  simply-connected and non-positively curved.

	We claim that the diameter of $\Omega$ is at most~$E$, where $E$ is  the number of edges of $X$.
For consider a geodesic $\alpha$ in~$\Omega$. By Proposition~\ref{prop_omega_hyps}, every hyperplane that intersects $\alpha$ must also intersect the image of $X$ in $\Omega$. Furthermore, as $\Omega$ is CAT(0) (and not just non-positively curved), no hyperplane intersects $\alpha$ twice. Thus, the length of $\alpha$ is at most $E$, and, as $\alpha$ was an arbitrary geodesic, the diameter of $\Omega$ is also at most $E$.  
It follows that $\Omega$ is finite, as it is locally finite (since it is folded) and has finite diameter.
	Finally, Proposition~\ref{prop_finite_completion} implies that 
there is a 	finite bound on the length of the completion sequence.
\end{proof}

When $\G$ is triangle-free, we get the following more precise bound on the length of 
 a standard 
 completion sequence,
which is used in Theorem~\ref{thm_completion_of_reflection_subgroup_triangle_free}.

\begin{proposition} \label{prop_finite_cat0_triangle_free}
With the set-up of Proposition~\ref{prop_finite_cat0}, if  $\G$ is additionally triangle-free, then 
there is a finite bound on the length of the completion sequence depending only on the number of edges of $X$ 
		and on $|V(\Gamma)|$.
\end{proposition}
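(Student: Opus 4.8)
The plan is to extract from Proposition~\ref{prop_finite_cat0} a bound, depending only on $E:=|E(X)|$ and $d:=|V(\G)|$, on the size of \emph{every} complex occurring in the sequence, and then to bound the number of operations. First I would bound $\Omega$ itself. By Proposition~\ref{prop_finite_cat0}, $\Omega$ is a finite CAT(0) cube complex; as in that proof a geodesic crosses each hyperplane at most once and, by Proposition~\ref{prop_omega_hyps}, every hyperplane meets the image of $X$, so $\mathrm{diam}(\Omega)\le E$. Since $\Omega$ is folded and $\G$ is simplicial, every vertex of $\Omega$ has valence at most $d$; since $\G$ is triangle-free, $\Omega$ is a square complex, and since it is folded without commuting bigons each vertex lies in at most $\binom{d}{2}$ squares. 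Counting the vertices in the ball of radius $E$ about $B$ then gives an explicit bound $g=g(E,d)$ on the total number of cells of $\Omega$.

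The key step is to show the same bound applies to every folded complex $F$ appearing in a standard completion sequence for $X$. Every complex in the sequence is simply connected (Proposition~\ref{prop_finite_cat0}) and has no graph-loops; if $F$ contained a commuting bigon $e_1e_2^{-1}$ its label would be a nontrivial word of the form $st$, contradicting Lemma~\ref{lemma_omega_paths}(3) (whose proof applies to any $\G$-labeled square complex, since squares of such a complex encode commuting relations). As $\G$ is triangle-free and $F$ is folded with no commuting bigon, Proposition~\ref{prop_no_bigon_nonpos} shows $F$ is non-positively curved, hence CAT(0); and $F$ has at most $E$ hyperplanes by Proposition~\ref{prop_omega_hyps}, so $\mathrm{diam}(F)\le E$ and $F$ has at most $g$ cells. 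Moreover, attaching a $2$-cube adds exactly four cells (one vertex, two edges, one square), and a single batch of cube attachments between consecutive folded complexes performs at most one attachment per maximal $2$-clique-tuple of the preceding folded complex, i.e.\ at most $A:=\binom{d}{2}g$ attachments; hence every complex in the sequence has at most $g+4A$ cells.

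It remains to count operations. Folds and cube identifications strictly decrease the cell count while attachments raise it by $4$, so the total number of fold/identification operations is at most $(\text{number of cells of }X)+4\cdot(\text{total number of attachments})$; thus it suffices to bound the total number of attachments, and for that the number $m$ of attachment-rounds. I would prove, by induction on $k$, that the natural map $F_k\to\Omega$ from the $k$-th folded complex is injective on the ball of radius $k$ about $B$ and restricts to an isometry onto the radius-$k$ ball of $\Omega$, and that every maximal $2$-clique-tuple of $F_k$ at a vertex within distance $k-2$ of $B$ already spans a square in $F_k$. Granting this, the $k$-th attachment-round only attaches squares at vertices at distance $\ge k-1$ from $B$; since $\mathrm{diam}(F_k)\le E$, this round is empty once $k\ge E+2$, so $m\le E+2$, the total number of attachments is at most $(E+2)A$, and all of the above quantities are bounded by explicit functions of $E$ and $d$, which is what is claimed.

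The hard part is the inductive claim of the previous paragraph. The difficulty is that a fold-round can identify a newly created far-corner vertex with a vertex lying deep inside an already-stabilized ball, so the radius-$k$ ball of $F_{k+1}$ is not visibly assembled from that of $F_k$; carrying the induction through requires controlling these identifications, using CAT(0)-ness of the $F_k$ (uniqueness of geodesics and convexity of hyperplane carriers) to rule out new shortcuts or newly-missing squares near $B$. A softer alternative that sidesteps this geometry is to note that all folded complexes in the sequence lie among the finitely many isomorphism types of based folded $\G$-labeled cube complexes with at most $g$ cells, and to argue, using confluence of the fold and cube-identification operations together with termination of the sequence (Proposition~\ref{prop_finite_cat0}), that no such isomorphism type recurs; this route, however, needs a confluence statement not established in the excerpt.
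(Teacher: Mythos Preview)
Your preliminary work is correct and matches the paper: each folded complex $F$ in the sequence is simply connected, has no graph-loops or commuting bigons, and since $\G$ is triangle-free Proposition~\ref{prop_no_bigon_nonpos} makes $F$ a CAT(0) square complex with diameter at most $E$ and valence at most $d$. Hence there are at most $g=g(E,d)$ isomorphism types of $\G$-labeled folded complexes that can occur, and between consecutive folded complexes the number of operations is bounded by a function of $E$ and $d$.

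The gap is in bounding the number of folded stages. Your main route, the inductive claim that the ball of radius $k$ in $F_k$ has already stabilized and maps isometrically to $\Omega$, is not the paper's argument, and you yourself flag it as the hard part without carrying it through; I do not see how to make it work cleanly, since a single round of attachments plus folds can in principle create identifications far from where the new squares were glued.

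Your ``softer alternative''---that no isomorphism type of folded complex can recur---\emph{is} exactly what the paper does, but it does not require any confluence statement. The missing trick is this: suppose $\Theta_j$ and $\Theta_k$ (with $j<k$) are isomorphic as $\G$-labeled complexes. Transport the sequence of operations taking $\Theta_j$ to $\Theta_k$ through the isomorphism and apply it to $\Theta_k$; the result is again isomorphic to $\Theta_j$. Iterating yields an infinite standard completion sequence (the standard algorithm does not depend on a basepoint, only on the labeled complex, so the replayed operations are legitimate standard steps). The direct limit $\Omega'$ of this infinite sequence is finite, since any finite collection of distinct cells in $\Omega'$ has distinct preimages in one of the infinitely many copies of $\Theta_j$, so $|\Omega'|\le|\Theta_j|$. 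But a finite direct limit of a standard completion sequence forces the sequence to be finite by Proposition~\ref{prop_finite_completion}, a contradiction. Hence the folded complexes $\Theta_1,\dots,\Theta_n$ are pairwise non-isomorphic, so $n\le g$, and the total length of the sequence is bounded by $n$ times your per-round bound.
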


\begin{proof}
	By Proposition~\ref{prop_finite_cat0}, $\Omega = \Omega_N$ for some $N$ and $\Omega$ is a finite CAT(0) cube complex. Let $E$ be the number of edges of $X$. We are left to prove that $N$ only depends on $E$ and on $|V(\G)|$. Consider the subsequence of all \textit{folded} complexes of the given standard completion:
	\[\Theta_1 = \Omega_{i_1}, \Theta_2 = \Omega_{i_2}, \dots, \Theta_n = \Omega_{i_n} \] 
	
	By Proposition~\ref{prop_no_bigon_nonpos}, we know that $\Theta_i$ is a CAT(0) cube complex. Furthermore, $\Theta_i$ has diameter at most $E$, by the proof of Proposition~\ref{prop_finite_cat0}.
	
	We claim that the complex $\Theta_j$ is not isometric to $\Theta_{k}$ for all $k > i$. Suppose otherwise for a contradiction. Consider the sequence of operations performed to $\Theta_{j}$ in order to obtain $\Theta_{k}$. We can repeat this same sequence of operations to $\Theta_{k}$ in order to obtain another folded complex isometric to $\Theta_j$. By iteratively repeating this process, we obtain a standard completion sequence which is infinite. Furthermore, the direct limit $\Omega'$ of this new standard completion sequence must be a finite complex.	To see this, note that given $m$ distinct cells in $\Omega'$, there is some complex isometric to $\Theta_j$ in the completion sequence, which contains	$m$ distinct preimages of the cells (since there are infinitely many such complexes in the sequence).  Thus the size of $\Omega'$ is bounded by the size of $\Theta_j$. 
	However, this contradicts Proposition~\ref{prop_finite_completion}.
	
	Let $F$ be the number of all possible CAT(0) cube complexes 
	of diameter at most $E$ and with at most $|V(\Gamma)|$ edges incident to each vertex.
	As $\Theta_j$ is not isometric to $\Theta_k$ for all $j \neq k$, it follows that $n \le F$. 
	For each $1 \le j \le n$, the number of cube attachments that can be applied to $\Theta_{j}$, and the number of fold and cube identification operations that can be applied to the resulting complex is bounded by a number $K$ which depends only on $E$. Thus,  $N \le KF$ where $K$ and $F$ depend only on $E$ and on $|V(\G)|$. 
\end{proof}

\section{Quasiconvexity} \label{sec_qc}

Let $H$ be a group with fixed generating set. Recall that a subgroup $G$ of $H$ is \textit{$M$-quasiconvex}, for $M \ge 0$, if any geodesic path in the Cayley graph of $H$ with endpoints in $G$ lies in the $M$-neighborhood of $G$. We say $G$ is \textit{quasiconvex} if it is $M$-quasiconvex for some $M$. 
In general, $G$ may be quasiconvex with respect to one generating set for $G$ but not another. 
However, if $G$ is quasiconvex with respect to some generating set, then it is quasi-isometrically embedded with respect to any generating set \cite[Chapter III.$\Gamma$, Lemma 3.5]{BH}.

When we say a subgroup is quasiconvex in a RACG, we will always mean with respect to the standard generating set. 
The main result of this section is that a subgroup of a RACG is quasiconvex if and only if any standard completion of the subgroup is finite.

We first prove a lemma relating distances in a completion of a subgroup to distances in the Cayley graph (associated to the standard generating set) of the RACG.

\begin{lemma} \label{lemma_distances_in_omega}
	Let $G$ be a subgroup of the RACG $W_{\Gamma}$ and let $(\Omega, B)$ be a completion of $G$. Let $w$ be the label of a path in $\Omega$ from the basepoint $B$ to some vertex $v \in \Omega$. Let $\mathcal{C}$ be the Cayley graph of $W_{\Gamma}$, and let $v_w$ be the vertex in $\mathcal{C}$ which represents the element of $W_\G$ corresponding to $w$. Then $d_{\Omega}(B, v) = d_{\mathcal{C}}(G, v_w)$. (Here $G$ is naturally identified with the vertices in $\mathcal{C}$ which represent elements of $G$.)
\end{lemma}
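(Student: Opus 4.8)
The plan is to prove the two inequalities $d_{\mathcal{C}}(G, v_w) \le d_{\Omega}(B,v)$ and $d_{\Omega}(B,v) \le d_{\mathcal{C}}(G, v_w)$ separately. First I would record a preliminary observation that the right-hand side depends only on $v$: if $w_1$ and $w_2$ are labels of paths in $\Omega$ from $B$ to $v$, then $w_1 w_2^{-1}$ is the label of a loop based at $B$, so by property~(\ref{def_completion_loops}) of Definition~\ref{def_completion} it represents an element $\gamma$ of $G$; hence $v_{w_1} = \gamma\, v_{w_2}$, and since left multiplication by $\gamma \in G$ is an automorphism of $\mathcal{C}$ fixing setwise the vertices identified with $G$, we get $d_{\mathcal{C}}(G, v_{w_1}) = d_{\mathcal{C}}(G, v_{w_2})$. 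In particular $d_{\mathcal{C}}(G, v_w)$ is independent of the chosen path with label $w$.

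For $d_{\mathcal{C}}(G, v_w) \le d_{\Omega}(B,v)$, I would take a minimal-length path $p$ in $\Omega$ from $B$ to $v$, so $|p| = d_{\Omega}(B,v)$, and let $w'$ be its label. By Lemma~\ref{lemma_omega_paths}(\ref{lemma_omega_paths2}), $w'$ is reduced, so $d_{\mathcal{C}}(1, v_{w'}) = |w'| = |p|$, where $1$ denotes the identity vertex. By the preliminary observation $d_{\mathcal{C}}(G, v_w) = d_{\mathcal{C}}(G, v_{w'})$, and since the identity vertex lies in $G$, this is at most $d_{\mathcal{C}}(1, v_{w'}) = |p| = d_{\Omega}(B,v)$.

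For the reverse inequality, set $n = d_{\mathcal{C}}(G, v_w)$ and choose $g \in G$ realizing this distance, so the element $g^{-1}[w]$ of $W_\Gamma$ has word length $n$; pick a reduced word $u$ representing $g^{-1}[w]$ and a reduced word $h$ representing $g$, so that $hu$ and $w$ represent the same element of $W_\Gamma$ and $|u| = n$. Applying Lemma~\ref{lemma_reduced_expression} to $h$ and $u$ yields a reduced expression $\hat h \hat u$ for $hu$ and generators $s_1, \dots, s_m \in V(\Gamma)$ such that $\hat h s_1 \cdots s_m$ is a reduced expression for $h$ and $s_m \cdots s_1 \hat u$ is a reduced expression for $u$; in particular $|u| = m + |\hat u|$. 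Since $\hat h \hat u$ is a reduced word equal to $w$, Lemma~\ref{lemma_omega_paths}(\ref{lemma_omega_paths1}) gives a path $q$ in $\Omega$ from $B$ to $v$ with label $\hat h \hat u$; let $v^{\ast}$ be the vertex on $q$ reached after reading the prefix $\hat h$. On the other hand, $\hat h s_1 \cdots s_m$ is a reduced word representing $g \in G$, so by property~(\ref{def_completion_reduced_words}) of Definition~\ref{def_completion} it labels a loop $\ell$ based at $B$. Since $\Omega$ is folded, the path from $B$ with prescribed label $\hat h$ is unique, so the prefix of $\ell$ labeled $\hat h$ also ends at $v^{\ast}$; hence the remaining portion of $\ell$, of length $m$, is a path from $v^{\ast}$ back to $B$. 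Therefore $d_{\Omega}(B,v) \le d_{\Omega}(B, v^{\ast}) + d_{\Omega}(v^{\ast}, v) \le m + |\hat u| = |u| = n = d_{\mathcal{C}}(G, v_w)$, which completes the proof.

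The main obstacle is this second inequality. Unlike in the free group setting, a reduced word representing an element of $G$ need not factor as a reduced word for a coset representative followed by a reduced ``remainder'', so one cannot simply concatenate a loop at $B$ labeled $h$ with a path labeled $u$. The purpose of Lemma~\ref{lemma_reduced_expression} is exactly to reorganize $hu$ into a genuinely reduced expression $\hat h \hat u$ that lifts to $\Omega$ via Lemma~\ref{lemma_omega_paths}(\ref{lemma_omega_paths1}); after that, foldedness is what allows us to identify the intermediate vertex $v^{\ast}$ simultaneously on the lifted path $q$ and on the loop $\ell$, and thereby bound $d_{\Omega}(v^{\ast}, B)$ by $m$.
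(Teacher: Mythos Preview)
Your proof is correct and follows essentially the same strategy as the paper's: both arguments hinge on Lemma~\ref{lemma_reduced_expression} to produce a reduced expression that can be lifted to $\Omega$ via Lemma~\ref{lemma_omega_paths}(\ref{lemma_omega_paths1}), the completion property~(\ref{def_completion_reduced_words}) to realize the element of $G$ as a loop, and foldedness to match the common prefix and locate the intermediate vertex. The only cosmetic differences are that you split the two inequalities explicitly and use the dual decomposition $w = hu$ with $h \in G$ and $|u| = d_{\mathcal C}(G, v_w)$, whereas the paper writes $wh \in G$ with $|h| = d_{\mathcal C}(G, v_w)$ and handles both inequalities in a single pass.
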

\begin{proof}
	By Lemma~\ref{lemma_omega_paths}, there is a path in $\Omega$ from $B$ to $v$ with label a reduced expression for $w$. Thus, without loss of generality, we may assume that $w$ is reduced.
	Let $\alpha$ be a geodesic in $\mathcal{C}$ from $v_{id}$ to $v_w$ with label $w$, where $v_{id}$ is the vertex in $\mathcal{C}$ labeled by the identity element. Let $\beta$ be a geodesic in $\mathcal{C}$ from $v_w$ to $G$ which realizes the distance from $v_w$ to $G$. Let $h$ be the label of $\beta$. It follows that $h$ is a reduced word. As $\alpha \beta$ is a path from $v_{id}$ to a vertex of $G$, the word $k = wh$ represents an element of $G$. By Lemma~\ref{lemma_reduced_expression}, there is a reduced expression $\hat{k} = \hat{w}\hat{h}$ for $k$ in $W_{\G}$ such that $w' = \hat{w}s_1 \dots s_m$ is a reduced expression for $w$ and $h' = s_m \dots s_1 \hat{h}$ is a reduced expression for $h$, where $s_i \in V(\G)$ for $1 \le i \le m$. 

	By Lemma~\ref{lemma_omega_paths}(\ref{lemma_omega_paths1}), there is a path $\alpha'$ 
	with label $w'= \hat{w}s_1 \dots s_m$ from $B$ to $v$ in $\Omega$. Furthermore, by the definition of the completion of a subgroup, there is a loop $l$ with label $\hat{k}= \hat{w}\hat{h}$ in $\Omega$ based at $B$. 
	Since $\Omega$ is folded, $\alpha'$ and $l$ overlap on the part labeled $\hat w$. It follows that  there is a path from $B$ to $v$ labeled by $h'^{-1} = \hat{h}^{-1}s_1\dots s_m$.

	Let $\gamma$ be a geodesic in $\Omega$ from $v$ to $B$ with label $z$. Note that $z$ must be a reduced word, and that $|z| \le |h'| = |h|$. As $wz$ is the label of a loop in $\Omega$, it follows by the definition of a completion that $wz$ represents an element of $G$. Thus there is a path in $\mathcal{C}$ from $v_w$ to $G$ with label $z$. By the minimality of $\beta$, we have that $|h| \le |z|$. Hence, $|z| = |h|$. It now follows that
$	d_{\Omega}(v_w, G) = |\beta| = |h| = |z| = |\gamma| = d_{\Omega}(B, v_w)$
\end{proof}

\begin{lemma} \label{lemma_is_wqc}
	Let $G$ be a subgroup of the RACG $W_{\Gamma}$. If some completion $(\Omega, B)$ of $G$ is finite, then $G$ is $M$-quasiconvex in $W_{\Gamma}$, where $M$ is the maximal distance of a vertex in $\Omega$ from $B$.
\end{lemma}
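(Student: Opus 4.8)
The plan is to reduce to the case of a geodesic based at the identity and then apply Lemma~\ref{lemma_distances_in_omega} one prefix at a time. Given any geodesic $\gamma$ in the Cayley graph $\mathcal{C}$ of $W_\Gamma$ with endpoints $g_1, g_2 \in G$, I would first left-multiply by $g_1^{-1}$. Since left multiplication by $g_1^{-1}$ is an isometry of $\mathcal{C}$ that carries $G$ onto $g_1^{-1}G = G$ (using $g_1 \in G$), it suffices to show that $g_1^{-1}\gamma$ lies in the $M$-neighborhood of $G$; but $g_1^{-1}\gamma$ is a geodesic from the identity vertex to $g_1^{-1}g_2 \in G$. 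So I may assume $\gamma$ runs from the identity vertex $v_{id}$ to some $h \in G$, and let $w = s_1 \cdots s_n$ be its label, a reduced word representing $h$.

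Next I would invoke the defining property of a completion. Since $w$ is a reduced word representing an element of $G$, Definition~\ref{def_completion}(\ref{def_completion_reduced_words}) gives a loop $l$ in $\Omega$ based at $B$ with label $w$. For each $k$ with $0 \le k \le n$, the prefix $w_k = s_1 \cdots s_k$ therefore labels a path in $\Omega$ from $B$ to some vertex $v \in \Omega$ (namely the subpath of $l$ of length $k$), and the $k$-th vertex $v_k$ of $\gamma$ is precisely the vertex of $\mathcal{C}$ representing $w_k$. Applying Lemma~\ref{lemma_distances_in_omega} to the word $w_k$ yields $d_{\mathcal{C}}(G, v_k) = d_{\Omega}(B, v)$. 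Since $v$ is a vertex of $\Omega$ and $M$ is, by hypothesis, the maximal distance in $\Omega$ of a vertex from $B$, we obtain $d_{\mathcal{C}}(G, v_k) \le M$ for every $k$. Thus every vertex of $\gamma$ lies within distance $M$ of $G$, which is exactly the assertion that $G$ is $M$-quasiconvex.

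I expect no serious obstacle: the real content is already packaged in Lemma~\ref{lemma_distances_in_omega}. The only points needing a little care are the reduction step (verifying that translating by $g_1^{-1}$ preserves both the geodesic condition and the set $G$) and the routine identification of the $k$-th vertex of $\gamma$ with the Cayley-graph vertex representing the length-$k$ prefix of $w$. Finiteness of $\Omega$ enters only to ensure that the constant $M$ is finite.
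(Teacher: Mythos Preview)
Your proposal is correct and follows essentially the same argument as the paper: reduce to a geodesic from the identity to an element of $G$, use the completion property to lift the reduced label $w$ to a loop in $\Omega$ based at $B$, and apply Lemma~\ref{lemma_distances_in_omega} to each prefix to bound the distance from $G$ by~$M$. The paper handles the reduction step with a bare ``without loss of generality'' while you spell out the left-translation by $g_1^{-1}$, but the content is identical.
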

\begin{proof}
Let $\alpha$ be a geodesic in the Cayley graph of $W_{\Gamma}$ between two elements of $G$. Without loss of generality, we may assume that
 $\alpha$ goes between 
 the identity vertex $v_{id}$ and some vertex labeled by an element  $g$ of $G$. 
Then the label $w$ of $\alpha$  is a minimal length word representing $g$. Let $v$ be any vertex along $\alpha$. Let $w'$ be the label of the subpath of $\alpha$ from $v_{id}$ to $v$.

	By the definition of a completion, there is a loop $l$ in $\Omega$ based at $B$ with label $w$. Consequently, there is an initial subpath, $l'$, of $l$ with label $w'$. Let $u$ be the vertex of $\Omega$ that is the endpoint of $l'$. By Lemma~\ref{lemma_distances_in_omega}, $d_{\Omega}(u, B) = d_{\mathcal{C}}(v, G) \le M$. 
\end{proof}

\begin{lemma} \label{lemma_qc_implies_finite}
	If $G$ is a quasiconvex subgroup of the RACG $W_{\Gamma}$, then $G$ is finitely generated and every standard completion of $G$ is finite.
\end{lemma}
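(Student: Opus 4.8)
The plan is to handle finite generation first and then the finiteness of standard completions. Since $G$ is quasiconvex, say $M$-quasiconvex with respect to the standard generating set of $W_\G$, a standard argument shows that $G$ is generated by the finitely many elements of $W_\G$ of word-length at most $2M+1$; in particular $G$ is finitely generated. Now fix a finite generating set of words $S_G$ for $G$ and let $\Omega$ be a standard completion of $G$ obtained from the $S_G$-complex $X(S_G)$, with completion sequence $X(S_G)=\Omega_0\to\cdots\to\Omega$. Because $\Omega$ is folded it is locally finite, and because every cube of $\Omega$ spans a clique of $\G$ its dimension is at most $|V(\G)|$. Hence to prove $\Omega$ is finite it suffices to prove that $\Omega$ has finite diameter; and then Proposition~\ref{prop_finite_completion} moreover yields that the completion sequence itself terminates, though this is not needed for the statement.

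The main tool for bounding the diameter is Lemma~\ref{lemma_distances_in_omega}: if $w$ labels a path in $\Omega$ from $B$ to a vertex $v$, then $d_\Omega(B,v)=d_{\mathcal C}(G,v_w)$, where $\mathcal C$ is the Cayley graph of $W_\G$ and $v_w$ is the vertex of $\mathcal C$ representing $w$. I would first bound the core graph $C(\Omega,B)$ of Definition~\ref{def_core_graph}. Let $v$ be a vertex of $C(\Omega,B)$, so $v$ lies on some loop $\ell$ based at $B$ with reduced label; write this label as $r_1r_2$, where $r_1$ labels the portion of $\ell$ from $B$ to $v$. Then $r_1$ is a prefix of the reduced word $r_1r_2$, which represents an element $g\in G$, so $v_{r_1}$ lies on a $\mathcal C$-geodesic between the two elements $v_{id}$ and $v_g$ of $G$. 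By $M$-quasiconvexity $d_{\mathcal C}(G,v_{r_1})\le M$, whence $d_\Omega(B,v)\le M$ by Lemma~\ref{lemma_distances_in_omega}. Thus $C(\Omega,B)$ lies in the ball of radius $M$ about $B$, and in particular is finite by local finiteness.

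Finally one must bound the vertices of $\Omega$ lying on no reduced loop based at $B$, i.e. those at distance more than $M$ from $B$. Since $X(S_G)$ is finite, all but finitely many vertices of $\Omega$ are images of vertices created by cube-attachment operations along the completion sequence (folds and cube identifications do not create vertices), and a single cube attachment introduces vertices only within distance $\dim\Omega\le|V(\G)|$ of the vertex it is applied at. The main obstacle is therefore to show that cube attachments do not ``run away'' from $B$: one must verify that along a standard completion sequence every cube is attached within a uniformly bounded distance of $B$. I expect this to follow by combining cube-fullness with the previous paragraph and with Proposition~\ref{prop_omega_hyps}: once a cube has been attached the complex is locally complete there, so a later cube attachment near the new vertices is forced only when an edge that does lie on a reduced loop (hence is close to $B$ by the previous paragraph) enlarges a clique of edge-labels, and since cliques of $\G$ have size at most $|V(\G)|$ and every edge of $\Omega$ is dual to one of the finitely many hyperplanes meeting the image of $X(S_G)$, all of $\Omega$ stays within a bounded neighbourhood of $C(\Omega,B)$, say within distance $M+2|V(\G)|$ of $B$. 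Together with the reduction of the first paragraph this shows $\Omega$ is finite, completing the proof.
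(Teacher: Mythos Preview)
Your reduction to bounding the diameter of $\Omega$ and your proof that the core graph $C(\Omega,B)$ lies in the $M$-ball about $B$ are both correct and coincide with the paper's opening steps.  The gap is entirely in the final paragraph.

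What you have written there is not an argument but an expectation, and the ingredients you list do not suffice.  The assertion that ``a later cube attachment near the new vertices is forced only when an edge that does lie on a reduced loop enlarges a clique'' is unsupported: cube attachments at newly created vertices are triggered by edges of previously attached cubes, not by edges of reduced loops, and after subsequent folds those new vertices can acquire further incident edges and demand yet more cubes.  Nor does the finiteness of hyperplanes from Proposition~\ref{prop_omega_hyps} by itself give a diameter bound: in Example~\ref{ex:nonqc_completion} the completion has finitely many hyperplanes and bounded-size cliques, yet the cube attachments cascade out to an infinite cylinder.  Granted, in that example the core graph is also infinite; but the point is that the specific mechanism you invoke---bounded clique size plus finitely many hyperplanes---does not control diameter, so some further use of the core-graph bound is required, and you have not supplied one.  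The claimed explicit bound $M+2|V(\G)|$ is in particular unjustified.

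The paper does not attempt to track cube attachments.  Instead it argues by contradiction: if $\Omega$ were infinite, local finiteness would yield an infinite geodesic ray $\zeta$ from $B$; by Proposition~\ref{prop_omega_hyps} some hyperplane $H$ is dual to many edges of $\zeta$.  One then closes a long initial segment of $\zeta$ into a loop at $B$ by travelling back through the carrier of $H$, and the crucial computation (using Lemma~\ref{lemma_hausdorff_bound}) is that when this loop's label is reduced, the number of deletions is controlled---few enough that the resulting reduced loop, which necessarily lies in $C(\Omega,B)$, still contains a vertex at distance greater than $M$ from $B$.  This contradicts the core-graph bound you already established.
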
 
\begin{proof} 
	Suppose $G$ is $M$-quasiconvex in $W_{\Gamma}$. 
 Then $G$ must be finitely generated as it is a quasiconvex subgroup of a finitely generated group~\cite[Chapter III.$\Gamma$ Lemma~3.5]{BH}. 
	Let $(\Omega, B)$ be a standard completion of $G$.   Lemma~\ref{lemma_distances_in_omega}      
	implies that $d_{\Omega}(v, B) \le M$ for every vertex 
	$v$ in the core graph $C(\Omega, B)$ from Definition~\ref{def_core_graph}.   
	
	Suppose $\Omega$ is not finite.  
    Since it is folded and locally finite, a standard argument can be used to show that $\Omega$ has an infinite geodesic ray $\zeta$ based at $B$.
	By Proposition~\ref{prop_omega_hyps}, there are only finitely many hyperplanes in $\Omega$. 
	It follows that we may choose a geodesic subsegment $\gamma$ of $\zeta$ starting at $B$,  
	a hyperplane $H$, and an integer $C \ge 0$ such that $H$ is dual to at least 
$M + C + 2$
	 edges of $\gamma$ and such that the distance from $B$ to $H \cap \gamma$ is less than $C$ .   
	
	Starting from $B$, let $e_1$ and $e_2$ respectively be the first and last edge along $\gamma$ dual to $H$.  Let $u$ be the endpoint of $e_1$ closer to $B$, and let $\alpha_1$ be the subpath of $\gamma$ from $B$ to $u$. Note that $|\alpha_1| \le C$. Now at least one endpoint of $e_{2}$ is in the same component of $N(H) \setminus H$
	as $u$; call it $v$.  Let $\alpha_2$ be the subpath of $\gamma$ from $u$ to $v$ and let $\alpha_3$ be a geodesic in $N(H) \setminus H$  from $v$ to $u$.  
	Let $a_i$ be the label of $\alpha_i$, for $1 \le i \le 3$.  
	Then the concatenation $\alpha = \alpha_1\alpha_2\alpha_3\alpha_1^{-1}$ is a loop in $\Omega$ based at $B$, with label $a= a_1a_2a_3 a^{-1}$. 
	
	We will complete the proof by using $\alpha$ to produce a loop in $C(\Omega, B)$ which leaves the $M$-neighborhood of $B$, a contradiction.
	We first claim that a reduced expression $\hat{a}_2 \hat{a}_3$ for $a_2a_3$ is obtained by a sequence of at most $|a_2| - M - C - 1$ deletions. 
	As $a_2$ and $a_3$ are reduced, every pair of generators deleted consists of a generator in $a_2$ and one in $a_3$. 
	Let $s$ be the type of $H$.  
	The claim now follows from the fact that $a_3$ has no occurrences of $s$ (being a word in $\mathrm{link}(s)$) while $a_2$ has at least $M + C + 1$ 
	occurrences of $s$.
	
	By Lemma \ref{lemma_hausdorff_bound}, there is 
	a closed path $\hat\alpha$ 
	in $\Omega$ with label $\hat{a}_2 \hat{a}_3$ based at $u$, and of Hausdorff distance at most $|a_2| - M - C - 1$ from $\alpha_2 \alpha_3$. 
	Then $\beta = \alpha_1 \hat\alpha \alpha_1^{-1}$ is a path based at $B$ in $\Omega$ with label 	$a_1 \hat{a}_2 \hat{a}_3 a_1^{-1}$,  and $\beta$ is Hausdorff distance at most~$|a_2| - M - C - 1$ from $\alpha$.
	As  the vertex $v$ of $ \alpha_1$ has distance $|a_1|  + |a_2|$ from $B$, it follows that $\beta$ contains a vertex $v'$ whose distance from $B$ is at least $|a_1| + M + C + 1$.
	
	Let $\hat{a}$ be a reduced expression for $a_1 \hat{a}_2 \hat{a}_3 a_1^{-1}$ obtained by a sequence of deletion operations. As $a_1$ and $\hat{a}_2 \hat{a}_3$ are each reduced, it follows that each pair of generators deleted in such a sequence contains a generator in $a_1$ or in $a_1^{-1}$ (possibly in both). Thus, there can be at most $2|a_1|$ such deletions. By Lemma \ref{lemma_hausdorff_bound}, there is a path $\hat{\alpha}$ in $\Omega$ based at $B$ with label $\hat{a}$ and with Hausdorff distance at most $2|a_1|$ from $\beta$. As $\beta$ contains $v'$, it follows that $\hat{\alpha}$ contains a vertex of distance at least $(|a_1| + M + C + 1) - 2|a_1| \ge M + 1$ from $B$. This is a contradiction, as $\hat{\alpha}$ is contained in $C(\Omega, B)$, which is itself contained in the $M$ neighborhood of~$B$.
		\end{proof}

We immediately obtain the following theorem from Lemmas~\ref{lemma_is_wqc} and~\ref{lemma_qc_implies_finite}. 

\begin{theorem} \label{thm_qc} 
	Let $G$ be a subgroup of a RACG. The following are equivalent:
	\begin{enumerate}
		\item $G$ is quasiconvex.
		\item Some  completion for $G$ is finite.
		\item $G$ is finitely generated and every standard completion for $G$ is finite. \qed
	\end{enumerate}
\end{theorem}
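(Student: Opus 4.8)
The plan is to prove the three statements equivalent by establishing the cyclic chain of implications $(1) \Rightarrow (3) \Rightarrow (2) \Rightarrow (1)$, most of which has already been carried out in the preceding lemmas.

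First, $(1) \Rightarrow (3)$ is precisely Lemma~\ref{lemma_qc_implies_finite}: a quasiconvex subgroup of the finitely generated group $W_\G$ is itself finitely generated (by~\cite[Chapter III.$\Gamma$, Lemma~3.5]{BH}), and that lemma shows every standard completion of such a $G$ is finite. Next, for $(3) \Rightarrow (2)$ I would argue as follows. Since $(3)$ includes the hypothesis that $G$ is finitely generated, $G$ has a finite generating set $S_G$ of words, and the associated $S_G$-complex $X(S_G)$ is a finite $\G$-labeled complex; by Proposition~\ref{prop_existence} it admits a completion, and the construction in the proof of that proposition produces a \emph{standard} completion of $G$ in the sense of Theorem~\ref{thm_subgroup_completion}. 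By hypothesis $(3)$, this standard completion is finite, which exhibits a finite completion for $G$, i.e.~$(2)$. Finally, $(2) \Rightarrow (1)$ is exactly Lemma~\ref{lemma_is_wqc}: if some completion $(\Omega, B)$ of $G$ is finite, then $G$ is $M$-quasiconvex in $W_\G$, where $M$ is the maximal distance in $\Omega$ from $B$ to a vertex.

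Since the substantive content has already been established earlier — the distance comparison of Lemma~\ref{lemma_distances_in_omega} feeding Lemma~\ref{lemma_is_wqc}, and the disk-diagram and hyperplane-counting arguments behind Lemma~\ref{lemma_qc_implies_finite} — the only point requiring care is the logical bookkeeping. In particular, condition $(3)$ carries the finite-generation hypothesis, so one must invoke it (and hence the existence of a standard completion, via Proposition~\ref{prop_existence} and Theorem~\ref{thm_subgroup_completion}) before the step $(3) \Rightarrow (2)$ is even meaningful; beyond this there is no real obstacle.
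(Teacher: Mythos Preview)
Your proof is correct and follows essentially the same approach as the paper, which simply states that the theorem is immediate from Lemmas~\ref{lemma_is_wqc} and~\ref{lemma_qc_implies_finite}. Your version merely makes explicit the routine logical bookkeeping for the implication $(3)\Rightarrow(2)$ (existence of a standard completion via Proposition~\ref{prop_existence} and Theorem~\ref{thm_subgroup_completion}), which the paper leaves implicit.
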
 

\begin{example}\label{ex:nonqc_completion}
Let $\G$ be the graph in Figure~\ref{fig:non_qc_completion}, and let $G = \langle abcde \rangle < W_\G$. The right of 
 this 
 figure shows a standard completion $\Omega$ for $G$.  
 The complex $\Omega$ is a bi-infinite cylinder tiled by squares, as 
the process of attaching squares never stops.
As $\Omega$ is infinite, 
$G$ is not quasiconvex by Theorem~\ref{thm_qc}. 
\begin{figure}[h!]
	\centering
	\begin{overpic}
		[scale=.6]
		{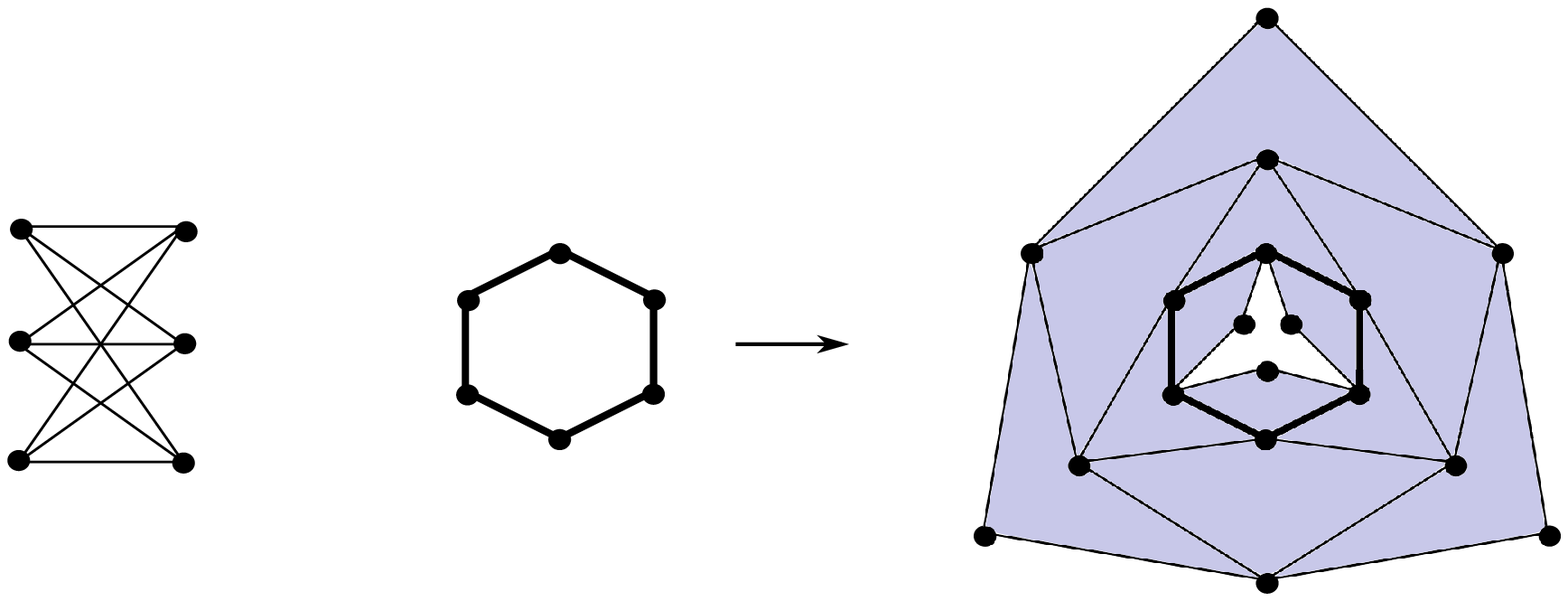}
		\put(-2,23){\small $a$}
		\put(13,23){\small $b$}
		\put(-2,16){\small $c$}
		\put(13,16){\small $d$}
		\put(-2,8){\small $e$}
		\put(13,8){\small $f$}

		\put(4.5,2){\small $\G$}
		\put(67,30){\small $\Omega$}

		\put(38.5,21){\small $a$}
		\put(42,15){\small $b$}
		\put(38,9.5){\small $c$}
		\put(30,9){\small $d$}
		\put(27,15){\small $e$}
		\put(31,21.5){\small $f$}

		\put(60, 16){\small $\dots$}
		\put(79, 16){\small $\dots$}
		\put(98, 16){\small $\dots$}
		
	\end{overpic}
	\bigskip
	\caption{
	Figure illustrating Example~\ref{ex:nonqc_completion}.
	}
	\label{fig:non_qc_completion}
\end{figure}	

\end{example}

\section{Residual finiteness and separability} \label{sec_separability}
In this section we give a proof using completions of the  well-known result that RACGs are residually finite. We additionally give a new proof of a result of Haglund which states that quasiconvex subgroups of RACGs are separable.  We begin by giving a construction which associates a new complex and associated subgroup 
to each subgroup of an RACG that has a finite completion.  This is similar 
to 
the complex used in Stallings' proof of Marshall Hall's theorem \cite{Stallings} and to 
the canonical completions of~\cite{Haglund-Wise-special}

\begin{definition}[Full valence extension and associated subgroup]\label{def:full_valence_ext}
Let $G$ be a finitely generated subgroup of a RACG $W_{\Gamma}$, and suppose that $G$ has a finite completion $(\Omega, B)$. 
For each $s \in V(\Gamma)$ and each vertex $v$ of $\Omega$ that is not  incident to an edge labeled by $s$, we add a graph-loop labeled by $s$ to $v$. We call this first resulting complex $\mathcal{E}_0$. Let $(\mathcal{E}, B)$ be a completion of $\mathcal{E}_0$ obtained by applying Lemma~\ref{lemma_adding_graph_loops}. We call $(\mathcal{E}, B)$ the \textit{full valence extension} of $\Omega$.
The complex $\mathcal{E}$ is folded and cube-full as it is a completion. Furthermore, by construction and by Lemma~\ref{lemma_adding_graph_loops}, $\mathcal{E}$ is finite and full-valence, and the natural inclusion of $\Omega$ into $\mathcal{E}$ is an isometry. 
 Let $H$ be the subgroup associated to $(\mathcal{E}, B)$ (as in Definition~\ref{def_associated_subgroup}). 
\end{definition}

The following lemma is an immediate consequence of Proposition~\ref{prop_subgroup_gen_by_loops} and Lemma~\ref{lemma_index_of_subgroup}.
\begin{lemma}\label{lem:ext_grp_fin_index}
With the notation of Definition~\ref{def:full_valence_ext}, the subgroup $H$ has finite index in $W_\G$. 
\end{lemma}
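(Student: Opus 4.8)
The statement to prove is Lemma~\ref{lem:ext_grp_fin_index}: with the notation of Definition~\ref{def:full_valence_ext}, the subgroup $H$ has finite index in $W_\G$.

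The plan is as follows. Recall from Definition~\ref{def:full_valence_ext} that $(\mathcal{E}, B)$ is a completion of the complex $\mathcal{E}_0$, that $H$ is the subgroup associated to $(\mathcal{E}, B)$ in the sense of Definition~\ref{def_associated_subgroup}, and that $\mathcal{E}$ is finite and full-valence (these two facts were already established in Definition~\ref{def:full_valence_ext} itself, via Lemma~\ref{lemma_adding_graph_loops}).

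First I would invoke Proposition~\ref{prop_subgroup_gen_by_loops}: since $(\mathcal{E}, B)$ is a connected, folded, cube-full, $\G$-labeled complex and $H$ is by definition the subgroup of $W_\G$ associated to $(\mathcal{E}, B)$, that proposition tells us $(\mathcal{E}, B)$ is a completion of $H$. Next, since $\mathcal{E}$ is full-valence, Lemma~\ref{lemma_index_of_subgroup} applies and gives that the index of $H$ in $W_\G$ equals the number of vertices of $\mathcal{E}$. Finally, since $\mathcal{E}$ is finite, this number is finite, so $H$ has finite index. That is essentially the whole argument — the lemma is stated as an ``immediate consequence'' precisely because all the substantive work (that $\mathcal{E}$ is finite, folded, cube-full, and full-valence) has already been done in constructing $\mathcal{E}$.

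There is no real obstacle here; the only thing to be careful about is making sure the hypotheses of the two cited results are genuinely met — namely that $\mathcal{E}$ is connected (inherited from $\Omega$ being connected and only graph-loops being added, plus completion operations preserving connectivity) and that $H$ is indeed \emph{the} associated subgroup so that Proposition~\ref{prop_subgroup_gen_by_loops} yields a completion of $H$ rather than just some completion of some subgroup. Here is the short proof.

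\begin{proof}
	By Definition~\ref{def:full_valence_ext}, $(\mathcal{E}, B)$ is a finite, full-valence, folded, cube-full, connected $\G$-labeled complex, and $H$ is the subgroup of $W_\G$ associated to $(\mathcal{E}, B)$. By Proposition~\ref{prop_subgroup_gen_by_loops}, $(\mathcal{E}, B)$ is therefore a completion of $H$. Since $\mathcal{E}$ is full-valence, Lemma~\ref{lemma_index_of_subgroup} implies that the index of $H$ in $W_\G$ equals the number of vertices of $\mathcal{E}$. As $\mathcal{E}$ is finite, this number is finite, so $H$ has finite index in $W_\G$.
\end{proof}
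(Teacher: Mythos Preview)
Your proof is correct and follows exactly the approach the paper indicates: the paper states this lemma as an immediate consequence of Proposition~\ref{prop_subgroup_gen_by_loops} and Lemma~\ref{lemma_index_of_subgroup}, and that is precisely what you do.
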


We now use full valence extensions to give a direct proof of the residual finiteness of RACGs.  The fact that
RACGs are residually finite is well-known: RACGs are linear (see for instance \cite{BB}), and by a theorem of Malcev, every finitely generated linear group is residually finite (see \cite[4.2]{Wehrfritz} for a proof).

\begin{theorem} \label{thm_res_finite}
	Every RACG is residually finite.
\end{theorem}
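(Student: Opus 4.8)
The plan is to use the machinery of completions and full valence extensions developed above. Let $W_\G$ be a RACG and let $g \in W_\G$ be a nontrivial element. We want to produce a finite-index (equivalently, since $W_\G$ is finitely generated, finite-quotient-yielding) subgroup $H < W_\G$ with $g \notin H$. First I would fix a reduced word $w$ representing $g$; since $g \neq 1$, the word $w$ is nonempty. The idea is to build a completion of a suitably chosen subgroup so that $w$ fails to label a loop at the basepoint, and then pass to a finite-index subgroup containing that subgroup but still avoiding $g$.

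Concretely, I would start with the trivial subgroup, or better, choose a proper prefix: write $w = s_1 \cdots s_n$ with $n \geq 1$, and consider the $\G$-labeled path $P$ which is a segment with $n$ edges labeled $s_1, \dots, s_n$ in order, with basepoint $B$ one endpoint. Since $P$ is a finite $\G$-labeled tree, Proposition~\ref{prop_finite_cat0} guarantees that a standard completion $(\Omega, B)$ of $P$ is a finite CAT(0) cube complex, and by Lemma~\ref{lemma_omega_paths}(\ref{lemma_omega_paths3}) (or Theorem~\ref{thm_fundamental_group}) the subgroup of $W_\G$ associated to $(\Omega, B)$ is trivial. In particular $w$ cannot label a loop at $B$ in $\Omega$: if it did, by Lemma~\ref{lemma_omega_paths}(\ref{lemma_omega_paths1}) applied to the trivial associated subgroup, $w$ would represent the identity, contradicting $g \neq 1$. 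Now apply Definition~\ref{def:full_valence_ext}: let $(\mathcal{E}, B)$ be the full valence extension of $\Omega$, a finite, full-valence, cube-full, folded complex into which $\Omega$ embeds isometrically, and let $H$ be the subgroup of $W_\G$ associated to $(\mathcal{E}, B)$. By Lemma~\ref{lem:ext_grp_fin_index}, $H$ has finite index in $W_\G$.

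It remains to check $g \notin H$. Suppose for contradiction that $g \in H$. Then there is a loop in $\mathcal{E}$ based at $B$ whose label represents $g$, and since $w$ is a reduced representative of $g$, Lemma~\ref{lemma_omega_paths}(\ref{lemma_omega_paths1}) (applicable because $\mathcal{E}$ is folded and cube-full, via Proposition~\ref{prop_subgroup_gen_by_loops}) gives a loop $\ell$ based at $B$ with label exactly $w = s_1 \cdots s_n$. But $\mathcal{E}$ is folded, so the edge of $\mathcal{E}$ incident to $B$ with label $s_1$ is unique; tracing $\ell$ forces the initial segment of $\ell$ of length $n$ to coincide with the image of $P$ inside $\mathcal{E}$, because at each vertex the next edge with the prescribed label is uniquely determined by foldedness and the fact that $\Omega$ (hence its isometric image in $\mathcal{E}$) already contains such a path. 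Thus $\ell$ would be a loop in (the image of) $\Omega \subseteq \mathcal{E}$ with label $w$, so $w$ labels a loop at $B$ in $\Omega$, contradicting the previous paragraph. Hence $g \notin H$, and since $g$ was an arbitrary nontrivial element, $W_\G$ is residually finite.

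The main obstacle I anticipate is the middle claim that a loop in $\mathcal{E}$ with label $w$ is forced to lie in the image of $\Omega$ and hence to be a loop already in $\Omega$ — one must argue carefully using foldedness that the unique path of label $w$ starting at $B$ in $\mathcal{E}$ is the image of the original segment $P$, and that its terminal vertex is therefore the non-basepoint endpoint of $P$ (which is not $B$, since $\Omega$ has the subgroup associated to it trivial and $w$ nonempty and reduced). A clean way to handle this is to observe that the vertex reached from $B$ by reading $w$ in $\Omega$ is at distance $n = |w|$ from $B$ (by Lemma~\ref{lemma_distances_in_omega}, since the associated subgroup is trivial so $d_\Omega(B, v_w) = d_{\mathcal C}(1, v_w) = |w| > 0$), hence is not $B$; the same computation in $\mathcal{E}$, together with the isometric embedding $\Omega \hookrightarrow \mathcal{E}$ and foldedness, shows the reading of $w$ in $\mathcal{E}$ lands at the same vertex, which is $\neq B$. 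This is essentially a bookkeeping argument, and none of the steps require anything beyond the lemmas already established.
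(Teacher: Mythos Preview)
Your proof is correct and follows essentially the same approach as the paper's: build a finite CAT(0) completion of the labeled segment $P$ with label $w$ (the paper reaches the same $P$ by first taking the rose for the generating set $\{ww^{-1}\}$ of the trivial subgroup and folding), observe that $w$ cannot label a loop there since the associated subgroup is trivial, then pass to the full valence extension and use foldedness together with the isometric embedding of $\Omega$ into $\mathcal{E}$ to conclude $g \notin H$. Your last paragraph's bookkeeping via uniqueness of the label-$w$ path in a folded complex is exactly how the paper's one-line appeal to the isometric embedding is justified.
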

\begin{proof}
	Let $W_\G$ be a RACG. Let $g$ be a nontrivial element in $W_{\Gamma}$, and let $w$ be a reduced word representing $g$. Let $G$ be the trivial subgroup of $W_{\Gamma}$ given by the generating set $S_G = \{ ww^{-1} \}$. The $S_G$-complex $\Omega_0 = X(S_G)$ consists of a circle, subdivided into labeled edges, whose label, read from from a base vertex $B$, is $ww^{-1}$. We can iteratively perform fold operations to $\Omega_0$ and obtain a complex $\Omega_N$ that is a path labeled by $w$. 
	
	By Proposition~\ref{prop_finite_cat0}, there is a completion $(\Omega, B)$ of $\Omega_N$ that is a finite CAT(0) cube complex. The image of $\Omega_N$ in $\Omega$ is a path, $p'$, based at $B$ and labeled by $w$. Furthermore, the path $p'$ is not a loop in $\Omega$. This follows since $\Omega$ is a completion of the trivial subgroup, and consequently every loop in $\Omega$ based at $B$ must have as label a word that is trivial in $W_{\Gamma}$. 
	
	Let $(\mathcal{E},B)$ be the full valence extension of $(\Omega, B)$. As $\Omega$ is isometrically embedded in $\mathcal{E}$ by Lemma~\ref{lemma_adding_graph_loops}, $w$ is still not the label of a loop in $\mathcal{E}$ based at $B$. 
By Lemma~\ref{lem:ext_grp_fin_index},
 the subgroup $H$ of $W_\G$ associated to $(\mathcal{E}, B)$  has finite index in $W_{\Gamma}$.
Furthermore, $g \notin H$ as $w$ is a reduced word representing $g$ which is not the label of a loop in~$\mathcal{E}$.
\end{proof}

An additional useful property of full valence extensions is that the subgroup $H$ associated to a full valence extension of a finite completion of $G<W_\G$ has a retraction to $G$:

\begin{proposition} \label{prop_extention_props}
With the notation of Definition~\ref{def:full_valence_ext}, there is a retraction from $H$ to $G$. 
\end{proposition}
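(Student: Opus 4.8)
The plan is to build an explicit retraction $\rho : H \to G$ by constructing a suitable "folding" map of complexes $\mathcal{E} \to \Omega$ and reading off the induced map on associated subgroups. Recall from Definition~\ref{def:full_valence_ext} that $\mathcal{E}$ is obtained from $\mathcal{E}_0$ (which is $\Omega$ together with added graph-loops) by the completion procedure of Lemma~\ref{lemma_adding_graph_loops}, and that $\Omega$ sits isometrically inside $\mathcal{E}$.  The key structural input is Lemma~\ref{lemma_adding_graph_loops}\eqref{lemma_adding_graph_loops_paths}: every edge of $\mathcal{E}$ not lying in (the image of) $\Omega$ is a graph-loop attached to some vertex $v$ of $\Omega$, whose label $s$ satisfies $V(\Gamma) = \mathrm{star}(s)$ is \emph{not} required here — rather, such a graph-loop arises from an original loop in $\mathcal L$ at some $u\in\Omega$ together with a path in $\Omega$ from $u$ to $v$ labeled by a word in $\mathrm{link}(s)$.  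This is exactly the data needed to define where the extra edges should "go" under a retraction.

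First I would define a combinatorial map $r : \mathcal{E} \to \Omega$ as follows.  On the subcomplex $\Omega \subseteq \mathcal{E}$, let $r$ be the identity.  For each graph-loop $\ell$ of $\mathcal E$ not in $\Omega$, attached to $v\in\Omega$ and labeled $s$, Lemma~\ref{lemma_adding_graph_loops}\eqref{lemma_adding_graph_loops_paths} gives a graph-loop $\ell_0 \in \mathcal L$ at a vertex $u$ of $\Omega$ with the same label $s$; since $\Omega$ is cube-full and there is a path from $u$ to $v$ in $\Omega$ labeled by a word in $\mathrm{link}(s)$, there is in fact an edge of $\Omega$ at $v$ labeled $s$ obtained by "pushing" $\ell_0$ along that commuting path (repeatedly using the squares guaranteed by cube-fullness).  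Map $\ell$ to that edge.  One then has to check this extends to a label-preserving combinatorial map on all higher cells; since $\mathcal E$ is cube-full and any cube is determined by its $1$-skeleton (edges at a vertex with clique labels), the map on edges extends uniquely, and it is label-preserving by construction.  Composing with the inclusion, $r$ restricted to $\Omega$ is the identity, so $r$ is a retraction of complexes fixing $B$.

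Next, passing to associated subgroups: a loop in $\mathcal E$ based at $B$ maps under $r$ to a loop in $\Omega$ based at $B$, and $r$ is label-preserving \emph{up to equality in $W_\Gamma$} — more precisely, an edge $\ell$ with label $s$ maps to an edge with label $s$, so $r$ genuinely preserves labels, hence preserves the group element represented by any loop.  Therefore $r$ induces a well-defined homomorphism $\rho : H \to G$ (using that $H$ and $G$ are the subgroups associated to $(\mathcal E, B)$ and $(\Omega,B)$ by Proposition~\ref{prop_subgroup_gen_by_loops}, noting $\Omega$ is a completion of $G$).  Since $r$ fixes $\Omega$ pointwise, every loop in $\Omega$ based at $B$ — and hence every element of $G$ — is fixed by $\rho$.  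As $G \le H$, this says $\rho|_G = \mathrm{id}_G$, i.e.\ $\rho$ is a retraction.

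The main obstacle is the careful verification that $r$ is genuinely well-defined as a label-preserving combinatorial map on \emph{all} of $\mathcal E$, not just its $1$-skeleton, and that the choices made (which edge of $\Omega$ at $v$ to send a graph-loop to) are consistent across cubes containing that graph-loop.  The resolution is that in a folded, cube-full complex a cube is uniquely determined by the set of edges at one of its vertices together with their (distinct, pairwise-commuting) labels, so once $r$ is pinned down on edges in a label-preserving way it automatically extends; and the edge of $\Omega$ at $v$ labeled $s$ is itself unique since $\Omega$ is folded.  A secondary point worth spelling out is why there \emph{is} an edge of $\Omega$ at $v$ labeled $s$: starting from $\ell_0$ at $u$ and the path $\gamma$ from $u$ to $v$ in $\Omega$ with label in $\mathrm{link}(s)$, induction on $|\gamma|$ using cube-fullness of $\Omega$ (at each step, the current $s$-edge together with the next edge of $\gamma$, whose label commutes with $s$, spans a square, whose opposite side is an $s$-edge one step further along) produces the desired edge at $v$.
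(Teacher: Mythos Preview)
Your approach has a genuine error in its central claim.  You assert that for a graph-loop $\ell$ of $\mathcal{E}\setminus\Omega$ attached at $v$ with label $s$, there is an edge of $\Omega$ at $v$ labeled $s$.  This is false, and in fact is exactly the opposite of the truth.  Recall how $\mathcal{E}_0$ was built: a graph-loop labeled $s$ is attached to a vertex of $\Omega$ precisely when that vertex is \emph{not} incident to any edge of $\Omega$ labeled $s$.  The completion $\mathcal{E}$ of $\mathcal{E}_0$ is folded and contains $\Omega$ as an isometrically embedded subcomplex (Lemma~\ref{lemma_adding_graph_loops}), so if a vertex $v$ carries a graph-loop labeled $s$ in $\mathcal{E}\setminus\Omega$, there cannot also be an $s$-edge of $\Omega$ at $v$ --- otherwise the two would have been folded together.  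Your induction argument fails at the base case: the loop $\ell_0\in\mathcal{L}$ at $u$ is \emph{not} an edge of $\Omega$ (it was added), and by construction $u$ has no $s$-edge in $\Omega$, so cube-fullness of $\Omega$ gives you nothing to push along $\gamma$.

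More conceptually, a label-preserving combinatorial retraction $r:\mathcal{E}\to\Omega$ fixing $B$ cannot exist in general.  Such a map would send every loop in $\mathcal{E}$ based at $B$ to a loop in $\Omega$ based at $B$ with the \emph{same} label, forcing $H\subseteq G$ and hence $H=G$; but $H$ always has finite index in $W_\Gamma$ while $G$ need not.  The paper's retraction is not induced by a map of complexes at all: given a loop in $\mathcal{E}$, one \emph{deletes} every traversal of a graph-loop in $\mathcal{L}$, obtaining a loop in $\Omega$ whose label is a (generally different, shorter) word.  The substance of the proof is then checking that this deletion procedure is well-defined on group elements, which requires tracking how Tits moves interact with the extra graph-loops.
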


%
\begin{proof}
	We define a map $\phi: H \to G$ as follows.   
	Let $\mathcal{L}$ be the set of graph-loops in $\mathcal{E}$ that are not in $\Omega$, i.e.~the graph-loops added in the construction of $\mathcal{E}_0$ or in the  completion process.
	Given an element $h \in H$,  since $\mathcal E$ is a completion of $H$ by Proposition~\ref{prop_subgroup_gen_by_loops},
	there is a loop $l$ in $\mathcal{E}$ based at $B$ whose  label $w$ is a word representing $h$. We remove from $l$ all graph-loops it traverses which are in $\mathcal{L}$. Let $l'$ be the resulting loop in $\Omega$ based at $B$, and let $w'$ be its label. It follows that $w'$ represents an element $g \in G$. We set $\phi(h) = g$.  
	
	We first check that $\phi$ is well-defined. Let $l_1$ and $l_2$ be loops in $\mathcal{E}$ based at $B$ with labels $w_1$ and $w_2$, such that $w_1$ and $w_2$ are distinct words, each representing the same element $h \in H$. Let $w$ be a reduced word representing $h$ in $W_\G$. Let $l_1', l_2'$ and $l'$ be the loops obtained by removing graph-loops in $\mathcal{L}$ from $l_1, l_2$ and $l$ respectively. Let $w_1', w_2'$ and $w'$ be the labels of $l_1', l_2'$ and $l'$ respectively. We must show that $w_1'$ and $w_2'$ represent the same element of $W_\G$. To do so, we will show that $w'$ represents the same element in $W_\G$ as both $w_1'$ and $w_2'$.
	
	By Tits' solution to the word problem, there is a sequence of Tits moves that can be performed to $w_1$ to obtain $w$. This sequence naturally produces a sequence of corresponding loops $l_1 = q_1, q_2, \dots, q_n = l$ in $\mathcal{E}$ whose labels are the corresponding words obtained by the Tits moves. Furthermore, if a cancellation move is performed to $q_i$ in order to obtain $q_{i+1}$, then as $\mathcal{E}$ is folded, it readily follows that the edges involved in this cancellation move are either both in $\mathcal{L}$ or both not in $\mathcal{L}$. Thus, by forgetting the Tits moves performed to generators which are labels of graph-loops in $\mathcal{L}$, this sequence of Tits moves induces a sequence of Tits moves performed to $w_1'$ to produce $w'$. Hence, $w_1'$ and $w'$ represent the same element of $W_\G$. Similarly, $w_2'$ and $w'$ represent the same element of $W_\G$. Consequently, $\phi$ is well-defined.
	
	It is clear that $\phi$ is a homomorphism.
Furthermore, given an element $g \in G$ and a loop $l$ in $\mathcal{E}$ based at $B$ with label a reduced word representing $g$, we have that $l$ is contained in the subcomplex $\Omega \subset \mathcal{E}$. Thus, $l$ does not traverse any graph-loops in $\mathcal{L}$. It follows that $\phi$ restricted to elements of $G$ is the identity. Hence, $\phi$ provides the desired retraction.
\end{proof}

The above proposition can now be used to recover a theorem of Haglund. 
Recall that 
a subgroup $G$ of a group $K$ is a virtual retract if $G$ is a retract of a finite index subgroup of $K$.

\begin{theorem}[Haglund \cite{Haglund}] \label{thm_virtual_retract}
	Let $G$ be a quasiconvex subgroup of a RACG $W_{\Gamma}$. 
		Then $G$ is separable and is a virtual retract of $W_{\Gamma}$.
\end{theorem}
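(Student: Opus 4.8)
The plan is to combine the full valence extension machinery with residual finiteness of $W_\Gamma$. First I would invoke Theorem~\ref{thm_qc}: since $G$ is quasiconvex, it is finitely generated and has a finite standard completion $(\Omega, B)$. Let $(\mathcal{E}, B)$ be its full valence extension (Definition~\ref{def:full_valence_ext}), with associated subgroup $H$. By Lemma~\ref{lem:ext_grp_fin_index}, $H$ has finite index in $W_\Gamma$, and by Proposition~\ref{prop_extention_props} there is a retraction $\phi \colon H \to G$. Here $G \le H$: since $\Omega$ is a completion of $G$, properties (\ref{def_completion_loops}) and (\ref{def_completion_reduced_words}) of Definition~\ref{def_completion} show that the subgroup associated to $(\Omega, B)$ is exactly $G$, and as $\Omega$ embeds (isometrically) in $\mathcal{E}$, every loop at $B$ in $\Omega$ is also one in $\mathcal{E}$, so $G$ is contained in the subgroup $H$ associated to $(\mathcal{E}, B)$. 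This already exhibits $G$ as a virtual retract of $W_\Gamma$.

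For separability, I would first establish the standard fact that $G$ is separable \emph{in} $H$, using that $H$ is residually finite (being a subgroup of $W_\Gamma$, which is residually finite by Theorem~\ref{thm_res_finite}). Given $h \in H \setminus G$, one has $\phi(h) \neq h$, since $\phi$ restricts to the identity on $G$ and $\phi(h) \in G$; hence $t := \phi(h)^{-1} h \neq 1$. Choose a homomorphism $\rho \colon H \to F$ onto a finite group with $\rho(t) \neq 1$, and consider $\Psi = (\rho,\, \rho \circ \phi) \colon H \to F \times F$ together with the diagonal subgroup $D \le F \times F$. Every $x \in G$ satisfies $\phi(x) = x$, hence $\Psi(x) \in D$, so $G \subseteq \Psi^{-1}(D)$; on the other hand $\Psi(h) \in D$ would force $\rho(t) = 1$, so $h \notin \Psi^{-1}(D)$. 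Since $\Psi^{-1}(D)$ has finite index in $H$, this proves $G$ is separable in $H$.

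Finally, I would upgrade separability in $H$ to separability in $W_\Gamma$ using $[W_\Gamma : H] < \infty$. Given $g_0 \in W_\Gamma \setminus G$: if $g_0 \notin H$, then $H$ itself is a finite-index subgroup of $W_\Gamma$ containing $G$ but not $g_0$; if $g_0 \in H$, then $g_0 \in H \setminus G$, and the previous paragraph produces a finite-index subgroup of $H$, hence of $W_\Gamma$, that contains $G$ but not $g_0$. In either case $g_0$ lies outside some finite-index subgroup of $W_\Gamma$ containing $G$, so $G$ is separable.

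I expect the only point requiring real care to be the separability argument, specifically the observation that a retract of a residually finite group is separable in it (the $\Psi = (\rho, \rho\circ\phi)$ trick), together with the bookkeeping needed to pass from separability in the finite-index subgroup $H$ to separability in all of $W_\Gamma$. The virtual retract statement, by contrast, is essentially immediate from the full valence extension construction already in hand.
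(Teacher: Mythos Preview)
Your proof is correct and follows essentially the same approach as the paper: establish the virtual retract using the full valence extension (Theorem~\ref{thm_qc}, Lemma~\ref{lem:ext_grp_fin_index}, Proposition~\ref{prop_extention_props}), then deduce separability from residual finiteness. The only difference is that the paper cites the well-known fact that a virtual retract of a residually finite group is separable (referring to \cite[Proposition~3.8]{Haglund}), whereas you spell out the argument explicitly via the $(\rho,\rho\circ\phi)$ diagonal trick and the finite-index upgrade; your version is self-contained but not a genuinely different route.
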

\begin{proof}
	By Theorem~\ref{thm_qc}, there is a finite completion $(\Omega, B)$ of $G$. Let $(\mathcal{E}, B)$ be the full valence extension of $(\Omega, B)$, 
	with associated subgroup $H$. 
	Then $H$ has finite-index in $W_\G$ by Lemma~\ref{lem:ext_grp_fin_index}  
	 and $G$ is a retract of $H$ by Proposition~\ref{prop_extention_props}.
	Thus, $G$ is a virtual retract.
	
	It is well-known that a virtual retract of a residually finite group is separable (see \cite[Proposition 3.8]{Haglund}). So, $G$ is separable.
\end{proof}

\section{Reflection Subgroups} \label{sec_reflection_subgroups}
A \textit{reflection} in $W_\G$ is an element  represented by a word of the form $wsw^{-1}$ where 
$s$ is a generator in $V(\Gamma)$ and $w$ is a word in $W_{\Gamma}$. In this section, we 
give a constructive argument to build a finite completion for any subgroup of $W_\G$ generated by a finite set of reflections.  In particular,  such subgroups are always quasiconvex and their index can be computed.

\subsection{Trimmed sets of reflections}
	In this subsection, we define
	 the notion of a trimmed set of reflections (Definition~\ref{def:trimmed}). We show in 
Lemma~\ref{lemma_reflection_generators}
	 that any subgroup generated by 
	 a non-trimmed 
	set of reflections can also be generated by a trimmed set of reflections. Moreover, the lengths of words in the trimmed set sum to a strictly smaller sum than those of the original set. This minimality property will facilitate our later arguments.  Another advantage of trimmed reflections is that they form a \textit{standard} generating set for a RACG subgroup by a theorem of Dyer (see Proposition \ref{cor_dyer}). On the other hand, a non-trimmed set of reflections usually gives a non-standard generating set for the RACG subgroup.

\begin{definition}[Trimmed reflection set]\label{def:trimmed}
	Let $W_\Gamma$ be a RACG. We say that a set 
		\[\mathcal{R} =  \{w_i s_i w_i^{-1} ~ | ~ w_i \in W_{\Gamma} \text{ and } s_{i} \in V(\Gamma),~ 1 \le i \le m \}\]
	of reduced reflections is \textit{trimmed} if  for all $i \neq j$, no reduced
	expression for $w_j$ begins with~$w_is_i$. 
\end{definition}

\begin{lemma} \label{lemma_reflection_generators}
	Let $W_{\Gamma}$ be a RACG, and let $G$ be a subgroup generated by a finite set of reflections $\mathcal{R}'$. Then $G$ is generated by a 
trimmed set of
	reflections $\mathcal{R}$. Furthermore, there is a constructive algorithm to obtain $\mathcal{R}$ from $\mathcal{R}'$,
		whose time-complexity only depends on the number $\sum_{r \in \mathcal{R}'}|r|$.
\end{lemma}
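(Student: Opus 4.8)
The plan is to start from an arbitrary finite reflection set $\mathcal{R}'$ and repeatedly ``trim'' a pair of reflections that violates the trimmed condition, each trimming step strictly decreasing the quantity $\sum_{r \in \mathcal{R}'}|r|$, so that the process terminates in finitely many steps. First I would set up the basic move: suppose $w_i s_i w_i^{-1}$ and $w_j s_j w_j^{-1}$ are two reduced reflections in the current set such that some reduced expression for $w_j$ begins with $w_i s_i$, say $w_j = w_i s_i u$ as reduced words. Then $w_j s_j w_j^{-1} = (w_i s_i w_i^{-1})(w_i u s_j u^{-1} w_i^{-1})(w_i s_i w_i^{-1})$, so replacing $w_j s_j w_j^{-1}$ by $r := w_i u s_j u^{-1} w_i^{-1}$ in the generating set does not change the subgroup $G$ it generates (we are conjugating one generator by another). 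The new word $w_i u s_j u^{-1} w_i^{-1}$, however, has a reduced expression of length $|w_i| + |u| + 1 + |u| + |w_i| = |w_j s_j w_j^{-1}| - 2$, using that $w_j = w_i s_i u$ is reduced (so $|w_j| = |w_i| + 1 + |u|$); I would first replace it by any reduced expression $w_i' s' w_i'^{-1}$ for this reflection, which has length at most $|w_j s_j w_j^{-1}| - 2$. Thus the sum $\sum_{r}|r|$ strictly decreases.

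The key steps, in order, are: (1) define the trimming move precisely and verify algebraically that it preserves $G$; (2) verify, using the deletion property / Lemma~\ref{lemma_reduced_expression} and the fact that $w_j = w_i s_i u$ is a reduced word, that the replacement reflection can be written as a reduced word of strictly smaller length, hence that the total sum strictly decreases; (3) observe that the sum is a non-negative integer, so after finitely many trimming moves no violating pair remains, i.e. the resulting set $\mathcal{R}$ is trimmed and still generates $G$; (4) note that detecting a violating pair is algorithmic — given the two words, one can decide using Tits' solution to the word problem whether some reduced expression for $w_j$ begins with $w_i s_i$ (equivalently, whether $s_i w_i^{-1} w_j$ is reduced and $|s_i w_i^{-1} w_j| = |w_j| - |w_i| + 1$, or more simply by checking that $w_i s_i$ is a prefix of some reduced expression for $w_j$, which is decidable), and performing the move is algorithmic; (5) bound the running time: each move is polynomial in the current total length, which is bounded by $\sum_{r \in \mathcal{R}'}|r|$, and the number of moves is at most $\sum_{r \in \mathcal{R}'}|r|/2$, so the overall time-complexity depends only on $\sum_{r \in \mathcal{R}'}|r|$.

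One technical point to be careful about: after replacing $w_j s_j w_j^{-1}$ by a reduced word, it may fail to be of the special prefix form with respect to some \emph{other} element of the set, or a previously-trimmed pair may become violating again; this is why the argument is phrased as ``while there exists a violating pair, apply a move,'' with termination guaranteed by the monovariant $\sum_r |r|$ rather than by a single pass. I would also need to make sure each reflection in the set remains reduced throughout, which is handled in step (2) by always passing to a reduced expression after each substitution (using that every element of a Coxeter group has a reduced expression, obtainable algorithmically by the deletion property).

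\textbf{Main obstacle.} The main thing to get right is step (2): showing that the substituted reflection genuinely has a \emph{strictly} shorter reduced length, not merely a shorter naive word length. This rests on the hypothesis that $w_j = w_i s_i u$ is a reduced word — so that cancelling the copies of $w_i s_i \cdot s_i w_i^{-1}$ (and of $u$ against part of the expression) is legitimate and the count $|w_j s_j w_j^{-1}| - 2$ is accurate before passing to a reduced expression. The rest of the argument is bookkeeping: an integer monovariant forces termination, and each step is plainly algorithmic.
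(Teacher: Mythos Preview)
Your proposal is correct and follows essentially the same approach as the paper: both replace a violating reflection $w_j s_j w_j^{-1}$ (where $w_j = w_i s_i u$ is reduced) by a reduced representative of its conjugate $(w_i u) s_j (w_i u)^{-1}$, observe that this strictly decreases the total length $\sum_r |r|$, and iterate until no violating pair remains. The paper's version is more terse on the length computation and algorithmic bookkeeping, but the idea and the termination argument via the monovariant are identical.
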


\begin{proof}
	Without loss of generality, we may assume elements in $\mathcal{R}'$ are reduced. Let $g = wsw^{-1}$ be a reflection in $\mathcal{R}'$ so that $w$ has an expression $w = w's'q$, where $h = w's'w'^{-1}$ is another reflection in $\mathcal{R}'$ and $q$ is a word in $W_{\Gamma}$.
	
	In $\mathcal{R}'$, we replace $g$ with a reduced representative of the shorter length reflection $g' = hgh^{-1} = (w'q)s(w'q)^{-1}$, to obtain a new set $\mathcal{R}''$. The set $\mathcal{R}''$ still generates $G$, as $g = h^{-1}g'h$. By iteratively performing such replacements, we obtain the desired generating set~$\mathcal{R}$. This process must end since at each step we obtain a set of generators whose lengths sum to a strictly smaller number than those in the previous step.
\end{proof}

\subsection{A completion for reflection subgroups} \label{subsec_completion_for_ref_subgroups}
Throughout this subsection, we fix the notation in the discussion below. This notation is also used in Section~\ref{sec_algorithm_for_2d_subgroups}.

Let $G$ be a subgroup of the RACG $W_{\Gamma}$, generated by a finite set of reflections:
\[\mathcal{R} =  \{w_i s_i w_i^{-1} ~ | ~ w_i \in W_{\Gamma} \text{ and } s_{i} \in V(\Gamma),~ 1 \le i \le m \}\]
By Lemma~\ref{lemma_reflection_generators} we may assume without loss of generality that $\mathcal{R}$ is trimmed.

Our goal is to give a finite completion $(\Omega_G, B)$ of $G$. We begin by describing the first complex $\Omega_0$ in this completion. For each $1 \le i \le m$, we attach a subdivided circle to the base vertex $B$, with label $w_is_iw_i^{-1}$. Next, for each $i$,  we fold the two copies of $w_i$ onto one another, and we call this resulting graph $\Omega_0$. 
Thus the graph $\Omega_0$ has, for each $1 \le i \le m$, a path  emanating from $B$ and labeled by $w_i$, with a graph-loop labeled by $s_i$ attached at its endpoint. By Theorem~\ref{thm_subgroup_completion}, any completion of $\Omega_0$ is a standard completion of $G$.

Let $\mathcal{T}$ denote the tree obtained by removing the graph-loops from $\Omega_0$. Let $\mathcal{FT}$ be the folded tree obtained by iteratively performing fold operations to $\mathcal{T}$. 
Let $\Omega_{\mathcal{FT}}$ be a standard completion of~$\mathcal{FT}$. 
By Proposition~\ref{prop_finite_cat0}, we know that $\Omega_{\mathcal{FT}}$ is a finite CAT(0) cube complex. Furthermore, $\Omega_{\mathcal{FT}}$ is also a completion of $\mathcal{T}$ by construction. 
Let $\hat{f}: \mathcal{T} \to \mathcal{FT} \to \Omega_{\mathcal{FT}}$ be the natural map. By a slight abuse of notation, we also denote by $\hat{f}$ the natural map $\hat{f}: \mathcal{FT} \to \Omega_{\mathcal{FT}}$. Let $\hat{\mathcal{T}} \coloneqq   \hat{f}(\mathcal{T}) = \hat{f}(\mathcal{FT})$. 

Given a vertex $\hat{v}$ in $\Omega_{\mathcal{FT}}$, define
\[L_{\hat{v}} = \{ s \in V(\G) ~ | ~ 
 \exists ~v \in V(\Omega_0) \text{ incident to a graph-loop labeled } s, \text{ such that } \hat{v} = \hat{f}(v) \} \]
We would like to build the completion $\Omega_G$ by ``adding back'' the graph-loops to $\Omega_{\mathcal{FT}}$ and 
applying Lemma~\ref{lemma_adding_graph_loops}. However, there is a technical issue: when adding back a graph-loop labeled $s$  to a vertex of $\hat{v}$ of $\Omega_{\mathcal{FT}}$, a priori  $\hat{v}$ might  already  be incident to an edge labeled by $s$. If this were true, then the hypothesis of Lemma~\ref{lemma_adding_graph_loops} would not be satisfied. The next two technical lemmas show this situation does not arise. 
	
\begin{lemma} \label{lemma_no_bad_paths}
	Let $ks$ be a reduced word in $W_{\Gamma}$ such that $s \in V(\Gamma)$ and $k$ is a (possibly empty) word consisting only of generators
	 that are adjacent to $s$ in $\Gamma$. Then given any $v \in \Omega_0$ incident to a graph-loop labeled by $s$, no path in $\mathcal{T} \subset \Omega_0$ starting at $v$ is labeled by an expression for~$ks$.
\end{lemma}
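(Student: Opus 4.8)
The plan is to prove Lemma~\ref{lemma_no_bad_paths} by contradiction, exploiting the combinatorics of reduced words in RACGs together with the trimmedness of $\mathcal{R}$. Suppose there were a path $\alpha$ in $\mathcal{T} \subset \Omega_0$ starting at a vertex $v$ incident to a graph-loop labeled $s$, whose label is an expression for $ks$. Recall that $\mathcal{T}$ is obtained from a disjoint union (glued only at $B$) of paths labeled $w_1, \dots, w_m$ by folding; thus $v$ lies on the fold image of the path labeled $w_i$ for some $i$, and in particular $\hat f(v)$ records the endpoint of (a prefix of) $w_i$. The key structural fact is that $v$ is incident to a graph-loop labeled $s$ in $\Omega_0$ precisely when $v$ is the terminal vertex of the path labeled $w_i$ and $s = s_i$, for some $i$ with this property. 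So the hypothesis says: some vertex $v$ which is the endpoint of the $w_i$-path (with $s_i = s$) is reachable from $B$, within $\mathcal{T}$, by a path whose label, call it $u$, satisfies that $u$ is an expression for $w_i$ (since $\mathcal{T}$'s paths from $B$ read off prefixes/the words $w_j$), and at the same time, starting from $v$, there is a path in $\mathcal{T}$ labeled by an expression for $ks$.

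First I would make precise which vertices of $\Omega_0$ carry graph-loops and which words label paths from $B$ in $\mathcal{T}$: every maximal path from $B$ in $\mathcal{T}$ has label equal to some $w_j$ (before folding these are exactly the $w_j$; folding only identifies common prefixes, so the set of labels of paths from $B$ to leaves is still $\{w_1, \dots, w_m\}$, and a vertex carries an $s$-labeled graph-loop iff it is the leaf associated to some $w_j$ with $s_j = s$). Then I would translate the existence of the bad path starting at $v$ into a statement about $w_i$: concatenating the $B$-to-$v$ path with the bad path gives a path from $B$ in $\mathcal{T}$ whose label is an expression for $w_i \cdot (ks)$, hence (being a path from $B$ in the tree to a leaf or an internal vertex) this must be a prefix of, or equal to, some $w_j$. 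So some reduced expression for $w_j$ begins with $w_i k s$.

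Now comes the main point. Since $ks$ is reduced and $k$ consists only of letters adjacent to $s$, the word $w_i k s$ is equal in $W_\Gamma$ to $w_i s k$, and one checks — using Lemma~\ref{lemma_reduced_expression} or directly via the deletion property — that if $w_i k s$ is a reduced expression (which it is, being a subword of the reduced $w_j$), then $w_i s$ is reduced and in fact $w_i s k$ is a reduced expression for $w_i k s$; hence $w_j$ has a reduced expression beginning with $w_i s$. If $i \ne j$ this directly contradicts the trimmedness of $\mathcal{R}$ (Definition~\ref{def:trimmed}). The remaining case $i = j$ must be handled separately: then a reduced expression for $w_i$ begins with $w_i s k$, which forces $k s$ to be empty — impossible since it contains the letter $s$. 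I would also need to address the degenerate case where $v = B$ (so $w_i$ is empty), which reduces to $s = s_i$ and $ks$ being a path from $B$: then $ks$ is a prefix of some $w_j$, again contradicting trimmedness (with $i$ the index for which $w_i$ is empty, noting $s_i = s$), or forces a contradiction with $k s$ nonempty when $j = i$.

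The main obstacle I expect is the careful bookkeeping in passing from ``$w_i k s$ is a reduced subword of $w_j$'' to ``$w_j$ has a reduced expression beginning with $w_i s$'': one must be sure that commuting the block $k$ past $s$ is legitimate, i.e. that $s$ genuinely commutes with every letter of $k$ (given), and that no cancellation is introduced, which follows because $w_i k s$ is already reduced. A secondary subtlety is justifying that every path from $B$ in the folded tree $\mathcal{T}$ has a label that is a reduced prefix of some $w_j$ — this requires knowing that folding a tree of reduced words produces a complex in which $B$-based paths read reduced words, which in turn uses that $\Omega_0$ is folded and that the original $w_j$ were taken reduced (or can be reduced first). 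Once these two points are nailed down, the contradiction with trimmedness is immediate and the lemma follows.
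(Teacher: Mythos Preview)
Your overall strategy---translate the existence of $\alpha$ into a statement about prefixes of the $w_j$ and then invoke trimmedness---is exactly the paper's. But there is a real gap at the step where you pass from the concatenated path to ``some reduced expression for $w_j$ begins with $w_i k s$.'' Since $v$ is a leaf of $\mathcal{T}$ (a wedge of paths, \emph{not} folded---$\mathcal{FT}$ is the folded object), the path $\alpha$ must begin by backtracking along the $w_i$-arm. Hence the concatenation $B \to v \to u$ is not a simple path; its label, as a \emph{word}, is $w_i$ followed by the label of $\alpha$, and this word is not a literal prefix of any $w_j$. What you actually get is that the \emph{group element} $w_i k s$ equals the label $p$ of the geodesic from $B$ to $u$, and $p$ is a (reduced) prefix of some $w_j$. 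Your justification ``$w_i k s$ is reduced, being a subword of the reduced $w_j$'' is therefore circular: you need $w_i k s$ reduced to conclude it has the same length as $p$, not the other way around. Your handling of the case $i=j$ is similarly off: it should be a length contradiction ($|p|\le |w_i| < |w_i k s|$), not ``$w_i$ begins with $w_i s k$.''

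The missing ingredient is an independent proof that $w_i s k$ is reduced, and this is where the hypothesis that $w_i s w_i^{-1}\in\mathcal R$ is a \emph{reduced} reflection is actually used. If $w_i s k$ were not reduced, then since $w_i s$ and $sk$ are each reduced, a deletion must pair some letter $t$ in $w_i$ with a copy of $t$ in $k$; then $t\in\mathrm{link}(s)$ and the suffix of $w_i$ beyond this $t$ commutes with $t$, which forces $w_i s w_i^{-1}$ to shorten---a contradiction. Once this is established, your argument finishes cleanly: either $u$ lies on the $w_i$-arm and one gets the length contradiction above, or $w_i s k\, h_2'$ is a reduced expression for $w_j$ beginning with $w_i s$, violating trimmedness. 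The paper arrives at the same contradiction but organises the bookkeeping differently, applying Lemma~\ref{lemma_reduced_expression} to $h_1^{-1}$ and $h_2$ and splitting into cases according to where the letter $s$ lands in the resulting decomposition, rather than proving $w_i s k$ reduced directly.
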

\begin{proof}
	For a contradiction, suppose  such a path $\alpha$ exists.   We may assume that $\alpha$ is a geodesic in $\mathcal{T}$.  For if not, 
	 then as $\mathcal{T}$ is a tree, some generator would  be consecutively repeated in the label of $\alpha$, and we   would be able to pass to a homotopic path. 
		Let $u$ be the endpoint of $\alpha$.

	Let $\beta_1$ be the geodesic from the base vertex $B$ to $v$, with label $h_1$.  Then there is an element $r_1 = h_1 s h_1^{-1}$ in $\mathcal R$.  We first show that $u$ does not lie on $\beta_1$.  Suppose it does.  Then $h_1$ has a suffix which is a reduced word equal in $W_\G$ to $sk^{-1}$.  Since $k$ commutes with $s$, it follows that the expression $h_1 s h_1^{-1}$ is not reduced, a contradiction.

	Now let $\beta_2$ be the geodesic from $B$ to $u$, with label $h_2$.  Since $u$ does not lie on $\beta_1$, it follows that $h_2$ is non-empty.  Moreover, there is a reflection $r_2 \in \mathcal R$, given by $(h_2h_2')s'(h_2'^{-1}h_2^{-1})$, where $h_2'$ could be empty.  
	Next, we claim that $h_1$ is non-empty.  For if not, then 
	$r_1 = s$, and 
	$h_2$ is an expression for $ks$. 
	  It follows that
	$r_2$ has a reduced expression that begins with $s$, which is not possible as $\mathcal{R}$ is trimmed.

	Thus, $h_1$ and $h_2$ are non-empty and $ks$ is a reduced expression for $h_1^{-1}h_2$. 
	By Lemma~\ref{lemma_reduced_expression} 
	there exist (possibly empty) words $x$, $k'$ and $k''$ 
	such that either $k'x$ and $x^{-1}k''s$ are reduced expressions in $W_\G$ for respectively $h_1^{-1}$ and $h_2$, or alternatively $sk'x$ and $x^{-1}k''$ are reduced expressions in $W_\G$ for respectively $h_1^{-1}$ and $h_2$. Moreover, $k'k''$ is equal to $k$ in $W_\G$.
	The latter case implies that the reflection $r_1$ has another reduced expression $(sk'x)^{-1}s(sk'x) $. However, this is a contradiction as this word is clearly not reduced. In the former case, we have that $r_1$ has reduced expression $(k'x)^{-1}s(k'x)$ and $r_2$ has reduced expression $(x^{-1}k''sh_2')s'(x^{-1}k''sh_2')^{-1}$. Set $w_1 = (k'x)^{-1}$ and $w_2 = x^{-1}k''sh_2'$. As the given expression for $r_1$ is reduced and as $s$ commutes with $k'$, it must be that $k'$ is the empty word. In particular, $w_1 = x^{-1}$. Furthermore, since $s$ commutes with $k''$, $w_2$ has reduced expression $x^{-1}sk''h_2'$. However, this contradicts our choice of $\mathcal{R}$ since some expression for $w_2$ begins with $w_1s$. The claim follows.
\end{proof}

\begin{lemma} \label{lemma_no_bad_vertices}
	For every vertex $\hat{v}$ of $\Omega_{\mathcal{FT}}$ and every $s \in L_{\hat{v}}$, 
	there is no edge labeled by $s$ incident to $\hat{v}$ in $\Omega_{\mathcal{FT}}$
\end{lemma}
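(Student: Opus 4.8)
The plan is to argue by contradiction: suppose that some vertex $\hat{v}$ of $\Omega_{\mathcal{FT}}$ is incident to an edge $\hat{e}$ labeled by $s$, where $s\in L_{\hat{v}}$. First I would unpack the hypothesis $s\in L_{\hat{v}}$: there is a vertex $v\in V(\Omega_0)$ incident to a graph-loop labeled $s$ with $\hat{f}(v)=\hat{v}$. Let $\beta$ be the geodesic in $\mathcal{T}$ from the base vertex $B$ to $v$, with label $h$; then $hsh^{-1}$ is (a reduced representative of) one of the reflections in $\mathcal{R}$, and in particular $hs$ is reduced. Applying $\hat{f}$, I obtain a path $\hat{\beta}=\hat{f}(\beta)$ in $\hat{\mathcal{T}}\subset\Omega_{\mathcal{FT}}$ from $B$ to $\hat{v}$ whose label is $h$. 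Since $\Omega_{\mathcal{FT}}$ is a CAT(0) cube complex (Proposition~\ref{prop_finite_cat0}) and $hs$ is reduced, the concatenation $\hat{\beta}\hat{e}$ is a geodesic, so in particular $\hat{e}$ is not traversed by $\hat{\beta}$.

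The key step is then to use the hyperplane $H$ of $\Omega_{\mathcal{FT}}$ dual to $\hat{e}$, together with Proposition~\ref{prop_omega_hyps}, which guarantees that $H$ meets the image $\hat{\mathcal{T}}$ of the defining tree. So $H$ is dual to some edge $\hat{e}'$ of $\hat{\mathcal{T}}$; note $\hat{e}'$ has label $s$ as well, since dual edges to a hyperplane all share the same label. Now I would pick a geodesic $\hat{\gamma}$ in $\hat{\mathcal{T}}$ from $\hat{v}$ to an endpoint $\hat{u}$ of $\hat{e}'$ that lies on the same side of $H$ as $\hat{v}$ (this is possible because $\hat{\mathcal{T}}$ is connected and $\hat{e}'\subset\hat{\mathcal{T}}$). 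Because $\Omega_{\mathcal{FT}}$ is CAT(0), $H$ separates it into two halfspaces, and both $\hat{e}$ and $\hat{e}'$ cross $H$; so the concatenation of $\hat{\gamma}$ (which stays on one side of $H$, being a geodesic between two vertices on the same side, inside the convex carrier region — more precisely, no geodesic in a CAT(0) cube complex crosses a hyperplane it does not need to) together with $\hat{e}$ and $\hat{e}'$ produces a combinatorial configuration whose label, read appropriately, is a word of the form $ksk'$ or $sks$ with $k,k'$ supported in $\mathrm{link}(s)$ — the point being that every hyperplane crossing $\hat{\gamma}$ must cross both $\hat{e}$ and $\hat{e}'$ (since $\hat{\gamma}$ does not itself cross $H$), hence has type adjacent to $s$. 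Lifting this $\hat{\mathcal{T}}$-path back through $\hat{f}$ to an actual path in $\mathcal{T}\subset\Omega_0$ starting at the vertex $v$ incident to the $s$-labeled graph-loop, I obtain a path in $\mathcal{T}$ beginning at $v$ whose label is a reduced word $k''s$ with $k''$ supported on vertices adjacent to $s$. This directly contradicts Lemma~\ref{lemma_no_bad_paths}.

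The main obstacle I anticipate is bookkeeping the geometry around the hyperplane $H$ carefully enough to produce a path in $\hat{\mathcal{T}}$ (not merely in $\Omega_{\mathcal{FT}}$) whose label is a reduced word of the shape $k''s$ with $\mathrm{Support}(k'')\subseteq\mathrm{link}(s)$, and then lifting it back to $\mathcal{T}$ with its label preserved up to the equality in $W_\Gamma$ needed to invoke Lemma~\ref{lemma_no_bad_paths}. Concretely I will need: (i) that $\hat{f}|_{\mathcal{T}}$ is injective on the relevant geodesic so the lift exists and keeps the same label (this follows since $\hat{\mathcal{T}}=\hat{f}(\mathcal{FT})$ and $\mathcal{FT}$ is the folded tree, and any geodesic in the CAT(0) complex $\Omega_{\mathcal{FT}}$ landing in $\hat{\mathcal{T}}$ lifts to a geodesic in the tree $\mathcal{FT}$, which in turn lifts to $\mathcal{T}$ up to backtracking); and (ii) a clean statement that a geodesic between two vertices of a CAT(0) cube complex lying in a common halfspace of $H$ does not cross $H$, so all its dual hyperplanes are forced to cross the two fixed $s$-edges and thus have types in $\mathrm{link}(s)$. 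Once these two points are in place, the contradiction with Lemma~\ref{lemma_no_bad_paths} is immediate and the proof is complete.
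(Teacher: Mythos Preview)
Your overall strategy matches the paper's: argue by contradiction, let $H$ be the hyperplane dual to the assumed $s$-edge at $\hat v$, invoke Proposition~\ref{prop_omega_hyps} to find an $s$-edge of $\hat{\mathcal{T}}$ dual to $H$, and then produce a path in $\mathcal{T}$ starting at $v$ whose label represents a reduced word $ks$ with $k$ supported in $\mathrm{link}(s)$, contradicting Lemma~\ref{lemma_no_bad_paths}.

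However, there is a genuine gap in your hyperplane step. The assertion that every hyperplane crossing $\hat{\gamma}$ must cross the \emph{edges} $\hat{e}$ and $\hat{e}'$ is false in general: take two $s$-edges far apart along the same hyperplane in a square tiling, and note that a hyperplane separating their near endpoints need not be dual to either edge. What is true is that such a hyperplane must cross $H$ itself, which suffices for the conclusion you want, but the reason you give does not establish this. More seriously, you take $\hat{\gamma}$ to be a geodesic in $\hat{\mathcal{T}}$, and $\hat{\mathcal{T}}$ is neither known to be convex in $\Omega_{\mathcal{FT}}$ nor contained in $N(H)$, so neither the edge-crossing claim nor the hyperplane-crossing claim is directly available for your $\hat{\gamma}$.

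The paper sidesteps both issues, as well as the lifting obstacle you flag, by reversing the direction of the construction. It first fixes a preimage $e\in\mathcal{T}$ of the $s$-edge in $\hat{\mathcal{T}}$ and takes a geodesic $\beta$ in $\mathcal{T}$ from $v$ to $e$; it then pushes forward to $\hat\beta=\hat f(\beta)$ and, via Lemma~\ref{lemma_omega_paths}, replaces $\hat\beta$ by a geodesic $\hat\beta'$ of $\Omega_{\mathcal{FT}}$ with the same endpoints and equal label in $W_\Gamma$. Separately it takes a path $\gamma$ in the carrier of $H$ between the same two endpoints, whose label automatically lies in $\mathrm{link}(s)$. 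Since hyperplanes separate and $\hat\beta'$ is geodesic, every hyperplane dual to $\hat\beta'$ is also dual to $\gamma$, so the reduced label of $\hat\beta'$ is a word in $\mathrm{link}(s)$. As this word equals the label of $\beta$ in $W_\Gamma$, the path $\beta\cup e$ in $\mathcal{T}$, which starts at $v$, is labeled by an expression for a reduced word $ks$ with $k$ in $\mathrm{link}(s)$, and Lemma~\ref{lemma_no_bad_paths} gives the contradiction. Because the path in $\mathcal{T}$ is chosen first and the comparison path lives in $N(H)$ rather than in $\hat{\mathcal{T}}$, no lifting from $\hat{\mathcal{T}}$ back to $\mathcal{T}$ is ever needed.
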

\begin{proof}
	For a contradiction, suppose that there exists some $\hat{v}$ of $\Omega_{\mathcal{FT}}$, $s \in L_{\hat{v}}$ and an edge $d$ incident to $\hat{v}$ in $\Omega_{\mathcal{FT}}$ which is labeled by $s$. Note that $\hat{v} \in \hat{\mathcal{T}}$. 
	
	Let $H$ be the hyperplane (recall that $\Omega_{\mathcal{FT}}$ is a CAT(0) cube complex) dual to $d$. In particular, $H$ is of type $s$. By Proposition~\ref{prop_omega_hyps}, $H$ intersects $\hat{\mathcal{T}}$ at some edge $\hat{e}$. Let $e$ be an edge of $\mathcal{T}$, such that $\hat{f}(e) = \hat{e}$. Note that the label of $e$ must be $s$. Let $v \in \mathcal{T}$ be such that $\hat{f}(v) = \hat{v}$.  Let $\beta$ be a geodesic in $\mathcal{T}$ from $v$ to $e$. Let $\hat{\beta} = \hat{f}(\beta)$. It follows that $\hat{\beta}$ is a path in $\Omega_{\mathcal{FT}}$ from $\hat{v}$ to $\hat{e}$. By Lemma~\ref{lemma_omega_paths}, there exists a geodesic $\hat{\beta}'$ with the same endpoints as $\hat{\beta}$ and with label a reduced expression for the label of $\hat{\beta}$. Finally, let $\gamma$ be a path in the carrier of $H$ from $\hat v$ to the endpoint of $\hat{e}$. Note that the label of $\gamma$ only consists of vertices in $\text{link}(s)$.
	
	As $\hat{\beta}'$ is geodesic, any hyperplane intersects it at most once. Thus, as hyperplanes separate $\Omega_{\mathcal{FT}}$ into two components, it follows that any hyperplane that intersects $\hat{\beta}'$ must also intersect $\gamma$. It follows that the label of $\hat{\beta}'$ consists only of vertices in $\text{link}(s)$. However, the label of $\hat{\beta}'$ and the label of $\beta$ are expressions for the same element of $W_{\Gamma}$. This implies that $\beta \cup e$ is a path in $\mathcal{T}$ based at $v$ whose label is 
 an expression for the word $ks$, 
	 where every generator in $k$ is adjacent to $s$ in $\Gamma$. This contradicts Lemma~\ref{lemma_no_bad_paths}.
\end{proof}

We are now ready to prove the main results of this section.
\begin{theorem} \label{thm_completion_of_reflection_subgroup}
	Let $G$ be a finitely generated reflection subgroup of a RACG. Then there exists a finite completion of $G$.
\end{theorem}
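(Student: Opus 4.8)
The plan is to build the promised finite completion $\Omega_G$ directly from the set-up of Section~\ref{subsec_completion_for_ref_subgroups} and then verify it is a completion of $G$ by computing its associated subgroup. By Lemma~\ref{lemma_reflection_generators} we may assume the generating set of reflections $\mathcal R=\{w_is_iw_i^{-1}\}$ is trimmed, so that $\Omega_0$, $\mathcal T$, $\mathcal{FT}$, the finite CAT(0) cube complex $\Omega_{\mathcal{FT}}$, the sets $L_{\hat v}$, and Lemmas~\ref{lemma_no_bad_paths} and~\ref{lemma_no_bad_vertices} are all at our disposal. I would form $\Omega_0'$ from $\Omega_{\mathcal{FT}}$ by attaching, for every vertex $\hat v$ and every $s\in L_{\hat v}$, a graph-loop labeled $s$ at $\hat v$. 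Lemma~\ref{lemma_no_bad_vertices} guarantees that no such graph-loop repeats the label of an edge already incident to $\hat v$, and distinct added graph-loops at $\hat v$ carry distinct labels, so $\Omega_0'$ is folded; it is finite since $\Omega_{\mathcal{FT}}$ is finite by Proposition~\ref{prop_finite_cat0}. Applying Lemma~\ref{lemma_adding_graph_loops} then yields a completion $\Omega_G$ of $\Omega_0'$ in which $\Omega_{\mathcal{FT}}$ is isometrically embedded, every edge outside $\Omega_{\mathcal{FT}}$ is a graph-loop based at a vertex $v\in\Omega_{\mathcal{FT}}$ whose label $s$ is witnessed by a matching added graph-loop at some vertex $u$ together with a path in $\Omega_{\mathcal{FT}}$ from $u$ to $v$ labeled by a word in $\text{link}(s)$, and the completion sequence has finitely many terms. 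In particular $\Omega_G$ is finite, connected, folded and cube-full.

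It remains to show that $(\Omega_G,B)$ is a completion of $G$; by Proposition~\ref{prop_subgroup_gen_by_loops} it suffices to show that the subgroup of $W_\G$ associated to $(\Omega_G,B)$ is exactly $G$. One inclusion is easy: the image in $\Omega_{\mathcal{FT}}$ of the arc of $\Omega_0$ labeled $w_i$ is a path from $B$ to a vertex $\hat v$ with $s_i\in L_{\hat v}$, so $\hat v$ carries a graph-loop labeled $s_i$ in $\Omega_G$, and concatenating this arc, the graph-loop, and the reversed arc gives a loop at $B$ with label $w_is_iw_i^{-1}$. Hence $G$ is contained in the associated subgroup.

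For the reverse inclusion, take an arbitrary loop $\ell$ at $B$ in $\Omega_G$. Using the witnessing data of Lemma~\ref{lemma_adding_graph_loops}, I would replace each traversal by $\ell$ of a graph-loop outside $\Omega_{\mathcal{FT}}$ (of label $s$, based at $v$) with $\delta^{-1}g\delta$, where $g$ is the matching added graph-loop and $\delta$ is the path in $\Omega_{\mathcal{FT}}$ labeled by a word $c\in\text{link}(s)$; since $c$ commutes with $s$, this changes the label of $\ell$ only by the relation $c^{-1}sc=s$ in $W_\G$. The resulting loop $\ell'$ alternates between subpaths $a_0,a_1,\dots,a_k$ lying in $\Omega_{\mathcal{FT}}$ and traversals $b_1,\dots,b_k$ of added graph-loops, with $b_j$ based at a vertex $\hat u_j$ and labeled $s_{i_j}$. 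The crucial ingredient is that $\Omega_{\mathcal{FT}}$ is a finite CAT(0) cube complex (Proposition~\ref{prop_finite_cat0}), hence simply connected, so by Lemma~\ref{lemma_omega_paths}(\ref{lemma_omega_paths3}) the label of any loop in $\Omega_{\mathcal{FT}}$ based at $B$ is trivial in $W_\G$; meanwhile the definition of $L_{\hat u_j}$ supplies an $\Omega_{\mathcal{FT}}$-path from $B$ to $\hat u_j$ labeled $w_i$ for an index $i$ with $s_i=s_{i_j}$. Writing $q_j$ for the label of $a_0a_1\cdots a_{j-1}$, which is a path from $B$ to $\hat u_j$ in $\Omega_{\mathcal{FT}}$, these two facts give $q_j=w_i$ in $W_\G$; also $a_0a_1\cdots a_k$ is a loop at $B$ in $\Omega_{\mathcal{FT}}$, so its label $q_{k+1}$ is trivial in $W_\G$. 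A telescoping computation then rewrites the label of $\ell'$ as $\bigl(q_1s_{i_1}q_1^{-1}\bigr)\cdots\bigl(q_ks_{i_k}q_k^{-1}\bigr)\,q_{k+1}$, a product of elements $w_is_iw_i^{-1}\in\mathcal R$ together with a trivial element, so the label of $\ell'$, and hence of $\ell$, represents an element of $G$. This yields the reverse inclusion and completes the proof.

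I expect the bookkeeping in the last step — keeping track of where the graph-loop traversals of $\ell'$ are based, and matching each of them to one of the defining reflections via the simple-connectivity of $\Omega_{\mathcal{FT}}$ — to be the main point requiring care. Finiteness of $\Omega_G$, by contrast, is immediate from Proposition~\ref{prop_finite_cat0} and Lemma~\ref{lemma_adding_graph_loops}, and the folded/cube-full hypotheses needed to invoke the technical lemmas are exactly what the trimmed assumption and Lemma~\ref{lemma_no_bad_vertices} provide.
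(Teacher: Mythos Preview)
Your proof is correct. The construction of $\Omega_G$ is identical to the paper's, but you diverge in how you verify that $(\Omega_G,B)$ is a completion of $G$. The paper observes that $\Omega_{\mathcal{FT}}'$ (your $\Omega_0'$) can itself be obtained from $\Omega_0$ by applying the same completion sequence that was applied to $\mathcal T$ to obtain $\Omega_{\mathcal{FT}}$, carrying the graph-loops along passively. Concatenating this with the sequence from Lemma~\ref{lemma_adding_graph_loops} gives a completion sequence for $\Omega_0$, so $\Omega_G$ is a \emph{standard} completion of $G$ by Theorem~\ref{thm_subgroup_completion}, and no further argument is needed. You instead compute the associated subgroup directly via Proposition~\ref{prop_subgroup_gen_by_loops}: the telescoping identity together with the simple-connectedness of $\Omega_{\mathcal{FT}}$ (so that Lemma~\ref{lemma_omega_paths}(\ref{lemma_omega_paths3}) forces $q_j=w_i$ in $W_\G$) shows every loop at $B$ has label in $G$. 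The paper's route is shorter and yields the extra information that $\Omega_G$ is a standard completion (which is used, for instance, in Remark~\ref{rmk_special_properties_of_completion} and in invoking Proposition~\ref{prop_omega_hyps} later). Your route is more self-contained and makes the role of the CAT(0) property of $\Omega_{\mathcal{FT}}$ explicit; it also sidesteps the mild bookkeeping the paper suppresses when it says the graph-loops can be ``ignored'' during the completion sequence.
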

\begin{proof}
	As previously discussed, we first obtain the completion $\Omega_{\mathcal{FT}}$ of $\mathcal{T}$ using Proposition~\ref{prop_finite_cat0}. For each vertex $\hat{v}$ of $\Omega_{\mathcal{FT}}$ and $s \in L_{\hat{v}}$, we attach a graph-loop to $\hat{v} \in \Omega_{\mathcal{FT}}$ labeled by~$s$. Let $\Omega_{\mathcal{FT}}'$ be the resulting complex. Note that by Lemma~\ref{lemma_no_bad_vertices}, such a graph-loop is never attached to a vertex that is  incident to an edge with the same label as the graph-loop. Furthermore, note that $\Omega_{\mathcal{FT}}'$ can be obtained from $\Omega_0$ by applying the same completion sequence that was applied to $\mathcal{T}$ to obtain $\Omega_{\mathcal{FT}}$, while ``ignoring'' the graph-loops. We now get a finite completion $\Omega_G$ of $\Omega_{\mathcal{FT}}'$ by applying Lemma~\ref{lemma_adding_graph_loops}. It follows that $\Omega_G$ is a finite completion for~$G$. 
\end{proof}

\begin{example} \label{ex:reflection_subgroup}
Let $\G$ be the graph in Figure~\ref{fig:refl_completion}.  The right of 
 this 
 figure shows a completion for the reflection subgroup $\langle (dac)b(dac)^{-1}, (dc)d(dc)^{-1}\rangle$. 

\begin{figure}[h!]
	\centering
	\begin{overpic}
		[scale=1.3]
		{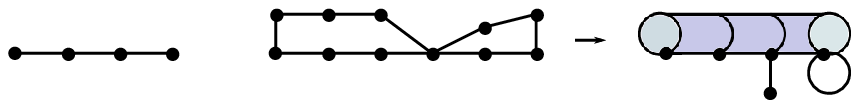}
		\put(-.5,2){\small $a$}
		\put(6,2){\small $b$}
		\put(12,2){\small $c$}
		\put(18,2){\small $d$}
		
		\put(8,-3){\small $\Gamma_4$}
		
		\put(34,3){\small $c$}
		\put(40,3){\small $a$}
		\put(46,3){\small $d$}
		\put(52,3){\small $d$}
		\put(59,3){\small $c$}

		\put(29.5,7){\small $b$}
		\put(63,7){\small $d$}

		\put(34,11){\small $c$}
		\put(40,11){\small $a$}
		\put(47,8.3){\small $d$}
		\put(52,7.8){\small $d$}
		\put(59,10){\small $c$}
		
		\put(80,3.5){\small $c$}
		\put(86,3.5){\small $a$}
		\put(88,1.5){\small $d$}
		\put(92,3.5){\small $c$}
		\put(96,-2){\small $d$}
		
		\put(73,7){\small $b$}
		\put(100,7){\small $b$}
		\put(84,7){\small $b$}
		\put(90,7){\small $b$}
		
	\end{overpic}
	\bigskip
	\caption{A completion for a reflection subgroup. }
	\label{fig:refl_completion}
\end{figure}	
\end{example}
The following corollary immediately follows from Theorem~\ref{thm_completion_of_reflection_subgroup} and Theorem~\ref{thm_qc}.
\begin{corollary}\label{cor_refection_subgroup_quasiconvex}
	Every finitely generated reflection subgroup of a RACG is quasiconvex.
\hfill{\qed}	
\end{corollary}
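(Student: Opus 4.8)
The plan is to combine the two main ingredients already established in the paper, so that this corollary becomes a one-line deduction. First I would invoke Theorem~\ref{thm_completion_of_reflection_subgroup}, which produces a \emph{finite} completion $(\Omega_G, B)$ for any finitely generated reflection subgroup $G$ of a RACG. Second I would appeal to Theorem~\ref{thm_qc}, which characterizes quasiconvex subgroups of RACGs as precisely those admitting a finite completion. Putting these together immediately yields that $G$ is quasiconvex.

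Concretely, the only thing to check is that the complex $\Omega_G$ built in Theorem~\ref{thm_completion_of_reflection_subgroup} really does meet the hypotheses of Theorem~\ref{thm_qc}: that it is a completion of $G$ in the sense of Definition~\ref{def_completion}, and that it is finite. Both are part of the conclusion of Theorem~\ref{thm_completion_of_reflection_subgroup}, so no further argument is needed. If one wanted an explicit quasiconvexity constant, one could extract it from Lemma~\ref{lemma_is_wqc}: $G$ is $M$-quasiconvex where $M$ is the maximal distance in $\Omega_G$ from the basepoint $B$.

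The substantive work has of course already been done inside the construction of $\Omega_G$, and that is where the real obstacle lies: one must guarantee that the process of ``adding back'' the graph-loops corresponding to the reflections $s_i$ to the finite CAT(0) cube complex $\Omega_{\mathcal{FT}}$ terminates. This rests on passing to a trimmed generating set (Lemma~\ref{lemma_reflection_generators}), on the technical statements Lemmas~\ref{lemma_no_bad_paths} and~\ref{lemma_no_bad_vertices} ensuring the hypotheses of Lemma~\ref{lemma_adding_graph_loops} are satisfied, and on the finiteness bound supplied by Lemma~\ref{lemma_adding_graph_loops} itself. Granting all of that, the corollary follows at once by quoting Theorem~\ref{thm_completion_of_reflection_subgroup} and Theorem~\ref{thm_qc}.
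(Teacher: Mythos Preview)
Your proposal is correct and matches the paper's own argument exactly: the corollary is stated as an immediate consequence of Theorem~\ref{thm_completion_of_reflection_subgroup} (finite completion exists) and Theorem~\ref{thm_qc} (finite completion implies quasiconvex), with no further work required. Your additional remarks on extracting an explicit quasiconvexity constant and on where the real content lies are accurate elaborations but not needed for the deduction itself.
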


For $2$-dimensional RACGs, we obtain the following stronger result which shows that the time-complexity of the algorithm which builds the completion of a reflection subgroup is bounded by the size of words in the generating set of reflections. 
This is used in 
Section~\ref{sec_algorithm_for_2d_subgroups}.
\begin{theorem} \label{thm_completion_of_reflection_subgroup_triangle_free}
	Let $W_{\Gamma}$ be a $2$-dimensional RACG. Let $G$ be a subgroup of $W_{\Gamma}$ generated by a finite set of reflection words $\mathcal{R}$. 
Then there is a finite completion sequence for $G$ whose length only depends on the numbers $\sum_{r \in \mathcal{R}}{|r|}$ and $|V(\G)|$.
\end{theorem}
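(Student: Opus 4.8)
The plan is to trace through the construction of $\Omega_G$ given in Theorem~\ref{thm_completion_of_reflection_subgroup} and check that, under the triangle-free hypothesis, every step in the construction takes a number of operations bounded by a function of $\sum_{r\in\mathcal{R}}|r|$ and $|V(\G)|$. First I would invoke Lemma~\ref{lemma_reflection_generators} to replace $\mathcal{R}$ by a trimmed set, noting that this algorithm's cost is bounded in terms of $\sum_{r\in\mathcal{R}}|r|$ and that the trimmed set has total word-length no larger than the original; so we may assume $\mathcal{R}$ is trimmed. Then the complex $\Omega_0$ and the tree $\mathcal{T}$ obtained by deleting graph-loops have size bounded by $\sum_{r\in\mathcal{R}}|r|$: the number of edges $E$ of $\mathcal{T}$ is at most $\sum_i |w_i| \le \sum_{r\in\mathcal{R}}|r|$. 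Folding $\mathcal{T}$ to $\mathcal{FT}$ only decreases the number of edges, so $\mathcal{FT}$ has at most $E$ edges as well.

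The heart of the bound is the passage from $\mathcal{FT}$ to its standard completion $\Omega_{\mathcal{FT}}$. Here I would apply Proposition~\ref{prop_finite_cat0_triangle_free} directly: since $\G$ is triangle-free (this is exactly where $2$-dimensionality is used), that proposition gives a bound on the length of the standard completion sequence for $\mathcal{FT}$ depending only on the number of edges of $\mathcal{FT}$ (hence only on $E$, hence only on $\sum_{r\in\mathcal{R}}|r|$) and on $|V(\G)|$. This is the main obstacle and it is essentially already handled by the earlier proposition — the only thing to verify is that the quantities fed into Proposition~\ref{prop_finite_cat0_triangle_free} are themselves controlled, which is the edge-count bound above.

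Next I would handle the final stage: re-attaching the graph-loops to $\Omega_{\mathcal{FT}}$ to form $\Omega_{\mathcal{FT}}'$, and then completing via Lemma~\ref{lemma_adding_graph_loops} to obtain $\Omega_G$. Lemma~\ref{lemma_no_bad_vertices} guarantees the hypothesis of Lemma~\ref{lemma_adding_graph_loops} is met, so the construction is legitimate; and Lemma~\ref{lemma_adding_graph_loops}(3) states that the number of operations in that completion depends only on the number of edges of $\Omega_{\mathcal{FT}}'$. So I need a bound on the number of edges of $\Omega_{\mathcal{FT}}'$. Since $\Omega_{\mathcal{FT}}$ is obtained from $\mathcal{FT}$ by a completion sequence of length bounded as above, and each operation in that sequence increases the number of edges by at most a bounded amount (a cube attachment of dimension $\le |V(\G)|$ adds at most $2^{|V(\G)|}$ edges, folds and cube identifications do not increase edges), the edge-count of $\Omega_{\mathcal{FT}}$ — and hence of $\Omega_{\mathcal{FT}}'$, which adds at most $|V(\G)|$ graph-loops per vertex — is bounded by a function of $\sum_{r\in\mathcal{R}}|r|$ and $|V(\G)|$.

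Finally I would assemble the pieces: the total completion sequence for $G$ is the concatenation of (i) the fold operations producing $\Omega_0$ and $\mathcal{FT}$ from the subdivided circles, (ii) the standard completion sequence of $\mathcal{FT}$ producing $\Omega_{\mathcal{FT}}$ (with the graph-loops carried along passively, as in the proof of Theorem~\ref{thm_completion_of_reflection_subgroup}), and (iii) the completion sequence of Lemma~\ref{lemma_adding_graph_loops} producing $\Omega_G$. Each of the three blocks has length bounded by a function of $\sum_{r\in\mathcal{R}}|r|$ and $|V(\G)|$, so their sum is too, which is exactly the assertion. The one place I would be careful is in stage (ii): I should note that performing the $\mathcal{FT}$-completion sequence on $\Omega_0$ (rather than on $\mathcal{FT}$) is still a valid sequence of fold/cube-identification/cube-attachment operations of the same length, since the graph-loops never interfere (Lemma~\ref{lemma_no_bad_vertices} again), so the length bound transfers verbatim.
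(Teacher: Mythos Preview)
Your proposal is correct and follows essentially the same approach as the paper's proof: reduce to a trimmed set via Lemma~\ref{lemma_reflection_generators}, invoke Proposition~\ref{prop_finite_cat0_triangle_free} for the bounded completion of the tree $\mathcal{FT}$, and then finish with Lemma~\ref{lemma_adding_graph_loops}(\ref{lemma_adding_graph_loops_bound}). The paper's write-up is terser (it just says to repeat the proof of Theorem~\ref{thm_completion_of_reflection_subgroup} using the sharper Proposition~\ref{prop_finite_cat0_triangle_free}), but you have correctly unpacked the needed edge-count bounds. One small note: in stage~(ii) the reason the graph-loops can be carried along passively is simply that the operations in the $\mathcal{T}$-completion do not involve them, so the same sequence is valid on $\Omega_0$; Lemma~\ref{lemma_no_bad_vertices} is really only needed at the handoff to stage~(iii), to verify the folded hypothesis of Lemma~\ref{lemma_adding_graph_loops}.
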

\begin{proof}
	By Lemma~\ref{lemma_reflection_generators}, we may assume without loss of generality that $\mathcal{R}$ is trimmed.
	As before, we first obtain the completion $\Omega_{\mathcal{FT}}$ of $\mathcal{T}$. However, this time we use the more refined Proposition~\ref{prop_finite_cat0_triangle_free} which guarantees that the number of steps in this completion sequence only depends on $|\Omega_0|$
	and $|V(\G)|$. The rest of the proof follows by repeating the proof of Theorem~\ref{thm_completion_of_reflection_subgroup} and noting that Lemma~\ref{lemma_adding_graph_loops}(\ref{lemma_adding_graph_loops_bound}) guarantees the bound.
\end{proof}

\begin{remark} \label{rmk_special_properties_of_completion}
By Lemma~\ref{lemma_adding_graph_loops}, the completion $\Omega_G$ of the reflection group~$G$ given by Theorems~\ref{thm_completion_of_reflection_subgroup} and~\ref{thm_completion_of_reflection_subgroup_triangle_free} contains the complex $\Omega_\mathcal{FT}$ as an isometrically embedded subcomplex. Moreover, the inclusion of $\Omega_{\mathcal{FT}}$ into $\Omega_G$ satisfies the additional properties given by Lemma~\ref{lemma_adding_graph_loops}. 
\end{remark}

\section{Coxeter subgroups of $2$-dimensional RACGs} \label{sec_2d_RACG}
In this section we study Coxeter subgroups of $2$-dimensional RACGs. Recall that such subgroups, by our definition, are always finitely generated.
It is clear that a Coxeter subgroup of a RACG $W_\G$ must be generated by involutions (order two elements) in $W_\G$. We show in Theorem~\ref{thm_racg_subgroup_gen_set} that under mild hypotheses these subgroups are generated by reflections.  Consequently, they are quasiconvex by Theorem~\ref{thm_completion_of_reflection_subgroup} and Theorem~\ref{thm_qc}.
	
We first prove three lemmas involving the structure of particular types of words in a RACG.
The first is well-known and addresses involutions:
\begin{lemma} \label{lemma_involutions}
	Let $g$ be an involution in the RACG $W_{\Gamma}$. Then there is an expression $wkw^{-1}$ for $g$, such that every generator in the word $k$ is in a common clique of $\Gamma$. 
\end{lemma}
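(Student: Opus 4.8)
The plan is to use the standard fact (Tits/Davis, [Davis, Theorem 12.3.4]) that every finite-order element of a Coxeter group is conjugate into a finite special subgroup, combined with the specific ``strong'' deletion available in RACGs as set up in Section~\ref{subsec_word_prob}. So first I would write $g = w k w^{-1}$ where $w$ is reduced, $k$ is a reduced word whose support spans a finite special subgroup (equivalently, $\mathrm{Support}(k)$ is a clique of $\G$), and among all such expressions I choose one with $|w|$ minimal. This already gives the conclusion \emph{provided} the clique condition is recorded; the content is just that such an expression exists, which is immediate from the conjugacy-into-finite-special-subgroup fact together with $g^2 = 1$ forcing $g$ to have finite order (here order dividing $2$), so $g$ is conjugate into a finite special subgroup $W_\Delta$ with $\Delta$ a clique, and any element of $W_\Delta$ is represented by a reduced word supported on $\Delta$.

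Strictly, the statement as phrased does not even require minimality of $w$ or that the expression be reduced --- it just asks for \emph{some} expression $wkw^{-1}$ with $\mathrm{Support}(k)$ contained in a clique. So the core of the argument is: (1) $g$ is an involution, hence has finite order; (2) by [Davis, Theorem~12.3.4], finite-order elements of $W_\G$ are conjugate into a finite special subgroup $W_\Delta$, and finite special subgroups of a RACG correspond exactly to cliques $\Delta$ of $\G$; (3) write the conjugator as a word $w$ and the element of $W_\Delta$ it conjugates $g$ to as a word $k$ in the letters of $\Delta$; then $g = wkw^{-1}$ with every letter of $k$ lying in the clique $\Delta$, as desired. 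I would phrase it in exactly this order.

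The only mild subtlety --- and the closest thing to an ``obstacle'' --- is making sure the cited conjugacy statement is applied correctly: [Davis, Theorem~12.3.4] says a subgroup of a Coxeter group generated by reflections is itself a Coxeter group and, more relevantly here, that every finite subgroup (hence every finite cyclic subgroup generated by a torsion element) is contained in a conjugate of a finite special subgroup. Since $W_\G$ is a RACG, its finite special subgroups $W_\Delta$ are precisely those with $\Delta$ a clique (as noted in Section~\ref{sec_prelim}: $W_\G$ is finite iff $\G$ is a clique, applied to induced subgraphs). Combining these gives the clique condition on $\mathrm{Support}(k)$ with no further work. I expect the proof to be three or four lines; if a stronger normal form were needed (e.g.\ $wkw^{-1}$ reduced, which is what the analogous argument in Proposition~\ref{prop_torsion_free} establishes), one would additionally invoke the deletion property to push cancellations between $w^{-1}$ and $k$ back into $w$, but that refinement is not required by the statement as given.
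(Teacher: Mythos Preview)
Your proposal is correct and matches the paper's approach essentially verbatim: the paper's proof is two sentences citing \cite[Theorem~12.3.4]{Davis} for the fact that every finite subgroup of a RACG is contained in a conjugate of a finite special subgroup, then noting that finite special subgroups correspond to cliques. Your additional remarks about minimality of $|w|$ and reducedness are accurate but, as you correctly observe, unnecessary for the statement as given.
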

\begin{proof}
	It is well known that every finite subgroup of a RACG is contained in a conjugate of a special finite subgroup (see \cite[Theorem 12.3.4]{Davis} for instance).
	The lemma follows, as a special subgroup of a RACG is finite if and only if its defining graph is a clique. 
\end{proof}

The next lemma concerns the structure of particular types of commuting words.

\begin{lemma} \label{lemma_commuting}
	Let $w = s_1 \dots s_m$ and $k = k_1 \dots k_n$ be reduced commuting words in the RACG $W_{\Gamma}$ and suppose that the vertices $k_1, \dots, k_n \in V(\G)$ are all contained in a common clique of $\G$. Then for each $1 \le i \le m$, either:
	\begin{enumerate}
		\item \label{lemma_commuting1} $s_i = k_r$ for some $1 \le r \le n$ and $m(s_i, s_j) = 2$ for all $j \neq i$ or 
		\item  \label{lemma_commuting2} $m(s_i, k_j) = 2$ for all $1 \le j \le n$
	\end{enumerate}
\end{lemma}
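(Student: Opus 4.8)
The plan is to argue by analyzing, for each letter $s_i$ of $w$, how it interacts with the clique letters $k_1, \dots, k_n$, using the commutativity relation $wk = kw$ in $W_\Gamma$ together with the deletion property. The key preliminary observation is that since $w$ and $k$ are reduced and $wk$ is equal in $W_\Gamma$ to $kw$, a disk diagram $D$ with boundary label $wk(kw)^{-1}$ (or, more usefully, a careful application of Tits' solution to the word problem / Lemma~\ref{lemma_disk_diagram_subwords}) controls which dual curves of type $k_r$ cross the $w$-portion of the boundary. Concretely, I would set up the square diagram for the equality $wk = kw$ and track the dual curve emanating from each edge of the $k$-labeled portion on the ``incoming'' side; each such curve has type $k_r$ for some $r$, and it must exit either through the $w$-portion on the far side or through the $k$-portion. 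Because the $k_r$'s pairwise commute (they lie in a clique), dual curves of type among $\{k_1,\dots,k_n\}$ that both cross the incoming $k$-side cannot cross each other, which rigidly constrains the combinatorics.

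The main case division is then the following. Fix $i$ and consider the letter $s_i$ of $w$. First I would handle the letters of $w$ whose type lies outside the clique $\{k_1, \dots, k_n\}$: for such an $s_i$, the dual curve of type $s_i$ crossing the edge of $w$ labeled $s_i$ must, after commuting $w$ past $k$, still be present, and since it cannot be ``absorbed'' by any $k_r$ (its type is not in the clique, so it does not equal any $k_r$ and — here is where I would need triangle-freeness is \emph{not} assumed here, so I must be careful — it must simply pass through the block of $k$'s, forcing $m(s_i, k_j) = 2$ for all $j$), we land in conclusion~(\ref{lemma_commuting2}). The remaining letters $s_i$ have type equal to some $k_r$. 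For these I would show conclusion~(\ref{lemma_commuting1}) holds: I claim such an $s_i$ must commute with every other letter $s_j$ of $w$. Indeed, suppose not; pick the nearest $s_j$ to $s_i$ (on one side, say the right) with $m(s_i, s_j) = \infty$. Then in the reduced word $w$ the letter $s_i$ cannot be moved past $s_j$. But since $s_i$ has type $k_r \in$ clique, conjugating the relation $wk=kw$ appropriately, or examining the disk diagram, shows that the $k_r$-dual curve entering from the $k$-side and the occurrence of $s_i$ in $w$ interact in a way that produces a shorter expression for $w$ or $k$ — contradicting reducedness. The cleanest implementation is probably: since $s_i = k_r$ and $k_r$ commutes with all of $k_1, \dots, k_n$, one can move the occurrence of $k_r$ in $k$ to ``cancel'' with $s_i$ unless $s_i$ is blocked inside $w$ by a non-commuting letter, and a blocking letter would have to itself be moved, leading by minimality/deletion to a contradiction with $w$ being reduced.

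The step I expect to be the main obstacle is making the ``absorption'' argument in case~(\ref{lemma_commuting1}) fully rigorous — precisely, ruling out the configuration where $s_i$ has type $k_r$ but fails to commute with some other $s_j$. This is genuinely a statement about reduced words in Coxeter groups and I would expect to need either a clean disk-diagram argument (tracking the $k_r$-curve and showing it would have to cross some curve twice, or cross the $w$-boundary in a way forcing a deletion in $w$) or an inductive argument on $|w| + |k|$ peeling off the first letter of $w$. I would likely structure it as: if $s_1$ has type in the clique, show $s_1$ commutes with $s_2, \dots, s_m$ directly from $wk = kw$ and the deletion property applied to the first position; then induct on the suffix $s_2 \cdots s_m$ (which still commutes with $k$, possibly after deleting $k_r$ from $k$, which keeps the clique hypothesis). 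If $s_1$ has type outside the clique, show $m(s_1, k_j) = 2$ for all $j$ by observing that otherwise $w k$ would admit a deletion involving $s_1$ while both $w$ and $k$ are reduced — impossible since any deletion in $wk$ pairs a letter of $w$ with a letter of $k$ of the same type, and $s_1$'s type is not among the $k_j$. Either way the induction closes, and conclusions~(\ref{lemma_commuting1}) and~(\ref{lemma_commuting2}) are exactly the two outcomes of ``the type of $s_i$ is or is not in the clique $\{k_1, \dots, k_n\}$,'' with the extra commutation in~(\ref{lemma_commuting1}) coming from the blocking-letter analysis.
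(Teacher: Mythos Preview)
Your overall strategy --- set up a rectangular disk diagram with boundary $wkw^{-1}k^{-1}$ and split into cases according to whether the type of $s_i$ lies in $\{k_1,\dots,k_n\}$ --- is exactly what the paper does. However, your proposal does not actually complete either case, and the inductive fallback you sketch at the end contains an error.

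For the case $s_i \notin \{k_1,\dots,k_n\}$: your inductive step asserts that if $m(s_1, k_j)=\infty$ for some $j$ then ``$wk$ would admit a deletion involving $s_1$.'' This does not follow --- non-commutation does not produce deletions, and there is no reason for $wk$ to fail to be reduced here. What is actually needed (and what the paper does) is a parity argument in the rectangle: the dual curve $H$ emanating from the bottom-most $s_i$-edge on the left side must land on the right side (since $s_i$ is not any $k_r$, it cannot hit a horizontal side), and the boundary subpath of the rectangle below $H$ contains exactly one edge of each type $k_r$; hence for each $r$ some $k_r$-curve must cross $H$, giving $m(s_i,k_r)=2$.

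For the case $s_i = k_r$: you correctly flag this as the obstacle but give only a heuristic. The paper's argument is that the type-$s_i$ dual curves in the rectangle have a forced zig-zag structure (one of two mirror-image patterns): the bottom $k_r$-edge pairs with the first left $s_i$-edge, the $j$th right $s_i$-edge pairs with the $(j{+}1)$st left $s_i$-edge, and the last right $s_i$-edge pairs with the top $k_r$-edge (or the mirror of this). Any letter $t\neq s_i$ occurring in $w$ either also occurs in $k$ (so $m(s_i,t)=2$ since both lie in the clique) or its dual curves go straight across left-to-right matching heights, and the zig-zag forces each such curve to cross some type-$s_i$ curve, again giving $m(s_i,t)=2$. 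Reducedness of $w$ then forces $s_i$ to occur exactly once, completing conclusion~(\ref{lemma_commuting1}). Your ``blocking letter'' and ``absorption'' heuristics are pointing at this phenomenon but do not supply the mechanism.
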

\begin{proof}
	Note that as $k$ is reduced and $k_1, \dots, k_n$ pairwise commute, it follows that $k_i \neq k_j$ for all $i \neq j$. As $w$ and $k$ commute, there exists a disk diagram $R$ with boundary label $wkw^{-1}k^{-1}$. We think of $R$ as a rectangle. The vertical sides are labeled, read from bottom to top, by $w$, and the horizontal sides of $R$ are labeled, read from left to right, by $k$. As $w$ and $k$ are reduced, no dual curve intersects the same side of $R$ twice. 
	
	Fix $s \in \G$ such that $s = s_i$ for some $1 \le i \le m$. Let $e_1, \dots, e_l$ be the set of edges labeled by $s$ on the left side of $R$, ordered from bottom to top.  Let $e_1', \dots, e_l'$ be the edges labeled by $s$ on the right side of $R$, ordered from bottom to top. We think of the edge $e_i$ as lying ``directly across'' from $e_i'$ in $R$. Consider the set $\mathcal{H}$ of all dual curves in $R$ of type $s$. 
	
	Suppose first that $s \neq k_r$ for all $1 \le r \le n$. As dual curves of the same type do not intersect, it follows that for each $1 \le j \le l$, there is a curve in $\mathcal{H}$ intersecting both $e_j$ and $e_j'$. 
	Let $H$ be the dual curve in $\mathcal{H}$ that is bottom-most in $R$, i.e., that intersects $e_1$ and $e_1'$. Let $\alpha$ be the path along the boundary of $R$ from the bottom of $e_1$ to the bottom of $e_1'$. 
	Let $t = k_r$ for any $1 \le r \le n$. 
	Observe that the label of $\alpha$ has an odd number of occurrences of the letter $t$. Hence some curve intersecting an edge in $\alpha$ labeled by $t$ must intersect $H$. Thus, $m(s,t) = 2$ and 
	item (\ref{lemma_commuting2}) in the statement of the lemma holds in this case.

	On the other hand, suppose that $s = k_r$ for some $1 \le r \le n$. 
	Let $d$ and $d'$ respectively be the edges on the bottom and top of $R$ labeled by $k_r$. 
	The	dual curves in $\mathcal{H}$ must take one of two possible forms.  
	The first possibility is that there are curves in $\mathcal H$ intersecting the following  pairs of edges: $d$ and $e_1$, $e_j'$ and $e_{j+1}$ for $1 \le j \le l-1$, and $e_{l}'$ and $d'$.  Otherwise, there are curves in $\mathcal H$  intersecting the pairs 
	$d$ and $e_1'$, $e_j$ and $e_{j+1}'$ for $1 \le j \le l-1$, and $e_{l}$ and $d'$.
	
	Let $t$ be a letter in $w$ such that $t \neq s$.  If $t$ also appears in $k$, then $m(s,t) = 2$. 
	Otherwise, the dual curves labeled by $t$ all go across $R$, such that for all $j$, the $j$th edge labeled $t$ on the left side is paired with the $j$th edge labeled $t$ on the right side.  
	Now the structure of dual curves in $\mathcal{H}$ (in either case) forces each dual curve labeled $t$ to intersect a curve in $\mathcal H$.  It follows that $m(s,t) = 2$. 
Thus, $s$ commutes with every generator of $w$ that is not equal to $s$. All that is left to show is that there is only one occurrence of the generator $s$ in $w$, i.e., that $l = 1$. However, if there were two occurrences of $s$, these occurrences can be deleted (as $s$ commutes with every generator in $w$). This is not possible, as $w$ is reduced. Thus, $m(s_i, s_j) = 2$ for all $1 \le j \le m$ such that $j \neq i$, and item (\ref{lemma_commuting1}) in the statement of the lemma holds.
\end{proof}

The following lemma about commuting words will be used in the proof of Theorem~\ref{thm_racg_subgroup_gen_set}.

\begin{lemma}\label{lemma_ax_commuting}
Let $b$ and $x = zs_1s_2z^{-1}$ be reduced commuting words in a RACG 
$W_\G$, where $s_1, s_2 \in V(\G)$ and $z$ is a word in $W_{\G}$.  
Suppose also that $s_1$ commutes with both $s_2$ and $z$.  Then $b$ commutes with 
$\hat{x} = zs_2z^{-1}$
\end{lemma}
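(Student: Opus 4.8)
The plan is to exploit the hypothesis that $x$ and $b$ commute and that $x = z s_1 s_2 z^{-1}$ with $s_1$ central relative to $s_2$ and $z$. Observe first that $x = z s_1 s_2 z^{-1} = s_1 \hat x$, where $\hat x = z s_2 z^{-1}$, because $s_1$ commutes with both $z$ and $s_2$. Hence $\hat x = s_1 x$. The commuting relation $bx = xb$ immediately gives $b s_1 x = s_1 x b$, so $b \hat x = b s_1 x$ and $\hat x b = s_1 x b = s_1 b x$; thus it suffices to show that $b$ commutes with $s_1$, since then $b s_1 x = s_1 b x = s_1 x b = \hat x b$. So the entire claim reduces to: \emph{$b$ commutes with $s_1$.}

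To prove this, I would apply Lemma~\ref{lemma_commuting} to the reduced commuting pair $b$ and $x$. Writing $x$ in the form required by that lemma is the delicate point: $x = z s_1 s_2 z^{-1}$ is reduced by hypothesis, but the letters appearing in it need not lie in a common clique, so Lemma~\ref{lemma_commuting} does not apply directly to the pair $(b,x)$. Instead, I would use Lemma~\ref{lemma_reduced_expression} to write a reduced expression for $x$ (equivalently, factor it suitably), or better, apply Lemma~\ref{lemma_commuting} with the roles of the two words chosen so that the ``clique word'' is the single letter $s_1$: since $s_1$ commutes with $b$ if and only if the one-letter word $k = s_1$ commutes with $b$, and a single vertex trivially lies in a clique, it would suffice to know that $b$ and $s_1$ commute as words. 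That circular observation shows Lemma~\ref{lemma_commuting} must instead be applied to $b$ and $x$ directly after rewriting $x$. Concretely: consider the word $s_1$. It commutes with $z$, with $s_2$, hence with $\hat x = z s_2 z^{-1}$, and therefore $s_1$ is a reflection commuting with $x = s_1 \hat x$. The cleanest route is a disk-diagram argument mirroring the proof of Lemma~\ref{lemma_commuting}: form the disk diagram $R$ for $b x b^{-1} x^{-1}$, which is a rectangle with vertical sides labeled $b$ and horizontal sides labeled (a reduced form of) $x$. Since $s_1$ occurs exactly once in $x$ and commutes with everything else in $x$, the dual curve of type $s_1$ crossing the bottom edge of $R$ labeled $s_1$ must travel straight across to the top, and no other dual curve of $R$ crossing a horizontal side can cross it; this forces every dual curve crossing a vertical side (i.e., every letter of $b$) to either equal a letter $s_1$ — impossible unless it commutes with $s_1$ by the clique structure around that crossing — or to cross the $s_1$-curve, hence commute with $s_1$. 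Either way every letter of $b$ commutes with $s_1$, so $b s_1 = s_1 b$.

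The main obstacle I anticipate is the bookkeeping in that disk-diagram step: one must be careful that the horizontal side of $R$ is labeled by a \emph{reduced} word equal to $x$ and that the single $s_1$-curve genuinely separates $R$ into a piece containing only $z$-letters below and a piece containing $s_2 z^{-1}$ above, which is exactly what the hypothesis ``$s_1$ commutes with $s_2$ and with $z$'' guarantees. Once $b s_1 = s_1 b$ is established, the conclusion $b \hat x = \hat x b$ follows by the two-line computation above: $b \hat x = b s_1 x = s_1 b x = s_1 x b = \hat x b$. I would present the reduction to ``$b$ commutes with $s_1$'' first, then either cite Lemma~\ref{lemma_commuting} applied to $b$ and a suitable reduced form of $x$ (noting that the occurrence of $s_1$ in that form is isolated and commutes with the rest, so case~(\ref{lemma_commuting1}) of the lemma applies to the letter $s_1$, forcing $b$ to commute with it), or give the short self-contained disk-diagram argument; the former is shorter and I would prefer it if the hypotheses let us massage $x$ into the exact shape Lemma~\ref{lemma_commuting} demands.
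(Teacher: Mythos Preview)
Your reduction is correct and actually cleaner than the paper's organization: since $s_1$ commutes with $z$ and $s_2$, one has $x = s_1\hat x$, so $\hat x = s_1 x$, and the entire lemma reduces to showing $bs_1 = s_1b$. The paper instead runs a three-case analysis on $b$ and proves $b\hat x = \hat x b$ directly in each case; your reformulation explains \emph{why} those cases collapse.

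However, the disk-diagram step you sketch has a genuine gap. Your assertion that ``the dual curve of type $s_1$ crossing the bottom edge of $R$ labeled $s_1$ must travel straight across to the top'' is justified only when $b$ contains no occurrence of $s_1$; in that situation your argument is fine (dual curves from the sides either cross the vertical $s_1$-curve or land on a horizontal edge, and every letter of $x$ commutes with $s_1$). But if $b$ \emph{does} contain $s_1$, the $s_1$-dual curves in $R$ can connect bottom-to-side and side-to-side in the alternating pattern described in the paper's proof of Lemma~\ref{lemma_commuting}, and your argument no longer forces the remaining letters of $b$ to commute with $s_1$. The sentence ``either equal a letter $s_1$ --- impossible unless it commutes with $s_1$ by the clique structure around that crossing'' does not repair this: a dual curve of type $t$ with $t \neq s_1$ not commuting with $s_1$ simply \emph{avoids} all $s_1$-curves, and nothing in your picture prevents it from doing so once the $s_1$-curves go side-to-side. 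This is exactly the configuration the paper rules out in its Case~3 by a careful parity/ordering argument on the $s_1$-curves; you will need an argument of that type (or a direct appeal to Lemma~\ref{lemma_commuting} applied to $x$ and $b$ with $k = x$ in some reduced form, which does not obviously work since the letters of $x$ need not lie in a clique) to finish. Once that gap is filled, your computation $b\hat x = bs_1x = s_1bx = s_1xb = \hat x b$ completes the proof.
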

\begin{proof}
	As in the previous lemma, we form a ``rectangular'' disk diagram $R$ with boundary label $bxb^{-1}x^{-1}$. The vertical sides of $R$ are labeled, read from bottom to top, by $b$, and the horizontal sides of $R$ are labeled, read from left to right, by $x$. As $b$ and $x$ are reduced, no dual curve intersects the same side of $R$ twice. Let $\mathcal{H}$ be all dual curves in $R$ of type $s_1$. 
	
	As $x$ is reduced and $s_1$ commutes with both $s_2$ and $z$, it readily follows that there is only one occurrence of $s_1$ in $x$. Let $e_t$ be the unique edge on the top of $R$ labeled by $s_1$, and let $e_b$ be the unique edge on the bottom of $R$ labeled by $s_1$.
	
	First suppose that $b$ does not contain any occurrences of the generator $s_1$. Then $\mathcal{H}$ consists of a single dual curve $H$ which intersects both $e_t$ and $e_b$. Let $N(H)$ be the set of cells in $R$ that contain an edge intersecting $H$. It follows that the boundary $\partial N(H)$ of $N(H)$ has label $s_1 ys_1y^{-1}$ for some word $y$. We can then excise $(N(H) \setminus \partial N(H)) \cup e_t \cup e_b$ from $R$ and then glue back together the resulting components along their boundary paths labeled by $y$. What results is a new disk diagram with boundary label $b \hat{x} b^{-1} \hat{x}^{-1}$. Thus, $b$ commutes with $\hat{x}$.
	
	On the other hand, suppose that $b$ has one or more occurrence of $s_1$. We consider two cases. The first case is that $s_1$ commutes with every generator of $b$. As $b$ is reduced, it readily follows that there is an unique occurrence of $s_1$ in $b$. 
Furthermore $ s_1 \hat{b}$ and $\hat{b}s_1$ are both expressions for $b$, where $\hat{b}$ is the word obtained from $b$ by removing the generator $s_1$. 
Now we have the following equalities in $W_\G$: 
	\[\hat{b} \hat{x}  =  \hat{b} s_1 s_1 \hat{x} = bx = xb = \hat{x} s_1 s_1 \hat{b} =  \hat{x} \hat{b} \]
Thus, $\hat b$ and $s_1$ both commute with $\hat x$, so $b$ does as well, and the lemma follows for this case.
	
	For the second case, suppose there are generators in $b$ that do not commute with $s_1$. We will show that this case is actually not possible as we obtain a contradiction.  
	
	Let $e_1, \dots, e_l$ be the set of edges labeled by $s_1$ on the left side of $R$ ordered from bottom to top, and let $e_1', \dots, e_l'$ be the set of edges labeled by $s_1$ on the right side of $R$ ordered from bottom to top. As in the previous lemma, the dual curves in $\mathcal H$ either intersect the pairs $e_b$ and $e_1$, $e_j'$ and $e_{j+1}$ for $1 \le j \le l-1$, and $e_{l}'$ and $e_t$, or  intersect the pairs $e_b$ and $e_1'$, $e_j$ and $e_{j+1}'$ for $1 \le j \le l-1$, and $e_{l}$ and $e_t$. We assume that the dual curves in $\mathcal{H}$ have the first configuration described (the proof in the other possible configuration same). 
	
	Consider the first occurrence in $b$ of a generator $t$ that does not commute with $s_1$, and let $d$ be the corresponding edge on the left side of $R$ labeled by $t$. Note that $d$ is the ``bottom-most" edge on the left side of $R$ with label $t$. Let $T$ be the curve in $R$ that intersects $d$. Note that $T$ cannot intersect any dual curve in $\mathcal{H}$ as $s_1$ and $t$ do not commute. Furthermore, $T$ cannot intersect an edge on the top or bottom of $R$ as the label of every such edge commutes with $s_1$. 
	Thus, it readily follows by the structure of dual curves in $\mathcal{H}$ that $d$ cannot lie before $e_1$. Similarly, $d$ cannot occur after $e_l$, as then the edge on the right side of $R$ labeled by $t$ occurs after $e_l'$, which is again not possible by the structure of $\mathcal{H}$.
	
	Thus $d$ must lie between $e_r$ and $e_{r+1}$ for some $1 \le r < l$. However, as $T$ cannot intersect a dual curve in $\mathcal{H}$ and cannot intersect the bottom of $R$, it follows that $T$ intersects an edge on the right side of $R$ that lies before $e_r'$. Correspondingly, there is an edge on the left side of $R$ lying below $e_r$ with label $t$. This contradicts the fact that $d$ is the bottom-most such edge on the left side of $R$. 
		The lemma follows.
\end{proof}

An \textit{isolated vertex} of a graph is a vertex that is not adjacent to any other vertex. 
If $W_\G$ is a  $2$-dimensional RACG, we show:

\begin{theorem} \label{thm_racg_subgroup_gen_set} 
	Let $G$ be a subgroup of a RACG $W_{\Gamma}$, where~$\G$ is triangle-free. Suppose that $G$ is isomorphic to the Coxeter group $W_{\G'}$ (which is right-angled by Proposition \ref{prop_coxeter_subgroups_are_racgs}), where $\Gamma'$ does not have an isolated vertex. Then $G$ is generated by a finite set of reflections in $W_{\G}$.
\end{theorem}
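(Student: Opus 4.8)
The plan is to begin with the standard Coxeter generating set of $G\cong W_{\G'}$, regarded as a set $g_1,\dots,g_n$ of involutions of $W_\G$, and to replace every $g_i$ that is not a reflection of $W_\G$ by a pair of commuting reflections of $W_\G$ that already lie in $G$. Fix $i$. Since $g_i$ is an involution of $W_\G$ and $\G$ is triangle-free, Lemma~\ref{lemma_involutions} gives $g_i=wkw^{-1}$ with every letter of $k$ lying in a common clique of $\G$, hence $|k|\le 2$; writing $k=s$ or $k=s_1s_2$ with $s_1$ adjacent to $s_2$, we get $g_i=wsw^{-1}$ or $g_i=(ws_1w^{-1})(ws_2w^{-1})$, so the reflection length $\ell_R(g_i)$ of $g_i$ in $W_\G$ (the least number of reflections of $W_\G$ whose product is $g_i$) satisfies $\ell_R(g_i)\le 2$, with equality unless $g_i$ is a reflection. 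If $g_i$ is a reflection I would simply keep it; otherwise $\ell_R(g_i)=2$ and I argue as follows.

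Because $\G'$ has no isolated vertex, the standard generator corresponding to $g_i$ is adjacent in $\G'$ to another one, so there is $j\ne i$ with $g_ig_j=g_jg_i$; moreover $g_i\ne g_j$ since distinct standard generators of $W_{\G'}$ are distinct elements. Thus $H:=\langle g_i,g_j\rangle\cong \mathbb{Z}_2\times\mathbb{Z}_2$ is a finite subgroup of $W_\G$, hence is contained in a conjugate of a finite (spherical) special subgroup (\cite[Theorem~12.3.4]{Davis}, as already used in Lemma~\ref{lemma_involutions}): $H\subseteq uW_\Delta u^{-1}$ for some $u\in W_\G$ and some clique $\Delta$ of $\G$. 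Since $|H|=4$ divides $|W_\Delta|$, triangle-freeness forces $|V(\Delta)|=2$, say $V(\Delta)=\{a,b\}$ with $a$ adjacent to $b$, and comparing orders gives $H=uW_\Delta u^{-1}$. The nontrivial elements of $W_\Delta$ are $a$, $b$ and $ab$; applying the abelianization $W_\G\to(\mathbb{Z}/2)^{V(\G)}$ (on which conjugation is trivial), every reflection of $W_\G$ maps to a single basis vector whereas $ab$ maps to $e_a+e_b$, so $ab$ is not a reflection, $\ell_R(ab)=2$, and $\ell_R(a)=\ell_R(b)=1$. Since reflection length is a conjugacy invariant and $\ell_R(g_i)=2$, the element $u^{-1}g_iu\in\{a,b,ab\}$ must be $ab$; hence $g_i=u(ab)u^{-1}=(uau^{-1})(ubu^{-1})$, and $uau^{-1}$ and $ubu^{-1}$ are commuting reflections of $W_\G$ lying in $H\subseteq G$.

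To conclude, let $\mathcal{R}$ be the union over all $i$ of $\{g_i\}$ when $g_i$ is a reflection and of the pair $\{uau^{-1},ubu^{-1}\}$ constructed above otherwise. Then $\mathcal{R}$ is a finite set of reflections of $W_\G$, contained in $G$, and $g_i\in\langle\mathcal{R}\rangle$ for every $i$, so $G=\langle g_1,\dots,g_n\rangle\subseteq\langle\mathcal{R}\rangle\subseteq G$ and $\mathcal{R}$ generates $G$. The heart of the argument — and the step I expect to be the main obstacle — is the structural claim that an involution of reflection length $2$ in a triangle-free RACG canonically produces two commuting reflections whose product it is; above I handle this by passing, via the commuting partner $g_j$, to a conjugate of a $\mathbb{Z}_2\times\mathbb{Z}_2$ special subgroup, but one can alternatively attack it by direct word surgery on a reduced expression $g_i=ws_1s_2w^{-1}$ together with the reduced word for $g_j$, using Lemmas~\ref{lemma_commuting} and~\ref{lemma_ax_commuting}, at the cost of more delicate dual-curve bookkeeping. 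The only other point requiring care is the degenerate case where $W_{\G'}$ is finite, where $n=2$ and the same argument applies unchanged.
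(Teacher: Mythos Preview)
Your proof is correct and takes a genuinely different (and cleaner) route than the paper's.

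The paper proceeds inductively: it maintains a generating set in bijection with $V(\Gamma')$ satisfying the commuting relations dictated by $\Gamma'$, and at each step replaces one non-reflection $h$ by the reflection $h'h$, where $h'$ is a commuting partner in the current generating set. The delicate part is checking that this replacement preserves the commuting relations with the \emph{other} generators, which is exactly what Lemmas~\ref{lemma_commuting} and~\ref{lemma_ax_commuting} are engineered to do via dual-curve arguments. You sidestep all of this by not trying to keep a bijection with $V(\Gamma')$: for each non-reflection $g_i$ you observe that $\langle g_i,g_j\rangle\cong(\mathbb{Z}/2)^2$ must coincide with a conjugate $uW_\Delta u^{-1}$ of a two-vertex special subgroup (by triangle-freeness and the classification of finite subgroups), so the two reflections $uau^{-1},ubu^{-1}$ already lie in $G$ and their product is $g_i$. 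This is more conceptual and avoids the word-combinatorics lemmas entirely. In fact your argument implicitly yields the same replacement as the paper's---since $g_i=u(ab)u^{-1}$ forces $\{g_j,g_ig_j\}=\{uau^{-1},ubu^{-1}\}$, the commuting partner $g_j$ is automatically a reflection and you are adding precisely $g_ig_j$---but you arrive there without Lemmas~\ref{lemma_commuting} and~\ref{lemma_ax_commuting}. The price is that your final $\mathcal{R}$ need not be a standard Coxeter generating set for $W_{\Gamma'}$, but the paper does not use that extra structure downstream (it recovers it via Proposition~\ref{cor_dyer} anyway), so nothing is lost for Corollary~\ref{cor_RACG_subgroups_qc} or Theorem~\ref{thm_algorithm_finite_index}.
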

\begin{proof}
	As $G$ is a RACG, it is generated by the standard Coxeter generating set corresponding to $\G'$. 
	In particular, there exists a finite generating set $I_G$ for $G$ consisting of reduced words representing involutions in $W_{\G}$, such that 
	\begin{enumerate}
		\item There is a bijective map from $V(\G')$ to the elements of $I_G$, and
		\item  If there is an edge between two vertices of $\G'$ then the corresponding elements of $I_G$ commute. 
	\end{enumerate}
	We inductively construct a sequence of generating sets for $G$, each consisting of $|I_G|$ reduced words representing involutions in $W_\G$ and satisfies properties (1) and (2).  Furthermore, each generating set in the sequence will contain one more reflection than the previous one.

	Let $r$ be the number of reflections in $I_G$.
	If $r = |I_G|$, then we are done. Otherwise, let $h \in I_G$ be such that $h$ is not a reflection. 
	By properties (1) and (2) above, and since $\Gamma'$ does not have isolated vertices, there exists an $h' \in I_G$ which is distinct from and commutes with $h$.
	By Lemma~\ref{lemma_involutions} and since $\G$ is triangle-free, we conclude that $h = ws_1s_2w^{-1}$ and $h' = w'k'w'^{-1}$, where $s_1$ and $s_2$ are adjacent vertices in $\Gamma$, 
	$k'$ is a word of length at most two whose generators are in a common clique, and $w, w'$ are words in $W_{\Gamma}$. 
	
	Consider the subgroup $H = w'^{-1}Gw'$ of $W_{\G}$. Note that $H$ is generated by $I_H = w'^{-1}I_Gw'$. Let $x$ be a reduced expression for $w'^{-1}hw'$ obtained by applying a sequence of deletions. Then $x$ must be of the form $x = zs_1s_2z^{-1}$ for some word $z$ in $W_\G$. Note that $k' = w'^{-1}h'w' \in I_H$ and that $k'$ and $x$ commute. 
	
	We claim that either $k' = s_1$ or $k' = s_2$. First suppose for a contradiction that $k'$ is of length two, say $k' = k_1k_2$, for some distinct $k_1, k_2 \in V(\G)$. 
	If $k_1 \neq s_1$ and $k_1 \neq s_2$, then by Lemma~\ref{lemma_commuting}, we have that $m(k_1, s_1) = m(k_1, s_2) = 2$. However, this is not possible as $\G$ is triangle-free. 
 	Thus $k_1$ is equal to either $s_1$ or $s_2$, and similarly, $k_2$ is equal to either $s_1$ or $s_2$.
 	It follows that, 
up to relabeling, 
 	 $k' = s_1s_2$. 
 	Lemma~\ref{lemma_commuting} further implies that $s_1$ and $s_2$ commute with $z$. Consequently $x = s_1s_2 = k'$, which is a contradiction since $h$ and $h'$ are distinct.
	
	Suppose now that $k'$ has length one. Again by Lemma~\ref{lemma_commuting} and the fact that $\G$ is triangle-free, $k'$ cannot consist of a generator distinct from $s_1$ and $s_2$. Thus $k' = s_1$ or $k' = s_2$. By possibly relabeling, we may assume that $k' = s_1$. Now Lemma~\ref{lemma_commuting} implies that $s_1$ commutes with $z$ and $s_2$. Let $y = k'x = zs_2z^{-1}$. We replace $x$ with $y$ in $I_H$ to form the new set $I_H'$. Note that $I_H'$ is still a generating set for $H$ as $x = k'y$.
	 
	Let $b \neq s_1$ be any element of $I_H'$ which commutes with $x$.  Then since $x = zs_1s_2z^{-1}$ and $s_1$ commutes with $z$ and $s_2$,  Lemma~\ref{lemma_ax_commuting} implies that $b$ commutes with $y$.   It follows that $w'I_H'w'^{-1}$ is a generating set for $G$ which satisfies properties (1) and (2) above.  Finally, note that  the number of reflections in $I_H'$, and hence in $w'I_H'w'^{-1}$ is exactly $r+1$.

	By repeating this process enough times, we are guaranteed a finite generating set for $G$ consisting only of reflections. 
\end{proof}

It follows easily from Corollary~\ref{cor_refection_subgroup_quasiconvex} that subgroups as in Theorem~\ref{thm_racg_subgroup_gen_set} are quasiconvex:

\begin{corollary} \label{cor_RACG_subgroups_qc}
Given a $2$-dimensional RACG, every Coxeter subgroup 
whose defining graph does not have an isolated vertex 
 is quasiconvex. \hfill{\qed}
\end{corollary}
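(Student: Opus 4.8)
The plan is to deduce this statement immediately from the two results that precede it, namely Theorem~\ref{thm_racg_subgroup_gen_set} and Corollary~\ref{cor_refection_subgroup_quasiconvex}. First I would unpack the hypotheses. A $2$-dimensional RACG $W_\G$ has, by definition, a triangle-free defining graph $\G$. A Coxeter subgroup $G$ of $W_\G$ is, by our convention, finitely generated and isomorphic to some finitely generated Coxeter group; by Proposition~\ref{prop_coxeter_subgroups_are_racgs} this group is a RACG, say $W_{\G'}$, and the standing assumption is that $\G'$ has no isolated vertex. These are exactly the hypotheses of Theorem~\ref{thm_racg_subgroup_gen_set}.

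Next I would simply invoke that theorem: it guarantees that $G$ is generated by a finite set $\mathcal R$ of reflections in $W_\G$. Thus $G$ is a finitely generated reflection subgroup of the RACG $W_\G$, and Corollary~\ref{cor_refection_subgroup_quasiconvex} applies directly, yielding that $G$ is quasiconvex in $W_\G$ with respect to the standard generating set. (Recall that Corollary~\ref{cor_refection_subgroup_quasiconvex} itself comes from Theorem~\ref{thm_completion_of_reflection_subgroup}, which builds a finite completion for a reflection subgroup, combined with Theorem~\ref{thm_qc}, which converts finiteness of a completion into quasiconvexity.) This finishes the argument.

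Since the statement is an immediate corollary, there is no genuine obstacle in this last step; the real content lies in Theorem~\ref{thm_racg_subgroup_gen_set}, whose proof replaces the standard Coxeter generators of $G$ one at a time by reflections using Lemmas~\ref{lemma_involutions}, \ref{lemma_commuting}, and~\ref{lemma_ax_commuting}, and in the construction of finite completions for reflection subgroups in Section~\ref{sec_reflection_subgroups}. The only point worth double-checking is the bookkeeping: that a Coxeter subgroup in our sense supplies a standard Coxeter generating set whose defining graph is precisely the $\G'$ appearing in the hypothesis, so that ``no isolated vertex'' is exactly the condition Theorem~\ref{thm_racg_subgroup_gen_set} demands, and that the reflection set it produces is finite, so that Corollary~\ref{cor_refection_subgroup_quasiconvex} applies verbatim.
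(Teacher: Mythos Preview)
Your proposal is correct and matches the paper's approach exactly: the corollary is an immediate consequence of Theorem~\ref{thm_racg_subgroup_gen_set} (yielding a finite reflection generating set) followed by Corollary~\ref{cor_refection_subgroup_quasiconvex}.
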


\begin{remark}\label{rmk_reflection_counterexample}
Note that the defining graph of a RACG has an isolated vertex if and only if the group splits as a free product with a $\mathbb{Z}_2$ factor.
	Some version of the non-splitting hypothesis is required in 
	Theorem~\ref{thm_racg_subgroup_gen_set} and
Corollary~\ref{cor_RACG_subgroups_qc}. 
For consider the graph $\G_2$ from Figure~\ref{fig_completion_ex2}.  The subgroup of $W_{\G_2}$ 
generated by $ab$ and $cd$ is isomorphic to the infinite dihedral group. In particular, it is a right-angled Coxeter subgroup of $W_{\G_2}$. However, it is straightforward to check that this subgroup cannot be generated by reflections and has an infinite completion (so is not quasiconvex by Theorem~\ref{thm_qc}).
\end{remark}

\section{Deciding when a RACG is a finite-index subgroup of a $2$-dimensional RACG} \label{sec_algorithm_for_2d_subgroups} 

In this section
we give an algorithm which, given a $2$-dimensional, one-ended RACG $W_{\G}$ and any RACG $W_{\G'}$, determines whether or not $W_{\Gamma}$ contains a finite-index subgroup isomorphic to $W_{\Gamma'}$ (Theorem~\ref{thm_algorithm_finite_index}).
In fact, the one-endedness hypothesis can be weakened.
When such a subgroup does exist, the output of the algorithm is a set of words in $W_{\G}$ which is a standard RACG generating set for a subgroup isomorphic to $W_{\G'}$.
Furthermore, the time-complexity of this algorithm only depends on the number of vertices of $\G$ and $\G'$.

Given a set of reflections $\mathcal{R}$ which generate a finite-index subgroup of $W_\G$, under the right hypotheses, Proposition~\ref{prop_conj_word_bound} below bounds the sizes of elements in $\mathcal{R}$ as a function of $|V(\G)|$ and $|\mathcal{R}|$. This is a key step in the proof of the main theorem of this section, as it allows us to bound the number of sets of reflections that need to be investigated by our algorithm.

To state the proposition, we require the following definition: we
say a graph $\Delta$ is \textit{almost star} if there exist  vertices $s, t \in \Delta$ (possibly not distinct) such that $V(\Delta) = \text{star}(s) \cup \{t\}$. 

\begin{proposition} \label{prop_conj_word_bound} 
	Let $W_\G$ be a RACG such that $\Gamma$ is triangle-free and not almost star. Let 
	\[\mathcal{R} = \{w_is_iw_i^{-1} ~ | ~ w_i \in W_\G \text{ and } s_i \in V(\G), 1 \le i \le N \}\]
	be a trimmed set of reflections which generates a finite-index subgroup $G < W_\G$. Then there exists a constant $M$, depending only on $|V(\Gamma)|$ and $|\mathcal{R}|$ such that $|w_i| \le M$ for all $w_is_iw_i^{-1} \in~\mathcal{R}$.
\end{proposition}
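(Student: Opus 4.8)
The plan is to argue by contradiction: suppose that for every $M$ there is a trimmed generating set of reflections $\mathcal{R}$ of size at most $|\mathcal{R}|$ (with reflections in $W_\Gamma$) generating a finite-index subgroup, yet containing a conjugating element $w_i$ with $|w_i| > M$. The first step is to invoke Theorem~\ref{thm_completion_of_reflection_subgroup_triangle_free}: since $\Gamma$ is triangle-free, there is a finite completion sequence for $G$ whose \emph{length} depends only on $\sum_{r \in \mathcal{R}}|r|$ and $|V(\Gamma)|$. This by itself does not bound $|w_i|$, so I need a different mechanism to convert ``long conjugator'' into ``the completion fails to be full valence.'' Concretely, I would look at the initial complex $\Omega_0$ for the reflection subgroup (the wedge of paths $w_i$ with graph-loops $s_i$ at the ends, with the two copies of each $w_i$ folded together), pass to the tree $\mathcal{T}$ and its standard completion $\Omega_{\mathcal{FT}}$, and then to $\Omega_G$ via Lemma~\ref{lemma_adding_graph_loops}. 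By Remark~\ref{rmk_special_properties_of_completion}, $\Omega_{\mathcal{FT}}$ embeds isometrically in $\Omega_G$ and the extra edges of $\Omega_G$ are graph-loops attached along paths in $\Omega_{\mathcal{FT}}$ whose labels lie in links.

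The key quantitative step: because $G$ has finite index, Theorem~\ref{thm_omega_finite_index} says a resolved completion of $G$ is finite and full valence. Since $\Gamma$ is not almost star, one checks $\Gamma=\mathrm{star}(s)$ for no $s$, so $\Omega_G$ is automatically resolved; hence $\Omega_G$ must be full valence. Now I want to bound the number of vertices of $\Omega_G$ — equivalently the index $[W_\Gamma:G]$ — using only $|V(\Gamma)|$. The crucial observation is that $\Omega_G$ is non-positively curved (Proposition~\ref{prop_no_bigon_nonpos}, using that $\Gamma$ is triangle-free and that $\Omega_G$, being a completion of a subgroup whose completion we've constructed carefully, has no commuting bigon — this needs a short argument, or one uses that $\Omega_{\mathcal{FT}}$ is CAT(0) and the added graph-loops are controlled). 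A full-valence, non-positively curved, folded $\Gamma$-labeled complex that is finite has its universal cover equal to the Davis complex of $W_\Gamma$, and its number of vertices equals the index. So the real content is: \emph{the index of a finite-index reflection subgroup generated by $N$ trimmed reflections is bounded in terms of $N$ and $|V(\Gamma)|$ alone.} Once the index $n = [W_\Gamma:G]$ is bounded, the diameter of $\Omega_G$ is bounded (at most $n$), and since every reduced word representing an element of $G$ labels a loop in $\Omega_G$, in particular each $w_i s_i w_i^{-1}$ labels a loop, so $|w_i| \le$ (length of that loop)$/2 \le \mathrm{diam}(\Omega_G)+$ (bounded) — giving the desired $M$.

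The step I expect to be the main obstacle is establishing that the index $n=[W_\Gamma:G]$ is bounded purely in terms of $N=|\mathcal{R}|$ and $|V(\Gamma)|$ — a priori a finite-index subgroup can have arbitrarily large index even when generated by few elements, so some feature special to \emph{reflection} subgroups of \emph{triangle-free} (non-almost-star) RACGs must be exploited. I anticipate the argument runs through the structure of $\Omega_{\mathcal{FT}}$: it is a CAT(0) cube complex that is a standard completion of the finite tree $\mathcal{T}$, and by Proposition~\ref{prop_finite_cat0}/\ref{prop_finite_cat0_triangle_free} its size is bounded by the number of edges of $\Omega_0$, i.e.\ by $\sum|w_i|$. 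So the completion $\Omega_G$ has size bounded by $\sum|w_i|$ times a factor depending on $|V(\Gamma)|$ (from Lemma~\ref{lemma_adding_graph_loops}(\ref{lemma_adding_graph_loops_bound})). This gives the chain $n \le C(|V(\Gamma)|)\cdot\sum_i |w_i|$, while full valence forces each vertex to have a large neighborhood; combining with a lower bound $\sum_i|w_i|\le 2Nn$ (each $w_i$ labels half of a loop, loops have length $\le$ some multiple of $\mathrm{diam}\,\Omega_G\le n$) one gets $n \le 2NC(|V(\Gamma)|)\,n$ — which is not a contradiction directly. The fix is to bound $\mathrm{diam}(\Omega_{\mathcal{FT}})$ instead: because $\Omega_{\mathcal{FT}}$ is CAT(0) with finitely many hyperplanes (Proposition~\ref{prop_omega_hyps}), and every hyperplane meets $\hat{\mathcal{T}}$, a longest geodesic crosses each hyperplane once, and the number of hyperplane \emph{types} is $\le |V(\Gamma)|$; an Euler-characteristic / parallel-hyperplane count for CAT(0) cube complexes bounds the number of hyperplanes of a given type crossing $\mathcal{T}$ by a function of the number of leaves of $\mathcal{T}$ (which is $\le N$) and $|V(\Gamma)|$, independent of $\sum|w_i|$. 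This is the technical heart, and I would set it up via the fact that two reflection leaves of $\mathcal{T}$ whose geodesics to the root agree past some point would contradict trimmedness — so the ``branching pattern'' of $\mathcal{T}$ modulo parallelism of hyperplanes is controlled by $N$ and $|V(\Gamma)|$. Assembling these pieces yields the bound $M = M(|V(\Gamma)|, |\mathcal{R}|)$ and completes the proof.
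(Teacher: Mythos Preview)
Your strategy has a genuine gap at exactly the point you flag as the main obstacle: you never establish that the index $[W_\Gamma:G]$, or equivalently the number of hyperplanes of $\Omega_{\mathcal{FT}}$, is bounded in terms of $N$ and $|V(\Gamma)|$ alone. The proposed fix --- bounding the number of hyperplanes of a given type crossing $\mathcal{T}$ by a function of the number of leaves of $\mathcal{T}$ --- does not work. The tree $\mathcal{T}$ has at most $N$ leaves, but between branch points the paths $w_i$ can contain arbitrarily many occurrences of a given generator, separated by letters that do not commute with it; each such occurrence gives a distinct hyperplane in $\Omega_{\mathcal{FT}}$ (since $\Omega_{\mathcal{FT}}$ is CAT(0), two edges of the same type on a geodesic path are never dual to the same hyperplane). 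Trimmedness only constrains prefix relations among the $w_i$, not the internal structure of any single $w_i$, so it cannot by itself bound the number of hyperplanes. Your circular chain $n \le C\sum|w_i| \le 2CNn$ is a symptom of this: without the finite-index hypothesis entering the argument in a sharper way, you cannot close the loop.

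The paper's argument is structurally different and does \emph{not} pass through an index bound. Instead, it takes $M$ to be a pigeonhole constant (so that any word longer than $M$ contains $2N+2$ copies of some letter $\bar s$), assumes $|w_1|>M$, and locates inside $\mathcal{FT}$ a subpath $\gamma$ of $w_1$ that lies strictly between two $\bar s$-edges, contains no $\bar s$-edge, and meets no branch point or graph-loop attachment (this is where the $2N+2$ is used). The hyperplanes $H_1,H_2$ dual to the two flanking $\bar s$-edges are then shown, via a sequence of lemmas exploiting trimmedness and the CAT(0) structure of $\Omega_{\mathcal{FT}}$, to meet $\hat f(\mathcal{FT})$ in exactly those edges, and every graph-loop in the region between $H_1$ and $H_2$ has label in $\mathrm{link}(\bar s)$. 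Now the ``not almost star'' hypothesis is used \emph{locally}: pick a vertex $a\notin\mathrm{star}(\bar s)$ distinct from the label of the edge of $\gamma$ adjacent to $e_2$. Full valence (from finite index) forces an $a$-edge at a specific vertex, then an $\bar s$-edge at its other end; the hyperplane dual to this new $\bar s$-edge can be neither $H_1$ nor $H_2$, yet must meet $\hat f(\mathcal{FT})$ and hence cross one of them --- impossible, as they all have type $\bar s$. The contradiction is geometric and local; no global size or index bound is ever computed.
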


In order to prove this proposition, we establish some notation and prove some preliminary lemmas. The notation below will be fixed until the proof of Proposition~\ref{prop_conj_word_bound} is complete.

Let $M$ be the smallest integer such that if a reduced word $w$ in $W_\G$ is longer than $M$, then $w$ contains $2N+2$ occurrences of some letter of $V(\Gamma)$ (where $N = |\mathcal R|$). In particular, $M$ only depends on  $|V(\Gamma)|$ and on $|\mathcal{R}|$. This $M$ will be the same as the constant in the proposition. 

In order to establish a contradiction, we assume that $|w_1| > M$. By possibly relabeling, we assume that $|w_1| \ge |w_i|$ for all $1 < i \le N$. By the previous paragraph, 
 we may fix 
 some vertex $\bar{s}$ of $\Gamma$ which occurs at least $2N+2$ times in the word $w_1$. 

We now choose convenient expressions for the elements of~$\mathcal{R}$. Firstly, we assume $w_1$ is written in an expression where occurrences of $\bar{s}$ appear as far left  as possible. More formally, if $w_1 = s_1 \dots s_m$ and $s_i = \bar{s}$, then for all $j < i$ there is no expression for $w_1$ in $W_\G$ equal to the word $s_1 \dots s_{j-1}s_{j+1} \dots s_i s_j s_{i+1} \dots s_m$.

Given two words $w$ and $w'$ in $W_{\Gamma}$, let $\phi(w, w')$ denote the length of their largest common prefix.  For each $2 \le i \le N$, we choose an expression for $w_i$ so that $\phi(w_1, w_i)$ is maximal out of all such possible choices for $w_i$. Clearly, there is no loss of generality in making these assumptions on $w_1, \dots w_N$.

We will now use  the notation established in Section~\ref{subsec_completion_for_ref_subgroups} associated to a reflection subgroup of a RACG. As in that section, we have the based $\G$-labeled complex $(\Omega_0, B)$, the labeled tree $\mathcal{T} \subset \Omega_0$ and the associated folded based labeled tree $(\mathcal{FT}, B)$. Furthermore, $(\Omega_{\mathcal{FT}}, B)$ is a based finite CAT(0) cube complex which is a completion of $(\mathcal{FT}, B)$. By Theorem~\ref{thm_completion_of_reflection_subgroup_triangle_free}, there is a completion $(\Omega_G, B)$ of $(\Omega_0, B)$ (which is also a completion for $G$) whose associated completion sequence has length bounded by a function which depends only on $\sum_{r \in \mathcal{R}}|r|$ and $|V(\G)|$. 
We again denote by $\hat{f}$ the natural map 
\[\hat{f}: \mathcal{FT} \to \Omega_{\mathcal{FT}} \subset \Omega_G\]
 and recall that $\Omega_{\mathcal{FT}}$ is isometrically embedded in $\Omega_G$.

Let $\mathcal{V}$ be the set of vertices in $\mathcal{FT}$ that are the image of a vertex of $\mathcal{T} \subset \Omega_0$ which has a graph-loop attached to it. Observe that  $|\mathcal{V}| \le N$. Also note that at most $N$ vertices in $\mathcal{FT}$ have valence larger than $2$.

Let $\alpha$ be the path in $\mathcal{FT}$ based at $B$ with label $w_1$. As there are at least $2N+2$ occurrences of $\bar{s}$ in the word $w_1$, 
there must exist two edges of $\alpha$, say $e_1$ and $e_2$, each with label $\bar{s}$ such that every vertex between $e_1$ and $e_2$ has valence $2$ and is not in $\mathcal{V}$. Let $\gamma$ be the geodesic in $\mathcal{FT}$ between $e_1$ and $e_2$. By possibly passing to a subpath, we may assume that no edge in $\gamma$ has label $\bar{s}$. We also assume that $e_1$ is closer to $B$ than $e_2$.

We now sketch how a contradiction will be established.
As $G$ is a finite-index subgroup of $W_\G$, it will follow that $\Omega_G$ must be full-valence. We then focus on a specific vertex of $\Omega_G$ which is contained in $\hat{f}(\gamma)$. We show that the structure of edges and graph-loops incident to this vertex, together with  the assumption that $\Omega_G$ is full valence, must imply that $\Gamma$ is an almost star graph, contradicting the hypotheses of Proposition~\ref{prop_conj_word_bound}. In order to carry out this argument, we must first gain a solid understanding of the structure of the subcomplex $\hat{f}(\gamma)$. This is the purpose of the next four lemmas.

\begin{lemma} \label{lemma_injectivity}
	Let $v$ be a vertex of $\gamma$, and let $v'$ be any vertex in $\mathcal{FT}$. If $\hat{f}(v) = \hat{f}(v')$, then $v = v'$. 
\end{lemma}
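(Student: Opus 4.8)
The plan is to exploit the fact that $\gamma$ is a geodesic in $\mathcal{FT}$ lying between two edges labeled $\bar s$, all of whose interior vertices have valence $2$ and miss $\mathcal V$, and to translate an identification $\hat f(v) = \hat f(v')$ in $\Omega_G$ into a coincidence of reduced words in $W_\G$ that contradicts one of the carefully chosen minimality properties of the expressions for $w_1, \dots, w_N$. First I would recall that $\Omega_{\mathcal{FT}}$ is a CAT(0) cube complex and that $\hat f \colon \mathcal{FT} \to \Omega_{\mathcal{FT}}$ is the completion map for the folded tree $\mathcal{FT}$; thus $\hat f(\mathcal{FT})$ is the image of $\mathcal T$, and Proposition~\ref{prop_omega_hyps} tells us every hyperplane of $\Omega_{\mathcal{FT}}$ meets $\hat f(\mathcal{FT})$. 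Since $\mathcal{FT}$ is a tree and $\hat f$ is defined by a sequence of cube attachments, folds and identifications that do not collapse the interior of $\gamma$ further (because $\mathcal{FT}$ is already folded), the only way $\hat f(v) = \hat f(v')$ can happen is if the two geodesics in $\mathcal{FT}$ from $B$ to $v$ and from $B$ to $v'$ carry labels that are equal as elements of $W_\G$ after the cube attachments identify commuting letters — i.e. their labels differ by a sequence of Tits moves, using Lemma~\ref{lemma_omega_paths}(\ref{lemma_omega_paths3}) applied to the disk diagram realizing a homotopy.

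Next I would set up notation: let $\beta$ be the geodesic in $\mathcal{FT}$ from $B$ to $v$ and $\beta'$ the geodesic from $B$ to $v'$, with reduced labels $u$ and $u'$ respectively. Since $\mathcal T$ was built from the subdivided rose for $\mathcal R$ by folding, $\beta$ extends (along $\gamma$ and beyond) to the path labeled $w_1$, so $u$ is a prefix of a reduced expression for $w_1$ of the specific form we fixed (occurrences of $\bar s$ pushed as far left as possible), and $u'$ is a prefix of a reduced expression for some $w_j$ chosen to maximize $\phi(w_1, w_j)$. The equality $\hat f(v) = \hat f(v')$ forces $u$ and $u'$ to represent the same element of $W_\G$, hence (both being reduced) to differ by swaps. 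I would then argue that an edge of $\beta$ labeled $\bar s$ lying along $\gamma$ — there is at least one, since $e_1$ is an endpoint of $\gamma$ — cannot be swapped past anything in $u'$ without violating either the leftmost-$\bar s$ choice for $w_1$ (if $\beta' \subsetneq \beta$, giving a shorter prefix with $\bar s$ moved further left, contradiction) or the maximality of $\phi(w_1, w_j)$ (if $\beta' \not\subseteq \beta$, the common prefix could be lengthened by rechoosing the expression for $w_j$). Carefully distinguishing the cases $v' \in \beta$, $v \in \beta'$, and neither — and using that interior vertices of $\gamma$ have valence $2$ so $\beta$ and $\beta'$ cannot branch apart inside $\gamma$ — should force $v = v'$.

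The main obstacle I expect is the bookkeeping around exactly which of the minimality conditions is violated in each configuration, and in particular handling the case where $v'$ lies on the part of $\mathcal{FT}$ branching off before $\gamma$: there one must show that a reduced expression equality between $u'$ and $u$ propagates a swap of $\bar s$ into a region where it contradicts the choice of expression for $w_1$ or $w_j$. This requires using the folded-tree structure (two geodesics in $\mathcal{FT}$ with the same endpoints must coincide, so $\beta$ and $\beta'$ overlap on their common initial segment) together with Tits' theorem and the observation that $\gamma$ contains no edge labeled $\bar s$ in its interior. Once the word-combinatorial contradiction is isolated, the conclusion $v = v'$ is immediate; I do not anticipate needing any new cube-complex machinery beyond Lemma~\ref{lemma_omega_paths} and Proposition~\ref{prop_omega_hyps}.
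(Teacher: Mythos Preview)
Your setup matches the paper's: take geodesics $\beta,\beta'$ in $\mathcal{FT}$ from $B$ to $v,v'$ with reduced labels $l,l'$ (prefixes of $w_1$ and some $w_j$ respectively), and use simple connectivity of $\Omega_{\mathcal{FT}}$ together with Lemma~\ref{lemma_omega_paths}(\ref{lemma_omega_paths3}) to conclude that $l$ and $l'$ represent the same element of $W_\Gamma$.

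Where you diverge is in finishing the argument. You propose a case analysis ($v'\in\beta$, $v\in\beta'$, neither), tracking occurrences of $\bar s$ and invoking both the ``leftmost-$\bar s$'' choice for $w_1$ and the maximality of $\phi(w_1,w_j)$. The paper instead observes that the maximal-common-prefix choice \emph{alone} finishes immediately: since $l$ and $l'$ are reduced and equal in $W_\Gamma$, they have the same length; if they differed as words, one could replace the initial segment $l'$ of $w_j$ by $l$ to get an expression for $w_j$ sharing a strictly longer prefix with $w_1$, contradicting the choice of expression for $w_j$. Hence $l=l'$ as words, and since $\mathcal{FT}$ is folded, $\beta=\beta'$ and $v=v'$. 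No $\bar s$, no cases, no valence-$2$ hypothesis on $\gamma$, and no Proposition~\ref{prop_omega_hyps} are needed.

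Two small corrections to your sketch: first, by construction \emph{no} edge of $\gamma$ is labeled $\bar s$ (the paper passes to a subpath precisely to arrange this), so your phrase ``an edge of $\beta$ labeled $\bar s$ lying along $\gamma$'' is misstated --- you presumably mean $e_1$, which is adjacent to $\gamma$ but not part of it. Second, Proposition~\ref{prop_omega_hyps} plays no role here; it is used later in Lemmas~\ref{lemma_unique_hyp_intersection} and~\ref{lemma_graph_loops_in_gamma}, not in this injectivity statement.
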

\begin{proof}
	Let $\beta$ be a path in $\mathcal{FT}$ from the base vertex $B$ to $v$, and let $\beta'$ be a path in $\mathcal{FT}$ from $B$ to $v'$. The label $l$ of $\beta$ is a prefix of $w_1$. Similarly, the label $l'$ of $\beta'$ is a prefix of $w_i$, for some $1 \le i \le N$.
	Let $\hat{\beta} = \hat{f}(\beta)$ and $\hat{\beta}' = \hat{f}(\beta')$ be the images of these geodesics in $\Omega_{\mathcal{FT}}$.
	
	As $\Omega_{\mathcal{FT}}$ is a CAT(0) cube complex, the loop $\hat{\beta}' \cup \hat{\beta}^{-1}$ is homotopic, relative to basepoint, to $B$. 
Thus, by Lemma~\ref{lemma_omega_paths}(\ref{lemma_omega_paths3}), the label of $\hat{\beta}' \cup \hat{\beta}^{-1}$ is equal to the identity element in $W_{\Gamma}$. It follows that $l'$ and $l$ represent the same element of $W_{\Gamma}$. However, our choice of $w_i$ guarantees that $w_i$ and $w_1$ share the largest possible prefix. It follows that $l$ and $l'$ are the same word. As $\mathcal{FT}$ is folded, it follows that $\beta = \beta'$ and that $v = v'$.
\end{proof}

\begin{lemma} \label{lemma_paths_with_reduced_labels}
	Let $\beta$ be a path in $\Omega_{\mathcal{FT}}$ with label a reduced word in $W_\G$. Then $\beta$ is a geodesic. 
\end{lemma}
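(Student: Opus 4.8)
The statement asserts that in the CAT(0) cube complex $\Omega_{\mathcal{FT}}$, any path whose label is a reduced word of $W_\G$ is automatically a geodesic. The natural approach is to argue by contradiction: suppose $\beta$ is not a geodesic. Since $\Omega_{\mathcal{FT}}$ is a finite CAT(0) cube complex (by Proposition~\ref{prop_finite_cat0}), a path in it is geodesic if and only if no hyperplane is dual to two of its edges (this is among the standard facts recorded in the Cube Complexes subsection). So if $\beta$ fails to be geodesic, some hyperplane $H$ is dual to (at least) two distinct edges $e$ and $e'$ of $\beta$. Let $s$ be the type of $H$; then $e$ and $e'$ both carry the label $s$.

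\textbf{Key steps.} First, I would isolate the subpath $\beta_0$ of $\beta$ running from the first endpoint of $e$ to the first endpoint of $e'$ encountered along $\beta$; it suffices to derive a contradiction from the subword of the label of $\beta$ corresponding to $e \,\beta_0'\, e'$, where $\beta_0'$ is the interior part between $e$ and $e'$. The crucial point is that the two edges dual to $H$ lie on the same side of $H$ or on opposite sides; in a CAT(0) cube complex, a geodesic from one to the other that uses no edge dual to $H$ must stay in a single component of the complement of $H$ — but more usefully, I would use the convexity of the carrier $N(H)$ and the fact that $\Omega_{\mathcal{FT}}$ is a completion (hence cube-full and folded). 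Concretely, every edge of $\beta_0'$ that is dual to some hyperplane $H''$ which separates $e$ from $e'$ must in fact cross $H$, hence its type commutes with $s$; and the net effect is that the subword $s\,u\,s$ of the label (where $u$ is the label of $\beta_0'$) has the property that $s$ commutes with every letter of $u$. This is exactly the configuration in which the deletion property applies: the two occurrences of $s$ cancel, so the label of $\beta$ is not reduced — contradiction.

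To make the commutation claim precise, I would invoke the disk-diagram technology already developed: since the label $w$ of $\beta$ satisfies $w = w$ trivially, or better, I would argue directly inside $\Omega_{\mathcal{FT}}$. Because $\Omega_{\mathcal{FT}}$ is CAT(0), the hyperplane $H$ is two-sided and $\Omega_{\mathcal{FT}} \setminus H$ has exactly two components; the edges $e$ and $e'$ both belong to $N(H)$. Any hyperplane $H''$ dual to an edge of the interior subpath $\beta_0'$ that has $e$ and $e'$ on opposite sides of it must intersect $H$ (standard CAT(0) fact: two hyperplanes either cross or one separates the other's dual edges; tracking which side of $H''$ the endpoints of $\beta_0'$ lie on forces a crossing), and crossing hyperplanes have commuting (adjacent) types, so the type of $H''$ commutes with $s$. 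The hyperplanes dual to $\beta_0'$ that do \emph{not} separate $e$ from $e'$ contribute letters that get crossed an even number of times, and after cancelling those in pairs (which is legitimate inside a folded, cube-full complex by Lemma~\ref{lemma_hausdorff_bound}/Lemma~\ref{lemma_omega_paths}) we are left with a subword all of whose letters commute with $s$. Then the deletion property deletes the two $s$'s, contradicting reducedness of the label of $\beta$.

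\textbf{Main obstacle.} The delicate point is the bookkeeping of which hyperplanes dual to $\beta_0'$ actually separate $e$ from $e'$ and the reduction to a clean subword $s u s$ with $[s,u]=1$: one must be careful that passing to homotopic paths (to cancel non-separating crossings in pairs) is valid in $\Omega_{\mathcal{FT}}$, which it is because $\Omega_{\mathcal{FT}}$ contains no graph-loops (it is CAT(0)), so Lemma~\ref{lemma_omega_paths}(\ref{lemma_omega_paths1}) gives homotopy rel endpoints and hence preserves the group element, and preserves the property of being reduced after we extract the relevant subword. Once that reduction is in place the contradiction with the deletion property is immediate. I expect the rest of the argument to be routine given the CAT(0) structure of $\Omega_{\mathcal{FT}}$ established in Proposition~\ref{prop_finite_cat0}.
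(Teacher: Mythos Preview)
Your overall strategy---assume $\beta$ is not geodesic, find a hyperplane dual to two of its edges, and deduce the label is not reduced---matches the paper's. However, your execution has a gap in the ``cancelling in pairs'' step. You assume the label of $\beta$ is reduced, so every subword is reduced too; you therefore cannot apply Lemma~\ref{lemma_hausdorff_bound} or the deletion property to cancel letters of $\beta_0'$. The fact that a hyperplane $H''$ is dual to two edges of $\beta_0'$ does not by itself imply the corresponding letters can be deleted, since the letters between them need not commute with the type of $H''$. Your attempt to separate hyperplanes into ``separating'' and ``non-separating'' and handle the latter by cancellation is precisely where the argument breaks down.

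The paper avoids all of this with a single extra line: among all pairs $(k,k')$ of edges of $\beta$ dual to a common hyperplane, choose one where the subpath between $k$ and $k'$ is minimal. Then no hyperplane is dual to two edges strictly between $k$ and $k'$, so every hyperplane dual to such an edge is crossed exactly once by the subpath and hence separates the endpoints of that subpath. Since those endpoints both lie in the carrier $N(K)$ on the same side of $K$, each such hyperplane must intersect $K$, so its type commutes with the type $s$ of $K$. Now the subword between the two occurrences of $s$ consists entirely of letters in $\mathrm{link}(s)$, and the deletion property immediately gives the contradiction. Your argument becomes correct and much shorter once you insert this innermost choice; the bookkeeping you flagged as the ``main obstacle'' then disappears entirely.
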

\begin{proof}
	Suppose $\beta$ is not geodesic. As $\Omega_{\mathcal{FT}}$ is a CAT(0) cube complex, it follows some hyperplane $K$ is dual to two distinct edges $k$ and $k'$ of $\beta$. Furthermore, we can choose $K$, $k$ and $k'$ so that every hyperplane dual to an edge of $\beta$ between $k$ and $k'$ intersects $K$. However, from this it readily follows that the label of $k$ (and of $k'$) commutes with the label of any edge of $\beta$ between $k$ and $k'$. This implies that the label of $\beta$ is not reduced, a contradiction.
\end{proof}

For the next two lemmas, let $\hat{e}_1 = \hat{f}(e_1)$ and $\hat{e}_2 = \hat{f}(e_2)$. Furthermore, let $H_1$ and $H_2$ be the hyperplanes in $\Omega_{\mathcal{FT}}$ that are dual respectively to $\hat{e}_1$ and $\hat{e}_2$.

\begin{lemma} \label{lemma_unique_hyp_intersection}
	The edge $\hat{e}_1$ is the only edge of $\hat{f}(\mathcal{FT})$ that is dual to $H_1$. Similarly, the edge $\hat{e}_2$ is the only edge of $\hat{f}(\mathcal{FT})$ that is dual to $H_2$.
\end{lemma}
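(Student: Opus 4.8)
I want to show that $\hat{e}_1$ is the unique edge of $\hat{f}(\mathcal{FT})$ dual to $H_1$ (the argument for $\hat{e}_2$ and $H_2$ is identical). First I would recall the relevant structural facts: $\Omega_{\mathcal{FT}}$ is a CAT(0) cube complex, $\gamma$ is a geodesic segment of the path $\alpha$ in $\mathcal{FT}$ with label $w_1$, the edges $e_1,e_2$ of $\alpha$ both have label $\bar s$, no edge of $\gamma$ has label $\bar s$, and every vertex strictly between $e_1$ and $e_2$ along $\alpha$ has valence $2$ in $\mathcal{FT}$ and lies outside $\mathcal{V}$. The key tool is Lemma~\ref{lemma_paths_with_reduced_labels}: a path in $\Omega_{\mathcal{FT}}$ whose label is a reduced word is automatically a geodesic, and since $\Omega_{\mathcal{FT}}$ is CAT(0), a geodesic crosses each hyperplane at most once.

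**Main argument.** Suppose for contradiction that $H_1$ is dual to a second edge $\hat d$ of $\hat{f}(\mathcal{FT})$, with $\hat d \neq \hat e_1$. Pick an edge $d$ of $\mathcal{FT}$ with $\hat f(d) = \hat d$; its label is $\bar s$, the type of $H_1$. Because $\mathcal{FT}$ is a tree, there is a geodesic $\delta$ in $\mathcal{FT}$ joining (an endpoint of) $e_1$ to (an endpoint of) $d$; passing to a subpath I may assume $\delta$ uses neither $e_1$ nor $d$ and, by choosing the right endpoints, that $\delta$ together with $e_1$ and $d$ forms a path in $\mathcal{FT}$. Its label is a reduced word (geodesics in a tree have reduced labels in the path-metric sense, and after the standard reduction of consecutive repeated letters the label becomes reduced in $W_\G$; more carefully, I would invoke Lemma~\ref{lemma_omega_paths}\eqref{lemma_omega_paths1} to replace $\hat f(\delta)$ by a path with reduced label without changing endpoints). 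Now the label of the path $\hat e_1 \cdot \hat f(\delta) \cdot \hat d$ contains exactly two occurrences of $\bar s$, at its two ends, with a reduced word in between containing no $\bar s$. By Lemma~\ref{lemma_paths_with_reduced_labels} the interior subpath $\hat f(\delta)$ is a geodesic in $\Omega_{\mathcal{FT}}$; but $\hat e_1$ and $\hat d$ are both dual to $H_1$, so the concatenated path crosses $H_1$ twice. Since $\hat f(\delta)$ crosses no hyperplane twice and does not cross $H_1$ at all (it has no edge of type $\bar s$... wait — that is not quite enough, as a hyperplane of type $\bar s$ could in principle be dual to an edge whose label is $\bar s$, which every $\bar s$-edge is; so $\hat f(\delta)$ genuinely crosses no $\bar s$-hyperplane because it has no $\bar s$-edge). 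Hence the two crossings of $H_1$ are precisely at $\hat e_1$ and $\hat d$. Both $\hat e_1$ and $\hat d$ must lie on the same side issue... instead I would argue: the label $\bar s\, u\, \bar s$ of the path, with $u$ reduced and $\bar s$-free, is not reduced (the two $\bar s$'s can be cancelled only if $\bar s$ commutes with every letter of $u$, since $u$ is $\bar s$-free). If $\bar s$ does commute with every letter of $u$, then $\hat e_1$ and $\hat d$ bound a flat strip in $\Omega_{\mathcal{FT}}$ and in fact $H_1$ is dual to both; but then pulling back via $\hat f$ to the tree $\mathcal{FT}$ would force $e_1$ and $d$ to be endpoints of a genuinely shorter configuration, contradicting either Lemma~\ref{lemma_injectivity} applied to a vertex of $\gamma$, or the minimality/trimmed hypotheses on the $w_i$. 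If $\bar s$ does \emph{not} commute with every letter of $u$, then the path $\hat e_1 \hat f(\delta) \hat d$ is geodesic with two $\bar s$-edges, so $H_1$ is dual to $\hat e_1$ only among those two — contradiction with $\hat d$ also being dual to $H_1$ unless $\hat d = \hat e_1$.

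**Where the real work is.** The delicate point — and the main obstacle — is not the CAT(0) geometry, which is routine, but carefully pinning down \emph{which} edge of $\gamma$ plays the role of the unique dual edge and ruling out the "flat strip" scenario using the hypotheses already set up: that $\gamma$ avoids $\mathcal{V}$, that interior vertices of $\gamma$ have valence $2$, that $\mathcal{R}$ is trimmed, and that the expressions for $w_1,\dots,w_N$ were chosen to maximize common prefixes and to push occurrences of $\bar s$ leftward. I expect the cleanest route is: use Lemma~\ref{lemma_paths_with_reduced_labels} to see that the geodesic in $\Omega_{\mathcal{FT}}$ realized by (the reduced form of) $\hat f$ of a tree-geodesic is itself a geodesic, deduce that any hyperplane of $\Omega_{\mathcal{FT}}$ is dual to at most one edge of the image of a single tree-geodesic, and then observe that $\hat e_1$ (resp. $\hat e_2$) lies on the image $\hat f(\gamma')$ of a tree-geodesic $\gamma'$ that contains \emph{every} edge of $\hat f(\mathcal{FT})$ of type $\bar s$ near it — using that between $e_1$ and $e_2$ the tree is an arc (valence $2$, outside $\mathcal V$) and that a second preimage would create, via Lemma~\ref{lemma_injectivity}, an impossible identification. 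So the step I anticipate taking the most care with is showing that no edge of $\hat f(\mathcal{FT})$ outside the obvious arc can be dual to $H_1$: this needs the observation that such an edge $d$, joined to $e_1$ by a tree-geodesic $\delta$, yields a reduced $\bar s$-free word $u$ conjugating one $\bar s$ to another, and then either $u$ is $\bar s$-central (contradicting trimmedness/injectivity) or $\bar s u \bar s$ is reduced (so lies on a geodesic, hence $H_1$ meets it once, contradiction).
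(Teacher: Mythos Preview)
Your argument has a genuine gap in the harder case.  The easy half of the paper's proof is essentially what you outline: if a preimage $d$ of $\hat d$ lies in the component $\bar C_B$ of $\mathcal{FT}\setminus e_1$ not containing $B$, then $e_1$ and $d$ both sit on a single path in $\mathcal{FT}$ with reduced label $w_j$, so by Lemma~\ref{lemma_paths_with_reduced_labels} its image is a geodesic in $\Omega_{\mathcal{FT}}$ crossing $H_1$ twice, contradiction.  Your sketch covers this, although your claim that the tree geodesic between $e_1$ and $d$ has ``a reduced word in between containing no $\bar s$'' is unjustified and in fact unnecessary for that step.

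The real difficulty is when $d$ lies in $C_B$: the tree geodesic from $e_1$ to $d$ then passes through a branch point and has label $k_1^{-1}k_2$ (with $wk_1$ a prefix of $w_1$ and $wk_2$ a prefix of some $w_j$), which need \emph{not} be reduced in $W_\Gamma$.  Your treatment of this case is not an argument: you float a dichotomy on whether $\bar s$ commutes with every letter of the interior label, and in the ``commuting/flat strip'' branch you say only that this would ``force a genuinely shorter configuration, contradicting Lemma~\ref{lemma_injectivity} or the minimality/trimmed hypotheses.''  Nothing here is substantiated, and Lemma~\ref{lemma_injectivity} only gives injectivity on vertices of $\gamma$, which tells you nothing about the location of~$d$.

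The paper resolves the $C_B$ case by a disk diagram argument that genuinely uses the specific conventions set up earlier: one takes a geodesic $\hat\zeta$ in the carrier of $H_1$ between the endpoints of $\hat f(\beta)$ and forms a disk diagram with boundary labelled $k_1^{-1}k_2 z$.  The ``$\bar s$ pushed leftmost in $w_1$'' convention is used to show no dual curve can go from the $k_1$-side to the carrier side, so every dual curve hitting the $k_1$-side hits the $k_2$-side.  Lemma~\ref{lemma_disk_diagram_subwords} then gives that $k_2$ has a reduced expression $k_1 k$, and the ``maximal common prefix'' convention on the $w_i$ forces $k_2 = k_1 k$ as words, hence $e=e_1$.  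None of these ingredients appears in your sketch; without them the non-reduced case is simply not handled.
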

\begin{proof}
	We prove the claim for $H_1$. The proof is analogous for $H_2$. Let $\hat{e}$ be an edge in $\hat{f}(\mathcal{FT}) \subset \Omega_G$ dual to $H_1$. We will show that $\hat{e} = \hat{e}_1$.
	
	As $\mathcal{FT}$ is a tree, if the edge $e_1$ is removed (but its endpoints are not removed), there are exactly two resulting components. We let $C_B$ denote the component
	that includes the vertex $B$, and let $\bar{C}_B$ be the component which does not.
	
	Let $e$ be an edge of $\mathcal{FT}$ such that $\hat{f}(e) = \hat{e}$. 	We first claim that $e$ cannot be in $\bar{C}_B$. For suppose otherwise. It follows that $e$ and $e_1$ are contained in a common path $\eta$ with reduced label $w_j$ for some $1 \le j \le N$. By Lemma~\ref{lemma_paths_with_reduced_labels} $\hat{f}(\eta)$ is a geodesic in $\Omega_{\mathcal{FT}}$. However, it now follows that the hyperplane $H_1$ is dual to two edges of a geodesic, contradicting the fact that a hyperplane in a CAT(0) cube complex is dual to at most one edge of a geodesic. 
	
	Thus, we may assume that either $e \in C_B$ or $e = e_1$. Let $\beta$ be a geodesic in $\mathcal{FT}$ from $e_1$ to $e$, which includes these two edges.
	Due to the tree structure of $\mathcal{FT}$, it follows that the label of $\beta$ is $k_1^{-1}k_2$, where $wk_1$ is a prefix of $w_1$ and $wk_2$ is a prefix of $w_j$ for some $1 \le j \le N$ and some reduced word $w$. 
	Note that $w, k_1$ and $k_2$ could each be the empty word. Let $\hat{\beta} = \hat{f}(\beta)$ be the corresponding path in $\Omega_{\mathcal{FT}}$. 
	
	Let $\hat{\zeta}$ be a geodesic along the carrier of $H_1$ from the endpoint to start point of $\hat{\beta}$. Let $z$ be the label of $\hat{\zeta}$. Let $D$ be a disk diagram in $\Omega_{\mathcal{FT}}$ with boundary $\hat{\beta} \cup \hat{\zeta}^{-1}$. The label of $\partial D$
	is $k_1^{-1}k_2z$. 
	Let $b_1$ and $b_2$ be the paths along $\partial D$ with labels respectively $k_1$ and $k_2$. Let $c$ the path along $\partial D$ labeled by $z$. Write the label of $b_1$ as $k_1 = s_1s_2 \dots s_m$, where $s_l \in V(\G)$ for $1 \le l \le m$. Note that $s_m = \bar{s}$. For $1 \le l \le m$, let $d_l$ be the edge of $b_1$ with label $s_l$.

	We claim that no dual curve intersects both $b_1$ and $c$. For suppose there is such a dual curve $P$. Further suppose that $P$ intersects the edge $d_r$ such that $r$ is maximal out of such possible choices. 
	Note that $r \neq m$ as $z$ only contains letters in $\text{link}(\bar{s})$, being   
	the label of a geodesic 
	in the carrier 
	of a hyperplane of type $\bar{s}$.
	It follows that every dual curve intersecting $d_l$, for $l > r$, intersects $P$. 
	Let $p \in \Gamma$ be the type of $P$. As $P$ intersects $c$, and the label of $c$ is in $\text{link}(\bar{s})$, it follows that $p \in \text{link}(\bar{s}) \subset V(\G)$.
	Additionally, $p$ commutes with $s_l$ for every $r < l \le m$. However, this implies that $s_1 \dots s_{r-1}s_{r+1} \dots s_m s_r$ is an expression for $k_1$. As $k_1$ is a subword of $w_1$, this contradicts our choice of $w_1$ having occurences of $\bar{s}$ appear as ``left-most'' as possible. Thus, every dual curve intersecting $b_1$ must intersect $b_2$.
	
	By Lemma~\ref{lemma_disk_diagram_subwords}, $k_2$ is an expression in $W_\G$ for $k_1 k$, where $k$ is possibly empty. However, it follows from our choice of expression for $w_j$ (we chose expressions for words in $\mathcal{R}$ to have maximal common prefix with $w_1$) that $k_2$ and $k_1k$ are actually equal as words. Consequently $wk_1$ is a prefix of both $w_1$ and $w_j$. Hence, we conclude that $e = e_1$ and so $\hat{e} = \hat{e}_1$.
\end{proof}

\begin{lemma} \label{lemma_graph_loops_in_gamma}
	Let $Y$ be the subcomplex of $\Omega_{\mathcal{FT}}$ bounded by $H_1$ and $H_2$. Let $\hat{v}$ be a vertex in $Y$. Then the label of any graph-loop in $\Omega_G$ incident to $\hat{v}$ (where we think of $\hat{v} \in \Omega_{\mathcal{FT}} \subset \Omega_G$) is in $\text{link}(\bar{s}) \subset V(\Gamma)$.
\end{lemma}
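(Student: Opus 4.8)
The plan is to pull back a graph-loop at $\hat v$ in $\Omega_G$ to the tree $\mathcal{T}\subset \Omega_0$ and apply Lemma~\ref{lemma_no_bad_paths}, exactly as in the proof of Lemma~\ref{lemma_no_bad_vertices}, but with the extra geometric input coming from the fact that $\hat v$ lies strictly between the hyperplanes $H_1$ and $H_2$. Suppose for contradiction that there is a graph-loop in $\Omega_G$ incident to $\hat v$ whose label $s\in V(\G)$ is \emph{not} in $\mathrm{link}(\bar s)$. By the construction of $\Omega_G$ via Lemma~\ref{lemma_adding_graph_loops} (see Remark~\ref{rmk_special_properties_of_completion}), every edge of $\Omega_G$ not in $\Omega_{\mathcal{FT}}$ is a graph-loop attached to some vertex of $\Omega_{\mathcal{FT}}$, and by Lemma~\ref{lemma_adding_graph_loops}(\ref{lemma_adding_graph_loops_paths}) there is a vertex $\hat u$ of $\Omega_{\mathcal{FT}}$ with $s\in L_{\hat u}$ and a path in $\Omega_{\mathcal{FT}}$ from $\hat u$ to $\hat v$ whose label $k$ is a word in $\mathrm{link}(s)$. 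Since $s\in L_{\hat u}$, there is a vertex $u$ of $\mathcal{T}\subset \Omega_0$ with a graph-loop labeled $s$ attached, and $\hat f(u)=\hat u$.

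The key step is to show $\hat u$ also lies in the region $Y$ bounded by $H_1$ and $H_2$, so that we can transport everything back to $\mathcal{T}$ inside the ``strip'' $\hat f(\gamma)$. First observe that $ks$ is reduced: if it were not, by the deletion property $s$ would occur in $k$, but then $s\in\mathrm{link}(s)$, which is absurd, or $s$ cancels against another occurrence, and in either case $s$ would commute with itself — impossible. Now by Lemma~\ref{lemma_omega_paths}(\ref{lemma_omega_paths1}) we may take the path from $\hat u$ to $\hat v$ with reduced label $k'$ (equal in $W_\G$ to $k$, hence still a word in $\mathrm{link}(s)$ — this follows since it is obtained from $k$ by deletions, which cannot introduce new letters); by Lemma~\ref{lemma_paths_with_reduced_labels} this path is a geodesic in $\Omega_{\mathcal{FT}}$. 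A geodesic is crossed at most once by any hyperplane, and $H_1,H_2$ separate $\Omega_{\mathcal{FT}}$; since the label of this geodesic has no occurrence of $\bar s$ (as $\bar s\notin\mathrm{link}(s)$, because $s\notin\mathrm{link}(\bar s)$ and adjacency is symmetric), neither $H_1$ nor $H_2$ — both of type $\bar s$ — can cross it. Hence $\hat u$ and $\hat v$ lie on the same side of $H_1$ and on the same side of $H_2$; since $\hat v\in Y$, this forces $\hat u\in Y$.

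Finally, since $\hat u\in Y=\hat f($the subpath of $\mathcal{FT}$ between $e_1$ and $e_2)$ and, by Lemma~\ref{lemma_injectivity}, $\hat f$ is injective on $\gamma$, the vertex $u\in\mathcal{T}$ with $\hat f(u)=\hat u$ together with the graph-loop labeled $s$ attached to it, and the geodesic in $\mathcal{T}$ realizing the prefix of $w_1$ up to $u$ then continuing — pull back the reduced path from $\hat u$ to $\hat e_1$ (or $\hat e_2$) via Lemma~\ref{lemma_paths_with_reduced_labels} and Lemma~\ref{lemma_no_bad_vertices}'s argument — produces a path in $\mathcal{T}$ starting at $u$ whose label is an expression for a reduced word $k''\bar s$ with every letter of $k''$ in $\mathrm{link}(\bar s)$, contradicting Lemma~\ref{lemma_no_bad_paths}. (Here one uses that $u$ is incident to a graph-loop labeled $s$, and separately traces a path to an edge labeled $\bar s$ inside the strip.) The main obstacle I anticipate is cleanly producing this last path in $\mathcal{T}$ with the required label: one must use the hyperplane-carrier argument of Lemma~\ref{lemma_no_bad_vertices} to convert the geodesic from $\hat u$ to an $\bar s$-edge of $H_1$ into a path whose label lies in $\mathrm{link}(\bar s)$, and then use injectivity of $\hat f$ on $\gamma$ (Lemma~\ref{lemma_injectivity}) and uniqueness of the dual edge (Lemma~\ref{lemma_unique_hyp_intersection}) to lift this path back into $\mathcal{T}$ without collapse, arriving at the contradiction with Lemma~\ref{lemma_no_bad_paths}.
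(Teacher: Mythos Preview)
Your argument is essentially the contrapositive of the paper's, and is correct up to the point where you conclude $\hat u\in Y$.  After that you go astray in two places.

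First, the assertion ``$Y=\hat f(\gamma)$'' is false: $Y$ is the entire subcomplex of $\Omega_{\mathcal{FT}}$ bounded by $H_1$ and $H_2$, not just the image of $\gamma$.  What you actually need is the much weaker fact $\hat f(\mathcal{FT})\cap Y=\hat f(\gamma)$, and this does require justification: it follows from Lemma~\ref{lemma_unique_hyp_intersection} (the only edges of $\hat f(\mathcal{FT})$ dual to $H_1$, $H_2$ are $\hat e_1$, $\hat e_2$) together with the fact that every vertex of $\gamma$ has valence $2$ in the tree $\mathcal{FT}$, so that $\mathcal{FT}\setminus(e_1\cup e_2)$ has $\gamma$ as a full connected component.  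The paper proves exactly this.

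Second, your attempt to close via Lemma~\ref{lemma_no_bad_paths} is a mismatch: that lemma requires a graph-loop labeled $s$ at $u$ and a path in $\mathcal T$ starting at $u$ whose label is an expression for $ks$ with $k$ a word in $\mathrm{link}(s)$.  You instead try to produce a path whose label ends in $\bar s$ with prefix in $\mathrm{link}(\bar s)$; since the graph-loop at $u$ is labeled $s$ (not $\bar s$), the lemma does not apply.  The correct finish, once you know $\hat u\in \hat f(\mathcal{FT})\cap Y=\hat f(\gamma)$, is immediate and needs neither Lemma~\ref{lemma_no_bad_paths} nor Lemma~\ref{lemma_no_bad_vertices}: by Lemma~\ref{lemma_injectivity} the vertex $u\in\mathcal{FT}$ with $\hat f(u)=\hat u$ must lie in $\gamma$, but $u$ carries a graph-loop in $\Omega_0$ so its image in $\mathcal{FT}$ lies in $\mathcal V$, contradicting the choice of $\gamma$ (no vertex of $\gamma$ is in $\mathcal V$).

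The paper runs this same logic in the forward direction: it first establishes $\hat f(\mathcal{FT})\cap Y=\hat f(\gamma)$, observes that the source vertex of any graph-loop therefore lies outside $Y$, and concludes that the $\mathrm{link}(t)$-labeled path from $\hat v$ to it must cross $H_1$ or $H_2$, forcing $\bar s\in\mathrm{link}(t)$.
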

\begin{proof}
	Let $u$ be any vertex of $\Omega_0$ that is incident to a graph-loop. By construction, the image of $u$ in $\mathcal{FT}$ is not contained in $\gamma$. By Lemma~\ref{lemma_unique_hyp_intersection}
	 and the fact that every vertex of $\gamma$ has valence 2 by construction,
	 $\hat{f}(\mathcal{FT}) \cap Y = \gamma$. Thus
either $H_1$ or $H_2$ separates $\hat{v}$ from $\hat{f}(u)$. 
	
	Suppose the label of the graph-loop attached to $\hat{v}$ is $t$. 
	By Lemma~\ref{lemma_adding_graph_loops} and 
	Remark~\ref{rmk_special_properties_of_completion}, there is a path $\eta$ in $\Omega_{\mathcal{FT}}$ from $\hat{v}$ to a vertex $\hat{u}'$ such that $\hat{u}' = \hat{f}(u')$, where $u' \in \Omega_0$ is a vertex that is incident to a graph-loop labeled by $t$. Furthermore, the label of $\eta$ is a word in  $\text{link}(t)$. It follows that $\eta$ must intersect either $H_1$ or $H_2$. Hence, the label of $\eta$ contains the generator $\bar{s}$. Thus, $\bar{s} \in \text{link}(t)$ and so $t \in \text{link}(\bar s)$. 
\end{proof}

We are now ready to prove the proposition.

\begin{proof}[{Proof of Proposition~\ref{prop_conj_word_bound}}]
	Let $e$ be the edge of $\gamma$ adjacent to $e_2$. Let $v$ be the vertex $e \cap e_2$. Let $t$ be the label of $e$. By our choice of $w_1$ (having $\bar{s}$ occurrences appear ``left-most''), $t$ and $\bar{s}$ are not adjacent vertices of $\Gamma$. Set $\hat{e} = \hat{f}(e)$, $\hat{e}_2 = \hat{f}(e_2)$, $\hat{v} = \hat{f}(v)$ and $\hat{\gamma} = \hat{f}(\gamma)$. 
	
	 As $\G$ is not almost star, it readily follows that $\text{star}(s) \subsetneq V(\G)$ for any $s \in V(\G)$. In particular, 
$\Omega_G$ is resolved.
	 As $G$ is a finite-index subgroup,
	 $\Omega_G$ is full valence by 
 Theorem~\ref{thm_omega_finite_index}.
	 
	 Again, as $\G$ is not almost star, there exists a vertex $a \in \G$ such that $a \neq t$ and $a \notin \text{star}(\bar{s})$. As $\Omega_G$ is full valence, there must exist an edge $\hat{d}$ adjacent to $\hat{v}$ labeled by $a$. By Lemma~\ref{lemma_graph_loops_in_gamma}, $\hat{d}$ is not a graph-loop. Let $\hat{u}'$ be the vertex of $\hat{d}$ which is not equal to $\hat{v}$. As $\Omega_G$ is full valence, there must exist an edge $\hat{d}'$ adjacent to $\hat{u}'$ with label $\bar{s}$ which is not a graph-loop by Lemma~\ref{lemma_graph_loops_in_gamma}. Let $H$ be the hyperplane dual to $\hat{d}'$. 
	 
	 First note that $\hat{d}'$ cannot be dual to $H_2$, for otherwise it would follow from the convexity of $N(H_2)$ that $\hat{d} \subset N(H_2)$, contradicting the fact that $a$ is not in $\text{star}(\bar{s})$. Furthermore, $\hat{d}'$ cannot be dual to $H_1$ either. For otherwise, as $\hat{\gamma}$ is geodesic (by Lemma~\ref{lemma_paths_with_reduced_labels}), it follows that the hyperplane dual to $\hat{e}$ must intersect $H_1$, contradicting the fact that $t$ is not in $\text{link}(\bar{s})$. Thus, $H \neq H_1$ and $H \neq H_2$.
	 
	 By Proposition~\ref{prop_omega_hyps}, $H$ must intersect $\hat{f}(\mathcal{FT}) \subset \Omega_G$.
	 As $\hat{\gamma}$ does not have any edges labeled by $\bar{s}$, it follows that $H$ cannot intersect $\hat{\gamma}$. Thus, by Lemma~\ref{lemma_unique_hyp_intersection}, $H$ must intersect either $H_1$ or $H_2$. However, this is a contradiction as $H$, $H_1$ and $H_2$ are all of type $\bar{s}$.
	\end{proof}

Before proving Theorem~\ref{thm_algorithm_finite_index}, we address a special case.  The next lemma describes finite-index subgroups of $W_\G$ for the case where $\G$ is a triangle-free join graph.

\begin{lemma} \label{lemma_join_case}
	Suppose $\Gamma$ is a triangle-free graph which splits as a join $\Gamma = A \star B$. Let $\mathcal{R}$ be a finite set of reduced reflection words in $W_\Gamma$ which generates the subgroup $G < W_\G$. Then $G$ is a finite-index subgroup of $W_{\Gamma}$ if and only if 
	$\mathcal{R} = \mathcal{R}_A \cup \mathcal{R}_B$ such that
	\begin{enumerate}
		\item $\mathcal{R}_A$ (resp.~$\mathcal{R}_B$) consists only of words in $W_A$ (resp.~$W_B$).
		\item $\mathcal{R}_A$ (resp.~$\mathcal{R}_B$) generates a finite-index subgroup of $W_A$ (resp.~$W_B$).
	\end{enumerate}
\end{lemma}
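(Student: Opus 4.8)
The plan is to reduce everything to the direct product decomposition $W_\Gamma = W_A \times W_B$, which holds because $\Gamma = A\star B$ (as recalled in Section~\ref{sec_prelim}); triangle-freeness plays no essential role here beyond being the standing hypothesis of the section. In $W_A\times W_B$ every element decomposes uniquely as a commuting product of an element of $W_A$ and an element of $W_B$. The first step is to observe that this product structure forces a canonical partition $\mathcal{R} = \mathcal{R}_A \sqcup \mathcal{R}_B$ of any set of reduced reflection words, where $\mathcal{R}_A$ consists of those words that involve only letters of $A$ and $\mathcal{R}_B$ of those that involve only letters of $B$. This partition is exactly condition (1), so the real content of the lemma is the equivalence of finite index of $G$ with condition (2).

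To obtain the partition, take $r \in \mathcal{R}$ and write the reflection it represents as $wsw^{-1}$ with $s\in V(\Gamma)$. If $s\in V(A)$, decompose the element represented by $w$ as $g_A g_B$ with $g_A\in W_A$, $g_B\in W_B$; since $s$ commutes with $g_B$, the reflection equals $g_A s g_A^{-1}\in W_A$, and symmetrically $s\in V(B)$ yields an element of $W_B$ (these cases are mutually exclusive, as a nontrivial element of $W_A\times W_B$ cannot lie in both factors). Thus $r$ is a reduced word representing an element of the special subgroup $W_A$ (resp.\ $W_B$). Since $W_A\hookrightarrow W_\Gamma$ preserves word length, that element has a reduced expression using only letters of $A$; and by Tits' theorem any two reduced expressions for a fixed element are related by swap moves, which preserve the multiset of letters, so $r$ itself uses only letters of $A$ (resp.\ only letters of $B$). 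This gives condition (1) for $\mathcal{R}_A = \{r\in\mathcal{R} : r \text{ uses only } A\text{-letters}\}$ and $\mathcal{R}_B = \mathcal{R}\setminus\mathcal{R}_A$.

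Now set $G_A = \langle \mathcal{R}_A\rangle \le W_A$ and $G_B = \langle \mathcal{R}_B\rangle \le W_B$, regarded inside $W_A\times W_B$ as subgroups of $W_A\times\{1\}$ and $\{1\}\times W_B$. Each element of $\mathcal{R}_A$ commutes with each element of $\mathcal{R}_B$, and $\mathcal{R}_A\cup\mathcal{R}_B$ generates $G$, so $G = G_A G_B$; moreover $G_A\cap G_B = \{1\}$, so $G = G_A\times G_B$ is an internal direct product. The cosets of $G$ in $W_\Gamma$ are then in natural bijection with $(W_A/G_A)\times(W_B/G_B)$, so that $[W_\Gamma : G] = [W_A : G_A]\cdot[W_B : G_B]$ as cardinals, and this is finite if and only if both factors are finite. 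This yields both directions of the lemma: if $G$ has finite index then $G_A$ and $G_B$ have finite index, giving (2) (with (1) already established); conversely, if (1) and (2) hold, then the words of $\mathcal{R}_A$ represent elements of $W_A$ and those of $\mathcal{R}_B$ elements of $W_B$, so again $G = G_A\times G_B$ with both factors of finite index, whence $G$ has finite index in $W_\Gamma$.

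The only point needing genuine care is the support invariance invoked in the second paragraph — that a reduced reflection word whose underlying reflection lies in $W_A$ uses only $A$-letters. I would isolate this as the standard consequence of Tits' theorem that reduced expressions for a fixed element of a RACG are connected by swap moves, so that the set of letters occurring in a reduced expression is an invariant of the element. Everything else is straightforward bookkeeping with the decomposition $W_\Gamma = W_A\times W_B$.
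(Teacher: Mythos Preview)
Your proof is correct and follows essentially the same approach as the paper: partition $\mathcal{R}$ according to whether the central letter lies in $A$ or $B$, observe the resulting reflections lie in $W_A$ or $W_B$ respectively, and then use the product structure $G = G_A \times G_B \le W_A \times W_B$ to equate finite index of $G$ with finite index of both factors. The only cosmetic difference is in justifying condition~(1): the paper notes directly that a $B$-letter in $w$ would render $wsw^{-1}$ non-reduced (the rightmost such letter commutes past $s$ to cancel with its mirror), whereas you route through Tits' theorem and support invariance of reduced expressions; both arguments are standard and your version is perfectly fine.
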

\begin{proof}
	The ``if'' direction is immediate. For the other direction, suppose that $G$ has finite index in $W_\G$. 
	Let 
	\[\mathcal{R}_A = \{wsw^{-1} \in \mathcal{R} ~|~ s \in V(A)
	 \} \] 	
	and let $\mathcal{R}_B = \mathcal{R} \setminus \mathcal{R}_A$. Note that if $wsw^{-1} \in \mathcal{R}_A$, then $w$ does not have a letter in $V(B)$. For if it did, $wsw^{-1}$ would not be reduced. Similarly, every reflection in $\mathcal{R}_B$ does not contain a letter of $V(A)$. This shows (1).
	
	Thus, the subgroup $G_A$ generated by $\mathcal{R}_A$ is a subgroup of $W_{A}$, and the subgroup $G_B$ generated by $\mathcal{R}_B$ is a subgroup of $W_B$. As $G$ has finite index in $W_{\G}$, it must be that $G_A$ and $G_B$ are finite-index subgroups respectively of $W_A$ and $W_B$.
\end{proof}

We also need the proposition below, which follows from known results:
\begin{proposition} \label{cor_dyer}
	Let $W$ be a RACG and let 
	$G < W$ be a subgroup generated by a set $\mathcal{R}$ of reflections. 
	Then $G$ is a RACG. Furthermore, if $\mathcal{R}$ is trimmed, then it is a standard Coxeter generating set.
\end{proposition}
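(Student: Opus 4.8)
The plan is to deduce both assertions from the theorem of Dyer and Deodhar together with the criterion in the first paragraph of~\cite{Dyer}, exactly as in the treatment of the doubling construction in Definition~\ref{def_doubling}. First I would recall that $\mathcal{R}$ consists of reflections of $W$, so the subgroup $G = \langle \mathcal{R} \rangle$ is a reflection subgroup of $W$. By the result of Dyer~\cite{Dyer} and Deodhar~\cite{Deodhar}, every reflection subgroup of a Coxeter group is itself a Coxeter group; combined with Proposition~\ref{prop_coxeter_subgroups_are_racgs} (which applies since $W$ is a RACG, hence $G$ is a Coxeter subgroup of a RACG), this shows $G$ is a RACG. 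This takes care of the first sentence of the proposition.

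For the second sentence, the key input is the canonical generating set produced by Dyer's criterion: by~\cite{Dyer}, $G$ has a distinguished Coxeter generating set $T'$ consisting of all reduced reflections $r$ in $G$ such that for every reflection $r' \neq r$ in $G$, the element $r'r$ has no representative word of length less than $|r|$. I would show that a trimmed set $\mathcal{R}$ is exactly this set $T'$. First, each element $r = w_i s_i w_i^{-1}$ of a trimmed set is reduced by definition. To see $\mathcal{R} \subseteq T'$: given $r \in \mathcal{R}$ and any reflection $r' \neq r$ in $G$, one analyzes $r'r$ using Tits' solution to the word problem and the deletion property, showing that no cancellation can shorten it below $|r|$ — this is where the trimmed hypothesis (no reduced expression for $w_j$ begins with $w_i s_i$) is used, since a short expression for $r'r$ would force exactly such a forbidden prefix relation among the conjugating words, much as in the argument already carried out in Lemma~\ref{lemma_no_bad_paths} and in Definition~\ref{def_doubling}. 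Conversely, since $\mathcal{R}$ and $T'$ both generate $G$, and since no proper subset of $T'$ generates $G$ (a standard fact about the Dyer generating set, used already in Definition~\ref{def_doubling}), we must have $\mathcal{R} = T'$; hence $\mathcal{R}$ is a standard Coxeter generating set.

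I expect the main obstacle to be the careful verification that $\mathcal{R} \subseteq T'$, i.e.\ that trimmedness rules out any short expression for $r'r$ where $r \in \mathcal{R}$ and $r'$ is an arbitrary reflection of $G$ (not just an element of $\mathcal{R}$). The subtlety is that $r'$ ranges over \emph{all} reflections of $G$, so one cannot directly invoke the definition of trimmed; instead one must express $r'$ as a conjugate $g r'' g^{-1}$ of a generator and track how a length-reducing cancellation in $r' r$ propagates. The cleanest route is probably to argue contrapositively: if $|r' r| < |r|$ for some reflection $r'$, then after reducing $r = w s w^{-1}$ a letter of $w$ or $w^{-1}$ must cancel against a letter coming from $r'$; using the deletion property and the fact that both $r$ and $r'$ are reflections, one extracts a shorter reflection $\tilde r$ with $r = h \tilde r h^{-1}$ for some nonempty $h$, and then $\tilde r$ together with the remaining generators violates trimmedness. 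This mirrors the reduction step in the proof of Lemma~\ref{lemma_reflection_generators}. Since the statement is flagged as following ``from known results,'' I would keep this verification brief, citing~\cite{Dyer}, Definition~\ref{def_doubling}, and Lemma~\ref{lemma_reflection_generators} for the analogous computations, rather than reproving Dyer's criterion from scratch.
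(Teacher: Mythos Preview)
Your treatment of the first assertion matches the paper exactly: Dyer--Deodhar plus Proposition~\ref{prop_coxeter_subgroups_are_racgs}.

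For the second assertion, the paper simply cites the main theorem of~\cite{Dyer} together with~\cite[Proposition~3.5]{Dyer} and stops there. You instead attempt to verify Dyer's criterion by hand, arguing that a trimmed set $\mathcal{R}$ coincides with the canonical generating set $T'$. The step $\mathcal{R}\subseteq T'$ is where your sketch has a genuine gap. The trimmed condition is purely a statement about pairs of elements \emph{within} $\mathcal{R}$ (no reduced expression for $w_j$ begins with $w_i s_i$), whereas membership in $T'$ requires that $|r'r|\ge |r|$ for \emph{every} reflection $r'$ in $G$. Your contrapositive extracts a shorter reflection $\tilde r$ with $r=h\tilde r h^{-1}$, but neither $\tilde r$ nor $r'$ need lie in $\mathcal{R}$, so there is no evident way to read off a violation of trimmedness. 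The arguments you invoke as templates---Definition~\ref{def_doubling} and Lemma~\ref{lemma_reflection_generators}---only handle the situation where the interfering reflection is already one of the chosen generators; extending this to arbitrary reflections of $G$ is precisely the content of~\cite[Proposition~3.5]{Dyer}, which uses Dyer's root-system machinery. So while your outline is in the right spirit, completing it would amount to reproving that proposition, and the paper's direct citation is the appropriate shortcut here.
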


\begin{proof}
Deodhar and Dyer independently proved that reflection subgroups of Coxeter groups are Coxeter groups~\cite{Dyer} \cite{Deodhar}.  This, combined with Proposition \ref{prop_coxeter_subgroups_are_racgs} implies that $G$ is a RACG.
	The second claim follows from the main theorem of \cite{Dyer} and \cite[Proposition~3.5]{Dyer}. We also refer the reader to \cite[page 69]{Dyer} for an algorithm to determine a standard generating set for a reflection subgroup of a Coxeter group. 
	\end{proof}

We are now ready to prove the main theorem of the section.

\begin{theorem} \label{thm_algorithm_finite_index}
	There is an algorithm which, given a $2$-dimensional RACG  $W_{\Gamma}$ and a RACG $W_{\G'}$ such that $\G'$ does not have an isolated vertex, determines whether or not $W_{\Gamma'}$ is isomorphic to a finite-index subgroup of $W_{\Gamma}$. The algorithm takes as input the graphs $\G$ and $\G'$, and the time-complexity of this algorithm only depends on the number of vertices of $\G$ and of $\G'$. Furthermore, if $W_{\G'}$ is isomorphic to a finite-index subgroup of $W_{\G}$, then the algorithm outputs an explicit set of words in $W_{\G}$ which generate this subgroup.
\end{theorem}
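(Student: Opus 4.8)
The plan is to reduce the problem to checking finitely many candidate reflection subgroups of $W_\G$. The key insight, already established, is that if $W_{\G'} \cong G < W_\G$ with finite index, then $G$ is a one-ended (equivalently, $\G'$ has no isolated vertex, so $G$ does not split off a $\mathbb{Z}_2$) Coxeter subgroup of the $2$-dimensional RACG $W_\G$, hence by Theorem~\ref{thm_racg_subgroup_gen_set} it is generated by a finite set $\mathcal{R}$ of reflections in $W_\G$. By Lemma~\ref{lemma_reflection_generators} we may assume $\mathcal{R}$ is trimmed, and then by Proposition~\ref{cor_dyer}, $\mathcal{R}$ is a \emph{standard} Coxeter generating set for $G$; so $|\mathcal{R}| = |V(\G')|$, and the Coxeter diagram determined by $\mathcal{R}$ must be isomorphic to $\G'$. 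First I would split into cases according to whether $\G$ is an almost star graph (recall $W_\G$ is already assumed triangle-free). If $\G$ decomposes as a nontrivial join $\G = A \star B$, I handle this via Lemma~\ref{lemma_join_case}: $\mathcal{R}$ must split as $\mathcal{R}_A \cup \mathcal{R}_B$ with each piece a finite-index reflection subgroup of the (smaller) factor, and we recurse. If $\G$ is almost star but not a join, one can analyze $W_\G$ directly — it is virtually $\mathbb{Z}$ or virtually a surface/dihedral-type group — and decide the question by hand (there are only finitely many diagram possibilities for $\G'$). This disposes of the degenerate cases.

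In the main case, $\G$ is triangle-free and not almost star. Here Proposition~\ref{prop_conj_word_bound} applies: there is a constant $M = M(|V(\G)|, |V(\G')|)$ such that any trimmed reflection set $\mathcal{R} = \{w_i s_i w_i^{-1}\}$ of size $|V(\G')|$ generating a finite-index subgroup satisfies $|w_i| \le M$. Therefore the algorithm enumerates \emph{all} sets $\mathcal{R}$ of $|V(\G')|$ reflection words $w s w^{-1}$ with $s \in V(\G)$ and $|w| \le M$; this is a finite list whose size depends only on $|V(\G)|$, $M$, and $|V(\G')|$, hence only on $|V(\G)|$ and $|V(\G')|$. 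For each such $\mathcal{R}$, first trim it using Lemma~\ref{lemma_reflection_generators} (discard it if the trimmed set no longer has the right cardinality or the wrong type of reflections). Then, using Theorem~\ref{thm_completion_of_reflection_subgroup_triangle_free}, build a finite completion $\Omega_G$ for the subgroup $G = \langle \mathcal{R} \rangle$ in a number of steps bounded by a function of $\sum_{r \in \mathcal{R}} |r|$ and $|V(\G)|$ — hence again bounded in terms of $|V(\G)|$ and $|V(\G')|$ only. Using the generating set $T'$ from Remark~\ref{rmk_resolved} to guarantee $\Omega_G$ is resolved, apply Theorem~\ref{thm_omega_finite_index}: $G$ has finite index if and only if $\Omega_G$ is finite (it is, by construction) and full valence, which is a finite check on $\Omega_G$. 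If $G$ has finite index, use Proposition~\ref{cor_dyer} (and the algorithm of Dyer cited there) to extract a standard Coxeter generating set and its diagram, and compare this diagram with $\G'$ for graph isomorphism (a finite check). Output "yes" together with the generating words if some $\mathcal{R}$ passes all tests; output "no" if the entire finite list is exhausted without success.

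Correctness has two directions. If the algorithm outputs a set $\mathcal{R}$, then by construction $\langle \mathcal{R}\rangle$ is a finite-index subgroup of $W_\G$ whose standard Coxeter diagram is isomorphic to $\G'$, so it is isomorphic to $W_{\G'}$. Conversely, if $W_{\G'}$ embeds as a finite-index subgroup $G < W_\G$, the discussion in the first paragraph produces a trimmed reflection generating set $\mathcal{R}$ of the correct cardinality, whose elements — by Proposition~\ref{prop_conj_word_bound} in the main case, or by the explicit analysis in the degenerate cases — have conjugating words of length $\le M$; hence $\mathcal{R}$ (or a trimming of a superset of it that the enumeration produces) is on the algorithm's list, and will be detected. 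The claimed bound on time-complexity follows since every quantity controlling the enumeration, the trimming, the completion construction, and the final diagram comparison has been bounded purely in terms of $|V(\G)|$ and $|V(\G')|$.

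The main obstacle, and the technical heart of the argument, is Proposition~\ref{prop_conj_word_bound}: without the a priori length bound $M$ on the conjugating words, the enumeration would be infinite and the algorithm would not terminate. Its proof (using the fine structure of the completion $\Omega_G$ of a reflection subgroup, the CAT(0) geometry of $\Omega_{\mathcal{FT}}$, and the hypothesis that $\G$ is not almost star) has already been carried out above, so for the theorem itself the remaining work is the bookkeeping of the join and almost-star reductions and the verification that each algorithmic step runs in time bounded only by $|V(\G)|$ and $|V(\G')|$.
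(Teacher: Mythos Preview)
Your treatment of the main case ($\G$ triangle-free and not almost star) and of the join case matches the paper's and is correct.

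The gap is in your ``almost star but not a join'' case. First, the structural claim is not right: take $\G$ to be the path on four vertices $a$--$b$--$c$--$d$. This graph is triangle-free, almost star (since $V(\G) = \mathrm{star}(b) \cup \{d\}$), and not a join, yet $W_\G$ is virtually a nonabelian free group (its rational Euler characteristic is $-1/4$), so it is none of virtually cyclic, a closed surface group, or dihedral. More importantly, even were the structural claim correct, ``decide the question by hand'' is not an algorithm, and nothing you say bounds its running time in terms of $|V(\G)|$ and $|V(\G')|$; the complexity assertion in the theorem is therefore unproved for this case.

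The paper closes this case with a doubling reduction. It locates a vertex $u$ adjacent only to the central vertex $s$ (such a $u$ exists precisely because $\G$ is neither a star nor a join with both factors of size at least two) and passes to the index-two kernel $K \cong W_{D(\G,u)}$ of the homomorphism $\phi_u : W_\G \to \mathbb{Z}_2$ from Definition~\ref{def_doubling}. One then checks that the doubled graph $D(\G,u)$ is \emph{not} almost star. An elementary index computation shows that a reflection subgroup $G$ has finite index in $W_\G$ if and only if $G \cap K$ has finite index in $K$, which reduces the question to the already-settled non-almost-star case. (When $\G$ is itself a star one doubles along the center $s$ instead, and $D(\G,s)$ is edgeless.) Since $|V(D(\G,u))|$ is bounded in terms of $|V(\G)|$, the complexity bound carries through. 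This reduction is the idea missing from your outline.
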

\begin{proof} 
	First observe that if there exists some $G<W_{\Gamma}$  isomorphic to $W_{\Gamma'}$, then by Theorem~\ref{thm_racg_subgroup_gen_set} and Lemma~\ref{lemma_reflection_generators}, $G$ is generated by a trimmed set of reflections $\mathcal{R}$ and $|\mathcal{R}| = |V(\Gamma')|$.
	
	We prove the theorem by analyzing a few different cases depending on the structure of $\Gamma$.	

	\smallskip\noindent{\it (i) $\Gamma$ is not almost star.}
	Let $I$ be a trimmed set of reduced reflections in $W_{\Gamma}$. We say $I$ is $M$-admissible if $|I| = |V(\Gamma')|$ and $|w| \le M$ for every reflection $wsw^{-1} \in I$. Let $\mathcal{I}_M$ be the collection of all $M$-admissible trimmed sets of reflections. Note that there is a bound on $|\mathcal{I}_M|$ depending only on $M$ and $|V(\Gamma')|$.
	
	Suppose $G$ is a finite-index subgroup of $W_\G$ which is isomorphic to $W_{\G'}$, and let $\mathcal{R}$ be a trimmed generating set for $G$ as described above. As $\G$ is not almost star, Proposition~\ref{prop_conj_word_bound} guarantees that $\mathcal{R} \in \mathcal{I}_M$, where $M$ is as in Proposition~\ref{prop_conj_word_bound}, and depends only on $|V(\Gamma)|$ and $|V(\Gamma')| = |\mathcal{R}|$. It follows that there exists a finite-index subgroup of $W_{\Gamma}$ isomorphic to $W_{\G'}$ if and only if some $I \in \mathcal{I}_M$ generates a finite-index subgroup that is isomorphic to $W_{\Gamma'}$. 
	
	Thus, to prove the theorem we only need to show that there is an algorithm to decide whether a given $I \in \mathcal{I}_M$ generates a finite-index subgroup $G$ isomorphic to $W_{\G'}$. By Theorem~\ref{thm_completion_of_reflection_subgroup_triangle_free} and 
 Theorem \ref{thm_omega_finite_index} 
	there is an algorithm 
	to decide whether or not such a $G$ is a finite-index subgroup, and the time-complexity of this algorithm only depends on $|V(\Gamma')|$ and $|V(\Gamma)|$. By Proposition~\ref{cor_dyer}, $G$ is a RACG and $I$ is a standard Coxeter generating set. As any RACG is defined by a unique graph \cite{Radcliffe}, it is straightforward to check whether $I$ generates a RACG isomorphic to $W_{\Gamma'}$. The theorem then follows in this case. 
	
	\smallskip\noindent{\it (ii) $|V(\Gamma)| \le 2$. }
 In this case $W_\Gamma$ is isomorphic to  $\mathbb{Z}_2$, $\mathbb{Z}_2 \times \mathbb{Z}_2$ or $\mathbb{Z}_2 * \mathbb{Z}_2$. 
	 Each of these groups contains finitely many
	finite-index RACGs,
	up to isomorphism, and one can easily list them.

	\smallskip\noindent{\it (iii) $\Gamma = A \star B$ where $|V(A)|, |V(B)| \ge 2$. }
	Suppose $G$ is a subgroup of $W_\G$ which is isomorphic to $W_{\G'}$. Let $\mathcal{R}$ be a trimmed generating  set of reflections for $G$ as  above.
	
	If $G$ is a finite-index subgroup, then Lemma~\ref{lemma_join_case} tells us that $\mathcal{R} = \mathcal{R}_A \cup \mathcal{R}_B$ where every reflection in $\mathcal{R}_A$ only contains generators in $A$ and every reflection in $\mathcal{R}_B$ only contains generators in $B$.  Furthermore, $\mathcal{R}_A$ generates a finite-index subgroup of $W_A$ and $\mathcal{R}_B$ generates a finite-index subgroup of $W_B$. As $|V(A)|, |V(B)| \ge 2$, both $\mathcal{R}_A$ and $\mathcal{R}_B$ are non-empty.  
	
	Let $\Delta = \Delta_A \star \Delta_B$ be the triangle-free join graph 
such that vertices of $\Delta_A$ (resp.~$\Delta_B$) correspond to elements of $\mathcal{R}_A$ 
(resp.~$\mathcal{R}_B$).
It readily follows that $W_{\Gamma'}$ is isomorphic to $W_{\Delta}$. 
	
	Thus, we can assume that  $\Gamma' = A' \star B'$, a join graph. To prove the claim, again by Lemma~\ref{lemma_join_case}, it is enough to check whether $W_{A'}$ is isomorphic to a finite-index subgroup of $W_A$ and  $W_{B'}$ is isomorphic to a 
	finite-index subgroup of $W_B$. However, as $\Gamma$ is triangle-free, $A$ does not contain any edges. It follows that either $A$ is not almost star or $A$ consists of at most two isolated vertices. The same holds for $B$. Thus, we are done by cases (i) and (ii). 

		\smallskip\noindent{\it (iv) $\Gamma$ is not as in (i), (ii) or (iii).}
	As $\Gamma$ is not as in (i), we may assume that $\Gamma$ is almost star. 
	Suppose first that $V(\G) \subsetneq \text{star}(v)$ for all $v \in V(\G)$. Let $s$ and $t$ be vertices of $\Gamma$ such that $V(\Gamma) =  \text{star}(s) \cup \{t \}$. Note that $s$ and $t$ must be distinct. 
	As $\Gamma$ is triangle-free, is not a join
	as in (iii), is not the star of a vertex, and contains more than two vertices, 
	 there must be a vertex $u$ of $\Gamma$ that is adjacent to $s$ and is not adjacent to any other vertex of $\Gamma$.

	Let $G$ be a subgroup of $W_{\Gamma}$ generated by reflections.	Let 
	$\Delta = D(\Gamma, u)$ and $\phi = \phi_u: W_{\Gamma} \to \mathbb{Z}_2$ be as in 
	Definition~\ref{def_doubling}, and  let $K =  \ker(\phi)$. Note that $\Delta$ is not almost star and that $K$ is generated by reflections.

	Let $i_G: G \to W_{\Gamma}$ be the inclusion map. Let $K'$ be the kernel of the map $\phi' = \phi \circ i_G$. We get the diagram below where all maps labeled by $i$ are the obvious inclusion homomorphisms.

\smallskip
	\begin{tikzpicture}
		\matrix (m) [matrix of math nodes,row sep=3em,column sep=8em,minimum width=2em]
		{
		K' = \ker(\phi')  & G & \\
		K = \ker(\phi) \cong W_\Delta & W_\Gamma & \mathbb{Z}_2 \\};
		\path[-stealth]
		
		(m-1-1) edge node [right] {$i$} (m-2-1)
		
		(m-1-1) edge node [below] {$i$} (m-1-2)
		
		(m-2-1) edge node [below] {$i$} (m-2-2)
		
		(m-1-2) edge node [right] {$i_G$} (m-2-2)
		
		(m-2-2) edge node [below] {$\phi$} (m-2-3)
		
		(m-1-2) edge node [right] {$\phi' = \phi \circ i_G$} (m-2-3);
	\end{tikzpicture}
\smallskip

	Recall that given a triple of groups $G_3<G_2<G_1$, 
	their indices satisfy the formula
	 $[G_1:G_3]=[G_1:G_2] [G_2:G_3]$, where infinite values are interpreted appropriately. If we apply this formula to the groups in the  diagram and note that $[G: K'] =  [W_\G :K] = 2$, 
	 we get:
	\[  2[W_\G : G] = [W_\G : G][G: K'] = [W_\G : K'] = [W_\G :K] [K : K'] = 2[K:K']\]
	Thus, $G$ is a finite-index subgroup of $W_\G$ if and only if $K'$ is a finite-index subgroup of $K$. As $\Delta$ is not almost star, it follows by (i) that there is an algorithm to check whether $K'$ is a finite-index subgroup of $K$. The theorem now follows. 	
	
	On the other hand, if  $V(\G) = \text{star}(s)$ for some  $s \in V(\G)$, then we apply the same argument as before but instead take the homomorphism $\phi = \phi_s$. In this case, $\Delta = D(\Gamma, s)$ must be a non-empty graph with no edges. Thus, by either (i) or (ii), there is an algorithm to check whether $K'$ is a finite-index subgroup of $K$, where $K'$ and $K$ are  as before.
\end{proof}

\section{Other algorithmic properties of quasiconvex subgroups}\label{sec_other_algorithmic_properties}
This section is dedicated to the proof of Theorem~\ref{introtheorem_algorithms} of the introduction. 
Throughout this section, we let $G$ be a quasiconvex subgroup of $W_\G$ given by a finite generating set, and we let $\Omega$ be the corresponding completion. 
By Theorem~\ref{thm_qc}, $\Omega$ is finite and by Proposition~\ref{prop_finite_completion} it can be computed in finite time. 

Note that (1) and (2) of Theorem~\ref{introtheorem_algorithms}
 immediately follow by Proposition~\ref{prop_torsion_free}, 
and Theorem~\ref{thm_omega_finite_index} respectively. 
 Before proving (3),
 we show that powers of an element of a RACG can be represented by words of a special form.

\begin{lemma} \label{lemma_powers}
	Let $w$ be a reduced word in the RACG $W_\G$. Then there exist reduced words $x, h$ and $k$, such that $xhkx^{-1}$ is a reduced expression for $w$ and $xh^nk^{(n\mod 2)} x^{-1}$ is a reduced expression for $w^n$ for all integers $n > 0$.
\end{lemma}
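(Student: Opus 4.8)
The idea is to find a cyclically reduced ``core'' of $w$ of a very special shape, so that taking powers does not create any cancellation. First I would use a conjugation argument: among all expressions $w = x u x^{-1}$ with $u$ reduced, choose one where $|x|$ is maximal. As in the proof of Proposition~\ref{prop_torsion_free}, maximality forces $xux^{-1}$ to be reduced (otherwise a deletion is possible, and by the usual argument one of the cancelling letters lies in $x$ or $x^{-1}$ and commutes past $u$, allowing $|x|$ to be increased, a contradiction). So $w = xux^{-1}$ with $xux^{-1}$ reduced and, by the choice of $x$, the word $u$ is \emph{cyclically reduced}: no reduced expression for $u$ begins and ends with the same letter, nor with two letters whose product could be cancelled after cyclic permutation. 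Equivalently, $u^2$ (hence $u^n$ for all $n$) is reduced. This last equivalence is the step I expect to require the most care: I would prove it via Tits' solution to the word problem, or via a disk-diagram / dual-curve argument in the Davis complex, showing that any deletion in $u^n$ would localize to a deletion straddling one copy of $u$, and hence induce a cyclic reduction of $u$, contradicting maximality of $|x|$.

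Next I would split $u$ according to the parity behaviour of its powers. The point is that $u$ itself, while cyclically reduced, may have the property that $u^2$ simplifies in \emph{length} even though it stays reduced — no, that cannot happen if $u^2$ is reduced; rather, the subtlety the statement is addressing is that a cyclically reduced word $u$ can have a reduced expression of the form $u = hk$ where $k$ is a ``central-ish'' piece such that $u^2$ reduces to $h^2$ (with the two copies of $k$ cancelling). Concretely: write $u$ in a reduced expression and let $k$ be the (possibly empty) maximal suffix consisting of pairwise-commuting letters each of which also commutes with everything to its left in $u$ \emph{up to} a matching letter — more carefully, $k$ should be chosen so that $u$ has a reduced expression $h k$ with $k$ reduced, $k$ supported on a clique, every letter of $k$ commuting with all of $h$, and $h$ itself having the property that $h^2$ is reduced and no further suffix can be split off. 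Then $u = hk$ gives $u^2 = hkhk = h^2 k^2$ (since $k$ commutes with $h$), and $k^2$ is the identity (each letter of $k$ occurs once and they all commute), so $u^2 = h^2$; inductively $u^n = h^n k^{(n \bmod 2)}$. One must check this expression is reduced: $h^n$ is reduced because $h^2$ is reduced and $h$ is cyclically reduced with no further splittable suffix, and appending $k$ (resp.\ nothing) keeps it reduced because $k$'s letters commute with all of $h$ but the choice of $k$ ensures they do not cancel into $h$.

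Finally I would assemble the conclusion. Conjugating back, $w^n = x u^n x^{-1} = x h^n k^{(n \bmod 2)} x^{-1}$, and this is reduced: any deletion in $x h^n k^{(n\bmod 2)} x^{-1}$ would, since $x u^n x^{-1}$ (the case-by-case reduced forms) has its letters of $x$ and $x^{-1}$ unable to cancel into the cyclically reduced core by maximality of $|x|$, have to occur inside $h^n k^{(n \bmod 2)}$, which we have just shown is reduced. Renaming, $xhkx^{-1}$ is a reduced expression for $w$ (the case $n=1$) and $x h^n k^{(n \bmod 2)} x^{-1}$ is a reduced expression for $w^n$, as desired. The main obstacle, as noted, is pinning down the correct definition of the splitting $u = hk$ and proving cleanly that $h^2$ reduced together with the commuting/clique conditions on $k$ really does yield reducedness of all $h^n k^{(n\bmod 2)}$; I would handle this with a careful application of the deletion property (Section~\ref{subsec_word_prob}) and Lemma~\ref{lemma_reduced_expression}, tracking which letter could cancel with which.
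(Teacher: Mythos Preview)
Your second and third paragraphs land on essentially the paper's approach, but your first paragraph contains a genuine error that you never fully retract. The claimed equivalence ``no reduced expression for $u$ begins and ends with the same letter $\Longleftrightarrow$ $u^2$ is reduced'' is \emph{false} in a RACG. Take $u = abc$ with $a,b,c$ pairwise commuting and distinct: every reduced expression for $u$ is a permutation of $abc$, so none begins and ends with the same letter, yet $u^2$ represents the identity. You seem to sense this in your second paragraph (``$u^2$ reduces to $h^2$ with the two copies of $k$ cancelling''), but you do not notice that this flatly contradicts what you just asserted. Maximality of $|x|$ gives you only the weaker property (no expression for $u$ begins and ends with the same letter), and the whole point of the $hk$ splitting is to repair the gap between that and ``$u^n$ is reduced.''

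The paper's definition of $k$ is cleaner than your attempts and resolves the difficulty you flag at the end. Having written $w = xyx^{-1}$ reduced with $|x|$ maximal, let $K$ be the set of generators that occur in $y$ and commute with \emph{every} letter of $y$. Then $K$ is automatically a clique, each element of $K$ occurs exactly once in $y$ (since $y$ is reduced), and one sets $k$ to be their product and $h$ so that $hk$ is a reduced expression for $y$. The key property of $h$ is now immediate: if some generator appeared as the last letter of one reduced expression for $h$ and the first letter of another, then either $|x|$ could be increased or that generator would belong to $K$; both are impossible. From this property and the deletion property, one checks directly that $xh^nk^{(n\bmod 2)}x^{-1}$ is reduced for all $n$: any putative deletion either lies inside a single copy of $h$ (impossible since $h$ is reduced), straddles two copies of $h$ (forbidden by the key property), involves a letter of $k$ and a letter of $h$ (impossible since letters of $K$ do not occur in $h$), or involves a letter of $x$ or $x^{-1}$ (in which case the same deletion would already apply to the reduced word $xhkx^{-1}$). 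This is exactly the case analysis your final paragraph gestures at, but with the right definition of $k$ it goes through without Lemma~\ref{lemma_reduced_expression} or disk diagrams.
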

\begin{proof}
	Write $w = xyx^{-1}$ where $x$ and $y$ are reduced words and $|x|$ is maximal out of all such possible expressions. Let $K = \{k_1, \dots, k_n\}$ be the set of vertices in $\Gamma$ that appear as letters in the word $y$ and which commute with every other letter of $y$. As $w$ is reduced, each element of $K$ appears as a letter of $w$ exactly once. Define the word $k = k_1 \dots k_n$. By our choice of $k$, it follows that there exists a reduced word $h$, such that 
	$hk$ is a reduced expression for $y$.
Note that $h$ has the property that any generator which appears as the last letter of some reduced expression for $h$, cannot also appear as the first letter in some reduced expression for $h$. This follows since otherwise either $x$ is not maximal or such a generator should have been in $K$. The word $xhkx^{-1}$ will be the desired expression for $w$.
	
	The word $xh^nk^{(n\mod 2)} x^{-1}$ is clearly an expression for the word $w^n$. Furthermore, the word $xh^nk^{(n\mod 2)} x^{-1}$ must be reduced. For otherwise, it follows by the deletion property	that either $h$ is not reduced or  some generator appears as the first letter in some reduced expression for $h$ and as the last letter in some reduced expression for $h$, which is not possible.
\end{proof}

\begin{proof}[{Proof of 
Theorem~\ref{introtheorem_algorithms}
(3):}]
	Without loss of generality, we may assume that $w$ is reduced. 
	Let $N$ be the number of vertices of $\Omega$. We claim that $g^m \in G$ for some positive integer $m$ if and only if $w^{2l}$ represents an element of $G$ for some $l \le N$. The theorem clearly follows from this claim and the fact that the membership problem is solvable for $G$.
	
	One direction of the claim is clear. On the other hand, suppose $g^m \in G$ for some positive integer $m$. By possibly taking a power, we may assume that $m$ is even. By Lemma~\ref{lemma_powers}, there is a reduced expression for $w^m$ of the form 
	$z = xh^m x^{-1}$.
	Let $\beta$ be a loop in $\Omega$ based at $B$ with label $z$. For $1 \le i \le m$, let $\alpha_i$ be the first subpath of $\beta$ with label $h^i$,
	 and let $v_i$ be the endpoint of $\alpha_i$. 
	 As $\Omega$ is folded, the two subpaths of $\beta$ labeled $x$ are identified.  It follows that $\alpha_m$ is a loop based at $v_m$.
	 As there are at most $N$ vertices of $\Omega$, it follows that the set $\{v_1, \dots, v_m\}$ contains at most $N$ distinct vertices. There must then exist some loop $\alpha$ based at $v_m$ with label $h^l$ for some $l \le N$. 
	 Thus, if we replace $\alpha_m$ with $\alpha\alpha$ in $\beta$, we conclude that there is a loop in $\Omega$ based at $B$ with label $x h^{2l} x^{-1}$. By the definition of a completion, the word $x h^{2l}x^{-1}$, which is an expression for $w^{2l}$, represents an element of $G$. 
\end{proof}

Although Theorem~\ref{thm_normal} characterizes normality,
the core of a completion may be difficult to algorithmically compute in general. Thus, we now give a different characterization of normality, for finitely generated subgroups, which is better suited to the algorithmic approach. 

\begin{proposition}\label{prop_normal_fg}
Let $G< W_\G$ be generated by a finite set of reduced words $S_G$, and let $(\Omega, B)$ be a standard completion for $G$ with respect to~$S_G$. 
Consider $\Delta \subset V(\G)$ defined by:
\[\Delta = \{ s \in V(\G) \mid s \text{ commutes with every element of } G \} \]
Then $G$ is normal if and only if the following hold:
\begin{enumerate}
\item[(N1)] Given any $s \in V(\G) \setminus \Delta$, there is an edge in $\Omega$ incident  to $B$ with label $s$. 
\item[(N2')] For every generator $w \in S_G$ of G, and for every vertex $v$ of $\Omega$, there exists a loop based at $v$ with label $w$.
\end{enumerate}
\end{proposition}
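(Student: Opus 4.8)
The plan is to show that Proposition~\ref{prop_normal_fg} follows from Theorem~\ref{thm_normal}, by proving that, given (N1), the condition (N2') is equivalent to (N2). Since Theorem~\ref{thm_normal} already provides the equivalence ``$G$ normal $\iff$ (N1) and (N2)'', it suffices to establish this equivalence of the two ``base-point homogeneity'' conditions. Note that (N2') is a priori much weaker than (N2): it only asks that the \emph{specific} generating words in $S_G$ lift to loops at every vertex, whereas (N2) demands a full isomorphism of core graphs. The subtlety is that $\Delta$ and (N1) are genuinely used: the argument that a generator lifting to loops everywhere forces the whole core graph to be homogeneous will need the edges at $B$ supplied by (N1) to handle the letters in $V(\G)\setminus\Delta$, and the letters of $\Delta$ are handled separately since, being central for $G$, every reduced word representing an element of $G$ can be rearranged.

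First I would prove the easy direction: (N2) $\Rightarrow$ (N2'). If $w \in S_G$ and $v$ is any vertex of $\Omega$, then by the definition of a standard completion there is a loop based at $B$ with label $w$; applying the isomorphism $C(\Omega,B)\to C(\Omega,v)$ from (N2) (or rather, noting that the image under this isomorphism of that loop is a loop at $v$ with the same label) gives a loop at $v$ labeled $w$, provided $w$ is reduced — which it is, by hypothesis on $S_G$. So (N2') holds. Hence if $G$ is normal then (N1) and (N2) hold by Theorem~\ref{thm_normal}, and therefore (N1) and (N2') hold.

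Now the main direction: assume (N1) and (N2') and deduce that $G$ is normal; by Theorem~\ref{thm_normal} it is enough to show $G$ is normal directly, or equivalently to derive (N2). I would argue directly that $sGs \subseteq G$ for every $s \in V(\G)$, which (applied to $s$ and taking inverses) gives normality. For $s \in \Delta$ this is immediate. For $s \in V(\G)\setminus\Delta$, by (N1) there is an edge $e$ at $B$ labeled $s$; let $v$ be its other endpoint, so by Lemma~\ref{lem_basepoint_change} the pair $(\Omega, v)$ is a completion for $s^{-1}Gs = sGs$. Now take any $g \in G$, and write $g = h_1 \cdots h_k$ with each $h_i \in S_G \cup S_G^{-1}$ (since $S_G$ generates $G$; note $S_G^{-1}$ is also a set of reduced words). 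By (N2') each $h_i$ (and, reading backwards, each $h_i^{-1}$) labels a loop at \emph{every} vertex of $\Omega$, in particular at $v$; concatenating these loops at $v$ yields a loop at $v$ whose label is the word $h_1 \cdots h_k$, which represents $g$. Thus $g$ is represented by a loop at $v$, so $g \in sGs$ by definition of the subgroup associated to $(\Omega,v)$ wait — rather, this shows $G \subseteq sGs$; applying the same reasoning with the roles reversed (using the edge $e$ traversed in the other direction, so that $(\Omega,B)$ is a completion of $s(sGs)s = G$) gives $sGs \subseteq G$. Either way we conclude $sGs = G$ for all $s \in V(\G)$, hence $G$ is normal.

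The main obstacle I anticipate is being careful about the direction of the containment and the precise meaning of ``the subgroup associated to $(\Omega,v)$'': one must not confuse ``$g$ is represented by \emph{some} loop at $v$'' (which puts $g$ in the associated subgroup $G_v = sGs$) with ``$g$ is represented by a \emph{reduced} loop at $v$'' (which is what the core graph sees). Since (N2') only controls the fixed words in $S_G$, which need not be reduced-length-preserving under concatenation, the cleanest route is to avoid core graphs entirely in this direction and argue at the level of associated subgroups as above, invoking Lemma~\ref{lem_basepoint_change} and the definition of the associated subgroup (Definition~\ref{def_associated_subgroup}); the reduced-word hypothesis on $S_G$ is exactly what makes the easy direction go through. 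Finally I would remark that this characterization is ``algorithmic'' in the sense claimed: given a finite $\Omega$, checking (N1) is a finite local check at $B$ once $\Delta$ is known (and $\Delta$ is computable — $s \in \Delta$ iff $s$ commutes with every letter appearing in reduced representatives of the finitely many generators, which can be read off $\Omega$), and checking (N2') amounts to verifying, for each of the finitely many $w \in S_G$ and each of the finitely many vertices $v$, whether the edge-path labeled $w$ starting at $v$ closes up — a finite computation.
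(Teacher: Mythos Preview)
Your proof is correct and follows essentially the same approach as the paper: the forward direction via Theorem~\ref{thm_normal}, and the backward direction by using (N1) to find the neighbor $v$ of $B$, Lemma~\ref{lem_basepoint_change} to identify the subgroup at $v$ as $sGs$, and (N2') to place each generator of $G$ into that subgroup. One small wobble: your ``roles reversed'' argument does not quite work as stated (you do not have (N2') for generators of $sGs$), but your alternative observation that $G\subseteq sGs$ implies $sGs\subseteq s^2Gs^2=G$ by conjugating by $s$ is exactly what the paper does and suffices.
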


\begin{proof}
If $G$ is normal, then Theorem~\ref{thm_normal} implies N1 and N2'.  On the other hand, suppose N1 and N2' hold.  As in the proof of Theorem~\ref{thm_normal}, 
to show that $G$ is normal it is enough to show $sGs \subseteq G$ for $s \in V(\G)\setminus \Delta$.  
Let $v$ be the vertex which is adjacent to $B$ via an edge labeled $s$, which exists due to N1.  
Then by Lemma~\ref{lem_basepoint_change},
 the subgroup of $W_\G$ associated to $(\Omega, v)$  is $sGs$, and 
by N2', $G$ is contained in this subgroup.  Conjugating by $s$, it follows that $sGs \subseteq G$. 
\end{proof}

\begin{proof}[{Proof of 
Theorem~\ref{introtheorem_algorithms}
 (4):}]
Let $S_G$ be a finite set of reduced words in $W_\G$ which generate $G$.
In this case, the  set $\Delta$ from Proposition~\ref{prop_normal_fg} is equal to the following set:
 $$
 \{ s \in V(\G) \mid \forall w \in S_G, \;s \text{ commutes with every letter in the support of } w\} 
 $$
Thus $\Delta$ can be computed in finite time.  
It follows that conditions (N1) and (N2') from Proposition~\ref{prop_normal_fg} can be checked in finite time as well. 
\end{proof}

\bibliographystyle{amsalpha}
\bibliography{bibliography}
\end{document}